 \newcommand{\mymarginpar}[1]{%
    \marginpar{\ifthenelse{\isodd{\arabic{page}}}{\flushleft 
#1}{\flushright #1}}}
\numberwithin{equation}{section}
 \newcommand{\eps}{\varepsilon}           
 \newcommand{\IC}{\mathbb{C}}                  
 \newcommand{\IF}{\mathbb{F}}
 \newcommand{\IR}{\mathbb{R}}                  
 \newcommand{\IT}{\mathbb{T}}                  
 \newcommand{\IZ}{\mathbb{Z}}         
\newcommand{\CA}{\mathcal{A}}
\newcommand{\CB}{\mathcal{B}}
\newcommand{\CD}{\mathcal{D}}
\newcommand{\CE}{\mathcal{E}}
\newcommand{\CF}{\mathcal{F}}
\newcommand{\CO}{\mathcal{O}}
\newcommand{\CT}{\mathcal{T}}
\newcommand{\CR}{\mathcal{R}}
\newcommand{\CJ}{\mathcal{J}}
\newcommand{\CK}{\mathcal{K}}
\newcommand{\CU}{\mathcal{U}}
\newcommand{\CW}{\mathcal{W}}
\newcommand{\CZ}{\mathcal{Z}}
 \theoremstyle{plain} %%%%%%%%%%%%%%%%%%%%%%%%%%%%%%%%%
 \newtheorem{Theorem}{Theorem}[section]
 \newtheorem{Lemma}[Theorem]{Lemma}
 \newtheorem{Proposition}[Theorem]{Proposition}
 \newtheorem{Corollary}[Theorem]{Corollary}
 \theoremstyle{definition} %%%%%%%%%%%%%%%%%%%%%%%%%%%%
 \newtheorem{Definition}[Theorem]{Definition}
 \newtheorem{Remark}[Theorem]{Remark}
 \newtheorem{Example}[Theorem]{Example}
\begin{document}

\author{Jack Spielberg}
\title{groupoids and $C^*$-algebras for left cancellative small categories}
\date{9 June 2018}
\address{School of Mathematical and Statistical Sciences \\ Arizona State University \\ P.O. Box 871804 \\ Tempe, AZ 85287-1804}
\email{jack.spielberg@asu.edu}
\subjclass[2010]{Primary 46L05; Secondary 20L05}
\keywords{left cancellative small categories, monoids, Cuntz-Krieger algebras, Toeplitz algebras, groupoids, Weiner-Hopf algebras}

\begin{abstract}

Categories of paths are a generalization of several kinds of oriented discrete data that have been used to construct $C^*$-algebras.  The techniques introduced to study these constructions apply almost verbatim to the more general situation of left cancellative small categories.  We develop this theory and derive the structure of the $C^*$-algebras in the most general situation.  We analyze the regular representation, and the Wiener-Hopf algebra in the case of a subcategory of a groupoid.

\end{abstract}

\maketitle

\section{Introduction}
\label{sec intro}

The current uses of oriented discrete data to define $C^*$-algebras began with the 1980 paper of Cuntz and Krieger \cite{CK}, which in turn was inspired by Cuntz' seminal 1977 paper \cite{cuntz}.  The paper \cite{cuntz} introduced the algebras $\CO_n$ by a very simple and natural presentation.  Even here the Bernoulli $n$-shift was implicit in the proofs.  In \cite{CK} the algebras $\CO_A$ were introduced, where the finite integer matrix $A$ was used both to generalize the presentation of $\CO_n$ and to make explicit the connection with the shift defined by $A$.  In fact it is shown there how to construct (the stablization of) $\CO_A$ directly from the symbolic dynamics.  The subsequent development had two branches.  On the one hand, a great deal of work was devoted to generalizing the presentations of the algebras $\CO_A$ to define $C^*$-algebras for directed graphs (see \cite{raeburn} for historical comments), infinite integer matrices (\cite{exelac}), and higher rank graphs (\cite{kumpas}, et. al.).  On the other hand, Nica (\cite{nica}) indicated how the algebra $\CT\CO_n$ is the Weiner-Hopf algebra of the ordered group $(\IF_n,\IF_n^+)$, and he introduced the class of {\it quasi-lattice ordered} groups generalizing this discovery.  He constructed a $C^*$-algebra from the left regular representation of the semigroup, and gave a presentation for a universal $C^*$-algebra based on the quasi-lattice order.  This idea was pushed further by Laca et. al. (\cite{lacrae}, \cite{crilac}), and took dramatic steps forward with the work of Li (\cite{li2012}; see also the survey \cite{li_notes} ).  In particular, Li considered chiefly {\it right LCM monoids}, that is having order structure like that of the positive cone in a quasi-lattice ordered group.  Like Nica, he defined $C^*$-algebras by considering various presentations based on the regular representation of the monoid.

Categories of paths were introduced in \cite{spi_pathcat} as a generalization of higher rank graphs as well as ordered groups.  The motivations were twofold.  First, it was desired to introduce more flexibile generalizations of directed graphs than the higher rank graphs.  Part of the power of the graph algebra construction lay in the freedom one has in constructing examples --- any diagram of dots and arrows will define a $C^*$-algebra.  This is not a feature shared by the higher rank graphs.  Moreover, the theory of higher rank graphs to this point focuses exclusively on the finitely aligned case.  But many interesting examples can be given that are not finitely aligned, so it was natural to try to remove this restriction.  The nonfinitely aligned case of higher rank graphs leads to the second motivation for \cite{spi_pathcat}.  Even in the case of directed graphs, the development from finite irreducible 0 - 1 matrices to locally finite, then row-finite without sources, and on to arbitrary, directed graphs required repeated tinkering with the presentation of the $C^*$-algebra (but see \cite{spi_graphalg} for an alternate approach).  This evolution was replayed in the case of higher rank graphs (e.g. from row finite to locally convex to finitely aligned).  It seems clear that an intrinsic construction of the algebra should be given without reference to generators and relations.

The construction in \cite{spi_pathcat} is based on a {\it category of paths}, i.e.~a cancellative small category with no (nontrivial) inverses.  In analogy with the subshifts underlying the Cuntz-Krieger algebras, shift maps were defined on a category of paths.  These were used to construct a local compactification of the category.  Then the same shifts induce local homeomorphisms of this space, from which the groupoid of germs can be constructed.  The $C^*$-algebras of these groupoids generalize the Toeplitz algebras of directed graphs and finitely aligned higher rank graphs as well as the algebras of Nica and Li.  Once this algebra is constructed, a presentation modeled on these examples can be derived.  The notion of finite alignment also generalizes to categories of paths. The finitely aligned case is developed in detail in \cite{spi_pathcat}.  In particular, the boundary of the local compactification is defined, and the {\it Cuntz-Krieger algebra} is obtained by restricting the groupoid to this boundary.  Then the presentation is extended to one for this ``boundary quotient''.

It was remarked in \cite{spi_pathcat} that most of the results of the paper remain true without the hypothesis of right cancellation.  In fact, most of the results remain true (after minor modifications) without the hypothesis that the category not contain inverses.  In this paper we give the details of this generalization.  The presence of inverses has a significant impact on the groupoid underlying our construction.  In the general setting of left cancellative small categories there are two groupoids that are subtly different.  It turns out that in the presence of inverses the groupoid of germs loses some structure of the category.  We give a detailed treatment of these two groupoids and of the corresponding $C^*$-algebras, and both for the Toeplitz algebras and Cuntz-Krieger algebras.  In particular we derive finer structure of the unit space than was done in \cite{spi_pathcat} for the general nonfinitely aligned case, and we give several conditions sufficient for the groupoids to be Hausdorff.  We study the regular representation, distinguishing the regular representation of the groupoid from the left regular representation of the category.  We give examples where these are different.  This has consequences for the notion of amenability, as defined in \cite{nica}.  Similarly, we generalize the Wiener-Hopf algebra of \cite{nica} to subcategories of groupoids, and study its relation to the $C^*$-algebra generated by the regular representation of the category.  We also extend the results of \cite{spi_pathcat} on the boundary algebra to the nonfinitely aligned case, deriving the presentation of the Cuntz-Krieger algebra by generators and relations.  Some other work also generalizing the example of higher rank graphs are \cite{broyet}, where the degree functor of higher rank graphs is replaced by a functor to a cancellative abelian monoid, \cite{exel08} which treats a more general notion of semigroupoid, but only generalizes row finite higher rank graphs, and the announcement \cite{exeste} which promises to give a further generalization.  All of these push further than this paper in some directions, but less completely in others.

We end this introduction with a description of the contents of the paper.  We begin with basic definitions and a brief recap of statements and results from \cite{spi_pathcat}.  As mentioned above, many results from that paper hold for the more general situation of left cancellative small categories, with unchanged proofs.  This includes finite alignment, and also the amalgamation of categories presented in \cite[Section 11]{spi_pathcat}.  Amalgamation allows us to construct interesting examples of categories, and then transform them into monoids that inherit the interesting properties.  We then define the two groupoids whose study occupies much of the paper.  We give a more detailed description of the points of the unit space, generalizing results of \cite[Section 7]{spi_pathcat} for finitely aligned categories of paths, and we use this to give some sufficient conditions for the groupoids to be Hausdorff.  The heart of the paper is in the description of the $C^*$-algebras by generators and relations, where the subtle differences between the two groupoids is explored in detail, both for the Toeplitz algebras and for the Cuntz-Krieger algebras. (But it is important to notice that the presentations by generators and relations are derived {\it after} the $C^*$-algebras have been defined.) We show that Exel's notion of {\it tight} representations, or equivalently formulated as {\it cover-to-join} by Donsig and Milan, is equivalent to the natural generalization of the Cuntz-Krieger relations that relate the Toeplitz algebra to the Cuntz-Krieger algebra in the classical cases. (This generalizes the corresponding result in the finitely aligned case from \cite{bkqs}.)  We then study the reduced $C^*$-algebra of the groupoid, and hence the role played in the theory by the $C^*$-algebra generated by the regular representation of the category.  Perhaps surprisingly, we find that these two regular representations do not always coincide.  This can occur even if the groupoid is amenable; in fact, we give an example in which the Toeplitz algebra is type I.  Thus Nica's notion of amenability, that the universal $C^*$-algebra and the $C^*$-algebra generated by the regular representation of the category naturally coincide, does not reflect nuclearity of the $C^*$-algebra or amenability of the underlying groupoid.  In the case of a subcategory of a groupoid we study the analog of the Wiener-Hopf algebra, and generalize some of the results of \cite[Section 8]{spi_pathcat} for ordered groups.  Last we discuss the situation of subcategories.  The general theory developed in \cite{spi_pathcat} treats a {\it relative category of paths}, i.e. a pair $(\Lambda_0,\Lambda)$ where $\Lambda_0$ is a subcategory of $\Lambda$.  This was an attempt to build some functoriality into the construction of the $C^*$-algebra.  While writing the current paper it became apparent that this is made more complicated by the presence of inverses.  Moreover some small errors in the use of relative categories of paths were noticed.  We correct these errors in the current paper at the same time that we indicate which aspects of functoriality remains true in the case of general left cancellative small categories.

The author would like to thank Erik B\'edos, Steve Kaliszewski, John Quigg, David Milan and Alan Donsig for many helpful conversations and suggestions, and also Tron Omland and Nico Stammeier for the invitation to the Mathematics Institute in Oslo where this paper was finished.

\section{LCSC's}
\label{sec lcsc}

\begin{Definition}

A {\it left cancellative small category} (or {\it LCSC}) is a small category satisfying left cancellation, that is, if $\alpha \beta = \alpha \gamma$ then $\beta = \gamma$.

\end{Definition}

\begin{Definition} \label{def shift maps}

Let $\Lambda$ be an LCSC.  The {\it right} and {\it left shift maps} are the maps $\tau^\alpha : s(\alpha) \Lambda \to r(\alpha)\Lambda$ and $\sigma^\alpha : \alpha \Lambda \to s(\alpha)\Lambda$, for $\alpha \in \Lambda$, defined by $\tau^\alpha(\beta) = \alpha \beta$ and $\sigma^\alpha = (\tau^\alpha)^{-1}$.

\end{Definition}

We note that $\tau^\alpha$ is one-to-one because of left cancellation, and hence $\sigma^\alpha$ is well-defined.  We continue with the notations introduced in \cite[Definition 2.6]{spi_pathcat}:  we say that $\beta$ {\it extends} $\alpha$, and that $\alpha$ is an {\it intial segment} of $\beta$, if $\beta \in \alpha \Lambda$.  We let $[\beta]$ denote the set of all initial segments of $\beta$.   We write $\alpha \Cap \beta$ if $\alpha \Lambda \cap \beta \Lambda \not= \varnothing$, and $\alpha \perp \beta$ otherwise.

The relation $\alpha \in [\beta]$ is reflexive and transitive, but not necessarily antisymmetric.  However it is straightforward to verify the following lemma.

\begin{Lemma}

Let $\Lambda$ be an LCSC, and let $\alpha$, $\beta \in \Lambda$.  Then $\alpha \in [\beta]$ and $\beta \in [\alpha]$ if and only if $\beta \in \alpha \Lambda^{-1}$.

\end{Lemma}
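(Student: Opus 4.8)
The plan is to prove the two implications separately, with the content concentrated in the forward direction and supplied entirely by left cancellation. The reverse implication should be immediate: if $\beta \in \alpha\Lambda^{-1}$, write $\beta = \alpha u$ with $u \in \Lambda^{-1}$; then $\beta \in \alpha\Lambda$ gives $\alpha \in [\beta]$, while multiplying on the right by $u^{-1}$ gives $\alpha = \beta u^{-1} \in \beta\Lambda$, so $\beta \in [\alpha]$.

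For the forward direction I would argue as follows. Assuming $\alpha \in [\beta]$ and $\beta \in [\alpha]$, choose $\gamma, \delta \in \Lambda$ with $\beta = \alpha\gamma$ and $\alpha = \beta\delta$. Substituting each equation into the other and using associativity gives $\alpha = \alpha(\gamma\delta)$ and $\beta = \beta(\delta\gamma)$. Comparing these with $\alpha = \alpha\, 1_{s(\alpha)}$ and $\beta = \beta\, 1_{s(\beta)}$ and cancelling $\alpha$ (respectively $\beta$) on the left yields $\gamma\delta = 1_{s(\alpha)}$ and $\delta\gamma = 1_{s(\beta)}$. These two identities say exactly that $\gamma$ is invertible with $\gamma^{-1} = \delta$, so $\gamma \in \Lambda^{-1}$ and $\beta = \alpha\gamma \in \alpha\Lambda^{-1}$, as required.

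The one step needing attention — really the only thing to verify, since the algebra is otherwise forced — is the bookkeeping of sources and ranges: one checks that $\gamma$ runs from $s(\beta)$ to $s(\alpha)$ and $\delta$ from $s(\alpha)$ to $s(\beta)$, so that $\gamma\delta$ and $\delta\gamma$ are defined and are endomorphisms of $s(\alpha)$ and $s(\beta)$ respectively. Only then is it legitimate to compare them with the identity morphisms at those objects and cancel. This is also where one sees why the conclusion is $\beta \in \alpha\Lambda^{-1}$ rather than $\beta = \alpha$: the possible presence of a nontrivial invertible $\gamma$ is exactly what accounts for the failure of antisymmetry of $\in [\,\cdot\,]$ noted just before the lemma, and in a category of paths $\Lambda^{-1}$ reduces to the identities, whereupon the statement collapses to $\beta = \alpha$.
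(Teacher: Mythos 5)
Your proof is correct; the paper states this lemma without proof (``it is straightforward to verify''), and your argument --- substituting $\beta = \alpha\gamma$ and $\alpha = \beta\delta$ into each other and using left cancellation to get $\gamma\delta = s(\alpha)$ and $\delta\gamma = s(\beta)$, so that $\gamma$ is invertible with inverse $\delta$ --- is exactly the intended verification, with the source/range bookkeeping handled correctly in both directions.
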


\begin{Definition}

Let $\alpha$, $\alpha' \in \Lambda$.  We write $\alpha \approx \alpha'$ if $\alpha' \in \alpha \Lambda^{-1}$.

\end{Definition}

We note that $\approx$ is an equivalence relation on $\Lambda$.  The proof of the following lemma is straightforward.

\begin{Lemma} \label{lem inverses}

Let $\Lambda$ be an LCSC and let $\alpha$, $\beta \in \Lambda$.

\begin{enumerate}

\item \label{lem inverses one} If $\alpha \in [\beta]$ then $\alpha' \in [\beta']$ for all $\alpha' \approx \alpha$ and $\beta' \approx \beta$.

\item \label{lem inverses two} The following statements are equivalent:  $\alpha \approx \beta$, $\alpha \Lambda = \beta \Lambda$, $[\alpha] = [\beta]$.

\end{enumerate}

\end{Lemma}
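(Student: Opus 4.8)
The plan is to handle the two parts separately, since each comes down to elementary bookkeeping with invertible morphisms; the only external input is the preceding (unlabeled) lemma, which I will invoke once to manufacture an invertible element. Throughout I will use that $\approx$ is symmetric (it is an equivalence relation) and that $\gamma \in [\alpha]$ unwinds to $\alpha \in \gamma\Lambda$.

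For part (\ref{lem inverses one}) I would give a single direct computation. Suppose $\alpha \in [\beta]$, so $\beta = \alpha\gamma$ for some $\gamma$ with $r(\gamma) = s(\alpha)$. Writing $\alpha' = \alpha u$ and $\beta' = \beta v$ with $u, v \in \Lambda^{-1}$, the matching of sources and ranges forces $r(u) = s(\alpha)$ and $r(v) = s(\beta) = s(\gamma)$, so that $\alpha = \alpha' u^{-1}$. Substituting gives $\beta' = \alpha\gamma v = \alpha'(u^{-1}\gamma v)$, and the composite $u^{-1}\gamma v$ is legitimate because $s(u^{-1}) = r(u) = s(\alpha) = r(\gamma)$ and $s(\gamma) = r(v)$. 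Hence $\beta' \in \alpha'\Lambda$, i.e. $\alpha' \in [\beta']$. The only thing to verify with care is that each indicated composition is defined, which is exactly the source/range matching just recorded.

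For part (\ref{lem inverses two}) I would close the cycle $\alpha \approx \beta \Rightarrow \alpha\Lambda = \beta\Lambda \Rightarrow [\alpha] = [\beta] \Rightarrow \alpha \approx \beta$. The first implication is immediate: from $\beta = \alpha u$ with $u \in \Lambda^{-1}$ one gets $\beta\Lambda \subseteq \alpha\Lambda$, and symmetry of $\approx$ yields the reverse inclusion in the same way. For the second, if $\gamma \in [\alpha]$ then $\alpha \in \gamma\Lambda$ forces $\alpha\Lambda \subseteq \gamma\Lambda$, whence $\beta \in \beta\Lambda = \alpha\Lambda \subseteq \gamma\Lambda$ gives $\gamma \in [\beta]$; the symmetric argument completes $[\alpha] = [\beta]$. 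For the third, reflexivity of the initial-segment relation gives $\alpha \in [\alpha] = [\beta]$ and $\beta \in [\beta] = [\alpha]$, so $\alpha \in [\beta]$ and $\beta \in [\alpha]$ hold simultaneously; the preceding lemma then delivers $\beta \in \alpha\Lambda^{-1}$, which is exactly $\alpha \approx \beta$.

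There is no genuine obstacle here---the paper's own description as ``straightforward'' is apt---so the real point is simply to keep the direction of the initial-segment relation straight and to track composability of each product of morphisms. I note that part (\ref{lem inverses one}) also furnishes a shortcut for $\alpha \approx \beta \Rightarrow [\alpha] = [\beta]$ (apply it with the pair $\gamma \approx \gamma$ and $\alpha \approx \beta$ to each $\gamma \in [\alpha]$), but the cyclic route above is preferable because it keeps the three equivalences self-contained and isolates the single use of the preceding lemma.
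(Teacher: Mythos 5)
Your proof is correct: part (i) is the right composability bookkeeping, and the cycle in part (ii), with the single appeal to the preceding lemma (that $\alpha \in [\beta]$ and $\beta \in [\alpha]$ together give $\beta \in \alpha\Lambda^{-1}$), closes the three equivalences. The paper gives no proof at all (it declares the lemma straightforward), and your argument is exactly the routine verification the author intends, so there is nothing to compare beyond confirming correctness.
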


We adapt some notation from \cite[Section 2]{spi_pathcat}.

\begin{Definition} \label{d.zigzag}

Let $\Lambda$ be an LCSC.  A \textit{zigzag} is an even tuple of the form $\zeta = (\alpha_1,\beta_1,\ldots,\alpha_n,\beta_n)$, where $\alpha_i$, $\beta_i \in \Lambda$, $r(\alpha_i) = r(\beta_i)$, $1 \le i \le n$, and $s(\alpha_{i+1}) = s(\beta_i)$, $1 \le i < n$.  We let $\CZ_{\Lambda}$ denote the set of all zigzags.  We may omit the subscript $\Lambda$ when it is clear from the context.  We define the maps $s$ and $r$ on $\CZ$ by $s(\zeta) = s(\beta_n)$ and $r(\zeta) = s(\alpha_1)$, and the \textit{reverse} of $\zeta$ as  $\overline{\zeta} =
(\beta_n,\alpha_n,\ldots,\beta_1,\alpha_1)$.  Each zigzag $\zeta \in \CZ_{\Lambda}$ defines a \textit{zigzag map} on $\Lambda$ that we denote $\varphi_\zeta \equiv \varphi^\Lambda_\zeta$, given by $\varphi_\zeta =
\sigma^{\alpha_1} \circ \tau^{\beta_1} \circ \cdots \circ \sigma^{\alpha_n} \circ \tau^{\beta_n}$.  We let $A(\zeta) \equiv A_\Lambda(\zeta)$ denote the domain of $\varphi_\zeta$.  Thus $A(\zeta) = \varphi_{\overline{\zeta}}(\Lambda)\subseteq s(\zeta)\Lambda$, and the range of $\varphi_\zeta$ equals $A(\overline{\zeta})$.  We call $A(\zeta)$ a \textit{zigzag set}.

\end{Definition}

\begin{Remark}
\label{r.zigzag}

\begin{enumerate}

\item The sets $A(\zeta)$ are the analog of {\it constructible right ideals} in \cite{li2012}.
\label{r.zigzag.a}

\item $\varphi_{(r(\alpha),\alpha)} = \tau^\alpha$, $\varphi_{(\alpha,r(\alpha))} = \sigma^\alpha$, and $\alpha\Lambda = A(\alpha,r(\alpha))$.
\label{r.zigzag.b}

\item $\CZ$ is closed under (composable) concatenation, and $\varphi_{\zeta_1\zeta_2} = \varphi_{\zeta_1} \circ \varphi_{\zeta_2}$ when $s(\zeta_1) = r(\zeta_2)$.  For $v \in \Lambda^0$ we let $v \CZ = \{ \zeta : r(\zeta) = v \}$, and similarly for $\CZ v$.  %For $\zeta$, $\xi \in \CZ$ we denote by $\xi\CZ$, $\CZ\zeta$, and $\xi\CZ\zeta$ the obvious subsets of $\CZ$.
\label{r.zigzag.c}

\item $\varphi_\zeta$ is one-to-one, with inverse $\varphi_{\overline{\zeta}}$.
\label{r.zigzag.d}

\item $A(\zeta) = A(\overline{\zeta}\zeta)$, and if $\zeta_1$, $\zeta_2 \in \CZ v$, then $A(\zeta_1) \cap A(\zeta_2) = A(\overline{\zeta_1}\zeta_1\overline{\zeta_2}\zeta_2)$.  (Thus the collection of zigzag sets is closed under intersection.)
\label{r.zigzag.e}

\item For $\zeta \in \CZ$ and $\alpha \in A(\zeta)$, we have $\alpha\Lambda \subseteq A(\zeta)$.  For $\beta \in s(\alpha) \Lambda$, $\varphi_\zeta(\alpha \beta) = \varphi_\zeta(\alpha) \beta$.
\label{r.zigzag.f}

\end{enumerate}

\end{Remark}

\begin{Definition} \label{def lcsc}

For $v \in \Lambda^0$ let $\CD^{(0)}_v$ (or $\CD^{(0)}_{\Lambda,v}$) denote all nonempty sets of the form $A(\zeta)$ for $\zeta \in \CZ_{\Lambda} v$.  Let $\CA_v$ (or $\CA_{\Lambda,v}$) denote the ring of sets generated by $\CD^{(0)}_{\Lambda,v}$.  We let $\CD_v$ (or $\CD_{\Lambda,v}$) denote the collection of nonempty sets of the form $E \setminus \bigcup_{i=1}^n F_i$, where $E$, $F_1$, $\ldots$, $F_n \in \CD^{(0)}_v$, and $F_i \subseteq E$.  It follows from routine algebra, and Remark \ref{r.zigzag}\eqref{r.zigzag.e}, that $\CA_v$ equals the collection of finite disjoint unions of sets from $\CD_v$ (cf. \cite[Lemma 2.6]{spi_pathcat}).

\end{Definition}

\begin{Remark}

It is routine to check that $\varphi_\zeta (\CA_{s(\zeta)}) \subseteq \CA_{r(\zeta)}$.  As in \cite[Section 2]{spi_pathcat} one can show that $\{ \CA_v : v \in \Lambda^0 \}$ are the smallest rings of sets containing $\{ \alpha \Lambda : \alpha \in \Lambda \}$ and invariant under $\{\tau^\alpha, \sigma^\alpha : \alpha \in \Lambda \}$.

\end{Remark}

\section{Finite alignment}
\label{sec finite alignment}

\begin{Definition}

Let $\Lambda$ be an LCSC, and let $F \subseteq \Lambda$.  The elements of $\bigcap_{\gamma \in F} \gamma \Lambda$ are called the {\it common extensions} of $F$.  A common extension $\eps$ of $F$ is {\it minimal} if for any common extension $\gamma$ with $\eps \in \gamma \Lambda$ we have $\gamma \approx \eps$.

\end{Definition}

\begin{Definition} \label{def finitely aligned}

The LCSC $\Lambda$ is \textit{finitely aligned} if for every pair of elements $\alpha$, $\beta \in \Lambda$, there is a finite subset $G$ of $\Lambda$ such that $\alpha\Lambda \cap \beta\Lambda = \bigcup_{\eps \in G} \eps\Lambda$.

\end{Definition}

\begin{Lemma} \label{lem finitely aligned}

Let $\Lambda$ be a finitely aligned LCSC, and let $F \subseteq \Lambda$ be a finite subset.  Let $\bigvee F$ denote the set of minimal common extensions of $F$.  Then

\begin{enumerate}

\item \label{lem finitely aligned one} $\bigvee F$ contains only finitely many $\approx$-classes.

\item \label{lem finitely aligned two} $\bigcap_{\alpha \in F} \alpha \Lambda = \bigcup_{\beta \in \bigvee F} \beta \Lambda$.

\end{enumerate}

\end{Lemma}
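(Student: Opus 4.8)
The plan is first to upgrade finite alignment from pairs to arbitrary finite sets, and then to extract the minimal common extensions from the resulting finite cover. By induction on $|F|$, using Definition \ref{def finitely aligned} at the inductive step (distributing the intersection with a new $\alpha\Lambda$ over the finite union produced at the previous stage, then applying pairwise finite alignment to each term), I would produce a finite set $G \subseteq \Lambda$ with $\bigcap_{\alpha \in F} \alpha\Lambda = \bigcup_{\eps \in G} \eps\Lambda$. Each $\eps \in G$ satisfies $\eps \in \eps\Lambda \subseteq \bigcap_{\alpha \in F}\alpha\Lambda$, so every element of $G$ is a common extension of $F$. This finite set drives both parts.

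For part \eqref{lem finitely aligned one}, let $\beta \in \bigvee F$ be a minimal common extension. Since $\beta \in \bigcap_{\alpha\in F}\alpha\Lambda = \bigcup_{\eps\in G}\eps\Lambda$, there is some $\eps \in G$ with $\beta \in \eps\Lambda$. As $\eps$ is itself a common extension and $\beta \in \eps\Lambda$, minimality of $\beta$ forces $\eps \approx \beta$. Thus every $\approx$-class meeting $\bigvee F$ is the class of some element of $G$, and since $G$ is finite, $\bigvee F$ meets only finitely many $\approx$-classes.

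For part \eqref{lem finitely aligned two}, the inclusion $\bigcup_{\beta\in\bigvee F}\beta\Lambda \subseteq \bigcap_{\alpha\in F}\alpha\Lambda$ is immediate, because each $\beta\in\bigvee F$ is a common extension, so $\beta\Lambda\subseteq\alpha\Lambda$ for every $\alpha\in F$. The reverse inclusion is the crux, and is exactly where the finiteness of $G$ is essential: a general LCSC carries no well-foundedness, so one cannot simply ``descend'' to a minimal common extension, and I must instead exhibit one below each element of the cover. I would order $G$ by the preorder $\eps' \preceq \eps$ iff $\eps \in \eps'\Lambda$; since $G$ is finite, every element dominates a $\preceq$-minimal one. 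The key claim is that a $\preceq$-minimal $\eps_0 \in G$ is in fact a minimal common extension: given any common extension $\gamma$ with $\eps_0 \in \gamma\Lambda$ (so $\gamma \in [\eps_0]$), the identity $\gamma \in \bigcup_{\eps'\in G}\eps'\Lambda$ yields $\eps' \in G$ with $\eps' \in [\gamma] \subseteq [\eps_0]$, i.e. $\eps' \preceq \eps_0$; minimality then gives $\eps' \approx \eps_0$, whence $\eps'\Lambda = \eps_0\Lambda$ by Lemma \ref{lem inverses}\eqref{lem inverses two}, and $\gamma \in \eps'\Lambda = \eps_0\Lambda$ combined with the hypothesis $\eps_0 \in \gamma\Lambda$ gives $\gamma \approx \eps_0$, as required. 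Consequently each $\eps \in G$ extends a minimal common extension $\eps_0$, so $\eps\Lambda \subseteq \eps_0\Lambda \subseteq \bigcup_{\beta\in\bigvee F}\beta\Lambda$, and unioning over $G$ yields $\bigcap_{\alpha\in F}\alpha\Lambda \subseteq \bigcup_{\beta\in\bigvee F}\beta\Lambda$. The main obstacle throughout is precisely this last step, guaranteeing both that minimal common extensions exist and that they recapture the whole intersection; the preorder-minimality argument resolves it by converting the finiteness supplied by finite alignment into a substitute for well-foundedness.
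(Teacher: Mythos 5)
Your proof is correct and takes essentially the same route as the paper's: both obtain a finite set $G$ with $\bigcap_{\alpha \in F} \alpha\Lambda = \bigcup_{\eps \in G} \eps\Lambda$ by induction from Definition \ref{def finitely aligned}, and both convert the finiteness of $G$ into existence of minimal common extensions via the same sandwich argument (the paper by first pruning $G$ to an independent set and showing the pruned set lies in $\bigvee F$, you by showing the $\preceq$-minimal elements of the unpruned $G$ lie in $\bigvee F$ — an equivalent maneuver). The remaining steps, in particular that every $\beta \in \bigvee F$ is $\approx$-equivalent to an element of $G$, coincide with the paper's.
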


\begin{proof}
By Definition \ref{def finitely aligned} and induction there is a finite set $G \subseteq \Lambda$ such that $\bigcap_{\alpha \in F} \alpha \Lambda = \bigcup_{\eps \in G} \eps \Lambda$.  If $\eps$, $\eps' \in G$ are such that $\eps' \in \eps \Lambda$ we can delete $\eps'$ from $G$ without affecting the equality.  Therefore we may assume that $\eps' \not\in \eps \Lambda$ for all $\eps$, $\eps' \in G$.  We claim that $G \subseteq \bigvee F$.  To see this, let $\eps \in G$.  Suppose $\gamma \in \bigcap_{\alpha \in F} \alpha \Lambda$ with $\eps \in \gamma \Lambda$.  There is $\eps' \in G$ such that $\gamma \in \eps' \Lambda$.  Then $\eps \in \eps' \Lambda$, so $\eps = \eps'$.  Hence $\gamma \in \eps \Lambda$.  Therefore $\gamma \approx \eps$.  Therefore $\eps \in \bigvee F$.  We further claim that for all $\beta \in \bigvee F$ there is $\eps \in G$ such that $\beta \approx \eps$.  To see this, let $\beta \in \bigvee F$.  Then $\beta \in \bigcap_{\alpha \in F} \alpha \Lambda$, so there is $\eps \in G$ such that $\beta \in \eps \Lambda$.  Since $\beta$ is minimal we have that $\beta \approx \eps$.  We now prove \eqref{lem finitely aligned two}. By the first claim above, we have $\bigcap_{\alpha \in F} \alpha \Lambda = \bigcup_{\eps \in G} \eps \Lambda \subseteq \bigcup_{\eps \in \bigvee F} \eps \Lambda$.  By the second claim above, we have  that for each $\beta \in \bigvee F$ there exists $\eps \in G$ such that $\beta \approx \eps$, and hence that $\beta \Lambda = \eps \Lambda$. Therefore $\bigcup_{\beta \in \bigvee F} \beta \Lambda \subseteq \bigcup_{\eps \in G} \eps \Lambda = \bigcap_{\alpha \in F} \alpha \Lambda$.  To see that \eqref{lem finitely aligned one} holds, note that by the second claim above $G$ contains a representative of every $\approx$-class of elements of $\bigvee F$.
\end{proof}

\begin{Remark}

Since $\alpha \Lambda = \beta \Lambda$ when $\alpha \approx \beta$, in Lemma \ref{lem finitely aligned} we may replace $\bigvee F$ by a finite subset that is ``independent'' in that if $\alpha \in \beta \Lambda$ then $\alpha = \beta$.

\end{Remark}

We now give the statements of the results from \cite[Section 3]{spi_pathcat} needed explicitly elsewhere in this article.  The proofs from \cite{spi_pathcat} apply also in the situation of LCSC's.

\begin{Definition} \label{def tail sets}

Let $\Lambda$ be an LCSC.  We let $\CE^{(0)}_v = \{ \alpha \Lambda : \alpha \in v \Lambda \}$, and we let $\CE_v$ denote the collection of all nonempty sets of the form $E \setminus \bigcup_{i=1}^n F_i$ with $E$, $F_1$, $\ldots$, $F_n \in \CE^{(0)}_v$.

\end{Definition}

\begin{Lemma} \label{lem finitely aligned generating family}

Let $\Lambda$ be a finitely aligned LCSC.  Then $\CA_v = \{ \bigsqcup_{j=1}^m A_j : A_j \in \CE_v \}$. (\cite[Remark 3.5]{spi_pathcat})

\end{Lemma}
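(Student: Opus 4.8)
\emph{Proof proposal.} The plan is to prove the two inclusions separately, writing $\CR_v := \{ \bigsqcup_{j=1}^m A_j : A_j \in \CE_v \}$ for the collection on the right. The inclusion $\CR_v \subseteq \CA_v$ is immediate: since $\alpha\Lambda = A(\alpha, r(\alpha))$ by Remark \ref{r.zigzag}\eqref{r.zigzag.b}, we have $\CE^{(0)}_v \subseteq \CD^{(0)}_v \subseteq \CA_v$, and because $\CA_v$ is a ring of sets every $E \setminus \bigcup_i F_i \in \CE_v$, and every finite disjoint union of such, lies in $\CA_v$ (this direction uses no finite alignment). For the reverse inclusion I would invoke the description of $\CA_v$ from Definition \ref{def lcsc} as the collection of finite disjoint unions of sets from $\CD_v$, and reduce to showing that each individual member of $\CD_v$ lies in $\CR_v$; since $\CR_v$ is patently closed under the formation of disjoint finite unions, this suffices.

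The heart of the argument is the claim, where finite alignment enters essentially, that for every $\zeta \in \CZ v$ the zigzag set $A(\zeta)$ is a finite union of members of $\CE^{(0)}_v$, i.e.\ of principal right ideals $\mu\Lambda$ with $\mu \in v\Lambda$. I would prove this by induction on the number $n$ of pairs in $\zeta$, uniformly over all objects $v$. For $n=1$, write $\zeta = (\alpha_1,\beta_1)$; then $A(\zeta) = \{\gamma \in v\Lambda : \beta_1\gamma \in \alpha_1\Lambda \cap \beta_1\Lambda\}$. Finite alignment gives $\alpha_1\Lambda \cap \beta_1\Lambda = \bigcup_{\eps \in G}\eps\Lambda$ with $G$ finite, and writing each $\eps = \beta_1\delta_\eps$ one checks via left cancellation that $\beta_1\gamma \in \eps\Lambda$ iff $\gamma \in \delta_\eps\Lambda$; hence $A(\zeta) = \bigcup_{\eps \in G}\delta_\eps\Lambda$ with $\delta_\eps \in v\Lambda$, a finite union of principal right ideals.

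For the inductive step I would factor $\zeta = \eta\,\zeta''$ with $\eta = (\alpha_1,\beta_1)$ and $\zeta'' = (\alpha_2,\beta_2,\ldots,\alpha_n,\beta_n)$, a legitimate concatenation since $s(\eta) = s(\beta_1) = s(\alpha_2) = r(\zeta'')$, so that $\varphi_\zeta = \varphi_\eta \circ \varphi_{\zeta''}$ by Remark \ref{r.zigzag}\eqref{r.zigzag.c}. As $\varphi_{\zeta''}$ is a bijection of $A(\zeta'')$ onto its range $A(\overline{\zeta''})$ with inverse $\varphi_{\overline{\zeta''}}$ (Remark \ref{r.zigzag}\eqref{r.zigzag.d}), the domain condition yields
\[
A(\zeta) = \{\gamma \in A(\zeta'') : \varphi_{\zeta''}(\gamma) \in A(\eta)\} = \varphi_{\overline{\zeta''}}\bigl(A(\eta) \cap A(\overline{\zeta''})\bigr),
\]
both $A(\eta)$ and $A(\overline{\zeta''})$ being subsets of $s(\beta_1)\Lambda$. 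Applying the base case to $A(\eta)$ and the induction hypothesis to $A(\overline{\zeta''})$ (a zigzag with $n-1$ pairs), both are finite unions of principal right ideals, and finite alignment lets me write $A(\eta) \cap A(\overline{\zeta''}) = \bigcup_k \xi_k\Lambda$ as a finite union of principal right ideals with each $\xi_k$ lying in one of the ideals composing $A(\overline{\zeta''})$, hence $\xi_k \in A(\overline{\zeta''})$. This containment is the crucial point: it lets Remark \ref{r.zigzag}\eqref{r.zigzag.f} apply, giving $\varphi_{\overline{\zeta''}}(\xi_k\Lambda) = \varphi_{\overline{\zeta''}}(\xi_k)\Lambda$, and by injectivity $A(\zeta) = \bigcup_k \varphi_{\overline{\zeta''}}(\xi_k)\Lambda$, again a finite union of principal right ideals, completing the induction. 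I expect this step --- arranging the intersection so that its constituent ideals sit inside $A(\overline{\zeta''})$, so that the zigzag map carries each to a principal ideal --- to be the main obstacle.

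With the claim established, the reverse inclusion becomes routine algebra. A member of $\CD_v$ has the form $A(\zeta_0) \setminus \bigcup_{i=1}^n A(\zeta_i)$; replacing each zigzag set by the finite union of principal right ideals supplied by the claim turns it into $\bigl(\bigcup_s \mu_s\Lambda\bigr) \setminus \bigl(\bigcup_t \nu_t\Lambda\bigr)$. Using finite alignment to rewrite each $\mu_s\Lambda \cap \nu_t\Lambda$ as a finite union of principal ideals contained in $\mu_s\Lambda$, each set $\mu_s\Lambda \setminus \bigcup_t \nu_t\Lambda$ acquires the form $E \setminus \bigcup_j F_j$ with $E, F_j \in \CE^{(0)}_v$ and $F_j \subseteq E$, hence lies in $\CE_v$; a standard disjointification over $s$ then exhibits the whole set as a finite disjoint union of members of $\CE_v$, i.e.\ as an element of $\CR_v$. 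Since $\CA_v$ consists of finite disjoint unions of sets from $\CD_v$ and $\CR_v$ is closed under such unions, this gives $\CA_v \subseteq \CR_v$ and finishes the proof.
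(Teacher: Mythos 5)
Your proof is correct. It follows essentially the route the paper intends: the paper disposes of this lemma by citation to \cite[Remark 3.5]{spi_pathcat}, whose content is exactly your two steps --- zigzag sets are finite unions of principal right ideals, followed by routine disjointification. The one real difference is where you place the induction. The paper's route (via \cite[Lemma 3.3]{spi_pathcat}, quoted here as Lemma \ref{lem finitely aligned zigzag map}) proves the stronger, map-level statement that $\varphi_\zeta$ is a finite union of maps $\tau^\gamma \circ \sigma^\delta$, and the set-level statement (Corollary \ref{cor finitely aligned zigzag set}) falls out by taking domains; you instead run the induction purely at the level of domains, using Remark \ref{r.zigzag}\eqref{r.zigzag.f} to push the principal ideals $\xi_k\Lambda$ through $\varphi_{\overline{\zeta''}}$, which is the correct mechanism and is where you rightly identify the crucial containment $\xi_k \in A(\overline{\zeta''})$. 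Your version is marginally more economical for this one lemma, but yields less: the map-level decomposition is what the paper reuses later (e.g.\ in Lemma \ref{lem finitely aligned hausdorff} and Theorem \ref{thm finitely aligned 2}). Indeed, you could have shortened your argument by simply invoking Corollary \ref{cor finitely aligned zigzag set}, which the paper states alongside this lemma. Your final ``standard disjointification'' is legitimately routine: since each $B_s = \mu_s\Lambda \setminus \bigcup_t \nu_t\Lambda$ already omits $\bigcup_t \nu_t\Lambda$, subtracting $B_{s'}$ from $B_s$ is the same as subtracting $\mu_{s'}\Lambda$, so each piece of the disjointified union still has the form $E \setminus \bigcup_j F_j$ with $E, F_j \in \CE^{(0)}_v$ and hence lies in $\CE_v$.
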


\begin{Lemma} \label{lem finitely aligned zigzag map}

Let $\Lambda$ be a finitely aligned LCSC, and let $\zeta \in \CZ$.  Then $\varphi_\zeta$ is a finite union of maps of the form $\tau^\gamma \circ \sigma^\delta$ with $\gamma$, $\delta \in \Lambda$. (\cite[Lemma 3.3]{spi_pathcat})

\end{Lemma}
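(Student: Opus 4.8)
The plan is to prove the statement by reducing everything to a single commutation identity for the basic building blocks $\sigma^\alpha \circ \tau^\beta$, and then to observe that the class of maps expressible as a finite union of maps of the form $\tau^\gamma \circ \sigma^\delta$ is closed under composition. Since $\varphi_\zeta = \sigma^{\alpha_1}\circ\tau^{\beta_1}\circ\cdots\circ\sigma^{\alpha_n}\circ\tau^{\beta_n}$ is the composite of the $n$ elementary zigzag maps $\sigma^{\alpha_i}\circ\tau^{\beta_i}$, the result follows once the base case ($n=1$) and the closure property are in hand. Throughout, I regard partial maps as their graphs, so that a ``finite union of maps'' is literally the union of the corresponding graphs; each piece produced below is a restriction of the single well-defined function $\varphi_\zeta$, so consistency of the union (that it is again a function) is automatic.

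\textbf{Base case.} Fix $\alpha$, $\beta$ with $r(\alpha)=r(\beta)$ and consider $\sigma^\alpha\circ\tau^\beta$, whose domain is $\{\mu : \beta\mu \in \alpha\Lambda\}$ and which sends $\mu$ to the unique $\nu$ with $\alpha\nu = \beta\mu$. By finite alignment (Definition \ref{def finitely aligned}) write $\alpha\Lambda\cap\beta\Lambda = \bigcup_{\eps\in G}\eps\Lambda$ with $G$ finite, and for each $\eps\in G$ choose factorizations $\eps = \alpha a_\eps = \beta b_\eps$. First I would check, using left cancellation, that $\{\mu : \beta\mu\in\eps\Lambda\} = b_\eps\Lambda$, and hence that the domain of $\sigma^\alpha\circ\tau^\beta$ is exactly $\bigcup_{\eps\in G} b_\eps\Lambda$. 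Then, for $\mu = b_\eps\xi$, the defining equation $\alpha\nu = \beta\mu = \eps\xi = \alpha a_\eps\xi$ forces $\nu = a_\eps\xi$ by left cancellation, i.e.\ $\nu = \tau^{a_\eps}(\sigma^{b_\eps}(\mu))$. This yields the identity
$$\sigma^\alpha\circ\tau^\beta = \bigcup_{\eps\in G}\tau^{a_\eps}\circ\sigma^{b_\eps},$$
which establishes the base case.

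\textbf{Closure under composition.} Given two finite unions $\varphi=\bigcup_i \tau^{\gamma_i}\circ\sigma^{\delta_i}$ and $\psi=\bigcup_j \tau^{\gamma'_j}\circ\sigma^{\delta'_j}$, I would note that, as relations, $\varphi\circ\psi = \bigcup_{i,j}\tau^{\gamma_i}\circ\sigma^{\delta_i}\circ\tau^{\gamma'_j}\circ\sigma^{\delta'_j}$. In each summand the middle factor $\sigma^{\delta_i}\circ\tau^{\gamma'_j}$ is of the elementary form handled by the base case, so it equals $\bigcup_k\tau^{c_k}\circ\sigma^{d_k}$ for a finite family; absorbing the outer factors via the identities $\tau^\gamma\circ\tau^c=\tau^{\gamma c}$ and $\sigma^d\circ\sigma^{\delta'}=\sigma^{\delta' d}$ (the latter from $\sigma^d\circ\sigma^{\delta'}=(\tau^{\delta'}\circ\tau^d)^{-1}$) rewrites the summand as $\bigcup_k\tau^{\gamma_i c_k}\circ\sigma^{\delta'_j d_k}$. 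Thus $\varphi\circ\psi$ is again a finite union of maps of the required form. Combining this closure property with the base case and inducting on $n$ completes the proof.

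The main obstacle is the base case: it is where finite alignment is genuinely used (to reduce the unbounded intersection $\alpha\Lambda\cap\beta\Lambda$ to finitely many principal sets $\eps\Lambda$), and where one must verify simultaneously that the domains $b_\eps\Lambda$ exactly exhaust the domain of $\sigma^\alpha\circ\tau^\beta$ and that the map agrees with $\tau^{a_\eps}\circ\sigma^{b_\eps}$ on each, both of which rest on left cancellation. Once this identity is established, the remaining steps are purely formal bookkeeping with the composition rules for the $\tau$'s and $\sigma$'s.
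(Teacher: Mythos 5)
Your proof is correct and is essentially the argument the paper relies on (it cites \cite[Lemma 3.3]{spi_pathcat}, and the same computation reappears in the proof of Theorem \ref{thm finitely aligned 2}): the base case $\sigma^\alpha\circ\tau^\beta = \bigcup_{\eps\in\alpha\vee\beta}\tau^{\sigma^\alpha(\eps)}\circ\sigma^{\sigma^\beta(\eps)}$ obtained from finite alignment and left cancellation, followed by induction on the length of the zigzag using closure of such finite unions under composition. The only cosmetic remark is that a middle factor $\sigma^{\delta_i}\circ\tau^{\gamma'_j}$ with $r(\delta_i)\neq r(\gamma'_j)$ is the empty map and should be discarded from the union, which your framework handles trivially.
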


\begin{Corollary} \label{cor finitely aligned zigzag set}

Let $\Lambda$ be a finitely aligned LCSC, and let $\zeta \in \CZ$.  Then $A(\zeta)$ is a finite union of sets from $\CE^{(0)}$. (\cite[Corollary 3.4]{spi_pathcat})

\end{Corollary}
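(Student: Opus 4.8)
The plan is to read this off almost immediately from Lemma \ref{lem finitely aligned zigzag map}. That lemma already writes $\varphi_\zeta$ as a finite union of elementary maps of the form $\tau^\gamma \circ \sigma^\delta$, and by Definition \ref{d.zigzag} the set $A(\zeta)$ is precisely the domain of $\varphi_\zeta$. So the whole task reduces to identifying the domain of each elementary piece $\tau^\gamma \circ \sigma^\delta$ and then observing that the domain of a finite union of maps is the union of the domains of the pieces.

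First I would compute the domain of a single elementary map. The map $\sigma^\delta$ has domain $\delta\Lambda$ and sends $\delta\beta \mapsto \beta \in s(\delta)\Lambda$; then $\tau^\gamma$ is applicable to such a $\beta$ exactly when $r(\beta) = s(\gamma)$, i.e.\ exactly when $s(\delta) = s(\gamma)$. Hence a nonempty elementary piece $\tau^\gamma \circ \sigma^\delta$ has domain precisely $\delta\Lambda$, which is a member of $\CE^{(0)}$; pieces for which $s(\delta) \neq s(\gamma)$ are empty and may be discarded. Writing Lemma \ref{lem finitely aligned zigzag map} as $\varphi_\zeta = \bigcup_{k} \tau^{\gamma_k} \circ \sigma^{\delta_k}$ (a finite union of graphs) therefore gives
\[
A(\zeta) = \operatorname{dom}(\varphi_\zeta) = \bigcup_k \operatorname{dom}\bigl(\tau^{\gamma_k} \circ \sigma^{\delta_k}\bigr) = \bigcup_k \delta_k \Lambda,
\]
a finite union. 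Finally, since $A(\zeta) \subseteq s(\zeta)\Lambda$ by Definition \ref{d.zigzag}, each $\delta_k\Lambda \subseteq s(\zeta)\Lambda$, so $\delta_k \in s(\zeta)\Lambda$ and thus $\delta_k\Lambda \in \CE^{(0)}_{s(\zeta)}$, exhibiting $A(\zeta)$ as a finite union of sets from $\CE^{(0)}$ as required.

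I do not expect a genuine obstacle here: the substantive content lives in Lemma \ref{lem finitely aligned zigzag map}, and what remains is bookkeeping. The only points demanding a little care are making precise the statement ``$\varphi_\zeta$ is a finite union of maps'' (interpreting each map as its graph, so that passing to domains is literally a union), and discarding the empty elementary pieces so that every surviving domain really is a principal right ideal $\delta_k\Lambda$ lying in $\CE^{(0)}$.
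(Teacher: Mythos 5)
Your proof is correct and is exactly the intended derivation: the paper quotes this as \cite[Corollary 3.4]{spi\_pathcat}, which follows from Lemma \ref{lem finitely aligned zigzag map} precisely by reading off domains, just as you do. Your identification of the domain of each nonempty piece $\tau^{\gamma_k}\circ\sigma^{\delta_k}$ as $\delta_k\Lambda \in \CE^{(0)}_{s(\zeta)}$, after discarding empty pieces, is the whole content of the bookkeeping step.
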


\section{Amalgamation of LCSC's}
\label{sec amalgamation}

In \cite[Section 11]{spi_pathcat} a construction is given for the amalgamation of a collection of categories of paths over an equivalence relation on the set of all vertices.  Most of this construction carries over word-for-word for LCSC's.  We recall the basic definitions and results here.  In fact, the beginning of the construction applies without change to arbitrary small categories, and we state the facts in this generality.

\begin{Definition} \label{def amalgamation}

Let $\{ \Lambda_i : i \in I \}$ be a collection of small categories, and let $\sim$ be an equivalence relation on $\bigcup_{i \in I} \Lambda_i^0$.  Let $L$ be the set of {\it composable tuples}:  $L = \{ (\alpha_1, \ldots, \alpha_m) : \alpha_j \in \bigcup_{i \in I} \Lambda_i,\ s(\alpha_j) \sim r(\alpha_{j+1} \}$.  There is a partially defined concatenation on $L$:  $(\alpha_1, \ldots, \alpha_m) (\beta_1, \ldots, \beta_n) = (\alpha_1, \ldots, \alpha_m,\beta_1, \ldots, \beta_n)$ if $s(\alpha_m) \sim r(\beta_1)$.  We let $\approx$ be the equivalence relation on $L$ generated by
\begin{align*}
(\cdots,\theta,\theta',\cdots) &\approx (\cdots,\theta\theta',\cdots), \text{ if } s(\theta) = r(\theta'), \\
(\cdots,\mu,w,\nu,\cdots) &\approx (\cdots,\mu,\nu,\cdots), \text{ if } w \in \bigcup_{i\in I}\Lambda_i^0.
\end{align*}
Let $\Lambda = L/\approx$ and $\Lambda^0 = (\bigcup_{i \in I}\Lambda_i)/\sim$.  We refer to $\Lambda$ as an {\it amalgamation} of the $\Lambda_i$.

\end{Definition}

\begin{Proposition} \label{prop amalgamation}

$\Lambda$ as constructed above is a small category.

\end{Proposition}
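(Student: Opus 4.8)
The plan is to verify the category axioms directly for the data consisting of the object set $\Lambda^0 = (\bigcup_{i} \Lambda_i^0)/\sim$, the morphism set $\Lambda = L/\approx$ (I write $[\xi]$ for the $\approx$-class of a tuple $\xi$, and $[w]_\sim$ for the $\sim$-class of a vertex $w$), range and source maps $r,s\colon \Lambda \to \Lambda^0$, a partial composition inherited from concatenation, and identity morphisms. Since $L$ is a set (finite tuples of arbitrary length drawn from the set $\bigcup_i \Lambda_i$, indexed by the set $I$) and a quotient of a set is a set, smallness will be automatic once the category structure is in place; the content lies entirely in checking that the structure maps are well defined on $\approx$-classes and satisfy the axioms.

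First I would define range and source on $L$ by $r(\alpha_1, \ldots, \alpha_m) = [r(\alpha_1)]_\sim$ and $s(\alpha_1, \ldots, \alpha_m) = [s(\alpha_m)]_\sim$, and show they descend to $\Lambda$. It suffices to check invariance under each of the two generating relations of Definition \ref{def amalgamation}. The merging relation $(\cdots,\theta,\theta',\cdots) \approx (\cdots,\theta\theta',\cdots)$ alters neither the first nor the last entry unless it occurs at an end, in which case $r(\theta\theta') = r(\theta)$ (resp. $s(\theta\theta') = s(\theta')$) by composition inside the relevant $\Lambda_i$. The vertex-removal relation leaves $r$ and $s$ unchanged for an interior vertex; at a boundary, say removing $w=\alpha_1$ from $(w,\nu,\cdots)$, composability of the tuple forces $w \sim r(\nu)$, so $[w]_\sim = [r(\nu)]_\sim$ and $r$ is unchanged (symmetrically at the right end). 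Thus $r,s$ are well defined on $\Lambda$, and since composability of a concatenation, $s(\alpha_m) \sim r(\beta_1)$, is exactly the condition $s(\xi) = r(\eta)$ in $\Lambda^0$, composability depends only on $\approx$-classes.

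Next I would show that concatenation descends to a partial composition, i.e. that $\approx$ is a congruence: if $\xi \approx \xi'$ and $s(\xi) = r(\eta)$, then $\xi^\frown \eta \approx (\xi')^\frown \eta$, and symmetrically on the right. By the previous paragraph $s(\xi') = r(\eta)$, so both concatenations are defined; and because $\approx$ is the transitive closure of the two generating moves, it is enough to treat the case where $\xi'$ arises from $\xi$ by a single move. Such a move rewrites consecutive entries of $\xi$, which remain consecutive in $\xi^\frown\eta$; a move applied at the right boundary of $\xi$ (say deletion of a terminal vertex) merely becomes an interior move of $\xi^\frown\eta$, so the same move is available and produces $(\xi')^\frown\eta$. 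Induction on the number of moves gives the congruence, and one checks immediately that $r(\xi^\frown\eta)=r(\xi)$ and $s(\xi^\frown\eta)=s(\eta)$. Associativity of the induced composition is inherited from the evident associativity of concatenation on $L$, both groupings yielding the same long tuple under the same composability conditions $s(\xi)=r(\eta)$ and $s(\eta)=r(\zeta)$.

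It remains to produce identities. For $v \in \Lambda^0$ I would choose a representative vertex $w$ and set $1_v = [(w)]$; this is independent of the choice because $w \sim w'$ makes $(w,w')$ composable with $(w) \approx (w,w') \approx (w')$. The identity laws then follow from the boundary form of the vertex-removal relation: if $\xi = (\alpha_1, \ldots, \alpha_m)$ has $r(\xi) = [w]_\sim$, then $r(\alpha_1) \sim w$, so $(w)^\frown\xi$ is defined and $(w,\alpha_1, \ldots, \alpha_m) \approx (\alpha_1, \ldots, \alpha_m)$, giving $1_{r(\xi)}[\xi] = [\xi]$, with the right law symmetric. The main obstacle, and the point needing the most care, is precisely the boundary behaviour of the vertex-removal relation: one must read it as permitting a terminal or initial vertex to be deleted whenever the remaining tuple is nonempty. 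This reading is genuinely forced, since from a length-one tuple $(w)$ neither generating move can reach $(w')$ for a $\sim$-equivalent $w'$ lying in a different $\Lambda_i$, yet uniqueness of identities in a category demands $(w)\approx(w')$. Once this boundary form is fixed, it is simultaneously what makes $r$, $s$, composition, and the identity laws cohere, and every remaining verification is a mechanical check on the two generating moves.
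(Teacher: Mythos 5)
Your proof is correct, and it follows the only natural route: direct verification of the category axioms on $\approx$-classes. There is no in-paper proof to compare against --- Proposition \ref{prop amalgamation} is imported from \cite[Section 11]{spi_pathcat}, where the argument is precisely the kind of check you carry out (well-definedness of $r$, $s$, and of concatenation on classes; associativity; identities; smallness) --- so the only point worth commenting on is where you go beyond the mechanical checks, namely your reading of the vertex-removal relation in Definition \ref{def amalgamation}. As displayed, $(\cdots,\mu,w,\nu,\cdots)\approx(\cdots,\mu,\nu,\cdots)$ appears to require the deleted vertex to be interior, and you are right that this literal reading breaks the proposition: under interior-only moves the object $r(\alpha_1)$ of the first entry is an \emph{exact} invariant of the $\approx$-class (merging at the left end replaces $\theta$ by $\theta\theta'$, preserving the range; all other moves leave the first entry alone), so if $w\sim w'$ lie in different $\Lambda_i$'s then $\xi^\frown(w)\approx(w)$ forces $r(\xi_1)=w$ while $\xi^\frown(w')\approx(w')$ forces $r(\xi_1)=w'$, and hence \emph{no} morphism at all can serve as an identity at $[w]_\sim$ --- a slightly stronger statement than your appeal to uniqueness of identities, which presupposes that the candidates $[(w)]$, $[(w')]$ are identities. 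Reading the ellipses as possibly empty, so that an initial or terminal vertex may be deleted whenever the remaining tuple is nonempty, is therefore forced, and with that reading your verifications (including $(w)\approx(w,w')\approx(w')$, which makes the identities well defined) all go through. This is a genuine and useful clarification of the definition as printed, not a gap in your argument.
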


\begin{Lemma} \label{lem normal form}

Let $\alpha \in L$.  There exists a unique element $\beta \in L$ such that

\begin{enumerate}

\item $\alpha \approx \beta$.

\item $\beta = (\beta_1, \ldots, \beta_n)$ with

\begin{enumerate}

\item $\beta_j \not\in \bigcup_{i \in I} \Lambda_i^0$ for all $j$.

\item $s(\beta_j) \not= r(\beta_{j+1})$ for all $j$.

\end{enumerate}

\end{enumerate}

\end{Lemma}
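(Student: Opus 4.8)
The plan is to realize $\approx$ as the equivalence relation generated by an oriented, length-reducing rewriting system, and then to obtain existence of a normal form by termination and uniqueness by confluence. Orient the two generating relations in the length-decreasing direction: a \emph{merge} replaces an adjacent pair $(\ldots,\theta,\theta',\ldots)$ with $s(\theta)=r(\theta')$ (genuine equality, i.e.\ composability inside a single $\Lambda_i$) by $(\ldots,\theta\theta',\ldots)$, and a \emph{deletion} removes an entry $w\in\bigcup_{i\in I}\Lambda_i^0$. Write $\alpha\to\gamma$ when $\gamma$ is obtained from $\alpha$ by one such step; since the defining relations are used symmetrically in generating $\approx$, the relation $\approx$ is exactly the reflexive–symmetric–transitive closure of $\to$. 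Each step strictly decreases the length of the tuple, so $\to$ is terminating.

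For existence, starting from any $\alpha$ I apply steps until none is available, which halts after finitely many steps by termination. A tuple to which no step applies has no entry lying in some $\Lambda_i^0$ (else a deletion would apply) and no adjacent pair with $s(\beta_j)=r(\beta_{j+1})$ (else a merge would apply); that is, it satisfies conditions (2a) and (2b). Hence $\alpha$ reduces to at least one element $\beta$ with $\alpha\approx\beta$ in normal form.

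For uniqueness it suffices to prove that $\to$ is confluent: then each $\approx$-class contains a unique $\to$-irreducible element, which is precisely its normal form, and any $\beta$ as in the statement, being irreducible and $\approx$-equivalent to $\alpha$, must equal that common reduct. Because $\to$ is terminating, Newman's Lemma reduces confluence to \emph{local} confluence, i.e.\ that whenever $\alpha\to\gamma_1$ and $\alpha\to\gamma_2$ by single steps, $\gamma_1$ and $\gamma_2$ admit a common reduct. Steps acting on disjoint positions commute, so only overlapping steps must be checked, and I expect this overlap analysis to be the heart of the argument. Two merges overlapping in three consecutive composable entries $\theta,\theta',\theta''$ are reconciled by associativity in the ambient categories, $(\theta\theta')\theta''=\theta(\theta'\theta'')$; a merge and a deletion overlapping at a vertex $w$ that is composable with a neighbor are reconciled by the identity laws $\theta w=\theta$ and $w\theta'=\theta'$, so both steps land on the tuple with $w$ removed; and overlapping deletions of neighboring vertices commute. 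This gives local confluence, and Newman's Lemma then yields confluence and hence uniqueness.

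The one delicate point to pin down is the treatment of vertex entries at the ends of a tuple, and of length-one tuples: the deletion relation as written is phrased for an interior entry, so one must verify that a boundary vertex can still be removed. When such a vertex is genuinely composable with its single neighbor this follows from a merge via the identity law, and the remaining cases are handled by reading the deletion relation with an empty $\mu$ or $\nu$. Once this bookkeeping is settled, termination together with local confluence produces the unique normal form $\beta$.
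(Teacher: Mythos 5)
The paper itself does not prove this lemma: it is imported verbatim from \cite[Section 11]{spi_pathcat}, so there is no in-paper argument to compare yours against. Judged on its own terms, your skeleton is the right one and is the standard way such normal-form statements are proved: orient the generating relations so that each step strictly decreases length, observe termination, and reduce uniqueness to local confluence via Newman's Lemma. Your resolution of the overlaps is also correct as far as it goes: merge--merge overlaps close up by associativity inside a single $\Lambda_i$, merge--deletion overlaps close up by the unit laws $\theta w = \theta$ and $w \theta' = \theta'$, and two deletions at adjacent \emph{interior} vertices commute. For any $\alpha$ whose $\approx$-class is not the class of an identity morphism, this argument does prove the lemma, since reduction never leaves the $\approx$-class and the bad configurations below never occur in such classes.

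The case you defer as ``bookkeeping'' is, however, a genuine gap: it cannot be closed by any reading of the deletion rule on nonempty tuples, because both local confluence and the lemma as literally stated fail on identity classes. First, boundary deletions are forced on you: if deletion were permitted only at interior entries, then $r(\beta_1)$ (the actual vertex, not its $\sim$-class) would be invariant under both generating relations, whence $(u,\beta_1,\ldots,\beta_n) \not\approx (\beta_1,\ldots,\beta_n)$ whenever $u \sim r(\beta_1)$ but $u \neq r(\beta_1)$; one checks from this invariant that no $\approx$-class could then serve as the identity at an object whose $\sim$-class contains two distinct vertices, contradicting Proposition \ref{prop amalgamation}. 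Granting boundary deletions, take vertices $u \neq v$ with $u \sim v$. Then $(u,v) \in L$ reduces in one step to $(u)$ and in one step to $(v)$; both are irreducible, because the sole entry of a singleton cannot be deleted (the empty tuple is not an element of $L$ as defined), and they are distinct. So local confluence fails exactly at the case you set aside, and with it uniqueness: $(u)$ and $(v)$ are $\approx$-equivalent irreducibles. Existence fails as well: if every $\Lambda_i$ consists only of objects and $u \sim v$, then every element of $L$ is a word in vertices and nothing satisfies condition (2a). The repair is not bookkeeping but a change of convention: either exclude identity classes from the statement, or enlarge $L$ by one empty tuple per object of $\Lambda^0$ and declare it the normal form ($n = 0$) of its identity class --- with that convention $(u)$ and $(v)$ acquire the required common reduct and your Newman's Lemma argument goes through verbatim. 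Your proposal, which keeps all tuples nonempty, does neither, so the uniqueness (and existence) argument is incomplete precisely at the exceptional classes.
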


The element $\beta$ is called the \textit{normal form} of $\alpha$.

\begin{Lemma} \label{lem amalgamation and cancellation}

Let $\Lambda$ be an amalgamation of a collection $\{\Lambda_i : i \in I \}$ of small categories.  If $\Lambda_i$ is left cancellative (respectively right cancellative, without nontrivial inverses) for all $i \in I$, then so is $\Lambda$.

\end{Lemma}

\begin{Corollary}

The amalgamation of a family of LCSC's is again an LCSC.

\end{Corollary}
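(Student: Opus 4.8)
The plan is to recognize this as an immediate consequence of the results established just above. By definition an LCSC is precisely a left cancellative small category, so the statement unpacks into two assertions about the amalgamation $\Lambda$ of a family $\{\Lambda_i : i \in I\}$ of LCSC's: that $\Lambda$ is a small category, and that $\Lambda$ satisfies left cancellation. The first is exactly Proposition \ref{prop amalgamation}, which holds for an amalgamation of arbitrary small categories. For the second, I would observe that each $\Lambda_i$, being an LCSC, is left cancellative, so the hypothesis of the left-cancellative case of Lemma \ref{lem amalgamation and cancellation} is met; that lemma then yields left cancellation for $\Lambda$. Combining the two gives that $\Lambda$ is a left cancellative small category, that is, an LCSC, which completes the proof.

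Since Lemma \ref{lem amalgamation and cancellation} may be assumed, there is no real obstacle remaining at the level of the corollary itself; the entire content lies in that lemma, whose proof is where any difficulty resides. Were I to prove the lemma directly, the key tool would be the normal form of Lemma \ref{lem normal form}: representing each class in $\Lambda = L/\approx$ by its unique reduced composable tuple lets one translate an equation of the form $\alpha\beta = \alpha\gamma$ in the amalgam into a comparison of normal forms. The crux would be to argue that such an equation forces the corresponding reductions to agree term by term, so that cancellation in $\Lambda$ ultimately reduces to cancellation inside a single factor $\Lambda_i$, where it holds by hypothesis. This reduction via normal forms, rather than the bookkeeping of the corollary, is the genuinely substantive step, and the same mechanism handles the right-cancellative and no-nontrivial-inverses cases uniformly.
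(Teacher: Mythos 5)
Your proof is correct and matches the paper's intent exactly: the corollary is stated there without proof precisely because it follows immediately by combining Proposition \ref{prop amalgamation} (the amalgamation is a small category) with the left-cancellative case of Lemma \ref{lem amalgamation and cancellation}. Your additional sketch of how the lemma itself would be proved via the normal form of Lemma \ref{lem normal form} is consistent with how the underlying argument goes in the cited source, but it is not needed for the corollary.
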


\begin{Proposition} \label{prop common extensions in amalgamations}

(Recall the notation $\Cap$ introduced after Definition \ref{def shift maps}.) Let $\alpha = (\alpha_1, \ldots, \alpha_m)$ and $\beta = (\beta_1, \ldots, \beta_k)$ be in normal form. Then $[\alpha]_\approx \Cap [\beta]_\approx$ if and only if 

\begin{enumerate}

\item in case $m \not= k$, we have $\alpha_j = \beta_j$ for $j < \min \{m,k\}$, and if, e.g., $m < k$ then $\beta_m \in \alpha_m \Lambda_i$, where $\alpha_m$, $\beta_m \in \Lambda_i$ for some $i$ (i.e. one extends the other);

\item in case $m = k$, we have $\alpha_j = \beta_j$ for $j < m$; $\alpha_m$, $\beta_m \in \Lambda_i$ for some $i$; and $\alpha_m \Cap \beta_m$.

\end{enumerate}

\end{Proposition}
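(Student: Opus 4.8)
The statement unwinds to a concrete assertion inside the amalgamated category: $[\alpha]_\approx \Cap [\beta]_\approx$ means exactly that there is a common extension $\gamma$, i.e. elements $\mu,\nu\in\Lambda$ with $\alpha\mu=\beta\nu=\gamma$ in $\Lambda$. The two tools I would use throughout are the uniqueness of normal forms (Lemma \ref{lem normal form}) and left cancellation in $\Lambda$ (Lemma \ref{lem amalgamation and cancellation}), which together let me identify an element of $\Lambda$ with its normal-form tuple and compare such tuples entrywise.

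For the (``if'') direction I would simply produce $\gamma$. Suppose first $m<k$ (the case $m>k$ is symmetric). The hypotheses $\alpha_j=\beta_j$ for $j<m$ and $\beta_m=\alpha_m\delta$ with $\delta\in\Lambda_i$ say precisely that $\beta=\alpha\,(\delta,\beta_{m+1},\dots,\beta_k)$, once the seam letters $\alpha_m$ and $\delta$ are amalgamated to $\alpha_m\delta=\beta_m$; hence $\beta\in\alpha\Lambda$ and $\beta$ is itself a common extension. When $m=k$, choose $\eps\in\alpha_m\Lambda_i\cap\beta_m\Lambda_i$ witnessing $\alpha_m\Cap\beta_m$, say $\eps=\alpha_m\mu=\beta_m\nu$; then $\gamma=(\alpha_1,\dots,\alpha_{m-1},\eps)$ equals both $\alpha\,(\mu)$ and $\beta\,(\nu)$ after amalgamating at the seam, so $\gamma\in\alpha\Lambda\cap\beta\Lambda$. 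In each case the verification is a direct application of the concatenation and the amalgamation relations of Definition \ref{def amalgamation}.

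For the (``only if'') direction I would begin from a common extension $\gamma=\alpha\mu=\beta\nu$ and compute the normal form of each product. The key preliminary step, which I would isolate as a lemma proved by induction on the length of $\mu$, is that when $\alpha=(\alpha_1,\dots,\alpha_m)$ is in normal form the normal form of $\alpha\mu$ agrees with $\alpha$ in its first $m-1$ entries and has $m$-th entry in $\alpha_m\Lambda_i$, where $\alpha_m\in\Lambda_i$. Indeed, in reducing $(\alpha_1,\dots,\alpha_m,\mu_1,\dots,\mu_p)$ the only admissible reduction at the seam is the amalgamation of $\alpha_m$ with the first letter $\mu_1$ of $\mu$, yielding $\alpha_m\mu_1\in\alpha_m\Lambda_i$; the letters to its left are untouched and those to its right already meet the normal-form conditions. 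The same statement applies to $\beta\nu$, with $\beta_k\in\Lambda_{i'}$. By uniqueness (Lemma \ref{lem normal form}) the two normal forms coincide entrywise. Comparing the entries before position $\min\{m,k\}$ gives $\alpha_j=\beta_j$ for $j<\min\{m,k\}$. If $m<k$, then entry $m$ of the common tuple is, read from the $\beta$-side, just $\beta_m$, while read from the $\alpha$-side it lies in $\alpha_m\Lambda_i$; hence $\beta_m\in\alpha_m\Lambda_i$ and in particular $\alpha_m,\beta_m\in\Lambda_i$, which is conclusion (i). If $m=k$, then entry $m$ lies simultaneously in $\alpha_m\Lambda_i$ and in $\beta_m\Lambda_{i'}$; being a single non-identity morphism it belongs to one category, forcing $i=i'$, and it exhibits a common extension of $\alpha_m$ and $\beta_m$ in $\Lambda_i$, giving $\alpha_m\Cap\beta_m$, which is conclusion (ii).

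The step I expect to be the main obstacle is the preliminary lemma on the normal form of $\alpha\mu$: I must guarantee that the reduction does not cascade past the single seam. This is exactly the point at which the absence of nontrivial inverses in the constituent categories (as in the category-of-paths setting of \cite[Section 11]{spi_pathcat}) is essential, since it ensures that an amalgamated seam letter $\alpha_m\mu_1$ is never an identity and so cannot be deleted and trigger further cancellations that would erode the prefix $\alpha_1,\dots,\alpha_{m-1}$. Once that locality is established, the case analysis above is routine bookkeeping with Lemma \ref{lem normal form}, and the symmetric case $m>k$ needs no separate argument.
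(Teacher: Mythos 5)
Your proof is correct, and it is essentially the paper's own argument: the paper gives no proof here, deferring to \cite[Section 11]{spi_pathcat}, and the proof there is exactly your normal-form comparison --- the ``if'' direction by splicing a common extension together at the seam, the ``only if'' direction by computing the normal form of $\alpha\mu$ and $\beta\nu$ and invoking the uniqueness in Lemma \ref{lem normal form}.

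The obstacle you single out deserves emphasis, because it is not merely where your argument needs help: it is where the \emph{statement} needs a hypothesis. For amalgamations of LCSC's with nontrivial inverses the proposition is false. Take $\Lambda_1 = \Lambda_2 = \IZ$, each viewed as a group with one object, and let $\sim$ identify the two objects. The amalgamation is the free group on generators $a$, $b$, in which any two elements have a common extension (every set $\gamma\Lambda$ is all of $\Lambda$), yet $(a)$ and $(b)$ are length-one normal forms whose entries lie in different $\Lambda_i$, so case (ii) of the proposition would deny $[a]_\approx \Cap [b]_\approx$. So your key locality lemma --- that reduction of $(\alpha_1,\ldots,\alpha_m,\mu_1,\ldots,\mu_p)$ cannot cascade past the seam --- genuinely requires that the $\Lambda_i$ contain no nontrivial inverses (note that by left cancellation, $\alpha_m\mu_1 \in \Lambda_i^0$ forces $\mu_1$ to be a two-sided inverse of $\alpha_m$, which is exactly what that hypothesis rules out), and the proposition must be read as silently carrying this hypothesis over from the category-of-paths setting of \cite{spi_pathcat}, even though the present section nominally allows arbitrary LCSC's. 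This does not affect the paper's uses of the proposition, which concern amalgamations of inverse-free categories; your proof establishes the statement in the generality in which it is true, and your identification of the needed hypothesis is exactly right.
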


\begin{Corollary} \label{cor finite alignment in amalgamations}

$\Lambda$ is finitely aligned if and only if $\Lambda_i$ is finitely aligned for all $i \in I$.

\end{Corollary}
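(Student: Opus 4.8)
The plan is to prove both implications by reducing, via the structural description in Proposition \ref{prop common extensions in amalgamations}, the computation of an intersection $\alpha\Lambda \cap \beta\Lambda$ in $\Lambda$ to the computation of a single intersection inside one factor $\Lambda_i$. Throughout I may assume $r(\alpha) = r(\beta)$ in $\Lambda^0$, since otherwise the intersection is empty, and I may represent morphisms by their normal forms (Lemma \ref{lem normal form}).

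First I would treat the implication that each $\Lambda_i$ finitely aligned forces $\Lambda$ finitely aligned, as this is the direction where the mechanism is clearest. Fix $\alpha = (\alpha_1, \ldots, \alpha_m)$ and $\beta = (\beta_1, \ldots, \beta_k)$ in normal form and split into the three cases dictated by Proposition \ref{prop common extensions in amalgamations}. If $[\alpha]_\approx \not\Cap [\beta]_\approx$, then $\alpha\Lambda \cap \beta\Lambda = \varnothing$ and nothing is to be proved. If $m \neq k$, say $m < k$, the Proposition forces $\beta_m \in \alpha_m\Lambda_i$, hence $\beta$ extends $\alpha$ and $\alpha\Lambda \cap \beta\Lambda = \beta\Lambda$ is already a single basic cone. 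The substantive case is $m = k$, where $\alpha_j = \beta_j$ for $j < m$, both $\alpha_m, \beta_m$ lie in a common $\Lambda_i$, and $\alpha_m \Cap \beta_m$.

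In this main case I would invoke finite alignment of $\Lambda_i$ to write $\alpha_m\Lambda_i \cap \beta_m\Lambda_i = \bigcup_{\eps \in G}\eps\Lambda_i$ for a finite $G \subseteq \Lambda_i$, and then claim that $\alpha\Lambda \cap \beta\Lambda = \bigcup_{\eps \in G}(\alpha_1, \ldots, \alpha_{m-1}, \eps)\Lambda$. The inclusion $\supseteq$ is immediate, since each $(\alpha_1, \ldots, \alpha_{m-1}, \eps)$ extends both $\alpha$ and $\beta$. For $\subseteq$, I would take a common extension $\gamma$, pass to its normal form, and use uniqueness of normal forms together with the fact that $\alpha$ is an initial segment to conclude that $\gamma$ agrees with $\alpha$ and $\beta$ in the first $m-1$ coordinates and has $m$-th coordinate lying in $\alpha_m\Lambda_i \cap \beta_m\Lambda_i$; slotting that coordinate into the appropriate $\eps\Lambda_i$ then exhibits $\gamma$ in the claimed union. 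I expect the main obstacle to be precisely this bookkeeping: verifying that the $m$-th coordinate of an arbitrary common extension really lands in $\alpha_m\Lambda_i \cap \beta_m\Lambda_i$ rather than being split across a concatenation, and that no cancellation across the boundary between coordinates $m-1$ and $m$ can occur. This is where Lemma \ref{lem normal form} and the left cancellation supplied by Lemma \ref{lem amalgamation and cancellation} do the essential work.

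For the converse, that $\Lambda$ finitely aligned forces each $\Lambda_i$ finitely aligned, I would fix $i$ and a pair $\gamma, \delta \in \Lambda_i$ with $r(\gamma) = r(\delta)$, discard the trivial case that one is a vertex, and view them as one-tuples in $\Lambda$. By the $m = k = 1$ instance of Proposition \ref{prop common extensions in amalgamations}, the common extensions of $[(\gamma)]_\approx$ and $[(\delta)]_\approx$ are, up to $\approx$, exactly the one-tuples $(\eta)$ arising from common extensions $\eta \in \gamma\Lambda_i \cap \delta\Lambda_i$; indeed any common extension of longer normal form contains such a $(\eta)$ as an initial segment and so is not minimal. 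Finite alignment of $\Lambda$ supplies a finite set of minimal common extensions, which by the preceding remark may be taken to be one-tuples $(\eta)$ with $\eta \in \Lambda_i$, and I would show their first coordinates witness finite alignment of $\gamma, \delta$ in $\Lambda_i$. Here again the one delicate point — handled by the normal form lemma — is that for $x, \eta \in \Lambda_i$ the relation $(x) \in (\eta)\Lambda$ in $\Lambda$ already implies $x \in \eta\Lambda_i$, so that distinct $\Lambda_i$-cones are not merged upon passing to $\Lambda$.
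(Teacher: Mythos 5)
Your overall strategy is the right one, and it is essentially the paper's: the corollary is extracted from Proposition \ref{prop common extensions in amalgamations} (following \cite[Section 11]{spi_pathcat}) by reducing the intersection $\alpha\Lambda\cap\beta\Lambda$ to a single intersection $\alpha_m\Lambda_i\cap\beta_m\Lambda_i$ in one factor, exactly as you set it up, and your treatment of the cases $[\alpha]\not\Cap[\beta]$ and $m\neq k$, as well as the converse direction, is sound.

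There is, however, a genuine gap in your $\subseteq$ step for the main case $m=k$. You plan to verify ``that no cancellation across the boundary between coordinates $m-1$ and $m$ can occur'' and to conclude that the normal form of an arbitrary common extension $\gamma$ agrees with $\alpha,\beta$ in the first $m-1$ coordinates and has $m$-th coordinate in $\alpha_m\Lambda_i\cap\beta_m\Lambda_i$. In an LCSC this cannot be verified, because it is false when nontrivial inverses are present --- which is precisely the situation this paper allows. Writing $\gamma=\alpha\mu$ with $\mu=(\mu_1,\ldots,\mu_p)$ in normal form, the product $\alpha_m\mu_1$ can be a unit (exactly when $\mu_1$ is an inverse of $\alpha_m$), and then the normal form of $\gamma$ collapses: it can be shorter than $m$, or continue with letters from other factors. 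Concretely, let $\Lambda_1,\Lambda_2$ be nontrivial groups amalgamated over a single object, and take $\alpha=(g)$, $\beta=(h)$ with $g\neq h$ in $\Lambda_1$; then for any $x\in\Lambda_2$ the element $(x)=(g)\cdot(g^{-1},x)$ lies in $\alpha\Lambda\cap\beta\Lambda$, but its normal form is the single $\Lambda_2$-letter $(x)$, which neither extends $\alpha$ and $\beta$ coordinatewise nor has a coordinate in $g\Lambda_1\cap h\Lambda_1\subseteq\Lambda_1$. The claimed set equality is still true, but the route to it must be repaired: observe that collapse at the $m$-th coordinate occurs if and only if $\alpha_m$ (or $\beta_m$) is invertible in $\Lambda_i$, and in that case $\alpha_m\Lambda_i=r(\alpha_m)\Lambda_i$, so one of the two cones contains the other and the intersection is a single cone, settling that case trivially. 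Once invertible last coordinates are excluded, no collapse can occur, and your uniqueness-of-normal-forms bookkeeping (after stripping $(\alpha_1,\ldots,\alpha_{m-1})$ by left cancellation) goes through as planned. The same caveat applies, more mildly, to the ``delicate point'' in your converse: the implication $(x)\in(\eta)\Lambda\Rightarrow x\in\eta\Lambda_i$ is true, but its proof also needs the separate invertible case, where one argues directly that $x=\eta(\eta^{-1}x)$ rather than by matching normal forms.
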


\section{The groupoid of an LCSC}
\label{sec groupoid}

Much of \cite[Section 4]{spi_pathcat} holds in the context of LCSC's without change.  We summarize the necessary facts here.  Let $\Lambda$ be an LCSC.  For $v \in \Lambda^0$ we recall the ring $\CA_v$ of subsets of $v\Lambda$ from Definition \ref{def lcsc}.  Let $A_v = \overline{\text{span}} \{\chi_E : E \in \CA_v\} \subseteq \ell^\infty(v\Lambda)$, a commutative $C^*$-algebra.

Recall that a \textit{filter} in a ring of sets $\CA$ is a nonempty collection, $\CU$, of nonempty elements of $\CA$, which is closed under the formation of intersections and supersets. An \textit{ultrafilter} is a maximal filter. Ultrafilters are characterized by the property: for each $E \in \CA$, either $E \in \CU$ or there is $F \in \CU$ with $E \cap F = \emptyset$. A \textit{filter base} is a nonempty collection of nonempty subsets of $\CA$ such that the intersection of any two of its elements is a superset of a third element. A filter base defines a unique filter by closing with respect to supersets. An \textit{ultrafilter base} is a filter base such that the filter it defines is an ultrafilter. Ultrafilter bases ($\CU_0$) are characterized by the property: for each $E \in \CA$, either there is $F \in \CU_0$ with $E \supseteq F$, or there is $F \in \CU_0$ with $E \cap F = \emptyset$. 

As in \cite[p. 250]{spi_graphalg} we identify $\widehat{A_v}$ with the set of ultrafilters in $\CA_v$:  the complex homomorphism $\omega \in \widehat{A_v}$ corresponds to the ultrafilter $\{ E \in \CA_v : \omega(\chi_E) = 1 \}$.  We will write  $X_v$ (or $X_v(\Lambda)$) for the spectrum, $\widehat{A_v}$, of $A_v$.  We let $\CU_x$ denote the ultrafilter corresponding to $x \in X_v$.  For $E \in \CA_v$ let $\widehat{E} \subseteq X_v$ denote the support of $\chi_E$.  Thus $x \in \widehat{E}$ if and only if $E \in \CU_x$.  The collection $\{\widehat{E} : E \in \CD_v\}$ is a base for the topology of $X_v$ consisting of compact-open sets.  We write $X = X(\Lambda) := \bigsqcup_{v \in \Lambda^0} X_v$. 

\begin{Definition} \label{def fixed ultrafilter}
(\cite[Section 4]{spi_pathcat})
A {\it fixed ultrafilter} is an ultrafilter of the form $\CU_{\{\alpha\}} := \{E \in \CA : \alpha \in E \}$, for some $\alpha \in \Lambda$.

\end{Definition}

\begin{Lemma} \label{lem fixed ultrafilters are dense}

The fixed ultrafilters are dense in $X$.

\end{Lemma}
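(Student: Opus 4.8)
The plan is to reduce the density statement to the distinguished base of compact-open sets recorded above, namely $\{\widehat{E} : E \in \CD_v,\ v \in \Lambda^0\}$. Since this collection is a base for the topology of $X = \bigsqcup_v X_v$, it suffices to exhibit, inside every nonempty basic open set $\widehat{E}$, a fixed ultrafilter. Recalling that $x \in \widehat{E}$ holds precisely when $E \in \CU_x$, producing a fixed ultrafilter in $\widehat{E}$ amounts to producing some $\alpha \in \Lambda$ with $E \in \CU_{\{\alpha\}}$, that is, with $\alpha \in E$.

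Before that, I would verify that each $\CU_{\{\alpha\}}$ is genuinely an ultrafilter in $\CA_v$ (where $v = r(\alpha)$), not merely a filter. Writing $\CU_{\{\alpha\}} = \{E \in \CA_v : \alpha \in E\}$, it is nonempty because $v\Lambda = A(v,v) \in \CD^{(0)}_v \subseteq \CA_v$ and $\alpha \in v\Lambda$; it is closed under intersections and supersets, hence a filter. To see that it is maximal I would invoke the characterization of ultrafilters stated above: given $E \in \CA_v$, either $\alpha \in E$, so $E \in \CU_{\{\alpha\}}$, or $\alpha \notin E$, in which case the relative complement $F := v\Lambda \setminus E$ lies in $\CA_v$ (as $\CA_v$ is a ring of sets with top element $v\Lambda$), satisfies $\alpha \in F$, and has $E \cap F = \varnothing$. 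This confirms $\CU_{\{\alpha\}} \in X_v$.

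For the density statement itself, fix a nonempty basic open set $\widehat{E}$ with $E \in \CD_v$. By definition $E$ is a nonempty subset of $v\Lambda$, so I may choose $\alpha \in E$; then $r(\alpha) = v$ and $E \in \CU_{\{\alpha\}}$, whence the fixed ultrafilter $\CU_{\{\alpha\}}$ lies in $\widehat{E}$. As $\widehat{E}$ was an arbitrary nonempty basic open set, the fixed ultrafilters meet every nonempty open subset of $X$, which is exactly the assertion. The argument is short, and the only point requiring genuine care is the verification that $\CU_{\{\alpha\}}$ is maximal rather than merely a filter; this is where one uses that $\CA_v$ is a true ring of sets, closed under relative complement with top element $v\Lambda$, rather than just an intersection-closed family. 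Everything else is bookkeeping about the base and the correspondence between points of $X_v$ and ultrafilters.
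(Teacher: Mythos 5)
Your proof is correct and follows essentially the same route as the paper's: given a nonempty basic open set $\widehat{E}$, pick $\alpha \in E$ and observe that the fixed ultrafilter $\CU_{\{\alpha\}}$ contains $E$, hence lies in $\widehat{E}$. The only difference is that you also verify maximality of $\CU_{\{\alpha\}}$ via relative complements in the ring $\CA_v$, a point the paper treats as part of Definition \ref{def fixed ultrafilter} (imported from \cite{spi_pathcat}); this is a harmless and correct addition.
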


\begin{proof}
Let $E \in \CA$, so that $\widehat{E}$ is a basic open set in $X$.  Choose $\gamma \in E$.  Then $E \in \CU_{\{\gamma\}}$, and hence $\CU_{\{\gamma\}} \in \widehat{E}$.
\end{proof}

The following is proved exactly as in \cite[Section 4]{spi_pathcat}.

\begin{Proposition} \label{prop maps on the spectrum}

Let $\Lambda$ be an LCSC.

\begin{enumerate}

\item Let $\alpha \in \Lambda$.  If $\CU$ is an ultrafilter in $\CA_{s(\alpha)}$ then $\tau^\alpha(\CU)$ is an ultrafilter base in $\CA_{r(\alpha)}$.

\item Let $\widehat{\tau^\alpha}(\CU)$ denote the ultrafilter generated by $\tau^\alpha(\CU)$.  Then $\widehat{\tau^\alpha} : X_{s(\alpha)} \to X_{r(\alpha)}$ is a homeomorphism onto its range, which is a compact-open subset of $X_{r(\alpha)}$. For $y \in X_{r(\alpha)}$, $y \in \widehat{\tau^\alpha}(X_{s(\alpha)})$ if and only if $\alpha \Lambda \in \CU_y$.

\item For $\zeta = (\alpha_1,\beta_1, \ldots, \alpha_n,\beta_n) \in \CZ_\Lambda v$ we write $\Phi_\zeta = \widehat{\tau^{\alpha_1}}^{-1} \circ \widehat{\tau^{\beta_1}} \circ \cdots \circ \widehat{\tau^{\alpha_n}}^{-1} \circ \widehat{\tau^{\beta_n}}$.  Then $\Phi_\zeta$ is a homeomorphism of $\widehat{A(\zeta)}$ onto $\widehat{A(\overline{\zeta})}$.  Moreover $\Phi_{\zeta_1 \zeta_2} = \Phi_{\zeta_1} \circ \Phi_{\zeta_2}$ and $\Phi_{\overline{\zeta}} = \Phi_\zeta^{-1}$.

\end{enumerate}

\end{Proposition}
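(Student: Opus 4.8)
The plan is to read $\widehat{\tau^\alpha}$ as the map on spectra induced by the injective set map $\tau^\alpha$, and then to build $\Phi_\zeta$ out of these pieces. The two facts I would lean on throughout are that $\tau^\alpha$ is injective (left cancellation), so it preserves intersections and relative complements of subsets, and that $\tau^\alpha(\CA_{s(\alpha)}) \subseteq \CA_{r(\alpha)}$ together with $\sigma^\alpha(\CA_{r(\alpha)}) \subseteq \CA_{s(\alpha)}$, which come from the Remark following Definition \ref{def lcsc} applied to $(r(\alpha),\alpha)$ and to $(\alpha,r(\alpha))$. For (i), since $\CU$ consists of nonempty sets and $\tau^\alpha$ is injective, $\tau^\alpha(\CU) = \{\alpha E : E \in \CU\}$ consists of nonempty members of $\CA_{r(\alpha)}$ and is closed under intersection (because $\tau^\alpha(E_1 \cap E_2) = \tau^\alpha(E_1) \cap \tau^\alpha(E_2)$), hence is a filter base. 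To see that it is an ultrafilter base I would verify the stated characterization: given $F \in \CA_{r(\alpha)}$, form $E := (\tau^\alpha)^{-1}(F) = \sigma^\alpha(F \cap \alpha\Lambda) \in \CA_{s(\alpha)}$ (using $\alpha\Lambda = A(\alpha,r(\alpha)) \in \CA_{r(\alpha)}$), and apply the ultrafilter dichotomy for $\CU$ to $E$; the two cases yield respectively a member of $\tau^\alpha(\CU)$ contained in $F$ or one disjoint from $F$, again by injectivity of $\tau^\alpha$.

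For (ii), the heart is to exhibit the inverse. Given $y \in X_{r(\alpha)}$ with $\alpha\Lambda \in \CU_y$, I would show that $\CV := \{(\tau^\alpha)^{-1}(F) : F \in \CU_y\}$ is an ultrafilter in $\CA_{s(\alpha)}$ and that the point it defines maps to $y$; the key computation is that $E \in \CV$ if and only if $\tau^\alpha(E) \in \CU_y$, which transports the ultrafilter property directly from $y$ and shows that the filter generated by $\tau^\alpha(\CV)$ is contained in, hence by maximality equal to, $\CU_y$. Conversely any $x$ with $\widehat{\tau^\alpha}(x) = y$ must satisfy $\CU_x = \CV$, giving injectivity and the range description $\widehat{\tau^\alpha}(X_{s(\alpha)}) = \widehat{\alpha\Lambda}$, which is compact-open since $\alpha\Lambda \in \CD^{(0)}_{r(\alpha)}$. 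Continuity and openness then follow from the two set identities $(\widehat{\tau^\alpha})^{-1}(\widehat{F}) = \widehat{(\tau^\alpha)^{-1}(F)}$ and $\widehat{\tau^\alpha}(\widehat{E}) = \widehat{\tau^\alpha(E)}$ for $E \in \CA_{s(\alpha)}$, $F \in \CA_{r(\alpha)}$; each is proved by unwinding membership in the generated ultrafilter (a set lies in it precisely when it contains a base element $\tau^\alpha(E)$) together with upward closure of ultrafilters. Since the basic opens are exactly the $\widehat{E}$, these identities give that $\widehat{\tau^\alpha}$ is a homeomorphism onto $\widehat{\alpha\Lambda}$.

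For (iii), I would read $\Phi_\zeta$ as a composition of the partial homeomorphisms $\widehat{\tau^{\beta_i}}$ and $\bigl(\widehat{\tau^{\alpha_i}}\bigr)^{-1} = \widehat{\sigma^{\alpha_i}}$ supplied by (ii), and proceed by induction on the length $n$. Composites of partial homeomorphisms between open subsets of $X$ are again partial homeomorphisms, so the only genuine work is to identify the domain of $\Phi_\zeta$ as exactly $\widehat{A(\zeta)}$. I would do this by matching the set-level computation: the domain of $\sigma^{\alpha_n} \circ \tau^{\beta_n}$ is $A((\alpha_n,\beta_n)) = (\tau^{\beta_n})^{-1}(\alpha_n\Lambda)$, and the identities of (ii) translate each successive pullback of domains on the spectrum side into the hat of the corresponding zigzag set, so that the accumulated domain collapses to $\widehat{A(\zeta)}$ (using Remark \ref{r.zigzag}\eqref{r.zigzag.e} and $\widehat{E \cap F} = \widehat{E} \cap \widehat{F}$). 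Applying the same analysis to $\overline{\zeta}$ identifies the range as $\widehat{A(\overline{\zeta})}$. The relations $\Phi_{\zeta_1\zeta_2} = \Phi_{\zeta_1} \circ \Phi_{\zeta_2}$ and $\Phi_{\overline{\zeta}} = \Phi_\zeta^{-1}$ then fall out of the definition: concatenation of zigzags concatenates the defining strings of maps, while reversal reverses the string and interchanges each $\widehat{\tau^\gamma}$ with its inverse. I expect the main obstacle to be precisely this bookkeeping in (iii) --- verifying that the nested pullbacks of partial domains neither overshoot nor undershoot $\widehat{A(\zeta)}$ --- since the individual homeomorphism claims in (i) and (ii) become routine once the inverse ultrafilter $\CV$ and the two hat-identities are in hand.
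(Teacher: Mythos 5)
Your proposal is correct, and it follows essentially the same route as the paper's proof, which is given only by reference to \cite[Section 4]{spi_pathcat}: injectivity of $\tau^\alpha$ from left cancellation, invariance of the rings $\CA_v$ under $\tau^\alpha$ and $\sigma^\alpha$, the stated characterization of ultrafilter bases, and the identities $(\widehat{\tau^\alpha})^{-1}(\widehat{F}) = \widehat{(\tau^\alpha)^{-1}(F)}$ and $\widehat{\tau^\alpha}(\widehat{E}) = \widehat{\tau^\alpha(E)}$, assembled by induction to identify $\mathrm{dom}\,\Phi_\zeta = \widehat{A(\zeta)}$. In particular, your construction of the inverse ultrafilter $\CV = \{(\tau^\alpha)^{-1}(F) : F \in \CU_y\}$ and the range description $\widehat{\tau^\alpha}(X_{s(\alpha)}) = \widehat{\alpha\Lambda}$ supply exactly the details the citation points to, so nothing is missing.
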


The groupoid defined in \cite[Section 4]{spi_pathcat} is the groupoid of germs associated to the partial homeomorphisms $\Phi_\zeta$ of $X$.  In the presence of inverses, however, there can be loss of information when forming the groupoid.

\begin{Example} \label{example group old}

Let $\Lambda$ be a (discrete) group, viewed as an LCSC with one object, the unit 1.  For each $\lambda \in \Lambda$ we have $\lambda \Lambda = \Lambda$, and hence we have that $\CD_1^{(0)} = \{ \Lambda \}$, $\CA_1 = \{ \varnothing, \Lambda \}$, $\Phi_\lambda = $ id for all $\lambda$, and $X_1 = \{ \text{pt} \}$.  Therefore $G = \{ \text{pt} \}$.

\end{Example}

Phenomena as in this example may occur in various ways in LCSC's, and in some situations it may be desirable to use the above definition to ignore certain parts of the inverse structure.  However, there is a variation on the definition given in \cite{spi_pathcat} that does not ignore this structure.  Moreover in the case where there are no inverses, it gives the same result as the original definition.  We now present this alternate definition, and investigate its relation to the original.  In order to distinguish them, we will denote by $\sim_1$ the equivalence relation on $\CZ * X$ defined in \cite{spi_pathcat}, and by $\sim_2$ the alternate version.

\begin{Definition} \label{def groupoids}

\begin{enumerate}

\item Let $\Lambda$ be an LCSC.  Define $r : X \to \Lambda^0$ by $r(x) = v$ for $x \in X_v$. We set $\CZ * X = \{ (\zeta,x) \in \CZ \times X : s(\zeta) = r(x) \}$.

\item Define equivalence relations $\sim_1$ and $\sim_2$ on $\CZ * X$ by
\begin{align*}
(\zeta,x) &\sim_1 (\zeta',x') \text{ if } x = x' \text{ and } \Phi_\zeta|_{\widehat{E}} = \Phi_{\zeta'}|_{\widehat{E}} \text{ for some } E \in \CU_x, \\
(\zeta,x) &\sim_2 (\zeta',x') \text{ if } x = x' \text{ and } \varphi_\zeta|_E = \varphi_{\zeta'}|_E \text{ for some } E \in \CU_x.
\end{align*}

\item Define groupoids $G_i \equiv G_i(\Lambda)$ by $G_i = (\CZ * X)/\sim_i$ for $i = 1$, 2.  We let $[\zeta,x]_i$ (or just $[\zeta,x]$ if the context makes clear which groupoid is involved) denote the equivalence class of $(\zeta,x)$.  The set of composable pairs is $G_i^2 = \bigl\{ \bigl([\zeta,x]_i, \; [\zeta',x']_i \bigr) : x = \Phi_{\zeta'}x' \bigr\}$.  Composition and inversion are given by $[\zeta,\Phi_{\zeta'}x]_i[\zeta',x]_i = [\zeta\zeta',x]_i$ and $[\zeta,x]_i^{-1} = [\overline{\zeta},\Phi_\zeta x]_i$.

\end{enumerate}

\end{Definition}

It follows from Proposition \ref{prop maps on the spectrum} that $\Phi_\zeta(x) = y$ if and only if $\varphi_\zeta(\CU_x)$ is a filter base for $\CU_y$.

\begin{Remark}

We note that $s([\zeta,x]_i) = [\overline{\zeta},\Phi_\zeta(x)]_i [\zeta,x]_i = [id,x]_i$, so we may identify the unit space $G_i(\Lambda)^{(0)}$ with $X$.  Therefore the unit space of $G_i(\Lambda)$ depends only on the algebraic structure of $\Lambda$, and is independent of $i$.

\end{Remark}

\begin{Lemma} \label{lem comparable equivalence relations}

$\sim_2 \; \subseteq \; \sim_1$.

\end{Lemma}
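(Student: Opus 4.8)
The plan is to prove the implication directly: given $(\zeta,x)\sim_2(\zeta',x)$ I will produce a witness set exhibiting $(\zeta,x)\sim_1(\zeta',x)$, and in fact the \emph{same} witness set will serve. The guiding principle is that each $\Phi_\zeta$ is precisely the map induced on ultrafilters by the set map $\varphi_\zeta$, as recorded just after Definition \ref{def groupoids}: $\Phi_\zeta(y)=z$ if and only if $\varphi_\zeta(\CU_y)$ is a filter base for $\CU_z$. Thus agreement of the finer maps $\varphi_\zeta,\varphi_{\zeta'}$ on a set ought to force agreement of the induced maps $\Phi_\zeta,\Phi_{\zeta'}$ on the corresponding hat set.

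First I would normalize the witness. By hypothesis there is $E_0\in\CU_x$ with $\varphi_\zeta|_{E_0}=\varphi_{\zeta'}|_{E_0}$; since $x$ lies in the domains of $\Phi_\zeta$ and $\Phi_{\zeta'}$ (equivalently $A(\zeta),A(\zeta')\in\CU_x$), I may replace $E_0$ by $E=E_0\cap A(\zeta)\cap A(\zeta')\in\CU_x$ and assume $E\subseteq A(\zeta)\cap A(\zeta')$ with $\varphi_\zeta(\gamma)=\varphi_{\zeta'}(\gamma)$ for every $\gamma\in E$. Because $E\subseteq A(\zeta)$ we get $\widehat E\subseteq\widehat{A(\zeta)}=\mathrm{dom}(\Phi_\zeta)$, and likewise for $\zeta'$, so the restrictions $\Phi_\zeta|_{\widehat E}$ and $\Phi_{\zeta'}|_{\widehat E}$ are meaningful.

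The heart of the argument is the claim that $\Phi_\zeta(y)=\Phi_{\zeta'}(y)$ for every $y\in\widehat E$. Fix such a $y$, so $E\in\CU_y$ and hence $A(\zeta)\in\CU_y$. By the bridging fact, $\CU_{\Phi_\zeta(y)}$ is generated by the filter base $\{\varphi_\zeta(B):B\in\CU_y,\ B\subseteq A(\zeta)\}$, and similarly for $\Phi_{\zeta'}(y)$. I would show these two ultrafilters coincide by mutual refinement: given $B\in\CU_y$ with $B\subseteq A(\zeta)$, the set $B':=B\cap E\in\CU_y$ satisfies $B'\subseteq E\subseteq A(\zeta)\cap A(\zeta')$, whence $\varphi_{\zeta'}(B')=\varphi_\zeta(B')\subseteq\varphi_\zeta(B)$, using agreement on $E$ and the fact that $\varphi_\zeta$ is a function. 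Thus every member of the base for $\Phi_\zeta(y)$ contains a member $\varphi_{\zeta'}(B')$ of the base for $\Phi_{\zeta'}(y)$; by symmetry the reverse holds, so the two filter bases generate the same filter, and being ultrafilters they are equal. (Injectivity of $\varphi_\zeta$ from Remark \ref{r.zigzag}\eqref{r.zigzag.d} guarantees each $\varphi_\zeta(B')$ is nonempty, so these genuinely are filter bases.) This yields $\Phi_\zeta|_{\widehat E}=\Phi_{\zeta'}|_{\widehat E}$, and since $E\in\CU_x$ we conclude $(\zeta,x)\sim_1(\zeta',x)$.

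The main obstacle I anticipate is that one cannot simply reduce to the dense set of fixed ultrafilters: for $\gamma\in E$ it is immediate that $\Phi_\zeta(\CU_{\{\gamma\}})=\CU_{\{\varphi_\zeta(\gamma)\}}=\CU_{\{\varphi_{\zeta'}(\gamma)\}}=\Phi_{\zeta'}(\CU_{\{\gamma\}})$, and Lemma \ref{lem fixed ultrafilters are dense} makes these dense in $\widehat E$, but upgrading agreement on a dense set to agreement everywhere would require $X$ to be Hausdorff, which is not available in this generality. This forces the direct filter-base computation above, handling arbitrary $y\in\widehat E$. The remaining delicate point is the domain bookkeeping of the first step — interpreting the germ equality $\varphi_\zeta|_E=\varphi_{\zeta'}|_E$ and arranging $E\subseteq A(\zeta)\cap A(\zeta')$ — after which the manipulation of filter bases is routine.
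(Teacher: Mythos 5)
Your proof is correct and follows essentially the same route as the paper's: for each $y\in\widehat{E}$ you compare the filter bases $\varphi_\zeta(\CU_y)$ and $\varphi_{\zeta'}(\CU_y)$ by intersecting with $E$, where the two zigzag maps agree, and conclude the induced ultrafilters $\CU_{\Phi_\zeta(y)}$ and $\CU_{\Phi_{\zeta'}(y)}$ coincide. Your normalization $E\subseteq A(\zeta)\cap A(\zeta')$ and the explicit mutual-refinement step are just slightly more detailed bookkeeping of the same argument.
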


\begin{proof}
Let $(\zeta,x) \sim_2 (\zeta',x')$.  We have $x = x'$ and $\varphi_\zeta|_E = \varphi_{\zeta'}|_E$ for some $E \in \CU_x$.  We will show that $\Phi_\zeta|_{\widehat{E}} = \Phi_{\zeta'}|_{\widehat{E}}$, which will imply that $(\zeta,x) \sim_1 (\zeta',x)$.  Let $y \in \widehat{E}$.  Then $E \in \CU_y$.  We know that $\CU_{\Phi_\zeta(y)}$ is generated by the filter base $\varphi_\zeta(\CU_y)$, or equivalently by $\varphi_\zeta(\CU_y \cap E)$.  By hypothesis this equals $\varphi_{\zeta'}(\CU_y \cap E)$, which generates the same filter as $\varphi_{\zeta'}(\CU_y)$, namely $\CU_{\Phi_{\zeta'}(y)}$.  Therefore $\Phi_\zeta(y) = \Phi_{\zeta'}(y)$.
\end{proof}

\begin{Definition} \label{def base for topology}

For $i = 1$, 2, and for $\zeta \in \CZ$ and $E \in \CA_{s(\zeta)}$, we let $[\zeta,E]_i = \{ [\zeta,x]_i : x \in \widehat{E} \}$.  Let $\CB_i = \{ [\zeta,E]_i : E \in \CA_{s(\zeta)} \}$.

\end{Definition}

The next proposition is adapted from \cite[Proposition 4.10]{spi_pathcat}.

\begin{Proposition} \label{pathcat 4.10}

$\CB_i$ is a base for a locally compact topology on $G_i$ for which the elements of $\CB_i$ are compact and Hausdorff, and making $G_i$ into an ample \'etale groupoid.

\end{Proposition}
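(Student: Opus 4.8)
The plan is to realize $G_i$ as a groupoid of germs and to transport the topology of $X$ through the canonical maps $\pi_\zeta \colon \widehat E \to [\zeta,E]_i$, $x \mapsto [\zeta,x]_i$. The single observation that drives every step is that germ-agreement is an \emph{open condition}: if $(\zeta,x_0) \sim_i (\zeta',x_0)$ with witness $E_0 \in \CU_{x_0}$, then for every $x \in \widehat{E_0}$ we have $E_0 \in \CU_x$, and the same $E_0$ witnesses $(\zeta,x) \sim_i (\zeta',x)$; hence $\{x : (\zeta,x) \sim_i (\zeta',x)\}$ contains the open set $\widehat{E_0}$ around $x_0$ and is open. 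This runs verbatim for $i=1$ (agreement of the $\Phi$'s on $\widehat{E_0}$) and for $i=2$ (agreement of the $\varphi$'s on $E_0$), so the two cases can be handled uniformly. Throughout I may assume $E \subseteq A(\zeta)$, replacing $E$ by $E \cap A(\zeta) \in \CA_{s(\zeta)}$, so that $\Phi_\zeta$ is defined on all of $\widehat E$.

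First I would check that $\CB_i$ is a base. Covering is immediate from $[\zeta,x]_i \in [\zeta,A(\zeta)]_i$. For the intersection property, suppose $[\eta,y]_i \in [\zeta,E]_i \cap [\zeta',E']_i$. Since $\sim_i$ forces equality of the $X$-coordinates, the three representatives share the point $y$, whence $s(\eta)=s(\zeta)=s(\zeta')$ and $E$, $E'$ together with the two germ-witnesses $F,F' \in \CU_y$ all lie in the single ring $\CA_{s(\eta)}$. Put $G = E \cap E' \cap F \cap F'$; this lies in $\CU_y$ because ultrafilters are closed under finite intersection, so $y \in \widehat G$. The openness-of-agreement argument then gives $[\eta,y]_i \in [\eta,G]_i \subseteq [\zeta,E]_i \cap [\zeta',E']_i$, the required basic neighborhood.

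Next I would prove that each $\pi_\zeta$ is a homeomorphism onto $[\zeta,E]_i$. It is a bijection (injectivity again from equality of $X$-coordinates); it is open because $\pi_\zeta(\widehat{G \cap E}) = [\zeta,G \cap E]_i$ is itself basic, hence open in $G_i$; and it is continuous because $\pi_\zeta^{-1}([\eta,G]_i) = \widehat G \cap \{x \in \widehat E : (\zeta,x) \sim_i (\eta,x)\}$, which is open by openness-of-agreement. Since every $E \in \CA_{s(\zeta)}$ is a finite disjoint union of sets from $\CD_{s(\zeta)}$, the set $\widehat E$ is a finite disjoint union of compact-open subsets of the Hausdorff spectrum $X_{s(\zeta)}$, hence compact Hausdorff; therefore so is $[\zeta,E]_i$. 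As the basic sets are open and compact and cover $G_i$, the space is locally compact with a base of compact-open sets.

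Finally I would assemble the groupoid structure. On $[\zeta,E]_i$ the source map restricts to $\pi_\zeta^{-1}$, a homeomorphism onto the open set $\widehat E \subseteq X$, and the range map restricts to $\Phi_\zeta \circ \pi_\zeta^{-1}$, a homeomorphism onto $\Phi_\zeta(\widehat E) = \widehat{\varphi_\zeta(E)}$ (Proposition \ref{prop maps on the spectrum}); thus $s$ and $r$ are local homeomorphisms and each basic set is a compact-open bisection, which will give the ample \'etale structure once the operations are shown continuous. Inversion carries $[\zeta,E]_i$ homeomorphically onto $[\overline\zeta,\varphi_\zeta(E)]_i$ via $\pi_{\overline\zeta} \circ \Phi_\zeta \circ \pi_\zeta^{-1}$, so it is continuous. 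For multiplication, note first that since $\pi_{\zeta\zeta'}$ is a homeomorphism onto the open set $[\zeta\zeta',A(\zeta\zeta')]_i$, the sets $[\zeta\zeta',W]_i$ with $W \in \CU_x$ form a neighborhood base at the product $[\zeta\zeta',x]_i$; given such a target, I would take the right factor in $[\zeta',E']_i$ and the left factor in $[\zeta,\varphi_{\zeta'}(E')]_i$, shrink $E'$ to $E' \cap W$, and observe that the product of $[\zeta,\Phi_{\zeta'}z]_i$ and $[\zeta',z]_i$ is $[\zeta\zeta',z]_i \in [\zeta\zeta',W]_i$ for all $z \in \widehat{E' \cap W}$. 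The main obstacle is exactly this continuity of multiplication, together with the bookkeeping needed to carry out each verification simultaneously for $\sim_1$ and $\sim_2$; in every instance the openness of germ-agreement is the one ingredient that reduces the work to routine manipulation.
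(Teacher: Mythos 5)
Your proposal is correct and follows essentially the same route as the paper's proof: the intersection property for $\CB_i$ via a germ witness $F \in \CU_x$, the identification of each basic set $[\zeta,E]_i$ with the compact Hausdorff set $\widehat{E}$, continuity of the operations via the product formula $[\zeta,\varphi_{\zeta'}(E)]\cdot[\zeta',E]=[\zeta\zeta',E]$ and the formula $[\zeta,E]^{-1}=[\overline{\zeta},\varphi_\zeta(E)]$, and the étale/ample conclusion from $r$ and $s$ restricting to homeomorphisms on basic sets. Your explicit ``openness of germ-agreement'' observation and the neighborhood-base step in the continuity of multiplication are just careful elaborations of what the paper leaves implicit, not a different argument.
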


\begin{proof}
We suppress the subscript indicating which of the two groupoids is involved.  Suppose $[\zeta,E] \cap [\zeta',E'] \not= \varnothing$.  Then $s(\zeta) = s(\zeta')$.  Let $[\zeta,x] \in [\zeta,E] \cap [\zeta',E']$.  Then $x \in \widehat{E} \cap \widehat{E'}$, and there is  $F \in \CA_{s(\zeta)}$ such that $x \in \widehat{F}$ and $\varphi_\zeta|_F = \varphi_{\zeta'}|_F$.  Then $[\zeta,x] \in [\zeta,E \cap E' \cap F] \subseteq [\zeta,E] \cap [\zeta',E']$.  Therefore $\CB$ is a base for a topology on $G$.

Next we show that multiplication and inversion are continuous.  Let $([\zeta,\Phi_{\zeta'}(x)],[\zeta,x]) \in G^2$.  Put $E = A(\zeta \zeta')$.  Then $E \in \CA_{r(x)}$ and $x \in \widehat{E}$, and $[\zeta \zeta', E]$ is a basic neighborhood of $[\zeta \zeta',x]$.  We have that $\varphi_{\zeta'}(E) \in \CA_{r(\zeta')}$ and $\Phi_{\zeta'}(x) \in \widehat{\varphi_{\zeta'}(E)}$.  Then $[\zeta,\varphi_{\zeta'}(E)]$ is a neighborhood of $[\zeta,\Phi_{\zeta'}(x)]$, $[\zeta,E]$ is a neighborhood of $[\eta',x]$, and $[\zeta, \varphi_{\zeta'}(E)] \cdot [\zeta',E] = [\zeta \zeta',E]$.  Therefore multiplication is continuous.  Also, $[\zeta,E]^{-1} = [\overline{\zeta},\phi_\zeta(E)]$, so inversion is continuous.

Finally, since $\Phi_\zeta : \widehat{A(\zeta)} \to \widehat{A(\overline{\zeta})}$ is a homeomorphism, it follows that $r|_{[\zeta,E]} : [\zeta,x] \mapsto \Phi_\zeta(x)$ is injective and open, and similarly for $s$.  Therefore $r$ and $s$ are local homeomorphisms.  Therefore $[\zeta,E]$ is compact and Hausdorff, and hence $G$ is \'etale and ample.
\end{proof}

\begin{Remark}

The results in the last part of \cite[Section 4]{spi_pathcat} giving simplifications for the case of a finitely aligned category of paths hold without change for LCSC's.

\end{Remark}

The next proposition is an explicit version of a parenthetical comment in the first paragraph of \cite{spi_pathcat}, section 6.  In that paper, the comment is valid for categories of paths, but not generally for relative categories of paths (see the remarks under {\it Relative LCSC's} in Section \ref{sec subcategories}).

\begin{Proposition} \label{prop lcsc without inverses}

Let $\Lambda$ be an LCSC without nontrivial inverses.  Then $\sim_2 = \sim_1$ on $\CZ*X$.

\end{Proposition}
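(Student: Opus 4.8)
The plan is to use the already-established inclusion and reduce everything to a statement about fixed ultrafilters. By Lemma \ref{lem comparable equivalence relations} we have $\sim_2\,\subseteq\,\sim_1$ with no hypothesis on inverses, so the whole content of the proposition is the reverse inclusion $\sim_1\,\subseteq\,\sim_2$, and this is where the absence of inverses will be used. So I would start from $(\zeta,x)\sim_1(\zeta',x)$, i.e. $\Phi_\zeta|_{\widehat E}=\Phi_{\zeta'}|_{\widehat E}$ for some $E\in\CU_x$. For the two restrictions to be defined on $\widehat E$ we need $\widehat E$ inside the domains $\widehat{A(\zeta)}$ and $\widehat{A(\zeta')}$, which forces $E\subseteq A(\zeta)\cap A(\zeta')$. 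The goal is then to produce $F\in\CU_x$ with $\varphi_\zeta|_F=\varphi_{\zeta'}|_F$, and I would aim to show that $F=E$ itself works.

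The first key step is to evaluate the two maps on the fixed ultrafilters, which are available by Lemma \ref{lem fixed ultrafilters are dense}. For $\alpha\in E$ we have $E\in\CU_{\{\alpha\}}$, so $\CU_{\{\alpha\}}\in\widehat E$, and the claim I must verify is that $\Phi_\zeta$ sends this fixed ultrafilter to the fixed ultrafilter at its image, namely $\Phi_\zeta(\CU_{\{\alpha\}})=\CU_{\{\varphi_\zeta(\alpha)\}}$ whenever $\alpha\in A(\zeta)$. Here I would use the remark following Definition \ref{def groupoids}, that $\Phi_\zeta(\CU_{\{\alpha\}})$ is the ultrafilter generated by the filter base $\varphi_\zeta(\CU_{\{\alpha\}})$: every set in that base is the $\varphi_\zeta$-image of a set containing $\alpha$, hence contains $\varphi_\zeta(\alpha)$, so the generated filter is contained in $\CU_{\{\varphi_\zeta(\alpha)\}}$; as both are ultrafilters they coincide. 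Applying this to $\zeta$ and $\zeta'$ and using $\Phi_\zeta(\CU_{\{\alpha\}})=\Phi_{\zeta'}(\CU_{\{\alpha\}})$ yields $\CU_{\{\varphi_\zeta(\alpha)\}}=\CU_{\{\varphi_{\zeta'}(\alpha)\}}$ for every $\alpha\in E$.

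The second key step is to decode when two fixed ultrafilters agree, and I claim $\CU_{\{\beta\}}=\CU_{\{\beta'\}}$ if and only if $\beta\approx\beta'$. For the forward-to-equal direction of this claim, if $\beta\approx\beta'$ then $\beta'\in\beta\Lambda^{-1}\subseteq\beta\Lambda\subseteq A(\zeta)$ whenever $\beta\in A(\zeta)$ by Remark \ref{r.zigzag}\eqref{r.zigzag.f}, and symmetrically, so $\beta$ and $\beta'$ belong to exactly the same zigzag sets; since each $\CA_v$ is generated as a ring of sets by the zigzag sets, they belong to the same members of $\CA$, whence the two fixed ultrafilters coincide. Conversely, $\CU_{\{\beta\}}=\CU_{\{\beta'\}}$ applied to the zigzag set $\beta\Lambda=A(\beta,r(\beta))$ of Remark \ref{r.zigzag}\eqref{r.zigzag.b} (and its counterpart for $\beta'$) forces $\beta'\in\beta\Lambda$ and $\beta\in\beta'\Lambda$, and a short left-cancellation argument shows the connecting morphisms are mutually inverse, so $\beta\approx\beta'$.

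Finally I would invoke the hypothesis: if $\Lambda$ has no nontrivial inverses then $\Lambda^{-1}=\Lambda^0$, so $\beta\Lambda^{-1}=\{\beta\}$ and $\approx$ is equality. The second step then upgrades $\CU_{\{\varphi_\zeta(\alpha)\}}=\CU_{\{\varphi_{\zeta'}(\alpha)\}}$ to $\varphi_\zeta(\alpha)=\varphi_{\zeta'}(\alpha)$ for every $\alpha\in E$, i.e. $\varphi_\zeta|_E=\varphi_{\zeta'}|_E$, giving $(\zeta,x)\sim_2(\zeta',x)$ and hence $\sim_1\,\subseteq\,\sim_2$; combined with Lemma \ref{lem comparable equivalence relations} this is the desired equality. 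I expect the main obstacle to be the bookkeeping in the claim $\Phi_\zeta(\CU_{\{\alpha\}})=\CU_{\{\varphi_\zeta(\alpha)\}}$ — checking that passing to the generated ultrafilter does not enlarge the fixed ultrafilter — together with the characterization of equality of fixed ultrafilters, since that characterization is exactly the point at which the inverse structure of $\Lambda$ enters.
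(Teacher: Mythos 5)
Your proposal is correct and follows essentially the same route as the paper's proof: reduce the reverse inclusion $\sim_1\,\subseteq\,\sim_2$ to the action on fixed ultrafilters, use $\Phi_\zeta(\CU_{\{\alpha\}})=\CU_{\{\varphi_\zeta(\alpha)\}}$, and then show via left cancellation that $\CU_{\{\lambda\}}=\CU_{\{\lambda'\}}$ forces $\lambda\approx\lambda'$, which is equality when there are no nontrivial inverses. The only differences are presentational (you argue directly and spell out the generated-ultrafilter claim and the $\approx$-characterization, where the paper argues by contradiction and asserts the former), so this matches the paper's argument.
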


\begin{proof}
We already observed in Lemma \ref{lem comparable equivalence relations} that $\sim_2 \subseteq \sim_1$.  Now let $(\zeta,x) \sim_1 (\zeta',x')$.  Then $x' = x$ and $\Phi_\zeta|_{\widehat{E}} = \Phi_{\zeta'}|_{\widehat{E}}$ for some $E \in \CU_x$.  We will show that $\varphi_\zeta|_E = \varphi_{\zeta'}|_E$.  Suppose not, i.e. suppose that there is $\alpha \in E$ such that $\varphi_\zeta(\alpha) \not= \varphi_{\zeta'}(\alpha)$.  Recall the notation for fixed ultrafilters from Definition \ref{def fixed ultrafilter}. Note that $\Phi_\zeta(\CU_{\{\alpha\}}) = \CU_{\{\varphi(\alpha)\}}$.  We will show generally that if $\lambda \not= \lambda'$ then $\CU_{\{\lambda\}} \not= \CU_{\{\lambda'\}}$.  Then it will follow that $\Phi_\zeta(\CU_{\{\alpha\}}) \not= \Phi_{\zeta'}(\CU_{\{\alpha\}})$, contradicting the assumption that $\Phi_\zeta|_{\widehat{E}} = \Phi_{\zeta'}|_{\widehat{E}}$.

Let $\lambda \not= \lambda'$.  Note that $\lambda \Lambda \in \CU_{\{\lambda\}}$.  If $\CU_{\{\lambda\}} = \CU_{\{\lambda'\}}$ then $\lambda \Lambda \in \CU_{\{\lambda'\}}$, and hence $\lambda' \in \lambda \Lambda$.  Similarly we have $\lambda \in \lambda' \Lambda$.  Therefore there are $\mu$, $\mu'$ such that $\lambda' = \lambda \mu$ and $\lambda = \lambda' \mu'$.  Then $\lambda = \lambda \mu \mu'$ and $\lambda' = \lambda' \mu' \mu$.  By left cancellation we obtain $\mu \mu' = s(\lambda)$ and $\mu' \mu = s(\lambda')$.  Since we are assuming that $\Lambda$ has no inverses it follows that $\mu = \mu' = s(\lambda) = s(\lambda')$, and hence that $\lambda = \lambda'$.  This finishes the proof.
\end{proof}

%\begin{Example} \label{example different groupoids}

%(put here an example of a relative LCSC such that $\sim_2 \not= \sim_1$ on $G(\Lambda_0,\Lambda)$)

%\end{Example}

%The definitions and results of \cite{spi_pathcat}, section 4 through Definition 4.9, hold without other changes.  We now give a new proof of \cite{spi_pathcat}, Proposition 4.10, for $\sim_2$.  Recall that $\CB = \{[\zeta,E] : E \in \CA_{s(\zeta)} \}$.

%Now the rest of \cite{spi_pathcat}, section 4, holds as written.  

We return to Example \ref{example group old}, but using $\sim_2$.

\begin{Example} \label{example group new}

Let $\Lambda$ be a (discrete) group, viewed as an LCSC with one object, the unit 1.  We consider the groupoid $G_2(\Lambda)$.  For $\zeta = (\alpha_1, \beta_1, \ldots, \alpha_n, \beta_n)$ we have $\varphi_\zeta = \alpha_1^{-1} \beta_1 \cdots \alpha_n^{-1} \beta_n \cdot$ on $A(\zeta) = \Lambda$.  Then $(\zeta,1) \sim_2 (\zeta',1)$ if and only if $\varphi_\zeta = \varphi_{\zeta'}$, that is if and only if $\alpha_1^{-1} \beta_1 \cdots \alpha_n^{-1} \beta_n = (\alpha_1')^{-1} \beta_1' \cdots (\alpha_{n'}')^{-1} \beta_{n'}'$.  Therefore $G_2(\Lambda) = \Lambda$.

\end{Example}

Examples \ref{example group old} and \ref{example group new} demonstrate why we use $G_2$ rather than $G_1$ in the next definition (though for categories of paths they coincide).

\begin{Definition} \label{def toeplitz algebra}

Let $\Lambda$ be an LCSC.  The {\it Toeplitz algebra} of $\Lambda$ is the $C^*$-algebra $\CT(\Lambda) := C^*(G_2(\Lambda))$.

\end{Definition}

\section{The spectrum of an LCSC}
\label{sec spectrum}

The general description of the space $X = G_i^0$ given at the beginning of Section \ref{sec groupoid} can be made more precise.  In \cite[Section 7]{spi_pathcat} this is done for countable finitely aligned categories of paths.  Those results hold without change for finitely aligned LCSC's. (However we are able to prove some of these results without the hypothesis of countability used in \cite{spi_pathcat}, see Theorem \ref{thm spectrum finitely aligned}.)  We now give a more precise description for the general nonfinitely aligned case.  Let $\Lambda$ be an LCSC.  For $v \in \Lambda^0$ we let $\CD_v^{(0)}$ denote the collection of nonempty zigzag sets associated to $\CZ_\Lambda v$ (as in Definition \ref{def lcsc}).  Recall from Remark \ref{r.zigzag}\eqref{r.zigzag.e} that $\CD^{(0)}_v$ is closed under (nonempty) intersection.  We will show that there is a bijective correspondence between the set of ultrafilters in $\CA_v$ and certain filters in $\CD_v^{(0)}$.  As was the case in \cite[Section 7]{spi_pathcat} with the description of ultrafilters by means of directed hereditary subsets of $\Lambda$, it will be useful to have a suitable characterization of ultrafilters in the case of nonfinitely aligned LCSC's.

\begin{Definition}
\label{d.dirheredset}
A nonempty collection $C \subseteq \CD_v^{(0)}$ is a \textit{filter} if it is closed under intersection, and under the formation of supersets.

\end{Definition}

We modify Exel's terminology in the following (\cite[Section 11]{exel08}.

\begin{Definition}
\label{d.cover}
Let $C$ be a filter in $\CD_v^{(0)}$, and let $\CF \subseteq \CD_v^{(0)}$ be finite.  We say that $\CF$ \textit{covers} $C$ if there is $E \in C$ such that $E \subseteq \cup \CF$.

\end{Definition}

Now we can identify the spectrum of an LCSC.

\begin{Definition}
\label{d.lambdastar}
We let $v \Lambda^*$ denote the collection of filters $C$ in $\CD_v^{(0)}$ such that for any finite collection $\CF \subseteq \CD_v^{(0)}$, if $\CF \cap C = \varnothing$, then $\CF$ does not cover $C$.  For $C \in v \Lambda^*$, we define $\CU_C^0 = \{ E \setminus \cup \CF : E \in C,\ \CF \subseteq \CD_v^{(0)} \text{ finite, } \CF \text{ does not cover } C \}$.
  
\end{Definition}

%\begin{Example}
%\label{e.notultrafilter}
%\end{Example}

\begin{Proposition}
\label{p.ultrafilterbase}
Let $C \in v \Lambda^*$.  Then $\CU_C^0$ is an ultrafilter base in $\CA_v$.

\end{Proposition}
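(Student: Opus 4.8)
The plan is to verify directly that $\CU_C^0$ satisfies the characterization of ultrafilter bases recalled in Section \ref{sec groupoid}: first that $\CU_C^0$ is a filter base (a nonempty family of nonempty members of $\CA_v$ whose pairwise intersections each contain a third member), and then that for every $E \in \CA_v$ either some member of $\CU_C^0$ is contained in $E$, or some member is disjoint from $E$. Throughout I will use the defining property of $v\Lambda^*$ (Definition \ref{d.lambdastar}) in its contrapositive form: if a finite $\CF \subseteq \CD_v^{(0)}$ covers $C$, then $\CF \cap C \neq \varnothing$, i.e. some element of $\CF$ already lies in $C$.

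First I would record the structural facts. Each member $E \setminus \cup\CF$ of $\CU_C^0$ is nonempty, since $E \in C$ together with the requirement that $\CF$ not cover $C$ forces $E \not\subseteq \cup\CF$. The family is nonempty because every $E \in C$ occurs with $\CF = \varnothing$ (which never covers $C$, as the members of $C$ are nonempty). For the intersection property, given $E_1 \setminus \cup\CF_1$ and $E_2 \setminus \cup\CF_2$ in $\CU_C^0$, their intersection equals $(E_1 \cap E_2)\setminus\cup(\CF_1\cup\CF_2)$; here $E_1\cap E_2 \in C$ by closure under intersection (Definition \ref{d.dirheredset}, using Remark \ref{r.zigzag}\eqref{r.zigzag.e}), and $\CF_1\cup\CF_2$ does not cover $C$ — for if it did, the $v\Lambda^*$ property would place some $F$ in $(\CF_1\cup\CF_2)\cap C$, say $F \in \CF_1$, and then $F\in C$ with $F\subseteq\cup\CF_1$ would show $\CF_1$ covers $C$, contradicting $E_1\setminus\cup\CF_1 \in \CU_C^0$. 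Thus the intersection is itself a member of $\CU_C^0$, so $\CU_C^0$ is a filter base.

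For the ultrafilter property I would first prove the dichotomy for a single set $D = E_0 \setminus \cup\CF \in \CD_v$ (with $\CF$ finite and $\cup\CF \subseteq E_0$) and then bootstrap, using the description in Definition \ref{def lcsc} of every element of $\CA_v$ as a finite disjoint union of such $D$'s. The casework for $D$ is: (i) if $E_0 \notin C$, then $\{E_0\}$ cannot cover $C$ (else superset-closure of $C$ would force $E_0 \in C$), so for any $E'' \in C$ the member $E''\setminus E_0$ is disjoint from $E_0 \supseteq D$; (ii) if $E_0 \in C$ and $\CF$ covers $C$, then by the $v\Lambda^*$ property some $F \in \CF$ lies in $C$, and this $F \in \CU_C^0$ is disjoint from $D$; (iii) if $E_0 \in C$ and $\CF$ does not cover $C$, then $D$ is itself a member of $\CU_C^0$ and contains itself. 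Now for arbitrary $E = \bigsqcup_{j} D_j \in \CA_v$: if some $D_j$ contains a member of $\CU_C^0$ then so does $E$; otherwise each $D_j$ is, by the dichotomy just proved, disjoint from a member $F_j \in \CU_C^0$, and the filter-base intersection $F = \bigcap_j F_j \in \CU_C^0$ is then disjoint from $E$.

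I expect the main obstacle to be the step that converts a covering relation into membership in $C$ — case (ii) above and the intersection step in the filter-base verification — since this is precisely where the delicate defining property of $v\Lambda^*$ must be invoked, and where one must check that ``$\CF$ does not cover $C$'' is exactly the right hypothesis to guarantee simultaneously the nonemptiness of the difference sets and the correct disjointness in the dichotomy. Once the single-set dichotomy is established, the passage to finite disjoint unions is routine, using the filter-base intersection property and the description of $\CA_v$ in Definition \ref{def lcsc}.
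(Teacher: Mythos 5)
Your proof is correct and follows essentially the same route as the paper's: verify the filter-base property via closure of $C$ under intersection together with the defining property of $v\Lambda^*$, then establish the ultrafilter-base dichotomy by decomposing an arbitrary element of $\CA_v$ into a disjoint union of sets in $\CD_v$ and running the same case analysis (whether $\CF$ covers $C$, and whether the ambient set lies in $C$) on each piece. The minor variations — using $E''\setminus E_0$ for an arbitrary $E''\in C$ where the paper uses $v\Lambda\setminus E$, and invoking the contrapositive of the $v\Lambda^*$ property where the paper can quote the definition of covering directly — are cosmetic.
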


\begin{proof}

The elements of $\CU_C^0$ are nonempty by construction.  If $A_i = E_i \setminus (\cup \CF_i) \in \CU_C^0$ for $i = 1$, 2, then $A_1 \cap A_2 = (E_1 \cap E_2) \setminus (\CF_1 \cup \CF_2)$.  Moreover, $E_1 \cap E_2 \in C$.  Since $\CF_i \cap C = \varnothing$ for $i = 1$, 2, then $(\CF_1 \cup \CF_2) \cap C = \varnothing$.  Therefore $\CF_1 \cup \CF_2$ does not cover $C$, and so $A_1 \cap A_2 \in \CU_C^0$.  Hence $\CU_C^0$ is a filter base.  To see that it is an ultrafilter base, let $A \in \CA_v$ be such that $A$ does not contain any element of $\CU_C^0$.  Write $A = \sqcup_{i=1}^k A_i$, where $A_i \in \CD_v$.  Then $A_i = E_i \setminus (\cup \CF_i)$, where $\{E_i\} \cup \CF_i \subseteq \CD_v^{(0)}$ with $\cup \CF_i \subseteq E_i$.  Now it suffices to find $B_i \in \CU_C^0$ for $i = 1$, $\ldots$, $k$, such that $B_i \cap A_i = \varnothing$; (for then, $B = \cap_i B_i \in \CU_C^0$, and $B \cap (\cup_i A_i) = \varnothing$).  So we may assume that $A = E \setminus \cup \CF$.

If $\CF$ covers $C$, then there is $G \in C$ with $G \subseteq \cup \CF$.  Then $G \in \CU_C^0$ and $G \cap A = \varnothing$.  So let us assume that $\CF$ does not cover $C$.  Since $E \setminus (\cup \CF) \not\in \CU_C^0$, it follows that $E \not\in C$.  But then $G \not\subseteq E$ for all $G \in C$.  Thus $\{ E \}$ does not cover $C$.  Then $v\Lambda \setminus E \in \CU_C^0$ and $(v\Lambda \setminus E) \cap A = \varnothing$.
\end{proof}

\begin{Definition}
\label{d.ultrafilter}
For $C \in v \Lambda ^*$ let $\CU_C$ be the ultrafilter in $\CA_v$ determined by the ultrafilter base $\CU_C^0$.

\end{Definition}

\begin{Proposition}
\label{p.ultrafilter}
The map $C \mapsto \CU_C$ is a bijection from $v \Lambda^*$ to the set of ultrafilters in $\CA_v$. Moreover, the inverse map is given by $\CU \mapsto \CU \cap \CD_v^{(0)}$.

\end{Proposition}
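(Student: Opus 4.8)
The plan is to prove that the two maps $C \mapsto \CU_C$ and $\CU \mapsto \CU \cap \CD_v^{(0)}$ are mutually inverse bijections. The forward map is already well defined by Proposition \ref{p.ultrafilterbase} and Definition \ref{d.ultrafilter}, so I would split the argument into two halves: first, for $C \in v\Lambda^*$, show that $\CU_C \cap \CD_v^{(0)} = C$; second, for an arbitrary ultrafilter $\CU$ in $\CA_v$, show that $C := \CU \cap \CD_v^{(0)}$ lies in $v\Lambda^*$ and satisfies $\CU_C = \CU$. The first half shows that $\CU \mapsto \CU \cap \CD_v^{(0)}$ is a left inverse (hence $C \mapsto \CU_C$ is injective), and the second gives surjectivity together with the fact that the same map is a right inverse; together these yield the proposition.

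For the first half, the inclusion $C \subseteq \CU_C \cap \CD_v^{(0)}$ is easy: the empty family $\CF = \varnothing$ cannot cover $C$ since every member of $C$ is nonempty, so each $E \in C$ equals $E \setminus \cup\varnothing \in \CU_C^0 \subseteq \CU_C$. For the reverse inclusion I would take $D \in \CD_v^{(0)}$ with $D \notin C$ and show $D \notin \CU_C$. Here I use that $C$ is a filter closed under supersets in $\CD_v^{(0)}$: if the singleton $\{D\}$ covered $C$, then some $E \in C$ with $E \subseteq D$ would force $D \in C$, a contradiction, so $\{D\}$ does not cover $C$. Since also $v\Lambda \in C$ (apply superset closure to any member of $C$), the set $v\Lambda \setminus D$ lies in $\CU_C^0$; as it is disjoint from $D$ and $\CU_C$ is a proper filter, $D \notin \CU_C$.

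For the second half I would first check that $C = \CU \cap \CD_v^{(0)}$ is a filter, using that $\CD_v^{(0)}$ is closed under intersection (Remark \ref{r.zigzag}\eqref{r.zigzag.e}) and that $\CU$ is a filter; it is nonempty because $v\Lambda \in \CU$. To see $C \in v\Lambda^*$, suppose $\CF \subseteq \CD_v^{(0)}$ is finite with $\CF \cap C = \varnothing$; then each $F \in \CF$ lies outside $\CU$, so its complement $v\Lambda \setminus F$ lies in $\CU$, and if $\CF$ covered $C$ we would have some $E \in C \subseteq \CU$ with $E \subseteq \cup\CF$, forcing $\cup\CF \in \CU$ while $\bigcap_{F \in \CF}(v\Lambda \setminus F) = v\Lambda \setminus \cup\CF \in \CU$ as well, an impossible pair for a proper filter. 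Finally, to obtain $\CU_C = \CU$ I would show $\CU_C^0 \subseteq \CU$: given $E \setminus \cup\CF \in \CU_C^0$ with $E \in C$ and $\CF$ not covering $C$, if this set were not in $\CU$ then its complement $(v\Lambda \setminus E) \cup \cup\CF$ would lie in $\CU$; primeness of the ultrafilter together with $E \in \CU$ would force some $F \in \CF$ with $F \in \CU \cap \CD_v^{(0)} = C$, and then $F \subseteq \cup\CF$ would exhibit $\CF$ as covering $C$, a contradiction. Thus $\CU_C \subseteq \CU$, and since both are ultrafilters (maximal filters) they are equal.

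The technical heart, and the place I expect to spend the most care, is the membership passage in the final step: I must use the ``prime'' and complement properties of ultrafilters in the field of sets $\CA_v$ (with unit $v\Lambda$), which are not assumed directly but follow from the stated characterization that for each $E \in \CA_v$ either $E \in \CU$ or some $F \in \CU$ has $E \cap F = \varnothing$. Establishing that $F \notin \CU$ forces $v\Lambda \setminus F \in \CU$, and that $P \cup Q \in \CU$ forces $P \in \CU$ or $Q \in \CU$, is the lemma-level fact underlying both the verification $C \in v\Lambda^*$ and the inclusion $\CU_C^0 \subseteq \CU$; the subtle interplay between the combinatorial notion of ``$\CF$ covers $C$'' and genuine set-membership in $\CU$ is where the argument really turns.
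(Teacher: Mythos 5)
Your proof is correct and follows essentially the same route as the paper's: both arguments hinge on showing that for an arbitrary ultrafilter $\CU$ the trace $C = \CU \cap \CD_v^{(0)}$ lies in $v\Lambda^*$ and regenerates $\CU$ (via $\CU_C \subseteq \CU$ plus maximality of the ultrafilter $\CU_C$), together with the observation that a singleton $\{D\}$ with $D \notin C$ cannot cover $C$. The only differences are organizational --- you prove $\CU_C \cap \CD_v^{(0)} = C$ in place of the paper's direct injectivity argument, and you route the ultrafilter manipulations through complements and primeness in the Boolean algebra $\CA_v$ where the paper works directly with disjoint witnesses from the stated ultrafilter characterization --- and these are immaterial.
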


\begin{proof}

Let $\CU \subseteq \CA_v$ be an ultrafilter.  Let $C = \CU \cap \CD_v^{(0)}$.  Then $C$ is a filter in $\CD_v^{(0)}$.  Let $\CF \subseteq \CD_v^{(0)} \setminus C$ be finite.  Then $\CF \cap \CU = \varnothing$.  Since $\CU$ is an ultrafilter, for each $F \in \CF$ there is $A_F \in \CU$ such that $F \cap A_F = \varnothing$.  Then $A = \cap_{F \in \CF} A_F \in \CU$, and $A \cap (\cup \CF) = \varnothing$.  Then $\cup \CF \not\in \CU$, and hence $\CF$ does not cover $C$.  Therefore $C \in v \Lambda^*$.

Now the same argument shows that for any finite collection $\CF \subseteq \CD_v^{(0)}$ that does not cover $C$, $\cup \CF \not\in \CU$, and hence for any $E \in C$, we have $E \setminus (\cup \CF) \in \CU$.  This means that $\CU_C \subseteq \CU$.  Since $\CU_C$ is an ultrafilter, it follows that $\CU = \CU_C$.  Therefore the assignment in the statement is onto.

To see that it is one-to-one, let $C_1 \not= C_2$ in $v \Lambda^*$.  We may assume, e.g., that there is $E \in C_1 \setminus C_2$.  Since $C_2$ is closed under the formation of supersets, we know that $\{ E \}$ does not cover $C_2$, and hence that $E \not\in \CU_{C_2}$.  Since $E \in \CU_{C_1}$, we have $\CU_{C_1} \not= \CU_{C_2}$.
\end{proof}

It follows from Proposition \ref{p.ultrafilter} that $v \Lambda^* = X$, the unit space of the groupoid $G(\Lambda)$.

It was shown in \cite[Section 7]{spi_pathcat} that if $\Lambda$ is a finitely aligned category of paths then $X$ can be identified with the collection of all directed hereditary subsets of $\Lambda$.  In fact more was proved there, and the proofs relied on the assumption that $\Lambda$ be countable.  Here we use the above results for the general case of an LCSC to prove the above mentioned identification without using countability. We recall \cite[Definition 7.1]{spi_pathcat}.

\begin{Definition}

Let $\Lambda$ be an LCSC.  A nonempty subset $H \subseteq \Lambda$ is {\it directed} if for all $\alpha$, $\beta \in H$, $H \cap \alpha\Lambda \cap \beta\Lambda \not= \varnothing$, and $H$ is {\it hereditary} if $[\alpha] \subseteq H$ for all $\alpha \in H$.

\end{Definition}

\begin{Theorem} \label{thm spectrum finitely aligned}
(c.f. \cite[Theorem 7.6]{spi_pathcat}) Let $\Lambda$ be a finitely aligned LCSC, and let $v \in \Lambda^0$.

\begin{enumerate}

\item \label{thm spectrum finitely aligned one} For $C \in v \Lambda^*$ let $H(C) = \{ \alpha \in v\Lambda : \alpha \Lambda \in C \}$.  Then $C \mapsto H(C)$ is a bijection between $v \Lambda^*$ and the collection of all directed hereditary subsets of $v\Lambda$.  The inverse map is given by $H \mapsto \{E \in \CD^{(0)}_v : E \supseteq \alpha \Lambda \text{ for some } \alpha \in H \}$.  Moreover, $C_1 \subseteq C_2$ if and only if $H(C_1) \subseteq H(C_2)$.

\item \label{thm spectrum finitely aligned two} For $C \in v \Lambda^*$, $\CU_C = \{ A \in \CA_v : A \supseteq H(C) \cap \alpha \Lambda \text{ for some } \alpha \in H(C) \}$.

\end{enumerate}

\end{Theorem}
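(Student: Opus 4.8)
The plan is to prove both parts by leveraging the single structural consequence of finite alignment that every zigzag set decomposes as a finite union of principal right ideals $\alpha\Lambda$ (Corollary \ref{cor finitely aligned zigzag set}), together with the dictionary between ``covering'' and ``membership'' that is built into the definition of $v\Lambda^*$. Throughout I would freely use Remark \ref{r.zigzag}\eqref{r.zigzag.f} (if $\alpha \in A(\zeta)$ then $\alpha\Lambda \subseteq A(\zeta)$), Remark \ref{r.zigzag}\eqref{r.zigzag.e} (zigzag sets are closed under intersection), and the fact that each $\alpha\Lambda = A(\alpha, r(\alpha))$ lies in $\CD_v^{(0)}$ (Remark \ref{r.zigzag}\eqref{r.zigzag.b}).

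For part \eqref{thm spectrum finitely aligned one} I would first verify that $H(C)$ is directed and hereditary. Heredity is immediate: if $\alpha \in H(C)$ and $\alpha \in \beta\Lambda$, then $\alpha\Lambda \subseteq \beta\Lambda$, and since $C$ is closed under supersets, $\beta\Lambda \in C$, so $\beta \in H(C)$. For directedness, given $\alpha$, $\beta \in H(C)$, closure of $C$ under intersection puts the (necessarily nonempty) zigzag set $\alpha\Lambda \cap \beta\Lambda$ in $C$; writing it via Lemma \ref{lem finitely aligned} as $\bigcup_{\eps \in \bigvee\{\alpha,\beta\}}\eps\Lambda$ exhibits a finite family $\CF \subseteq \CD_v^{(0)}$ that covers $C$, so the defining property of $v\Lambda^*$ forces some $\eps\Lambda \in C$, i.e. $\eps \in H(C) \cap \alpha\Lambda \cap \beta\Lambda$. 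For the inverse I would set $C(H) = \{E \in \CD_v^{(0)} : E \supseteq \alpha\Lambda \text{ for some } \alpha \in H\}$ and check it is a filter (the intersection clause uses directedness of $H$) lying in $v\Lambda^*$. Membership in $v\Lambda^*$ is where finite alignment enters decisively: if $\CF$ covers $C(H)$ via $\alpha\Lambda \subseteq \cup\CF$ with $\alpha \in H$, decomposing each $F \in \CF$ into principal ideals locates a piece $\delta\Lambda \subseteq F$ with $\alpha \in \delta\Lambda$, whence heredity gives $\delta \in H$ and thus $F \in \CF \cap C(H)$. That the two assignments are mutually inverse and order-preserving then follows formally, again using the decomposition of a zigzag set $E \in C$ into principal ideals together with the cover property to produce a generator of $E$ lying in $H(C)$.

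For part \eqref{thm spectrum finitely aligned two}, write $\CV$ for the right-hand side. I would show $\CU_C^0 \subseteq \CV$ and that $\CV$ is closed under supersets, giving $\CU_C \subseteq \CV$; then $\CV \subseteq \CU_C$ by maximality of the ultrafilter $\CU_C$. The first inclusion is the crux: for $B = E \setminus \cup\CF \in \CU_C^0$, pick (using $C = C(H(C))$ from part \eqref{thm spectrum finitely aligned one}) some $\alpha_0 \in H(C)$ with $\alpha_0\Lambda \subseteq E$; then $H(C) \cap \alpha_0\Lambda \subseteq B$, because any $\beta \in H(C)$ lying in some $F \in \CF$ would satisfy $\beta\Lambda \subseteq F \subseteq \cup\CF$ by Remark \ref{r.zigzag}\eqref{r.zigzag.f}, contradicting that $\CF$ does not cover $C$. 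For $\CV \subseteq \CU_C$, if $A \in \CV$ but $A \notin \CU_C$, then $\CU_C$ contains some $B \in \CU_C^0$ disjoint from $A$; applying the inclusion just proved, $B \supseteq H(C) \cap \beta\Lambda$ for some $\beta \in H(C)$, and directedness of $H(C)$ produces $\gamma \in H(C) \cap \alpha\Lambda \cap \beta\Lambda \subseteq A \cap B$, a contradiction.

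The main obstacle is the surjectivity/inverse part of \eqref{thm spectrum finitely aligned one} --- specifically proving $C(H) \in v\Lambda^*$ and $C(H(C)) = C$. Both hinge on turning the purely combinatorial ``cover'' condition into a statement about principal ideals, and this conversion is exactly what finite alignment (Corollary \ref{cor finitely aligned zigzag set}) together with the heredity of $H$ provide; once that translation is in place, the remaining verifications are routine bookkeeping.
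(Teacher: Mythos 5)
Your proposal is correct. Part \eqref{thm spectrum finitely aligned one} and the containment $\CU_C \subseteq \CV$ in part \eqref{thm spectrum finitely aligned two} (writing $\CV$ for the right-hand side there) track the paper's own proof almost step for step: directedness of $H(C)$ comes from Lemma \ref{lem finitely aligned} plus the cover-to-membership property defining $v\Lambda^*$, heredity from superset-closure of $C$, the mutual inverses from Corollary \ref{cor finitely aligned zigzag set}, and $\CU_C^0 \subseteq \CV$ from the observation that an element of $H(C)$ lying in some $F \in \CF$ would force $\CF$ to cover $C$. (A cosmetic difference: to show that a family covering $C(H)$ must meet $C(H)$, you decompose each $F \in \CF$ into principal ideals and invoke heredity of $H$, while the paper gets $\alpha\Lambda \subseteq F$ in one step from Remark \ref{r.zigzag}\eqref{r.zigzag.f}; your detour through finite alignment is harmless but not needed at that point.) The genuine divergence is in the reverse containment $\CV \subseteq \CU_C$. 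The paper proceeds concretely: it decomposes $A = \bigsqcup_j A_j$ with $A_j \in \CD_v$, picks the piece containing $\alpha$, writes it as $E \setminus \bigcup_{i=1}^n F_i$, uses directedness of $H(C)$ to show the $F_i$ cannot cover $C$, and thereby exhibits $\alpha\Lambda \setminus \bigcup_{i=1}^n F_i$ as an element of $\CU_C^0$ lying below that piece. You argue abstractly instead: if $A \notin \CU_C$, the ultrafilter dichotomy gives some $B \in \CU_C^0$ disjoint from $A$, and then the already-proved inclusion $\CU_C^0 \subseteq \CV$ together with directedness of $H(C)$ produces a point of $A \cap B$, a contradiction. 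Your version is shorter and dispenses with the $\CD_v$-decomposition bookkeeping entirely; the paper's version buys an explicit element of the ultrafilter base beneath $A$, but both arguments rest on the same two ingredients (directedness and the forward containment), so nothing essential is lost either way.
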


\begin{proof}
\eqref{thm spectrum finitely aligned one}: Let $C \in v \Lambda^*$ and define $H(C)$ as in the statement.  Let $\alpha$, $\beta \in H(C)$.  Then $\alpha \Lambda$, $\beta \Lambda \in C$. Since $C$ is a filter, $\alpha\Lambda \cap \beta\Lambda \in C$. We may write $\alpha \Lambda \cap \beta \Lambda = \bigcup_{\gamma \in \alpha \vee \beta} \gamma \Lambda$.  Then $\{ \gamma \Lambda : \gamma \in \alpha \vee \beta \}$ covers $C$.  Since $C \in v \Lambda^*$ there is $\gamma \in \alpha \vee \beta$ such that $\gamma \Lambda \in C$.  Then $\gamma \in H(C)$.  Therefore $H(C)$ is directed.  If $\delta \in [\alpha]$ then $\alpha \Lambda \subseteq \delta \Lambda$.  Since $C$ is a filter, $\delta \Lambda \in C$, and hence $\delta \in H(C)$. Therefore $H(C)$ is hereditary.

Let $C_1$, $C_2 \in v \Lambda^*$ and suppose that $H(C_1) = H(C_2)$.  Let $E \in C_1$.  By Corollary \ref{cor finitely aligned zigzag set} there are $\alpha_1$, $\ldots$, $\alpha_k \in v\Lambda$ such that $E = \bigcup_{i=1}^k \alpha_i \Lambda$.  Since $C \in v \Lambda^*$ there is some $i$ such that $\alpha_i \Lambda \in C_1$.  Then $\alpha_i \in H(C_1) = H(C_2)$, and hence $\alpha_i \Lambda \in C_2$.  But then $E \in C_2$ since $C_2$ is a filter.  Therefore $C_1 \subseteq C_2$.  By symmetry, $C_1 = C_2$, and the map is injective.

Now let $H \subseteq v\Lambda$ be directed and hereditary. Put $C = \{E \in \CD^{(0)}_v : E \supseteq \alpha\Lambda \text{ for some } \alpha \in H\}$.  We show that $C \in v \Lambda^*$.  Let $E_1$, $E_2 \in C$.  There are $\alpha_1$, $\alpha_2 \in H$ such that $\alpha_i \Lambda \subseteq E_i$.  Then $\alpha_1 \Lambda \cap \alpha_2 \Lambda \subseteq E_1 \cap E_2$.  Since $H$ is directed there is $\gamma \in \alpha_1 \vee \alpha_2$ such that $\gamma \in H$.  Then $\gamma \Lambda \subseteq E_1 \cap E_2$, so $E_1 \cap E_2 \in C$.  Therefore $C$ is closed under intersection.  Let $F \in \CD^{(0)}_v$ with $E_1 \subseteq F$.  Then $F \supseteq \alpha_1\Lambda$, so $F \in C$.  Therefore $C$ is a filter.  Now let $F_1$, $\ldots$, $F_k \in \CD^{(0)}_v$ cover $C$.  There is $E \in C$ such that $E \subseteq \bigcup_{i=1}^k F_k$.  Since $E \in C$ there is $\alpha \in H$ such that $\alpha \Lambda \subseteq E$.  Then $\alpha \in F_i$ for some $i$.  But then $\alpha \Lambda \subseteq F_i$, so $F_i \in C$.  Therefore $C \in v \Lambda^*$.  It is clear that $H = H(C)$.  Therefore the map is surjective, with inverse as described in \eqref{thm spectrum finitely aligned one}.

Finally, for $C_1$, $C_2 \in v \Lambda^*$ it is clear that $C_1 \subseteq C_2$ if and only if $H(C_1) \subseteq H(C_2)$.

\noindent
\eqref{thm spectrum finitely aligned two} Let $C \in v \Lambda^*$. We begin with the forward containment.  $\CU_C$ is generated by the filter base $\CU^0 = \{E \setminus \bigcup_{i=1}^n F_i : E \in C,\ F_1, \ldots, F_n \in \CD^{(0)}_v \text{ do not cover } C \}$.  Note that if $E \setminus \bigcup_{i=1}^n F_i \in \CU^0$ then $(\bigcup_{i=1}^n F_i) \cap H(C) = \varnothing$.  The reason is that if $\alpha \in F_i \cap H(C)$ for some $i$, then $\alpha \Lambda \subseteq F_i$ and $\alpha \Lambda \in C$, contradicting the assumption that $F_1$, $\ldots$, $F_n$ do not cover $C$.  Now, since $E \in C$, $E \supseteq \alpha \Lambda$ for some $\alpha \in H(C)$.  Then $E \supseteq \alpha\Lambda \cap H(C)$.  Therefore $E \setminus \bigcup_{i=1}^n F_i \supseteq \alpha \Lambda \cap H(C)$.  Since every element of $\CU_C$ contains an element of $\CU^0$, the containment follows.

Now we prove the reverse containment.  Let $A \in \CA_v$ with $A \supseteq \alpha\Lambda \cap H(C)$ for some $\alpha \in H(C)$.  We have $A = \bigsqcup_{j=1}^m A_j$ for $A_j \in \CD_v$, so there is $j$ such that $\alpha \in A_j$.  It suffices to show that $A_j \in \CU_C$.  Let $A_j = E \setminus \bigcup_{i=1}^n F_i$ with $E$, $F_i \in \CD^{(0)}_v$.  We claim that $\bigl( \bigcup_{i=1}^n F_i \bigr) \cap H(C) = \varnothing$.  The reason is that $\alpha \Lambda \cap H(C) \cap \bigl( \bigcup_{i=1}^n F_i \bigr) = \varnothing$ since $\alpha \Lambda \cap H(C) \subseteq A_j$.  If $\gamma \in H(C)$ then there is $\delta \in \gamma \Lambda \cap \alpha \Lambda \cap H(C)$.  Then $\delta \not\in \bigcup_{i=1}^n F_i$.  If $\gamma \in F_i$ for some $i$ then $\delta \in F_i$, a contradiction.  Thus $\gamma \not\in \bigcup_{i=1}^n F_i$.  Therefore $\gamma \Lambda \not\subseteq \bigcup_{i=1}^n F_i$ for all $\gamma \in H(C)$.  Then $F_1$, $\ldots$, $F_n$ do not cover $C$, so $\alpha \Lambda \setminus \bigcup_{i=1}^n F_i \in \CU_C$.  Therefore $A_i \in \CU_C$.
\end{proof}

%\begin{Example}
%\label{e.noncover}
%\end{Example}

\section{The Hausdorff property for the groupoid}
\label{sec hausdorff}

\begin{Lemma} \label{lem finitely aligned hausdorff}

Let $\Lambda$ be a finitely aligned LCSC with right cancellation. Then $G_1(\Lambda)$ and $G_2(\Lambda)$ are Hausdorff groupoids.

\end{Lemma}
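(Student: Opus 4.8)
The plan is to use the standard criterion that an ample \'etale groupoid with a basis of compact-open bisections is Hausdorff if and only if each basic bisection is closed, equivalently if and only if for every pair $\zeta,\zeta'\in\CZ v$ the \emph{agreement set}
\[
D_i(\zeta,\zeta') = \{x\in\widehat{A(\zeta)}\cap\widehat{A(\zeta')} : [\zeta,x]_i = [\zeta',x]_i \}
\]
is closed. It is automatically open: by Definition \ref{def groupoids} the relation $[\zeta,x]_i=[\zeta',x]_i$ is witnessed by agreement on some $E\in\CU_x$, hence holds throughout the open set $\widehat E$. Points with $x\ne x'$, or with $\Phi_\zeta(x)\ne\Phi_{\zeta'}(x)$, are separated immediately using continuity of $s$ and $r$ together with Hausdorffness of the unit space $X=\bigsqcup_v X_v$ (each $X_v=\widehat{A_v}$ is the spectrum of a commutative $C^*$-algebra). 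Thus the whole problem reduces to showing $D_i(\zeta,\zeta')$ is closed, and for this it suffices to exhibit a set $W_i\in\CA_v$ with $D_i(\zeta,\zeta')=\widehat{W_i}$: then $\widehat{W_i}$ is compact-open, hence closed in the locally compact Hausdorff space $X_v$.

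First I would identify $W_i$. For $G_2$, the relation $(\zeta,x)\sim_2(\zeta',x)$ is witnessed by $\varphi_\zeta|_E=\varphi_{\zeta'}|_E$, so the natural candidate is $W_2=\{\alpha\in A(\zeta)\cap A(\zeta') : \varphi_\zeta(\alpha)=\varphi_{\zeta'}(\alpha)\}$. For $G_1$, using that the fixed ultrafilters are dense (Lemma \ref{lem fixed ultrafilters are dense}), that $\Phi_\zeta(\CU_{\{\alpha\}})=\CU_{\{\varphi_\zeta(\alpha)\}}$, and that $\CU_{\{\beta\}}=\CU_{\{\beta'\}}$ if and only if $\beta\approx\beta'$ (because every zigzag set is $\approx$-saturated by Remark \ref{r.zigzag}\eqref{r.zigzag.f} and Lemma \ref{lem inverses}), one checks that $\Phi_\zeta|_{\widehat E}=\Phi_{\zeta'}|_{\widehat E}$ if and only if $\varphi_\zeta(\alpha)\approx\varphi_{\zeta'}(\alpha)$ for all $\alpha\in E$; thus the candidate is $W_1=\{\alpha\in A(\zeta)\cap A(\zeta') : \varphi_\zeta(\alpha)\approx\varphi_{\zeta'}(\alpha)\}$. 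In either case, once $W_i\in\CA_v$ is known, $D_i(\zeta,\zeta')=\widehat{W_i}$ follows because $\CU_x$ is a filter, so the existence of a witness in $\CU_x$ is equivalent to $W_i\in\CU_x$.

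Now finite alignment localizes the computation. By Corollary \ref{cor finitely aligned zigzag set} I can write $A(\zeta)\cap A(\zeta')$ as a finite union of principal right ideals $\epsilon\Lambda$, and on each such piece Remark \ref{r.zigzag}\eqref{r.zigzag.f} gives $\varphi_\zeta(\epsilon\rho)=a\rho$ and $\varphi_{\zeta'}(\epsilon\rho)=b\rho$, where $a=\varphi_\zeta(\epsilon)$ and $b=\varphi_{\zeta'}(\epsilon)$ have common range $r(\zeta)$ and common source $s(\epsilon)$. For $G_2$ the condition on $\epsilon\Lambda$ becomes $a\rho=b\rho$, and right cancellation forces this to be equivalent to $a=b$, independently of $\rho$; hence $W_2\cap\epsilon\Lambda$ is either all of $\epsilon\Lambda$ or empty, so $W_2$ is a finite union of sets $\epsilon\Lambda\in\CE^{(0)}_v$, whence $W_2\in\CA_v$ and $G_2$ is Hausdorff.

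The remaining, and main, obstacle is the $G_1$ case, where on $\epsilon\Lambda$ the condition is $a\rho\approx b\rho$, equivalently $a\rho\Lambda=b\rho\Lambda$. Here right cancellation is more subtle: writing $a\rho=b\rho p$ and $b\rho=a\rho q$ and cancelling on the left shows that $pq$ and $qp$ are identities, so any witnessing $p$, $q$ are automatically invertible; this reduces $\approx$-agreement to the two ideal-membership conditions $a\rho\in b\Lambda$ and $b\rho\in a\Lambda$ together with invertibility of the connecting morphisms. I would then feed the finitely many minimal common extensions of $a\Lambda\cap b\Lambda$ (available by Lemma \ref{lem finitely aligned}) back through the shift maps and use right cancellation again to show that $R=\{\rho : a\rho\approx b\rho\}$ is a finite Boolean combination of principal right ideals, so that $W_1\in\CA_v$ by Lemma \ref{lem finitely aligned generating family}. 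The delicate bookkeeping of the invertible ``corrections'' is where the hypothesis of right cancellation does its real work, and this is the step I expect to require the most care.
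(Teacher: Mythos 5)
Your reduction of Hausdorffness to closedness of the agreement sets $D_i(\zeta,\zeta')$ is sound, and your $G_2$ argument is correct; in substance it is the paper's own proof in different packaging. The paper forms $\zeta=\overline{\zeta_2}\zeta_1$, writes $\varphi_\zeta$ as a finite union of maps $\tau^{\alpha_i}\sigma^{\beta_i}$ (Lemma \ref{lem finitely aligned zigzag map}), and uses right cancellation to show that disagreement with the identity at one point of $\beta_j\Lambda$ forces disagreement at every point of $\beta_j\Lambda$, yielding disjoint basic neighborhoods; that is exactly your ``all-or-nothing on each piece'' computation of $W_2$, phrased as separation rather than as compact-openness of the agreement set. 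For $G_1$, by contrast, the paper gives no direct argument at all: it simply invokes the proof of Proposition 4.17 of \cite{spi_pathcat}.

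The genuine gap is in your $G_1$ step, and it is not merely ``delicate bookkeeping'': the pivotal claim --- that $R=\{\rho: a\rho\approx b\rho\}$ is a finite Boolean combination of principal right ideals, so that $W_1\in\CA_v$ --- is false under exactly the hypotheses of the Lemma. Let $\Lambda$ be the amalgamation (Definition \ref{def amalgamation}) of the group $\IZ=\langle t\rangle$ and the free monoid on one generator $s$, over a single vertex $u$; it is left and right cancellative and finitely aligned by Lemma \ref{lem amalgamation and cancellation} and Corollary \ref{cor finite alignment in amalgamations}. Take $\zeta,\zeta'$ with $\varphi_\zeta=\tau^{s}$ and $\varphi_{\zeta'}=\tau^{st}$, so that $\epsilon=u$, $a=s$, $b=st$. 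By Lemma \ref{lem inverses}\eqref{lem inverses two}, $s\rho\approx st\rho$ iff $s\rho\Lambda=st\rho\Lambda$ iff (cancelling $s$ on the left) $t\rho\in\rho\Lambda$ and $\rho\in t\rho\Lambda$. Every unit $\rho=t^m$ satisfies this, while for a non-unit $\rho$ the normal form begins $t^{n_0}s\cdots$, so $t\rho$ begins $t^{n_0+1}s\cdots$ whereas $\rho y$ begins $t^{n_0}s\cdots$ for every $y$; hence $W_1=U:=\{t^m:m\in\IZ\}$, the group of units. But $U\notin\CA_u$: a nonempty set $\gamma\Lambda\setminus\bigcup_{l=1}^{N}\delta_l\Lambda$ contained in $U$ forces $\gamma$ to be invertible (so $\gamma\Lambda=\Lambda$) and every $\delta_l$ to be a non-unit, and then each of the infinitely many non-units $t^{k}s$, $k\in\IZ$, must lie in some $\delta_l\Lambda$; since the non-unit initial segments of $t^ks$ are exactly $t^ksU$, distinct $k$ require distinct $l$, contradicting finiteness.

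The conceptual point your sketch misses is that right cancellation controls equality but not $\approx$-equivalence: from $b\rho=a\rho\mu$ with $\mu$ invertible one cannot conclude $a\rho\lambda\approx b\rho\lambda$, since that would require $\mu\lambda\approx\lambda$, which may fail (here $s\approx st$, yet $ss\not\approx sts$). This propagation failure is precisely what condition \eqref{prop equality of groupoids two} of Proposition \ref{prop equality of groupoids} is designed to exclude, and nothing in the hypotheses of the Lemma grants it. Note that in the example above $D_1(\zeta,\zeta')$ is still closed, but only because it is empty: no nonempty element of $\CA_u$ fits inside $W_1$. So a correct proof for $G_1$ cannot proceed by showing $W_1\in\CA_v$; it must instead analyze which ring elements embed in $W_1$ (or argue as in \cite{spi_pathcat}), and your proposal as it stands does not do this.
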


\begin{proof}
The proof of \cite[Proposition 4.17]{spi_pathcat} implies that $G_1(\Lambda)$ is Hausdorff.  We next consider $G_2(\Lambda)$.  Let $[\zeta_1,x_1]_2 \not= [\zeta_2,x_2]_2$.  If $x_1 \not= x_2$ then there are $E_i \in \CU_{x_i}$ with $E_1 \cap E_2 = \varnothing$, and hence $[\zeta_1,E_1]_2$ and $[\zeta_2,E_2]_2$ are disjoint neighborhoods of $[\zeta_1,x_1]_2$ and $[\zeta_2,x_2]_2$.  Therefore we may as well assume that $x_1 = x_2 =: x$.  Let $\zeta = \overline{\zeta_2} \zeta_1$.  Then $[\zeta,x]_2 \not= [\text{id},x]_2$.  By Lemma \ref{lem finitely aligned zigzag map} we know that $\phi_\zeta = \bigvee_{i=1}^n \tau^{\alpha_i} \circ \sigma^{\beta_i}$ for some $\alpha_i$, $\beta_i \in \Lambda$.  Then $A(\zeta) = \bigcup_{i=1}^n \beta_i \Lambda$.  Since $x \in \widehat{A(\zeta)} = \bigcup_{i=1}^n \widehat{\beta_i \Lambda}$ there is $j$ such that $x \in \widehat{\beta_j \Lambda}$. Since $[\zeta,x]_2 \not= [\text{id},x]_2$ there is $\gamma \in \beta_j \Lambda$ such that $\varphi_\zeta (\gamma) \not= \gamma$.  Then $\gamma = \beta_j \gamma'$, and hence $\varphi_\zeta(\gamma) = \alpha_j \gamma'$.  Therefore $\alpha_j \not= \beta_j$.  By right cancellation it follows that $\varphi_\zeta(\delta) \not= \delta$ for all $\delta \in \beta_j \Lambda$.  Now we claim that $[\zeta,y]_2 \not= [\text{id},y]_2$ for all $y \in \widehat{\beta_j \Lambda}$.  To see this, let $y \in \widehat{\beta_j \Lambda}$.  Then $\beta_j \Lambda \in \CU_y$.  If $E \in \CU_y$, i.e. if $\widehat{E}$ is a neighborhood of $y$, then $E \cap \beta_j \Lambda \in \CU_y$, hence $E \cap \beta_j \Lambda \not= \varnothing$.  Then $\varphi_\zeta(\delta) \not= \delta$ for all $\delta \in E \cap \beta_j \Lambda$, verifying the claim. It follows that $[\zeta_1,y]_2 \not= [\zeta_2,y]_2$ for all $y \in \widehat{\beta_j \Lambda}$, a neighborhood of $x$.  Therefore $[\zeta_1,\beta_j \Lambda]_2 \cap [\zeta_2, \beta_j \Lambda]_2 = \varnothing$.
\end{proof}

  We next consider the case where $\Lambda$ is not finitely aligned.  We recall that in the nonfinitely aligned case, right cancellation is in general insufficient to guarantee that $G_1(\Lambda)$ is Hausdorff even when $\Lambda$ has no inverses (\cite{spi_pathcat}, Example 4.11).

\begin{Theorem} \label{thm contained in groupoid hausdorff}

Let $\Lambda$ be an LCSC such that $\Lambda$ is a subcategory of a groupoid.  Then $G_2(\Lambda)$ is Hausdorff.

\end{Theorem}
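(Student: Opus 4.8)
The plan is to imitate the proof of Lemma \ref{lem finitely aligned hausdorff}, but to replace the use of finite alignment (which there expressed $\varphi_\zeta$ as a finite union of maps $\tau^\gamma\circ\sigma^\delta$) by the ambient groupoid structure. Write $\Lambda\subseteq\mathcal{G}$ with $\mathcal{G}$ a groupoid, and take distinct points $[\zeta_1,x_1]_2\neq[\zeta_2,x_2]_2$ of $G_2(\Lambda)$. If $x_1\neq x_2$ they are separated by $[\zeta_1,E_1]_2$ and $[\zeta_2,E_2]_2$ for disjoint $E_i\in\CU_{x_i}$, exactly as in Lemma \ref{lem finitely aligned hausdorff}. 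So assume $x_1=x_2=:x$. If $\Phi_{\zeta_1}(x)\neq\Phi_{\zeta_2}(x)$ the two germs have distinct ranges, and since the unit space $X=\bigsqcup_v X_v$ is Hausdorff and the range map is a local homeomorphism, they are separated by pulling back disjoint neighborhoods of $\Phi_{\zeta_1}(x),\Phi_{\zeta_2}(x)$. Thus I may also assume $\Phi_{\zeta_1}(x)=\Phi_{\zeta_2}(x)$; this forces $r(\zeta_1)=r(\zeta_2)$, so $\zeta:=\overline{\zeta_2}\zeta_1$ is a composable zigzag with $s(\zeta)=s(\zeta_1)$ and $x\in\widehat{A(\zeta)}$. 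Since $[\zeta,x]_2=[\overline{\zeta_2},\Phi_{\zeta_1}(x)]_2[\zeta_1,x]_2$ while $[\overline{\zeta_2}\zeta_2,x]_2=[\mathrm{id},x]_2$ (using $A(\zeta_2)=A(\overline{\zeta_2}\zeta_2)\in\CU_x$ from Remark \ref{r.zigzag}\eqref{r.zigzag.e}), left cancellation in the groupoid $G_2(\Lambda)$ reduces the problem to separating $[\zeta,x]_2\neq[\mathrm{id},x]_2$.

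The key new ingredient is the description of $\varphi_\zeta$ coming from the embedding into $\mathcal{G}$. Writing $\zeta=(\alpha_1,\beta_1,\ldots,\alpha_n,\beta_n)$ and computing in $\mathcal{G}$ (where $\sigma^\alpha$ is left multiplication by $\alpha^{-1}$), Remark \ref{r.zigzag}\eqref{r.zigzag.c} gives, for every $\gamma\in A(\zeta)$,
\[
\varphi_\zeta(\gamma)=\alpha_1^{-1}\beta_1\cdots\alpha_n^{-1}\beta_n\,\gamma=g_\zeta\,\gamma,
\]
where $g_\zeta:=\alpha_1^{-1}\beta_1\cdots\alpha_n^{-1}\beta_n\in\mathcal{G}$ is a fixed element with $s(g_\zeta)=s(\zeta)$ and $r(g_\zeta)=r(\zeta)$. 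This is the non-finitely-aligned analog of the computation in Example \ref{example group new}: on all of $A(\zeta)$, the map $\varphi_\zeta$ is the restriction of a single left translation of $\mathcal{G}$.

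This yields a sharp dichotomy. If $g_\zeta$ is a unit of $\mathcal{G}$, then $g_\zeta\gamma=\gamma$ for all $\gamma\in A(\zeta)$, so $\varphi_\zeta=\mathrm{id}$ on $A(\zeta)\in\CU_x$, giving $[\zeta,x]_2=[\mathrm{id},x]_2$, contrary to assumption. Hence $g_\zeta$ is not a unit; then right cancellation in the groupoid forces $g_\zeta\gamma\neq\gamma$ for every $\gamma\in A(\zeta)$, since $g_\zeta\gamma=\gamma=r(\gamma)\gamma$ would give $g_\zeta=r(\gamma)$. Thus $\varphi_\zeta$ is fixed-point-free on all of $A(\zeta)$, not merely at the single point furnished by $[\zeta,x]_2\neq[\mathrm{id},x]_2$.

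From here the argument closes as in Lemma \ref{lem finitely aligned hausdorff}: for every $y\in\widehat{A(\zeta)}$ and every $E\in\CU_y$ we have $\varnothing\neq E\cap A(\zeta)\in\CU_y$ and $\varphi_\zeta$ moves every element of $E\cap A(\zeta)$, whence $[\zeta,y]_2\neq[\mathrm{id},y]_2$; translating back by left multiplication by $[\overline{\zeta_2},\Phi_{\zeta_2}(y)]_2$ gives $[\zeta_1,y]_2\neq[\zeta_2,y]_2$ for all $y$ in the neighborhood $\widehat{A(\zeta)}$ of $x$, so that $[\zeta_1,A(\zeta)]_2$ and $[\zeta_2,A(\zeta)]_2$ are disjoint. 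The main obstacle, compared with the finitely aligned case, is exactly the passage from ``$\varphi_\zeta$ moves one point'' to ``$\varphi_\zeta$ moves every point''; the role of the groupoid hypothesis is to collapse $\varphi_\zeta$ to one global translation, for which these two statements coincide by cancellation. (Note that right cancellation of $\Lambda$ itself need not be assumed here, as in Lemma \ref{lem finitely aligned hausdorff}: it is supplied automatically by the ambient groupoid.)
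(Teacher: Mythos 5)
Your proof is correct and rests on exactly the same key idea as the paper's: once $\Lambda$ sits inside a groupoid, every zigzag map $\varphi_\zeta$ is the restriction of left translation by a single ambient element, so cancellation in the ambient groupoid upgrades ``the two maps disagree at one point'' to ``they disagree at every point of the common domain.'' The only difference is organizational: the paper compares $\varphi_{\zeta_1}$ and $\varphi_{\zeta_2}$ directly on $A(\zeta_1)\cap A(\zeta_2)$ (if the two translations coincide the germs at $x$ are equal, otherwise they differ everywhere, so $[\zeta_1,A(\zeta_1)\cap A(\zeta_2)]$ and $[\zeta_2,A(\zeta_1)\cap A(\zeta_2)]$ are disjoint), which avoids your extra reductions to equal ranges and to the identity germ, and the attendant bookkeeping with the domain $A(\overline{\zeta_2}\zeta_1)$, which need not be contained in $A(\zeta_2)$.
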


\begin{proof}
Let $[\zeta_1,x_1] \not= [\zeta_2,x_2]$.  If $x_1 \not= x_2$ then there are $E_i \in \CU_{x_i}$ with $E_1 \cap E_2 = \varnothing$, and hence $[\zeta_1,E_1]$ and $[\zeta_2,E_2]$ are disjoint neighborhoods of $[\zeta_1,x_1]$ and $[\zeta_2,x_2]$.  Therefore we may as well assume that $x_1 = x_2 =: x$.  Let $\zeta_1 = (\alpha_1,\beta_1, \ldots, \alpha_m, \beta_m)$ and $\zeta_2 = (\gamma_1,\delta_1, \ldots, \gamma_n, \delta_n)$, and put $s_1 = \alpha_1^{-1} \beta_1 \cdots \alpha_m^{-1} \beta_m$, $s_2 = \gamma_1^{-1} \delta_1 \cdots \gamma_n^{-1} \delta_n$.  For $\eps \in A(\zeta_i)$ we have $\varphi_{\zeta_i}(\eps) = s_i \eps$.  If $s_1 = s_2$ then $\varphi_{\zeta_1}|_{A(\zeta_1) \cap A(\zeta_2)} = \varphi_{\zeta_2}|_{A(\zeta_1) \cap A(\zeta_2)}$, contradicting the assumption that $[\zeta_1,x] \not= [\zeta_2,x]$.  Therefore $s_1 \not= s_2$.  But then $\varphi_{\zeta_1}(\alpha) \not= \varphi_{\zeta_2}(\alpha)$ for all $\alpha \in A(\zeta_1) \cap A(\zeta_2)$.  Therefore $[\zeta_1,A(\zeta_1) \cap A(\zeta_2)] \cap [\zeta_2,A(\zeta_1) \cap A(\zeta_2)] = \varnothing$.
\end{proof}

We now prove that the previous result holds for $G_1(\Lambda)$ if we assume in addition that $\Lambda$ has no inverses.

\begin{Corollary} \label{cor hausdorff}

Let $\Lambda$ be an LCSC, with $\Lambda$ a subcategory of a groupoid $Y$, and suppose that $\Lambda \cap \Lambda^{-1} \subseteq Y^0$.  Then the groupoid $G_1(\Lambda)$ is Hausdorff.

\end{Corollary}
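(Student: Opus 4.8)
The plan is to reduce the statement to the two results already in hand: Proposition \ref{prop lcsc without inverses}, which identifies the equivalence relations $\sim_1$ and $\sim_2$ when $\Lambda$ has no nontrivial inverses, and Theorem \ref{thm contained in groupoid hausdorff}, which gives the Hausdorff property for $G_2(\Lambda)$ when $\Lambda$ sits inside a groupoid. The only real content is to check that the hypothesis $\Lambda \cap \Lambda^{-1} \subseteq Y^0$ is exactly the no-inverses condition needed to invoke the former.

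First I would translate the hypothesis. Suppose $\lambda \in \Lambda$ is invertible \emph{within} $\Lambda$, with inverse $\mu \in \Lambda$ satisfying $\lambda\mu = r(\lambda)$ and $\mu\lambda = s(\lambda)$. Since $\Lambda$ is a subcategory of the groupoid $Y$, and inverses in a groupoid are unique, $\mu$ must coincide with the inverse $\lambda^{-1}$ computed in $Y$. Thus $\lambda^{-1} = \mu \in \Lambda$, so $\lambda \in \Lambda \cap \Lambda^{-1} \subseteq Y^0$; that is, $\lambda$ is a unit of $Y$ and hence an object of $\Lambda$. Consequently $\Lambda$ has no nontrivial inverses.

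With that in place, Proposition \ref{prop lcsc without inverses} yields $\sim_1 \; = \; \sim_2$ on $\CZ * X$. Because the underlying sets, the composition and inversion formulas (Definition \ref{def groupoids}), and the basic open sets $[\zeta,E]_i$ (Definition \ref{def base for topology}) all depend only on the equivalence relation $\sim_i$, the groupoids $G_1(\Lambda)$ and $G_2(\Lambda)$ coincide as topological groupoids. Finally, since $\Lambda$ is a subcategory of the groupoid $Y$, Theorem \ref{thm contained in groupoid hausdorff} shows $G_2(\Lambda)$ is Hausdorff, and therefore so is $G_1(\Lambda)$.

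The main obstacle, such as it is, lies entirely in the first step: being careful that an inverse of $\lambda$ inside $\Lambda$ really is the groupoid inverse $\lambda^{-1}$, so that the hypothesis applies. Once the no-inverses condition is extracted, the result is immediate from the cited proposition and theorem, with no further estimates or constructions required.
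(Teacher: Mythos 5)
Your proof is correct and follows essentially the same route as the paper: translate the hypothesis $\Lambda \cap \Lambda^{-1} \subseteq Y^0$ into the statement that $\Lambda$ has no nontrivial inverses, apply Proposition \ref{prop lcsc without inverses} to get $\sim_1 \, = \, \sim_2$, and conclude via Theorem \ref{thm contained in groupoid hausdorff}. The only difference is that you spell out the uniqueness-of-inverses argument that the paper leaves implicit, which is a reasonable thing to make explicit.
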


\begin{proof}
Since $\Lambda$ does not contain (nontrivial) inverses, Proposition \ref{prop lcsc without inverses} implies that $\sim_1 = \sim_2$.  Then the Corollary follows from Theorem \ref{thm contained in groupoid hausdorff}.
\end{proof}

\begin{Remark}

We have not been able to determine whether  $G_1(\Lambda)$ is Hausdorff in general for subcategories of groupoids.  We give the following criterion for equality of $G_1(\Lambda)$ and $G_2(\Lambda)$.  Thus if a subcategory $\Lambda$ of a groupoid satisfies this criterion then $G_1(\Lambda)$ is Hausdorff.

\end{Remark}

\begin{Proposition} \label{prop equality of groupoids}

Let $\Lambda$ be an LCSC.  Consider the conditions

\begin{enumerate}

\item \label{prop equality of groupoids one} $\sim_1 \; = \; \sim_2$.

\item \label{prop equality of groupoids two} For each nonempty set $E \in \CA$, for each $\alpha \in E$, and for each $\mu \in s(\alpha) \Lambda^{-1} s(\alpha)$ with $\mu \not= s(\alpha)$, there is an element $\beta \in s(\alpha)\Lambda$ such that $\alpha \beta \in E$ and either $\mu \beta \not\in \beta \Lambda$ or $\mu^{-1} \beta \not\in \beta \Lambda$.

\end{enumerate}

Then \eqref{prop equality of groupoids two} implies \eqref{prop equality of groupoids one}.  Moreover, if $\Lambda$ has right cancellation then \eqref{prop equality of groupoids one} implies \eqref{prop equality of groupoids two}.

\end{Proposition}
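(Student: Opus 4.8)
The plan is to prove both implications by contraposition, comparing the two equivalence relations through their action on the dense set of fixed ultrafilters. Two preliminary facts will organize everything. First, every $E \in \CA_v$ is built from zigzag sets by Boolean operations, and each zigzag set is $\approx$-saturated by Remark \ref{r.zigzag}\eqref{r.zigzag.f} (if $\gamma \in A(\zeta)$ then $\gamma\Lambda \subseteq A(\zeta)$, and $\gamma' \approx \gamma$ forces $\gamma' \in \gamma\Lambda$); hence every $E \in \CA_v$ is $\approx$-saturated, so $\lambda \approx \lambda'$ forces $\CU_{\{\lambda\}} = \CU_{\{\lambda'\}}$, and (as in the proof of Proposition \ref{prop lcsc without inverses}) $\Phi_\zeta(\CU_{\{\gamma\}}) = \CU_{\{\varphi_\zeta(\gamma)\}}$ for $\gamma \in A(\zeta)$. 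Second, for an invertible loop $\mu$ at $s(\alpha)$ and $\beta \in s(\alpha)\Lambda$, the two membership conditions in \eqref{prop equality of groupoids two}, namely $\mu\beta \in \beta\Lambda$ and $\mu^{-1}\beta \in \beta\Lambda$, hold simultaneously if and only if $\mu\beta \approx \beta$: writing $\mu\beta = \beta\delta$ and $\mu^{-1}\beta = \beta\delta'$, left cancellation applied to $\beta = \mu^{-1}\mu\beta$ and $\beta = \mu\mu^{-1}\beta$ yields $\delta'\delta = s(\beta)$ and $\delta\delta' = s(\beta)$, so $\delta$ is invertible. I will use the negation of \eqref{prop equality of groupoids two} in the form: there exist a nonempty $E \in \CA$, an $\alpha \in E$, and an invertible $\mu \in s(\alpha)\Lambda^{-1}s(\alpha)$ with $\mu \ne s(\alpha)$, such that $\mu\beta \approx \beta$ for every $\beta$ with $\alpha\beta \in E$.

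For \eqref{prop equality of groupoids two} $\Rightarrow$ \eqref{prop equality of groupoids one}, since $\sim_2 \subseteq \sim_1$ by Lemma \ref{lem comparable equivalence relations}, it suffices to prove $\sim_1 \subseteq \sim_2$. Suppose $(\zeta,x) \sim_1 (\zeta',x)$, witnessed by $\Phi_\zeta|_{\widehat E} = \Phi_{\zeta'}|_{\widehat E}$ with $E \in \CU_x$ and $E \subseteq A(\zeta) \cap A(\zeta')$, but $(\zeta,x) \not\sim_2 (\zeta',x)$. Then $\varphi_\zeta|_E \ne \varphi_{\zeta'}|_E$, so there is $\alpha \in E$ with $\varphi_\zeta(\alpha) \ne \varphi_{\zeta'}(\alpha)$. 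Evaluating the equal maps $\Phi_\zeta, \Phi_{\zeta'}$ at the fixed ultrafilter $\CU_{\{\alpha\}} \in \widehat E$ gives $\CU_{\{\varphi_\zeta(\alpha)\}} = \CU_{\{\varphi_{\zeta'}(\alpha)\}}$, hence $\varphi_\zeta(\alpha) \approx \varphi_{\zeta'}(\alpha)$; write $\varphi_{\zeta'}(\alpha) = \varphi_\zeta(\alpha)\mu$ with $\mu$ an invertible loop at $s(\alpha)$, and $\mu \ne s(\alpha)$ by left cancellation. For any $\beta$ with $\alpha\beta \in E$, the same evaluation at $\CU_{\{\alpha\beta\}}$, combined with the equivariance $\varphi_\eta(\alpha\beta) = \varphi_\eta(\alpha)\beta$ of Remark \ref{r.zigzag}\eqref{r.zigzag.f}, gives $\varphi_\zeta(\alpha)\beta \approx \varphi_\zeta(\alpha)\mu\beta$, whence $\mu\beta \approx \beta$ by left cancellation. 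Thus $(E,\alpha,\mu)$ witnesses the negation of \eqref{prop equality of groupoids two}, a contradiction; so $\sim_1 \subseteq \sim_2$.

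For the converse (assuming right cancellation) I argue contrapositively: from a witness $(E,\alpha,\mu)$ of the negation of \eqref{prop equality of groupoids two} I build a pair that is $\sim_1$ but not $\sim_2$. Replacing $E$ by $E \cap \alpha\Lambda$ (still $\approx$-saturated, still containing $\alpha$), set $v = r(\alpha)$, let $\zeta = (v,\alpha\mu,\alpha,v)$ so that $A(\zeta) = \alpha\Lambda$ and $\varphi_\zeta(\alpha\beta) = \alpha\mu\beta$, let $\zeta' = (v,v)$ with $\varphi_{\zeta'} = \mathrm{id}$, and take $x = \CU_{\{\alpha\}}$. Since $\varphi_\zeta(\alpha) = \alpha\mu \ne \alpha = \varphi_{\zeta'}(\alpha)$ and every member of $\CU_x$ contains $\alpha$, the maps differ on every $F \in \CU_x$, so $(\zeta,x) \not\sim_2 (\zeta',x)$. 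On the other hand, each $\gamma = \alpha\beta \in E$ satisfies $\mu\beta \approx \beta$, hence $\varphi_\zeta(\gamma) = \alpha\mu\beta \approx \alpha\beta = \gamma$, so $\Phi_\zeta$ fixes $\CU_{\{\gamma\}}$ for every $\gamma \in E$; as the fixed ultrafilters are dense in $\widehat E$ (Lemma \ref{lem fixed ultrafilters are dense}) and $X_v$ is Hausdorff, the continuous maps $\Phi_\zeta|_{\widehat E}$ and $\Phi_{\zeta'}|_{\widehat E} = \mathrm{id}$ coincide, giving $(\zeta,x) \sim_1 (\zeta',x)$. Therefore $\sim_1 \ne \sim_2$.

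The main obstacle is the last step: upgrading ``$\Phi_\zeta$ fixes each fixed ultrafilter of $\widehat E$'' to ``$\Phi_\zeta = \mathrm{id}$ on the whole neighborhood $\widehat E$.'' This is exactly where $\approx$-saturation of $E$ is essential, since it guarantees that $\varphi_\zeta$ merely permutes each $\approx$-class inside $E$, so that $\Phi_\zeta$ fixes the dense set of fixed ultrafilters of $\widehat E$; density together with Hausdorffness of the spectrum then closes the gap. The hypothesis of right cancellation I expect to enter precisely in identifying the two one-sided conditions of \eqref{prop equality of groupoids two} with the single relation $\mu\beta \approx \beta$; I would verify carefully whether left cancellation alone suffices there (as the computation above suggests) or whether right cancellation is genuinely needed in the general, non-fixed-ultrafilter formulation of the converse.
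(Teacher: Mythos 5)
Your proof is correct, and the first half, (ii) $\Rightarrow$ (i), is essentially the paper's own argument: the same three ingredients (evaluation at fixed ultrafilters via $\Phi_\zeta(\CU_{\{\gamma\}}) = \CU_{\{\varphi_\zeta(\gamma)\}}$, the equivalence $\CU_{\{\lambda\}} = \CU_{\{\lambda'\}} \Leftrightarrow \lambda \approx \lambda'$ coming from $\approx$-saturation of sets in $\CA$, and the equivariance $\varphi_\zeta(\alpha\beta) = \varphi_\zeta(\alpha)\beta$), merely organized contrapositively and without the paper's preliminary reduction to $\zeta' = \mathrm{id}$. The converse is where you genuinely diverge, and your closing question has a positive answer: left cancellation alone does suffice in your argument, so you have in fact proved (i) $\Rightarrow$ (ii) \emph{without} the right-cancellation hypothesis, which is stronger than the proposition as stated. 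The reason the paper needs right cancellation is that it proves a stronger separation: it shows $[\zeta,x]_1 = [\mathrm{id},x]_1$ while $[\zeta,x]_2 \neq [\mathrm{id},x]_2$ for \emph{every} $x$ in the open set $\widehat{E'}$, where $E' = \alpha\Lambda \cap E$. For a non-fixed ultrafilter $x$, a set $F \in \CU_x$ need not contain $\alpha$, so ruling out $\varphi_\zeta|_F = \mathrm{id}|_F$ requires $\varphi_\zeta(\gamma) \neq \gamma$ at every $\gamma \in E'$; writing $\varphi_\zeta(\alpha\beta) = \alpha\mu\beta = \alpha\beta\nu$, the needed inequality $\nu \neq s(\beta)$ is exactly where right cancellation enters (without it, $\mu\beta = \beta$ can happen with $\mu \neq s(\alpha)$). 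You sidestep this by basing the counterexample at the single point $x = \CU_{\{\alpha\}}$: every $F \in \CU_x$ contains $\alpha$ itself, and $\varphi_\zeta(\alpha) = \alpha\mu \neq \alpha$ needs only left cancellation and $\mu \neq s(\alpha)$; since exhibiting one pair in $\sim_1 \setminus \sim_2$ is all that is required to negate (i), this weaker, pointwise separation is enough. So the trade-off is: the paper's hypothesis buys failure of $\sim_1 = \sim_2$ on a whole neighborhood, while your argument buys the implication itself under weaker hypotheses. Two incidental remarks: your zigzag $(v,\alpha\mu,\alpha,v)$ is well-formed and realizes $\alpha\beta \mapsto \alpha\mu\beta$ (the tuple printed in the paper's proof appears to be garbled), and your normalization $E \subseteq A(\zeta) \cap A(\zeta')$ in the forward direction is harmless since $A(\zeta), A(\zeta') \in \CU_x$, so one may intersect.
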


\begin{proof}
\eqref{prop equality of groupoids two} $\Rightarrow$ \eqref{prop equality of groupoids one}:  We must show that $\sim_1 \subseteq \sim_2$.  Since $\varphi_\zeta$ and $\Phi_\zeta$ are bijections it follows that $(\zeta,x) \sim_i (\zeta',x)$ if and only if $(\overline{\zeta} \zeta',x) \sim_i (\text{id},x)$, for $i = 1$, 2.  So we assume that $(\zeta,x) \sim_1 (\text{id},x)$ and prove that $(\zeta,x) \sim_2 (\text{id},x)$.  Suppose to the contrary that $(\zeta,x) \not\sim_2 (\text{id},x)$.  Then for all $E \in \CA$ such that $x \in \widehat{E}$, there is $\alpha \in E$ such that $\varphi_\zeta(\alpha) \not= \alpha$.  By our assumption there is such an $E$ with $\Phi_\zeta|_{\widehat{E}} = \text{id}$. Choose $\alpha \in E$ as above.  We let $C_\alpha = \{ D \in \CD^{(0)}_{r(\alpha)} : \alpha \in D \}$.  Then $C_\alpha \in \Lambda^*$, and $\CU_{C_\alpha} = \{ A \in \CA_{r(\alpha)} : \alpha \in A \}$.  Moreover, $E \in \CU_{C_\alpha}$ and $C_\alpha \in \widehat{E}$.  Then $\Phi_\zeta(\CU_{C_\alpha}) = \CU_{C_\alpha}$, and hence for $A \in \CA_{r(\alpha)}$ we have that $\alpha \in A$ if and only if $\varphi_\zeta(\alpha) \in A$.  Applying this to $A = \alpha \Lambda$, and to $A = \varphi_\zeta(\alpha) \Lambda$, we obtain that $\varphi_\zeta(\alpha) \in \alpha \Lambda$ and $\alpha \in \varphi_\zeta(\alpha) \Lambda$.  Thus there are $\mu$ and $\mu'$ such that $\varphi_\zeta(\alpha) = \alpha \mu$ and $\alpha = \varphi_\zeta(\alpha) \mu'$.  Then $\alpha = \alpha \mu \mu'$ and $\varphi_\zeta(\alpha) = \varphi_\zeta(\alpha) \mu' \mu$, and hence $\mu\mu' = s(\alpha)$, $\mu'\mu = s(\varphi_\zeta(\alpha)) = s(\alpha)$.  Therefore $\varphi_\zeta(\alpha) = \alpha \mu$ with $\mu \in s(\alpha) \Lambda^{-1} s(\alpha)$.

Choose $\beta$ as in \eqref{prop equality of groupoids two}.  Thus $\alpha \beta \in E$, and either $\mu \beta \not\in \beta \Lambda$ or $\mu' \beta \not\in \beta \Lambda$.  Suppose for definiteness that $\mu \beta \not\in \beta \Lambda$.  Recall from Remark \ref{r.zigzag}\eqref{r.zigzag.f} that for $\alpha \in A(\zeta)$ we have $\varphi_\zeta(\alpha \beta) = \varphi_\zeta(\alpha) \beta$.  Then $\varphi_\zeta(\alpha \beta) = \varphi_\zeta(\alpha) \beta = \alpha \mu \beta \not\in \alpha \beta \Lambda$.  Now the previous argument applies to $\alpha \beta$: there is $\nu \in s(\beta) \Lambda^{-1} s(\beta)$ such that $\varphi_\zeta(\alpha \beta) = \alpha \beta \nu$.  Therefore $\alpha \mu \beta = \alpha \beta \nu$.  By left cancellation we have $\mu \beta = \beta \nu \in \beta \Lambda$, a contradiction.

\eqref{prop equality of groupoids one} $\Rightarrow$ \eqref{prop equality of groupoids two}:  Now suppose that $\Lambda$ has right cancellation.  Suppose that \eqref{prop equality of groupoids two} fails.  Then there are $E \in \CA$, $\alpha \in E$, and $\mu \in s(\alpha) \Lambda^{-1} s(\alpha)$ with $\mu \not= s(\alpha)$, such that for all $\beta \in s(\alpha) \Lambda$, if $\alpha \beta \in E$ then $\mu \beta$, $\mu^{-1} \beta \in \beta \Lambda$.  Then there are $\nu$, $\nu' \in \Lambda$ such that $\mu \beta = \beta \nu$ and $\mu^{-1} \beta = \beta \nu'$.  Note that it follows from right cancellation that $\nu \not= s(\beta)$.  Moreover $\beta = \mu^{-1} \beta \nu = \beta \nu' \nu$ and $\beta = \mu \beta \nu' = \beta \nu \nu'$.  By left cancellation we have $\nu' \nu = s(\beta) = \nu \nu'$, so $\nu \in s(\beta) \Lambda^{-1} s(\beta)$.  Let $\zeta = (s(\alpha), \alpha, \alpha \mu^{-1}, s(\alpha)) \in \CZ$.  Then $\varphi_\zeta (\alpha) = \varphi_\zeta (\alpha \mu^{-1} \mu) = \alpha \mu \not= \alpha$.  Hence if $\alpha \beta \in E$ then $\phi_\zeta(\alpha \beta) = \alpha \mu \beta = \alpha \beta \nu \not= \alpha \beta$.  Therefore $\phi_\zeta(\gamma) \not= \gamma$ for all $\gamma \in E' := \alpha \Lambda \cap E$.  Hence $[\zeta,\CU_{\{\gamma\}}]_2 \not= [\text{id},\CU_{\{\gamma\}}]_2$ for all $\gamma \in E'$.  By Lemma \ref{lem fixed ultrafilters are dense} we have that $[\zeta,x]_2 \not= [\text{id},x]_2$ for all $x \in \widehat{E'}$.

On the other hand we claim that $[\zeta,x]_1 = [\text{id},x]_1$ for all $x \in \widehat{E'}$.  To see this, note that for $\alpha \beta \in E'$ we have $[\alpha \beta] \subseteq [\alpha \beta \nu] \subseteq [\alpha \beta \nu \nu^{-1}] = [\alpha \beta]$.  Therefore $\Phi_\zeta(\CU_{\{\gamma\}}) = \CU_{\{\gamma\}}$ for all $\gamma \in E'$.  By Lemma \ref{lem fixed ultrafilters are dense} it follows that $\Phi_\zeta|_{\widehat{E'}} = \text{id}$, proving the claim. 
\end{proof}

In Section \ref{sec ore theorem} we study {\it right reversibility} for LCSC's (Definition \ref{def right reversible}), and in particular we prove an analog of Ore's theorem, Theorem \ref{thm ore theorem}.  That theorem implies that if a small category is cancellative and right reversible, then it embeds into a groupoid.  The results of this section give the following.

\begin{Corollary} \label{cor right reversible hausdorff}

Let $\Lambda$ be a right cancellative right reversible LCSC.  Then $G_2(\Lambda)$ is Hausdorff.

\end{Corollary}

\begin{Corollary} \label{cor abelian hausdorff}

Let $\Lambda$ be an abelian LCSC.  Then $G_2(\Lambda)$ is Hausdorff.

\end{Corollary}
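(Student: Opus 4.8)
The plan is to deduce this from Corollary \ref{cor right reversible hausdorff} by checking that an abelian LCSC automatically satisfies its two hypotheses, namely right cancellation and right reversibility (Definition \ref{def right reversible}). As a fallback I would instead embed $\Lambda$ into its enveloping (Grothendieck) groupoid and invoke Theorem \ref{thm contained in groupoid hausdorff} directly; since a commutative cancellative monoid is the prototypical case covered by Ore's theorem (Theorem \ref{thm ore theorem}), this route and the one through Corollary \ref{cor right reversible hausdorff} amount to the same thing.

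First I would verify right cancellation. Suppose $\beta \alpha = \gamma \alpha$ with the products defined. Since $\Lambda$ is abelian, composability of $\beta\alpha$ forces $\alpha\beta$ to be defined with $\alpha\beta = \beta\alpha$, and likewise $\alpha\gamma = \gamma\alpha$. Hence $\alpha\beta = \alpha\gamma$, and left cancellation (the defining property of an LCSC) gives $\beta = \gamma$. The only delicate point is the bookkeeping of sources and ranges: one must observe that commutativity makes the two one-sided cancellation conditions coincide, so that the left cancellation built into the definition of an LCSC yields right cancellation at no extra cost.

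Next I would verify right reversibility. For any pair $\alpha$, $\beta$ to which the condition of Definition \ref{def right reversible} applies, commutativity produces a common multiple outright: the element $\alpha\beta = \beta\alpha$ lies in the relevant intersection of principal ideals, so the Ore-type condition holds with no further argument. This is exactly the classical fact that a commutative cancellative monoid is an Ore monoid. With right cancellation and right reversibility in hand, Corollary \ref{cor right reversible hausdorff} immediately yields that $G_2(\Lambda)$ is Hausdorff.

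I expect the only genuine obstacle to be interpretive rather than technical: pinning down precisely what ``abelian'' should mean for a small category with possibly several objects, and confirming that the composability constraints never obstruct the use of commutativity in the two verifications above. Once the convention is fixed so that $\alpha\beta = \beta\alpha$ holds whenever either side is defined, both checks are routine, and all of the substantive content has already been absorbed into Ore's theorem (Theorem \ref{thm ore theorem}) and Theorem \ref{thm contained in groupoid hausdorff} by way of Corollary \ref{cor right reversible hausdorff}.
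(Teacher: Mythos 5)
Your proposal is correct and takes exactly the paper's route: the paper's entire proof is the one-line observation that an abelian LCSC is right cancellative and right reversible, after which Corollary \ref{cor right reversible hausdorff} applies. Your two verifications (right cancellation from commutativity plus left cancellation, and right reversibility from the common multiple $\alpha\beta=\beta\alpha$) simply spell out the details the paper leaves implicit, including the observation that in a genuinely abelian small category the composability constraints never interfere.
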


\begin{proof}
Since $\Lambda$ is abelian it is right cancellative and right reversible.
\end{proof}

\section{Boolean ring homomorphisms}
\label{sec boolean}

As pointed out in \cite{dm}, the zigzag maps form an inverse semigroup.  One can study the $C^*$-algebra that is universal for representations of this inverse semigroup.  However this ignores important structure that becomes significant in many interesting examples.  In \cite{spi_pathcat} it is shown how this extra structure develops naturally from the shift maps $\tau$ and $\sigma$ on $\Lambda$ (in \cite{dm} this is referred to as ``finite join preservation'' --- see also \cite[Subsection 7.3]{bkqs}).  The two results are quoted here; the proofs are identical to those given in \cite[Section 5]{spi_pathcat}.

\begin{Theorem} \label{thm boolean ring homomorphism}

Let $\Lambda$ be an LCSC, $v \in \Lambda^0$, and $\CR$ a Boolean ring.  A map $\mu : \CD^{(0)}_v \to \CR$ extends to a Boolean ring homomorphism $\CA_v \to \CR$ if and only if the following conditions hold:

\begin{enumerate}

\item $\mu(E\cap F) = \mu(E) \cap \mu(F)$, for $E$, $F \in \CD^{(0)}_v$.
\label{thm boolean ring homomorphism 1}

\item $\mu(E) = \bigcup_{i=1}^n \mu(F_i)$ for $E$, $F_1$, $\ldots$, $F_n \in \CD^{(0)}_v$ with $E = \bigcup_{i=1}^n F_i$.
\label{thm boolean ring homomorphism 2}

\end{enumerate}

In this case, the extension to $\CA_v$ is unique.

\end{Theorem}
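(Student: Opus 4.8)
The plan is to handle the two directions separately, with the forward implication routine and the reverse carrying all the weight. Throughout I use that in a Boolean ring $\CR$ the lattice operations are algebraic: $a \cap b = ab$, $a \cup b = a + b + ab$, $a \setminus b = a + ab$, and orthogonality means $ab = 0$ (recall $\CR$ has characteristic $2$). I extend $\mu$ by declaring $\mu(\varnothing) = 0$ and read condition \eqref{thm boolean ring homomorphism 1} as holding for all pairs with this convention, so that disjoint sets are sent to orthogonal elements. I also note that $v\Lambda = A(v,v) \in \CD^{(0)}_v$ is a largest element; hence $\CA_v$ is in fact a Boolean algebra, which lets me compute inside finite subalgebras with honest relative complements. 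For the ``only if'' direction, if $\bar\mu : \CA_v \to \CR$ is a homomorphism extending $\mu$, then $\bar\mu$ preserves products (hence $\cap$), and since $\cup$ and $\setminus$ are polynomial in $+$ and $\cdot$ it preserves those too; conditions \eqref{thm boolean ring homomorphism 1} and \eqref{thm boolean ring homomorphism 2} are then immediate.

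For the ``if'' direction, the first step is a cover lemma: if $E$, $F_1, \ldots, F_n \in \CD^{(0)}_v$ with $E \subseteq \bigcup_i F_i$, then $\mu(E) \le \bigcup_i \mu(F_i)$ in $\CR$. This follows by writing $E = \bigcup_i (E \cap F_i)$, discarding empty intersections, and applying \eqref{thm boolean ring homomorphism 2} and then \eqref{thm boolean ring homomorphism 1}: $\mu(E) = \bigcup_i \mu(E \cap F_i) = \bigcup_i \bigl( \mu(E) \cap \mu(F_i) \bigr) \le \bigcup_i \mu(F_i)$. In particular $\mu$ is monotone. The second step fixes a finite family $\CG = \{E_1, \ldots, E_N\} \subseteq \CD^{(0)}_v$ with $v\Lambda \in \CG$ and works inside the finite Boolean algebra $B_\CG \subseteq \CA_v$ it generates. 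Its nonempty atoms are the sets $\alpha_S = E_S \setminus \bigcup_{i \notin S} E_i$ for $S \subseteq \{1, \ldots, N\}$, where $E_S = \bigcap_{i \in S} E_i \in \CD^{(0)}_v$ by the intersection-closure of $\CD^{(0)}_v$ (Remark \ref{r.zigzag}\eqref{r.zigzag.e}). I assign $r_S = \mu(E_S) \setminus \bigcup_{i \notin S} \mu(E_i) \in \CR$ and check two things: that the $r_S$ are pairwise orthogonal, and that $\bigcup_{S \ni i} r_S = \mu(E_i)$ for each $i$. Orthogonality is easy from monotonicity: if $i_0 \in S \setminus T$ then $r_S \le \mu(E_{i_0})$ while $r_T \le \mu(E_T) \setminus \mu(E_{i_0})$, forcing $r_S r_T = 0$. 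In the identity $\bigcup_{S \ni i} r_S = \mu(E_i)$, the inequality $\le$ is again monotonicity, and the reverse inequality is exactly where the cover lemma is needed. Granting these, the assignment $\alpha_S \mapsto r_S$ determines a homomorphism $\bar\mu_\CG : B_\CG \to \CR$ agreeing with $\mu$ on $\CG$.

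Finally I would glue. Every $A \in \CA_v$ lies in $B_\CG$ for some finite $\CG$, and for $\CG \subseteq \CG'$ the restriction $\bar\mu_{\CG'}|_{B_\CG}$ is again a homomorphism extending $\mu|_\CG$, hence equals $\bar\mu_\CG$ since a homomorphism of a finite Boolean algebra is determined by its values on generators. Using $\CG \cup \CG'$ as a common refinement, the $\bar\mu_\CG$ patch to a single well-defined homomorphism $\bar\mu : \CA_v \to \CR$ extending $\mu$, and uniqueness is automatic because $\CD^{(0)}_v$ generates $\CA_v$. I expect the main obstacle to be the second step, specifically the covering half of $\bigcup_{S \ni i} r_S = \mu(E_i)$: this is the finite inclusion–exclusion bookkeeping in which hypotheses \eqref{thm boolean ring homomorphism 1} and \eqref{thm boolean ring homomorphism 2} are genuinely used, and where one must be careful that the possible absence of a unit in $\CR$ does not disrupt the computation. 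Keeping $v\Lambda$ inside every $\CG$, so that $\mu(v\Lambda)$ serves as a local unit, is what makes this bookkeeping go through cleanly.
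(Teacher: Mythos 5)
Your strategy is sound, and it is organized genuinely differently from the proof the paper relies on: the theorem here is quoted from \cite{spi_pathcat}, where the extension is built directly through the decomposition of $\CA_v$ into finite disjoint unions of sets $E \setminus \bigcup_{i=1}^n F_i$, defining $\overline{\mu}(E \setminus \bigcup_i F_i) = \mu(E) \setminus \bigcup_i \mu(F_i)$ and checking well-definedness of each stage by hand; your packaging through finite Boolean subalgebras $B_{\mathcal{G}}$, their atoms, and gluing over common refinements replaces those checks by the standard fact that two ring homomorphisms agreeing on a generating set of $B_{\mathcal{G}}$ agree on all of $B_{\mathcal{G}}$ (note that your $r_S$ is exactly the formula used in \cite{spi_pathcat}, so the underlying computation is the same). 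Two of your preliminary moves deserve explicit endorsement. First, the convention $\mu(\varnothing)=0$, forcing disjoint members of $\CD^{(0)}_v$ to have orthogonal images, is not a convenience but a necessity: since $\CD^{(0)}_v$ consists of nonempty sets, reading \eqref{thm boolean ring homomorphism 1} only over nonempty intersections makes the theorem false as stated (for the path category of the graph with two parallel edges $e,f$ into $v$ one has $\CD^{(0)}_v=\{\{e\},\{f\},v\Lambda\}$, and $\mu \equiv 1 \in \IZ/2\IZ$ satisfies \eqref{thm boolean ring homomorphism 1} and \eqref{thm boolean ring homomorphism 2} literally, yet admits no extension because any extension must send $\{e\}\cap\{f\}=\varnothing$ to $0$). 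Second, your cover lemma is proved correctly and is indeed the only place \eqref{thm boolean ring homomorphism 2} enters.

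However, the step you leave as an ``expected obstacle'' is the crux of the theorem, and as written this is a genuine gap; it must be closed, not anticipated. Fortunately it closes in a few lines, and no inclusion--exclusion bookkeeping is needed. Write $a_j=\mu(E_j)$. By \eqref{thm boolean ring homomorphism 1} iterated (using closure of $\CD^{(0)}_v$ under nonempty intersections, and your convention at the first empty intersection) one has $\mu(E_S)=\prod_{j\in S}a_j=:a_S$ for every $S$, so $r_S=a_S\setminus\bigcup_{j\notin S}a_j$ is a purely algebraic expression in the $a_j$. Work in the subring $B\subseteq\CR$ generated by $a_1,\dots,a_N$: it is finite, and since $E_1=v\Lambda$ dominates every $E_j$, monotonicity gives $a_j\le a_1$, so $B$ is a finite Boolean algebra with unit $a_1$ (this also disposes of your worry about $\CR$ lacking a unit). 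For an atom $p$ of $B$ put $S_p=\{j: p\le a_j\}$; then $pa_j=0$ for $j\notin S_p$, hence $p\le r_{S_p}$, and $i\in S_p$ exactly when $p\le a_i$. Since $a_i$ is the join of the atoms of $B$ below it, this gives $a_i\le\bigcup_{S\ni i}r_S$, where the union runs over \emph{all} $S\subseteq\{1,\dots,N\}$ containing $i$, and the reverse inequality is monotonicity; so $a_i=\bigcup_{S\ni i}r_S$, an identity using only \eqref{thm boolean ring homomorphism 1}. Hypothesis \eqref{thm boolean ring homomorphism 2} now enters exactly once: if $\alpha_S=\varnothing$ then either $E_S=\varnothing$, whence $r_S\le a_S=0$, or $E_S\subseteq\bigcup_{j\notin S}E_j$ is a genuine covering and your cover lemma gives $a_S\le\bigcup_{j\notin S}a_j$, i.e.\ $r_S=0$. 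Deleting these vanishing terms yields $\mu(E_i)=\bigcup_{\{S\,:\,i\in S,\ \alpha_S\neq\varnothing\}}r_S$, which is precisely the reverse inequality you were missing; equivalently, it shows the assignment $\alpha_S\mapsto r_S$ assigns $0$ to every empty atom, so it really does define a homomorphism on $B_{\mathcal{G}}$. With this inserted, the remainder of your argument (orthogonality, agreement with $\mu$ on the generating family, gluing, and uniqueness) is correct as written.
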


%\textcolor{blue}{ Is the following needed?}

\begin{Theorem} \label{thm boolean ring homomorphism finitely aligned}

Let $\Lambda$ be a finitely aligned LCSC, $v \in \Lambda^0$, $\CR$ a Boolean ring, and $\mu : \CE^{(0)}_v \to \CR$.  Then $\mu$ extends to a Boolean ring homomorphism $\CA_v \to \CR$ if and only if the following condition holds:

\begin{enumerate}
\setcounter{enumi}{2}

\item $\mu(\alpha\Lambda) \cap \mu(\beta\Lambda) = \bigcup_{\gamma \in \alpha \vee \beta} \mu(\gamma\Lambda)$ for all $\alpha$, $\beta \in v\Lambda$.
\label{thm boolean ring homomorphism finitely aligned 3}

\end{enumerate}

In this case, the extension is unique.

\end{Theorem}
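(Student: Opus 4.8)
The plan is to deduce both directions from the general extension criterion in Theorem \ref{thm boolean ring homomorphism}, using that in the finitely aligned case $\CE^{(0)}_v$ generates the same ring $\CA_v$ as $\CD^{(0)}_v$ (Corollary \ref{cor finitely aligned zigzag set}). The forward implication is immediate. If $\mu$ extends to a Boolean ring homomorphism $\bar\mu$ on $\CA_v$, then for $\alpha$, $\beta \in v\Lambda$ we may write $\alpha\Lambda \cap \beta\Lambda = \bigcup_{\gamma \in \alpha \vee \beta} \gamma\Lambda$ by Lemma \ref{lem finitely aligned}\eqref{lem finitely aligned two}; this is a union of only finitely many distinct sets by Lemma \ref{lem finitely aligned}\eqref{lem finitely aligned one}, so applying $\bar\mu$ and using that it preserves finite intersections and unions yields condition \eqref{thm boolean ring homomorphism finitely aligned 3}.

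For the converse I would first extract two consequences of \eqref{thm boolean ring homomorphism finitely aligned 3}. Monotonicity: if $\delta\Lambda \subseteq \alpha\Lambda$, then $\delta \in \alpha\Lambda$, the common extensions of $\delta$ and $\alpha$ are exactly the elements of $\delta\Lambda$, and one checks directly that $\delta \vee \alpha$ is precisely the $\approx$-class of $\delta$; hence \eqref{thm boolean ring homomorphism finitely aligned 3} reads $\mu(\delta\Lambda) \cap \mu(\alpha\Lambda) = \mu(\delta\Lambda)$, giving $\mu(\delta\Lambda) \subseteq \mu(\alpha\Lambda)$. The key lemma: if $\alpha\Lambda = \bigcup_{k=1}^p \delta_k\Lambda$ with $\delta_k \in v\Lambda$, then $\mu(\alpha\Lambda) = \bigcup_{k=1}^p \mu(\delta_k\Lambda)$. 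Here ``$\supseteq$'' is monotonicity (each $\delta_k\Lambda \subseteq \alpha\Lambda$), while ``$\subseteq$'' follows by noting that $\alpha \in \alpha\Lambda$ forces $\alpha \in \delta_{k_0}\Lambda$ for some $k_0$, which together with $\delta_{k_0} \in \alpha\Lambda$ gives $\alpha \approx \delta_{k_0}$ and thus $\mu(\alpha\Lambda) = \mu(\delta_{k_0}\Lambda)$.

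Next I would extend $\mu$ to $\CD^{(0)}_v$ by fixing, for each $A(\zeta)$, a representation $A(\zeta) = \bigcup_{i=1}^n \alpha_i\Lambda$ as in Corollary \ref{cor finitely aligned zigzag set}, and setting $\tilde\mu(A(\zeta)) = \bigcup_{i=1}^n \mu(\alpha_i\Lambda)$. Well-definedness is the crux. Given two representations $\bigcup_i \alpha_i\Lambda = \bigcup_j \beta_j\Lambda$, each set $\alpha_i\Lambda = \bigcup_j \bigcup_{\gamma \in \alpha_i \vee \beta_j} \gamma\Lambda$ is a finite union of subsets of $\alpha_i\Lambda$, so the key lemma gives $\mu(\alpha_i\Lambda) = \bigcup_j \bigcup_{\gamma \in \alpha_i \vee \beta_j} \mu(\gamma\Lambda)$; rewriting each inner union via \eqref{thm boolean ring homomorphism finitely aligned 3} as $\mu(\alpha_i\Lambda) \cap \mu(\beta_j\Lambda)$ shows $\mu(\alpha_i\Lambda) \subseteq \bigcup_j \mu(\beta_j\Lambda)$, and symmetry completes the verification.

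It then remains to check the two hypotheses of Theorem \ref{thm boolean ring homomorphism} for $\tilde\mu$ on $\CD^{(0)}_v$ (which is closed under intersection by Remark \ref{r.zigzag}\eqref{r.zigzag.e}). For \eqref{thm boolean ring homomorphism 1}, intersecting representations of $E$ and $F$ and applying \eqref{thm boolean ring homomorphism finitely aligned 3} termwise shows $\tilde\mu(E \cap F) = \tilde\mu(E) \cap \tilde\mu(F)$; for \eqref{thm boolean ring homomorphism 2}, a finite union of representations is again a representation, so well-definedness yields $\tilde\mu(\bigcup_k F_k) = \bigcup_k \tilde\mu(F_k)$. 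Theorem \ref{thm boolean ring homomorphism} then produces a Boolean ring homomorphism on $\CA_v$ extending $\tilde\mu$, and in particular extending $\mu$; uniqueness is automatic since $\CE^{(0)}_v$ generates $\CA_v$ as a Boolean ring. I expect the well-definedness step, resting on the key lemma and in particular on the identification of the minimal common extensions lying inside a single $\alpha\Lambda$, to be the main obstacle.
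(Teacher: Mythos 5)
Your proof is correct and takes essentially the same approach as the paper's (the paper quotes this result, its proof being that of \cite[Section 5]{spi_pathcat}): extend $\mu$ from $\CE^{(0)}_v$ to $\CD^{(0)}_v$ via the finite decompositions of Corollary \ref{cor finitely aligned zigzag set}, prove well-definedness from condition \eqref{thm boolean ring homomorphism finitely aligned 3} together with the observation that $\alpha\Lambda=\bigcup_k\delta_k\Lambda$ forces $\alpha\approx\delta_{k_0}$ for some $k_0$, and then invoke the general criterion of Theorem \ref{thm boolean ring homomorphism}. The details you leave implicit --- that the joins are finite because $\bigvee F$ contains only finitely many $\approx$-classes (Lemma \ref{lem finitely aligned}), and that $\alpha\perp\beta$ forces $\mu(\alpha\Lambda)\cap\mu(\beta\Lambda)=0$ via the empty join --- are immediate.
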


We remark that the union in Theorem \ref{thm boolean ring homomorphism finitely aligned} is finite even if $\alpha \vee \beta$ is infinite, since $\gamma \Lambda = \gamma' \Lambda$ if $\gamma \approx \gamma'$.

\section{Generators and relations}
\label{sec gens and rels}

In this section we give the presentation of $C^*(G_i(\Lambda))$ by generators and relations.  Much of the argument is as in \cite[Section 6]{spi_pathcat}; however, the distinctions caused by the two equivalence relations are a bit subtle, and we feel it is best to give the arguments in full.  We will use $\CZ$, $G_1$, $G_2$, etc., instead of $\CZ_\Lambda$, etc., when the LCSC is understood from the context.  (However, we will write $[\zeta,E]$ without a subscript whenever it is clear from context which groupoid is under consideration.)

\begin{Definition} \label{def toeplitz relations}

Let $\Lambda$ be an LCSC.  Let $\{ T_\zeta : \zeta \in \CZ \}$ be a family of elements of a $C^*$-algebra.  We will consider the following relations on the $T_\zeta$:

\begin{itemize}

\item[(1)\phantom{$_2$}] $T_{\zeta_1} T_{\zeta_2} = T_{\zeta_1 \zeta_2}$ if $s(\zeta_1) = r(\zeta_2)$, and equals 0 otherwise.

\item[(2)\phantom{$_2$}] $T_{\overline{\zeta}} = T_\zeta^*$.

\item[(3)\phantom{$_2$}] $T_\zeta^* T_\zeta = \bigvee_{i=1}^n T_{\zeta_i}^* T_{\zeta_i}$ if $A(\zeta) = \bigcup_{i=1}^n A(\zeta_i)$.

\item[(4)$_1$] $T_\zeta = T_\zeta^* T_\zeta$ if $\Phi_\zeta = \text{id}_{\widehat{A(\zeta)}}$.

\item[(4)$_2$] $T_\zeta = T_\zeta^* T_\zeta$ if $\varphi_\zeta = \text{id}_{A(\zeta)}$.

\end{itemize}

\end{Definition}

We note the following elementary consequences.

\begin{Lemma} \label{lem elementary consequences}

\begin{enumerate}

\item \label{lem elementary consequences 1} Suppose that $\{ T_\zeta : \zeta \in \CZ \}$ satisfy (3). If $A(\zeta) = \varnothing$ then $T_\zeta = 0$.

\item \label{lem elementary consequences 2} Suppose that $\{ T_\zeta : \zeta \in \CZ \}$ satisfy (1) and (2).  For $u \in \Lambda^0$ let $T_u := T_{(u,u)}$.  Then $\{ T_u : u \in \Lambda^0 \}$ are pairwise orthogonal projections.

\item \label{lem elementary consequences 3}  Suppose that $\{ T_\zeta : \zeta \in \CZ \}$ satisfy (1) - (3).  Then $T_\zeta$ is a partial isometry.

\item \label{lem elementary consequences 4} Suppose that $\{ T_\zeta : \zeta \in \CZ \}$ satisfy (1) - (3) and (4)$_2$.  If $\varphi_{\zeta_1} = \varphi_{\zeta_2}$ then $T_{\zeta_1} = T_{\zeta_2}$.

\item \label{lem elementary consequences 5} Suppose that $\{ T_\zeta : \zeta \in \CZ \}$ satisfy (1) - (3) and (4)$_1$.  If $\Phi_{\zeta_1} = \Phi_{\zeta_2}$ then $T_{\zeta_1} = T_{\zeta_2}$.

\item \label{lem elementary consequences 6} Suppose that $\{ T_\zeta : \zeta \in \CZ \}$ satisfy (1) - (3) and (4)$_2$.  If $\varphi_\zeta = \bigcup_{i=1}^n \varphi_{\zeta_i}$ then $T_\zeta = \bigvee_{i=1}^n T_{\zeta_i}$.

\item \label{lem elementary consequences 7} Suppose that $\{ T_\zeta : \zeta \in \CZ \}$ satisfy (1) - (3) and (4)$_1$.  If $\Phi_\zeta = \bigcup_{i=1}^n \Phi_{\zeta_i}$ then $T_\zeta = \bigvee_{i=1}^n T_{\zeta_i}$.

\end{enumerate}

\end{Lemma}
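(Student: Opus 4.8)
The organizing principle is that relation~(3), applied with a single set (the case $n=1$), shows that the projection $T_\zeta^* T_\zeta$ depends only on the zigzag set $A(\zeta)$: if $A(\zeta)=A(\zeta')$ then $T_\zeta^* T_\zeta=T_{\zeta'}^* T_{\zeta'}$. Combined with the fact that (1) and (2) make $\zeta\mapsto T_\zeta$ a $*$-preserving representation of $\CZ$ under concatenation, this drives all seven parts. I would dispose of (i)--(iii) first. For (i), apply~(3) with the empty family ($n=0$): since $A(\zeta)=\varnothing=\bigcup_{i=1}^0 A(\zeta_i)$, the right-hand side is the empty join $0$, so $T_\zeta^* T_\zeta=0$ and hence $T_\zeta=0$ by the $C^*$-identity. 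For (ii), relation~(2) gives $T_u^*=T_{\overline{(u,u)}}=T_u$ since $\overline{(u,u)}=(u,u)$; the ``equals $0$ otherwise'' clause of~(1) gives $T_uT_w=0$ for $u\ne w$ because $s(u,u)=u\ne w=r(w,w)$; and $T_u^2=T_{(u,u)(u,u)}=T_{(u,u)}=T_u$ by~(1), the constant zigzag at $u$ being idempotent under concatenation ($u=\mathrm{id}_u$). For (iii), put $\eta=\overline{\zeta}\zeta$, so that $\overline{\eta}=\eta$ and $A(\eta)=A(\zeta)$ by Remark~\ref{r.zigzag}\eqref{r.zigzag.e}. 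Then $q:=T_\zeta^* T_\zeta=T_{\overline{\zeta}}T_\zeta=T_\eta$ by (1)--(2), while~(3) applied to the single set $A(\eta)=A(\zeta)$ gives $q=T_\eta^* T_\eta=T_\eta T_\eta=q^2$, using $T_\eta^*=T_\eta$. Thus $q=q^*=q^2$ is a projection and $T_\zeta$ is a partial isometry.

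Parts (iv) and (v) rest on a comparison zigzag. For (iv), since $\varphi_{\zeta_1}=\varphi_{\zeta_2}$ their domains and ranges agree, so $A(\zeta_1)=A(\zeta_2)$ and $A(\overline{\zeta_1})=A(\overline{\zeta_2})$. Set $\eta=\overline{\zeta_2}\zeta_1$; then $\varphi_\eta=\varphi_{\zeta_2}^{-1}\varphi_{\zeta_1}=\mathrm{id}_{A(\zeta_1)}$ and $A(\eta)=A(\zeta_1)$. By~(4)$_2$, $T_\eta=T_\eta^* T_\eta$, which by the principle above equals $T_{\zeta_1}^* T_{\zeta_1}$; since also $T_\eta=T_{\zeta_2}^* T_{\zeta_1}$ by (1)--(2), we get $T_{\zeta_2}^* T_{\zeta_1}=T_{\zeta_1}^* T_{\zeta_1}$. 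Left multiplying by $T_{\zeta_2}$ and using that the range projections agree, $T_{\zeta_1}T_{\zeta_1}^*=T_{\zeta_2}T_{\zeta_2}^*$ (relation~(3) applied to $\overline{\zeta_1}$, $\overline{\zeta_2}$), together with the partial-isometry identity $vv^*v=v$ (valid by (iii)), both sides collapse to $T_{\zeta_1}=T_{\zeta_2}$. Part (v) is identical with $\Phi$ for $\varphi$ and (4)$_1$ for (4)$_2$, noting that $E\mapsto\widehat{E}$ is injective so $\Phi_{\zeta_1}=\Phi_{\zeta_2}$ again forces $A(\zeta_1)=A(\zeta_2)$.

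For (vi), set $p_i=T_{\zeta_i}^* T_{\zeta_i}$, so that~(3) gives $T_\zeta^* T_\zeta=\bigvee_i p_i$ (using $A(\zeta)=\bigcup_i A(\zeta_i)$). The hypothesis $\varphi_\zeta=\bigcup_i\varphi_{\zeta_i}$ means $\varphi_\zeta|_{A(\zeta_i)}=\varphi_{\zeta_i}$, and a short computation with Remark~\ref{r.zigzag}\eqref{r.zigzag.f} gives $\varphi_{\zeta\overline{\zeta_i}\zeta_i}=\varphi_\zeta|_{A(\zeta_i)}=\varphi_{\zeta_i}$; hence part (iv) yields $T_{\zeta_i}=T_{\zeta\overline{\zeta_i}\zeta_i}=T_\zeta T_{\zeta_i}^* T_{\zeta_i}=T_\zeta p_i$. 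Then $\bigvee_i T_{\zeta_i}=\bigvee_i T_\zeta p_i=T_\zeta\bigl(\bigvee_i p_i\bigr)=T_\zeta\,T_\zeta^* T_\zeta=T_\zeta$. Part (vii) mirrors this using part (v) and (4)$_1$.

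I expect the one genuinely delicate point to be the penultimate display in (vi) (and its analogue in (vii)): the identity $\bigvee_i(T_\zeta p_i)=T_\zeta\bigl(\bigvee_i p_i\bigr)$, i.e.\ that left multiplication by the fixed partial isometry $T_\zeta$ carries the join of the commuting initial subprojections $p_i\le T_\zeta^* T_\zeta$ to the join of the partial isometries $T_\zeta p_i$. This is where I would be most careful: it uses that the $p_i$ lie in the commutative diagonal and that the $T_\zeta p_i$ form a compatible family (they agree on overlaps, being restrictions of the single partial isometry $T_\zeta$), so that their join is computed by summing the initial projections under the common ``phase'' $T_\zeta$.
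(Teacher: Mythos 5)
Your proof is correct and takes essentially the same route as the paper's: the empty join for (i), the self-adjoint idempotent $T_{\overline{\zeta}\zeta}$ for (ii)--(iii), the identity zigzag $\overline{\zeta_2}\zeta_1$ together with (4)$_2$ (resp.\ (4)$_1$) and the equality of initial and final projections via the single-set case of (3) for (iv)--(v), and the factorization $T_{\zeta_i} = T_\zeta T_{\zeta_i}^* T_{\zeta_i}$ for (vi)--(vii). The step you flag as delicate, $T_\zeta \bigl(\bigvee_i T_{\zeta_i}^* T_{\zeta_i}\bigr) = \bigvee_i T_\zeta T_{\zeta_i}^* T_{\zeta_i}$, is used without further comment in the paper as well; it is immediate from the inclusion--exclusion formula defining the join of a coherent family of partial isometries.
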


\begin{proof}
\eqref{lem elementary consequences 1}: If $A(\zeta) = \varnothing$ then $A(\zeta)$ is the union of the empty collection (of zigzag sets).  By (3) then $T_\zeta^* T_\zeta$ equals the empty sum (of operators), i.e. $T_\zeta^* T_\zeta = 0$.  Therefore $T_\zeta = 0$.

\noindent
\eqref{lem elementary consequences 2}: Since $\overline{(u,u)} = (u,u) = (u,u)(u,u)$, it follows from (1) and (2) that $T_u$ is a projection.  The orthogonality follows from (1).

\noindent
\eqref{lem elementary consequences 3}: First note that by (1) and (2) we have $T_{\overline{\zeta} \zeta} = T_\zeta^* T_\zeta$ is self-adjoint.  Since $A(\zeta) = A(\overline{\zeta}\zeta)$, by (3) we have $T_\zeta^* T_\zeta = T_{\overline{\zeta}\zeta}^* T_{\overline{\zeta}\zeta} = T_{\overline{\zeta}\zeta}^2$.  Therefore $T_{\overline{\zeta}\zeta}$ is a projection, and hence $T_\zeta$ is a partial isometry.

\noindent
\eqref{lem elementary consequences 4}: Suppose that (1) - (3), (4)$_2$ hold, and let $\varphi_{\zeta_1} = \varphi_{\zeta_2}$.  Then $A(\zeta_1) = A(\zeta_2)$ and $A(\overline{\zeta_1}) = A(\overline{\zeta_2})$.  By (3) and (2) we have $T_{\zeta_1}^* T_{\zeta_1} = T_{\zeta_2}^* T_{\zeta_2}$ and $T_{\zeta_1} T_{\zeta_1}^* = T_{\zeta_2} T_{\zeta_2}^*$.  Since $\varphi_{\overline{\zeta_1} \zeta_2} = \text{id}_{A(\overline{\zeta_1} \zeta_2)}$, (4)$_2$ gives $T_{\overline{\zeta_1} \zeta_2} = T_{\overline{\zeta_1} \zeta_2}^* T_{\overline{\zeta_1} \zeta_2}$.  By (1), (2), and the above we get
\[
T_{\zeta_1}^* T_{\zeta_2}
= T_{\zeta_2}^* T_{\zeta_1} T_{\zeta_1}^* T_{\zeta_2}
= T_{\zeta_2}^* (T_{\zeta_2} T_{\zeta_2}^*) T_{\zeta_2}
= T_{\zeta_2}^* T_{\zeta_2}
= T_{\zeta_1}^* T_{\zeta_1}.
\]
Now we have $T_{\zeta_1} = T_{\zeta_1} (T_{\zeta_1}^* T_{\zeta_1}) = T_{\zeta_1} (T_{\zeta_1}^* T_{\zeta_2}) = (T_{\zeta_2} T_{\zeta_2}^*) T_{\zeta_2} = T_{\zeta_2}$.

\noindent
\eqref{lem elementary consequences 5}: A similar argument shows that if $\Phi_{\zeta_1} = \Phi_{\zeta_2}$ then (1) - (3) and (4)$_1$ give the same conclusion.

\noindent
\eqref{lem elementary consequences 6}: For each $i$ we have $\varphi_{\zeta \overline{\zeta_i} \zeta_i} = \varphi_{\zeta_i}$.  By \eqref{lem elementary consequences 4}, and (1) and (2), we have $T_\zeta T_{\zeta_i}^* T_{\zeta_i} = T_{\zeta_i}$.  Then $T_\zeta \bigvee_{i=1}^n T_{\zeta_i}^* T_{\zeta_i} = \bigvee_{i=1}^n T_{\zeta_i}$.  Also we have that $A(\zeta) = \bigcup_{i=1}^n A(\zeta_i)$.  By (3) we have $T_\zeta^* T_\zeta = \bigvee_{i=1}^n T_{\zeta_i}^* T_{\zeta_i}$.  Thus we have
\[
T_\zeta = T_\zeta (T_{\zeta}^* T_\zeta) = T_\zeta \bigvee_{i=1}^n T_{\zeta_i}^* T_{\zeta_i} = \bigvee_{i=1}^n T_{\zeta_i}.
\]

\noindent
\eqref{lem elementary consequences 7}: The proof is the same as for \eqref{lem elementary consequences 6} except that we use \eqref{lem elementary consequences 5} instead of \eqref{lem elementary consequences 4}.
\end{proof}

\begin{Lemma} \label{lem gens and rels in G}

For $\zeta \in \CZ$ let $t_\zeta$ denote the function $\chi_{[\zeta,A(\zeta)]} \in C_c(G_i)$, for $i = 1$, 2.  Then $\{ t_\zeta : \zeta \in \CZ \}$ satisfy (1) - (3) and (4)$_i$.

\end{Lemma}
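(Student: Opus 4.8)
The plan is to exploit that each $G_i$ is an ample \'etale groupoid (Proposition \ref{pathcat 4.10}) whose basic sets $[\zeta,E]_i$ are compact-open bisections. For such a groupoid, convolution and involution of indicator functions of open bisections are governed by the groupoid operations: $\chi_U * \chi_V = \chi_{UV}$ whenever $U$, $V$ are open bisections (so that each element of $UV$ factors uniquely and $UV$ is again a bisection), and $\chi_U^* = \chi_{U^{-1}}$. Since $t_\zeta = \chi_{[\zeta,A(\zeta)]}$, each relation reduces to an identity among the basic bisections $[\zeta,E]_i$, computed from the composition and inversion formulas of Definition \ref{def groupoids} together with the compatibility $\Phi_\zeta(\widehat{E}) = \widehat{\varphi_\zeta(E)}$ for $E \in \CA_{s(\zeta)}$ with $E \subseteq A(\zeta)$, which follows from Proposition \ref{prop maps on the spectrum} (equivalently, from the remark that $\Phi_\zeta(x)=y$ iff $\varphi_\zeta(\CU_x)$ is a filter base for $\CU_y$). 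This computation is uniform in $i$ for relations (1)--(3); only (4)$_i$ distinguishes the two groupoids.

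For (2) I would compute $[\zeta,A(\zeta)]^{-1} = [\overline{\zeta}, A(\overline{\zeta})]$ directly from $[\zeta,x]^{-1} = [\overline{\zeta},\Phi_\zeta x]$ and $\Phi_\zeta(\widehat{A(\zeta)}) = \widehat{A(\overline{\zeta})}$, yielding $t_\zeta^* = t_{\overline{\zeta}}$. For (1), when $s(\zeta_1) = r(\zeta_2)$ I use $[\zeta_1,\Phi_{\zeta_2}b][\zeta_2,b] = [\zeta_1\zeta_2,b]$ to identify the product set: an element $[\zeta_2,b]$ of $[\zeta_2,A(\zeta_2)]$ is composable with an element of $[\zeta_1,A(\zeta_1)]$ exactly when $b \in \widehat{A(\zeta_2)}$ and $\Phi_{\zeta_2}(b) \in \widehat{A(\zeta_1)}$, i.e. when $b \in \widehat{A(\zeta_2)} \cap \Phi_{\zeta_2}^{-1}\widehat{A(\zeta_1)} = \widehat{A(\zeta_1\zeta_2)}$, the last equality using $A(\zeta_1\zeta_2) = \varphi_{\zeta_2}^{-1}\bigl(A(\zeta_1)\cap A(\overline{\zeta_2})\bigr)$ and the $\Phi$--$\varphi$ compatibility. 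Hence $[\zeta_1,A(\zeta_1)]\,[\zeta_2,A(\zeta_2)] = [\zeta_1\zeta_2,A(\zeta_1\zeta_2)]$ and $t_{\zeta_1}t_{\zeta_2} = t_{\zeta_1\zeta_2}$. When $s(\zeta_1) \ne r(\zeta_2)$, the source of any element of $[\zeta_1,A(\zeta_1)]$ lies in $X_{s(\zeta_1)}$ while the range of any element of $[\zeta_2,A(\zeta_2)]$ lies in $X_{r(\zeta_2)}$; as the $X_v$ are disjoint the product set is empty and $t_{\zeta_1}t_{\zeta_2} = 0$.

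Combining (1), (2) and the inverse computation gives $t_\zeta^* t_\zeta = t_{\overline{\zeta}} t_\zeta = \chi_{[\overline{\zeta}\zeta, A(\zeta)]}$; since $A(\overline{\zeta}\zeta) = A(\zeta)$ (Remark \ref{r.zigzag}\eqref{r.zigzag.e}) and $\varphi_{\overline{\zeta}\zeta} = \varphi_{\overline{\zeta}}\circ\varphi_\zeta = \text{id}_{A(\zeta)}$, this bisection lies in the unit space $X$ and $t_\zeta^* t_\zeta = \chi_{\widehat{A(\zeta)}}$. Each such indicator is a projection in the commutative $*$-subalgebra of functions supported on $X$, so the join in (3) is the pointwise supremum, supported on $\bigcup_i \widehat{A(\zeta_i)} = \widehat{\bigcup_i A(\zeta_i)} = \widehat{A(\zeta)}$ (using that $\widehat{\;\cdot\;}$ carries finite unions to unions and $A(\zeta) = \bigcup_i A(\zeta_i)$); this equals $\chi_{\widehat{A(\zeta)}} = t_\zeta^* t_\zeta$, proving (3). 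For (4)$_2$, if $\varphi_\zeta = \text{id}_{A(\zeta)}$ then $(\zeta,x) \sim_2 (\text{id},x)$ for every $x \in \widehat{A(\zeta)}$, so $[\zeta,A(\zeta)]_2 = [\text{id},A(\zeta)]_2$ lies in the unit space and $t_\zeta$ is a projection, whence $t_\zeta = t_\zeta^* t_\zeta$; the argument for (4)$_1$ is identical, using that $\Phi_\zeta = \text{id}_{\widehat{A(\zeta)}}$ forces $(\zeta,x) \sim_1 (\text{id},x)$ (take $E = A(\zeta) \in \CU_x$) for all $x \in \widehat{A(\zeta)}$.

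The main obstacle, and the one point deserving genuine care, is the bisection product identity $[\zeta_1,A(\zeta_1)]\,[\zeta_2,A(\zeta_2)] = [\zeta_1\zeta_2,A(\zeta_1\zeta_2)]$ underlying (1): one must check that the composability constraint $\Phi_{\zeta_2}(b) \in \widehat{A(\zeta_1)}$ translates, via the $\Phi$--$\varphi$ compatibility and the domain identity for $A(\zeta_1\zeta_2)$, exactly into $b \in \widehat{A(\zeta_1\zeta_2)}$, and that each element of the product factors uniquely so that convolution of indicators produces the indicator of the product. Everything else is a routine translation between the set operations on zigzag sets and the groupoid operations, uniform across the two equivalence relations except at the final relation.
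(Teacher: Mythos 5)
Your proof is correct and follows essentially the same route as the paper: both verify (1) and (2) by computing products and inverses of the compact open bisections $[\zeta,A(\zeta)]$ using the $\Phi$--$\varphi$ compatibility from Proposition \ref{prop maps on the spectrum}, and both reduce (3) and (4)$_i$ to statements about indicators of compact open subsets of the unit space. The only minor differences are that for (3) the paper invokes Theorem \ref{thm boolean ring homomorphism} where you argue pointwise via Stone duality (the same content), and for (4)$_2$ the paper first proves the claim that $\varphi_\zeta = \text{id}_{A(\zeta)}$ forces $\Phi_\zeta = \text{id}_{\widehat{A(\zeta)}}$, a detour you correctly observe is unnecessary since $\sim_2$ is defined directly in terms of the maps $\varphi$.
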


\begin{proof}
First note that
$\varphi_{\overline{\zeta_2}} (A(\zeta_1))
= \varphi_{\overline{\zeta_2}} \varphi_{\overline{\zeta_1}} (\Lambda)
= \varphi_{\overline{\zeta_1 \zeta_2}}(\Lambda) = A(\zeta_1 \zeta_2)$.
Then
\[
t_{\zeta_1} t_{\zeta_2}
= \chi_{[\zeta_1,A(\zeta_1)]} \chi_{[\zeta_2,A(\zeta_2)]}
= \chi_{[\zeta_1,A(\zeta_1)] \, [\zeta_2,A(\zeta_2)]}
= \chi_{[\zeta_1 \zeta_2, \varphi_{\overline{\zeta_2}}(A(\zeta_1)) \cap A(\zeta_2))]}
= \chi_{[\zeta_1 \zeta_2, A(\zeta_1 \zeta_2)]}
= t_{\zeta_1 \zeta_2},
\]
verifying (1).  Next,
$t_{\overline{\zeta}}
= \chi_{[\overline{\zeta}, A(\overline{\zeta})]}
= \chi_{[\zeta, A(\zeta)]^{-1}}
= \chi_{[\zeta, A(\zeta)]}^*
= t_\zeta^*$,
verifying (2).  Next note that $A(\zeta) \mapsto \chi_{[\overline{\zeta} \zeta, A(\zeta)]} = t_{\overline{\zeta} \zeta}$ is a Boolean ring homomorphism.  By Theorem \ref{thm boolean ring homomorphism}, if $A(\zeta) = \bigcup_{i=1}^n A(\zeta_i)$ then $t_{\overline{\zeta}\zeta} = \bigvee_{i=1}^n t_{\overline{\zeta_i}\zeta_i}$, verifying (3).  Now let $i = 1$.  Suppose that $\Phi_\zeta = \text{id}_{\widehat{A(\zeta)}}$.  Then also $\Phi_{\overline{\zeta}\zeta} = \text{id}_{\widehat{A(\zeta)}}$.    Since $\Phi_\zeta = \text{id} = \Phi_{\overline{\zeta}\zeta}$ near each point of $\widehat{A(\zeta)}$, we have $[\zeta,A(\zeta)] = [\overline{\zeta} \zeta, A(\zeta)]$, i.e. $t_\zeta = t_{\overline{\zeta}\zeta}$ in $C_c(G_1)$, verifying (4)$_1$.  Finally, let $i = 2$.  Suppose that $\varphi_\zeta = \text{id}_{A(\zeta)}$.  We claim that $\Phi_\zeta = \text{id}_{\widehat{A(\zeta)}}$.  To see this, let $x \in \widehat{A(\zeta)}$.  We will let $\CU_x$ denote the ultrafilter (in $\CA_{s(\zeta)}$) corresponding to $x$.  Then $\CU_{\Phi_\zeta(x)}$ is the ultrafilter generated by $\varphi_\zeta(\CU_x)$ (recall that $\varphi_\zeta(\CU_x)$ is an ultrafilter base).  Let $E \in \CU_x$.  Since $x \in \widehat{A(\zeta)}$ we have $A(\zeta) \in \CU_x$.  Therefore $E \cap A(\zeta) \in \CU_x$, and hence $E \cap A(\zeta) = \varphi_\zeta(E \cap A(\zeta)) \in \CU_{\Phi_\zeta(x)}$.  Since $\CU_{\Phi_\zeta(x)}$ is a filter, $E \in \CU_{\Phi_\zeta(x)}$.  Thus we have shown that $\CU_x \subseteq \CU_{\Phi_\zeta(x)}$.  Since these are both ultrafilters it follows that $\CU_x = \CU_{\Phi_\zeta(x)}$, i.e. $\Phi_\zeta(x) = x$.  We now see that $\Phi_\zeta = \text{id}_{\widehat{A(\zeta)}}$, hence $t_\zeta = t_{\overline{\zeta} \zeta}$ in $C_c(G_2)$ verifying (4)$_2$.
\end{proof}

\begin{Theorem} \label{thm toeplitz gens and rels}

Let $\Lambda$ be an LCSC.  For $i = 1$, 2, $C^*(G_i)$ is the universal $C^*$-algebra generated by a family $\{ T_\zeta : \zeta \in \CZ \}$ satisfying (1) - (3) and (4)$_i$.

\end{Theorem}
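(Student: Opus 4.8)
The plan is to realize $C^*(G_i)$ as the universal object directly. Let $B_i$ denote the universal $C^*$-algebra generated by a family $\{T_\zeta : \zeta \in \CZ\}$ subject to (1)--(3) and (4)$_i$; this exists because (1)--(3) force each $T_\zeta$ to be a partial isometry (Lemma \ref{lem elementary consequences}\eqref{lem elementary consequences 3}), so that $\|T_\zeta\| \le 1$ and the relations bound the generators. By Lemma \ref{lem gens and rels in G} the elements $t_\zeta = \chi_{[\zeta,A(\zeta)]} \in C_c(G_i) \subseteq C^*(G_i)$ satisfy (1)--(3) and (4)$_i$, so the universal property of $B_i$ yields a $*$-homomorphism $\pi_i : B_i \to C^*(G_i)$ with $\pi_i(T_\zeta) = t_\zeta$. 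The whole task is to show $\pi_i$ is bijective.

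First I would dispose of surjectivity. Using relation (4)$_i$, each diagonal generator $t_{\overline{\zeta}\zeta}$ equals $\chi_{[\mathrm{id},A(\zeta)]}$, a projection in the image of the commutative subalgebra $D_i \subseteq B_i$ generated by $\{T_{\overline\zeta\zeta}\}$. Since $\CA_{v}$ is the ring of sets generated by $\CD^{(0)}_v = \{A(\zeta) : \zeta \in \CZ v\}$, relation (3) together with the Boolean-ring-homomorphism property established in the proof of Lemma \ref{lem gens and rels in G} (via Theorem \ref{thm boolean ring homomorphism}) shows that $\chi_{[\mathrm{id},E]} \in \pi_i(D_i)$ for every $E \in \CA_v$. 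Multiplying by $t_\zeta$ then gives $\chi_{[\zeta,E]} = t_\zeta\,\chi_{[\mathrm{id},E]} \in \pi_i(B_i)$ for every $\zeta \in \CZ$ and every $E \in \CA_{s(\zeta)}$ with $E \subseteq A(\zeta)$. By Definition \ref{def base for topology} and Proposition \ref{pathcat 4.10} the sets $[\zeta,E]$ form a basis of compact-open bisections for the ample \'etale groupoid $G_i$, so their characteristic functions span a dense $*$-subalgebra of $C_c(G_i)$; hence $\pi_i$ is onto.

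The heart of the argument is injectivity, which I would handle through a conditional-expectation comparison. On the groupoid side, restriction to the unit space gives a conditional expectation $\Phi : C^*(G_i) \to C_0(X)$: because the full norm dominates the reduced norm, and the latter dominates the sup-norm of the restriction to $X$, the map $f \mapsto f|_{X}$ on $C_c(G_i)$ is contractive and extends to a (in general non-faithful) conditional expectation. On the universal side, I would identify $D_i$ with $C_0(X)$: by Theorem \ref{thm boolean ring homomorphism} and Proposition \ref{p.ultrafilter} the projections $T_{\overline\zeta\zeta}$ generate a commutative $C^*$-algebra whose spectrum is precisely the space $X$ of ultrafilters on $\CA$, and $\pi_i$ restricts to an isomorphism $D_i \xrightarrow{\ \cong\ } C_0(X)$. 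The crucial remaining ingredient is a \emph{faithful} conditional expectation $\Theta_i : B_i \to D_i$ satisfying the compatibility $\Phi \circ \pi_i = \pi_i \circ \Theta_i$. Granting this, injectivity follows formally: if $\pi_i(b) = 0$ then $\pi_i(\Theta_i(b^*b)) = \Phi(\pi_i(b^*b)) = 0$, whence $\Theta_i(b^*b) = 0$ since $\pi_i|_{D_i}$ is injective, and faithfulness of $\Theta_i$ forces $b^*b = 0$, i.e.\ $b = 0$.

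Thus the main obstacle is the construction of $\Theta_i$ and the proof that it is faithful. I would build it as in \cite[Section 6]{spi_pathcat}: on the dense $*$-subalgebra generated by the $T_\zeta$, define $\Theta_i$ by extracting the ``diagonal'' summands, namely those $\zeta$ for which the governing map ($\Phi_\zeta$ when $i=1$, resp.\ $\varphi_\zeta$ when $i=2$) is the identity on the relevant piece; the well-definedness of this extraction is exactly what relations (1)--(3) and (4)$_i$ encode, with Lemma \ref{lem elementary consequences}\eqref{lem elementary consequences 4}--\eqref{lem elementary consequences 7} collapsing the ambiguity among zigzags inducing the same map. Boundedness of $\Theta_i$ is inherited from $\Phi$ through the compatibility relation, so it extends to all of $B_i$; faithfulness is then established by testing against the regular (point-evaluation) representations indexed by $x \in X$, which realize $\Theta_i(a)(x)$ as a vector state and jointly detect positivity on $D_i$. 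The distinction between $i=1$ and $i=2$ enters only through which zigzags count as diagonal, reflecting the difference between $\sim_1$ and $\sim_2$; otherwise the two cases run in parallel.
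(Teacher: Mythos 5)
Your reduction of the theorem to surjectivity plus injectivity of $\pi_i : B_i \to C^*(G_i)$ is reasonable, and both your surjectivity argument and the identification $\pi_i|_{D_i} : D_i \cong C_0(X)$ are essentially correct. The gap is fatal, however, and it is exactly where you located the ``heart'' of your argument: a \emph{faithful} conditional expectation $\Theta_i : B_i \to D_i$ satisfying $\Phi \circ \pi_i = \pi_i \circ \Theta_i$ does not exist in general. Note first that compatibility and injectivity of $\pi_i|_{D_i}$ force $\Theta_i = (\pi_i|_{D_i})^{-1} \circ \Phi \circ \pi_i$, so $\Theta_i$ is completely determined. Now take $\Lambda = \IF_2$, the free group on two generators, viewed as an LCSC with one object. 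Every zigzag set is all of $\Lambda$, so relations (1)--(3) make every $T_\zeta$ a unitary, and (4)$_2$ identifies $T_\zeta$ with $1$ exactly when the reduced word $\alpha_1^{-1}\beta_1\cdots\alpha_n^{-1}\beta_n$ is trivial; hence $B_2 \cong C^*(\IF_2)$, $X$ is a point, and $D_2 = \IC$ (this matches Example \ref{example group new}). The formula above then forces $\Theta_2$ to be the canonical trace $\tau(f) = f(e)$ on $C^*(\IF_2)$. But the GNS representation of $\tau$ is the left regular representation, so $\tau$ vanishes on the nonzero kernel of $C^*(\IF_2) \to C^*_r(\IF_2)$ and is \emph{not} faithful. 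This failure is structural, not an accident of the example: your proposed proof of faithfulness --- ``testing against the regular (point-evaluation) representations indexed by $x \in X$'' --- collapses at exactly this point, because those representations all factor through $C^*_r(G_i)$ and therefore cannot detect positive elements of $B_i$ that die in the reduced quotient. Indeed, had your argument worked, it would show $B_i \cong C^*_r(G_i)$, i.e.\ that the full and reduced groupoid algebras always coincide; faithful-expectation arguments of this type characterize reduced algebras (compare Section \ref{sec regular representation}), never full ones.

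The correct route, and the one the paper takes, runs in the opposite direction: instead of trying to prove injectivity of $\pi_i$ abstractly, one constructs its inverse. Given an \emph{arbitrary} family $\{T_\zeta\}$ in a $C^*$-algebra $B$ satisfying (1)--(3) and (4)$_i$, one builds a $*$-homomorphism $C_c(G_i) \to B$ with $t_\zeta \mapsto T_\zeta$: the Boolean ring homomorphism of Theorem \ref{thm boolean ring homomorphism} handles the diagonal, producing $\pi_0$ with $\pi_0(\chi_{A(\zeta)}) = T_\zeta^* T_\zeta$; a function $f$ supported in a basic bisection $[\zeta, A(\zeta)]$ is sent to $T_\zeta\, \pi_0(\widetilde{f})$ where $\widetilde{f}(x) = f([\zeta,x])$; and relation (4)$_i$, combined with the density of fixed ultrafilters (Lemma \ref{lem fixed ultrafilters are dense}), is precisely what makes this local definition independent of the choice of $\zeta$ --- this is where the distinction between $\sim_1$ and $\sim_2$ genuinely enters. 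One then patches over a disjoint cover of the support, checks multiplicativity and inductive-limit continuity, and extends to $C^*(G_i)$ using the universal property of the full groupoid $C^*$-algebra. Applying this construction to the universal family inside $B_i$ yields a homomorphism $C^*(G_i) \to B_i$ inverse to your $\pi_i$ on generators; no expectation, and in particular no faithfulness claim, is ever needed.
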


\begin{proof}
It follows from Lemma \ref{lem gens and rels in G} that the generating family $\{ t_\zeta : \zeta \in \CZ \} \subseteq C_c(G_i)$ satisfy (1) - (3) and (4)$_i$.  For the converse, let $\{ T_\zeta : \zeta \in \CZ \}$ be elements in a $C^*$-algebra $B$ satisfying (1) - (3) and (4)$_i$.  We define $\mu : \CD^{(0)} \to B$ by $\mu(A(\zeta)) = T_\zeta^* T_\zeta$.  Since $ T_{\overline{\zeta_1} \zeta_1}  T_{\overline{\zeta_2} \zeta_2} =  T_{\overline{\zeta_1} \zeta_1 \overline{\zeta_2} \zeta_2}$, condition \eqref{thm boolean ring homomorphism 1} of Theorem \ref{thm boolean ring homomorphism} holds.  Condition \eqref{thm boolean ring homomorphism 2} of Theorem \ref{thm boolean ring homomorphism} holds by (3).  Then  by Theorem \ref{thm boolean ring homomorphism} we obtain a $*$-homomorphism $\pi_0 : C_0(G^0) \to B$ such that $\pi_0(\chi_{A(\zeta)}) = T_\zeta^* T_\zeta$.

Now let us consider the case $i = 2$.  In order to extend $\pi_0$ to all of $C^*(G)$, we proceed locally.  Let $f \in C_c(G_2)$ be such that there is $\zeta \in \CZ$ with $\text{supp}\,(f) \subseteq [\zeta,A(\zeta)]$.  Define $\widetilde{f} \in C\bigl(\widehat{A(\zeta)}\bigr)$ by $\widetilde{f}(x) = f([\zeta,x])$.  To show that $\widetilde{f}$ is well-defined, suppose also that $\text{supp}\,(f) \subseteq [\zeta',A(\zeta')]$.  For each $w \in \text{supp}\,(f)$, $w = [\zeta,x] = [\zeta',x]$ for some $x \in \widehat{A(\zeta)} \cap \widehat{A(\zeta')}$.  Then $f([\zeta,x]) = f([\zeta',x])$, and hence $\widetilde{f}$ doesn't depend on the choice of $\zeta$.

Now we define $\pi(f) = T_\zeta \pi_0(\widetilde{f})$ if $\text{supp}\,(f) \subseteq [\zeta,A(\zeta)]$.
To see that this is well-defined, let supp$(f) \subseteq [\zeta_1, A(\zeta_1)] \cap [\zeta_2, A(\zeta_2)]$.  First suppose that $f = \chi_{[\xi,A(\xi)]}$.  Then $A(\xi) \subseteq A(\zeta_1) \cap A(\zeta_2)$, and for $x \in \widehat{A(\xi)}$ we have $[\zeta_1,x] = [\zeta_2,x] = [\xi,x]$.  Then there is $E \in \CU_x$ such that $\varphi_{\zeta_1}|_E = \varphi_{\zeta_2}|_E = \varphi_\xi|_E$.  For each $\alpha \in A(\xi)$ we let $\{\alpha\}$ denote the filter in $X$ consisting of all zigzag sets containing $\alpha$.  Then $\CU_{\{\alpha\}}$ is the fixed ultrafilter at $\alpha$.  Then $\{ \alpha \} \in \widehat{A(\xi)}$ and then the above implies that there is $E \in \CU_{\{\alpha\}}$ with $\varphi_{\zeta_j}|_E = \varphi_\xi|_E$.  But $E \in \CU_{\{\alpha\}}$ implies that $\alpha \in E$, and hence $\varphi_{\zeta_j}(\alpha) = \varphi_\xi(\alpha)$.  Since $\alpha \in A(\xi)$ was arbitrary, we have $\varphi_{\zeta_j \overline{\xi}\xi} = \varphi_\xi$.  By Lemma \ref{lem elementary consequences}\eqref{lem elementary consequences 4} we have $T_{\zeta_1} T_\xi^* T_\xi = T_{\zeta_2} T_\xi^* T_\xi$.  We have $\widetilde{f} = \chi_{\widehat{A(\xi)}}$, and hence $\pi_0(\widetilde{f}) = T_\xi^* T_\xi$.  But then $T_{\zeta_1} \pi_0(\widetilde{f}) = T_{\zeta_1}T_\xi^* T_\xi = T_{\zeta_2} T_\xi^* T_\xi = T_{\zeta_2} \pi_0(\widetilde{f})$.  It follows that this also holds for $f$ in the span of such characteristic functions.  By continuity of $\pi_0$, it follows for all $f$ supported in basic sets $[\zeta,A(\zeta)]$.

For an arbitrary $f \in C_c(G)$, we may find $\zeta_i \in \CZ$ and $A_i \in \CA$, $1 \le i \le n$, such that $\text{supp}\,(f) \subseteq \bigsqcup_i [\zeta_i,A_i]$.  Then $f = \sum_i f |_{[\zeta_i,A_i]}$.  If also $\text{supp}\,(f) \subseteq \bigsqcup_j [\xi_j,B_j]$, then since $[\zeta_i,A_i] \cap [\xi_j,B_j] = [\zeta_i,A_i \cap B_j] = [\xi_j, A_i \cap B_j]$, we have
\[
\sum_i f |_{[\zeta_i,A_i]} = \sum_{i,j} f |_{[\zeta_i,A_i \cap B_j]} = \sum_{i,j} f |_{[\xi_j,A_i \cap B_j]} = \sum_j f |_{[\xi_j,B_j]}.
\]
Thus $\sum_i \pi \bigl( f |_{[\zeta_i,A_i]} \bigr) = \sum_j \pi \bigl( f |_{[\xi_j,B_j]} \bigr)$.  Therefore this last expression is a well-defined extension of $\pi$ to all of $C_c(G)$, and is a self-adjoint linear map.  We note that $\pi$ is continuous for the inductive limit topology, since by the above it reduces to uniform convergence on the sets $[\zeta,\widehat{A(\zeta)}]$.  Finally, since $\pi$ is multiplicative on the characteristic functions of the basic sets $[\zeta,A(\zeta)]$, the continuity implies that $\pi$ is multiplicative on $C_c(G)$.  Therefore $\pi$ extends to all of $C^*(G)$ by \cite[Definition 3.17]{exel08} (and the remarks preceding it).

Finally, we consider the case $i = 1$.  The only difference with the above argument is in showing that if $[\xi,A(\xi)] \subseteq [\zeta_1,A(\zeta_1)] \cap [\zeta_2,A(\zeta_2)]$ then $T_{\zeta_1 \overline{\xi} \xi} = T_{\zeta_2 \overline{\xi} \xi} = T_\xi$.  In this case, since $[\zeta_1,x] = [\zeta_2,x] = [\xi,x]$ for all $x \in \widehat{A(\xi)}$ we immediately get $\Phi_{\zeta_j}|_{\widehat{A(\xi)}} = \Phi_\xi$.  Then Lemma \ref{lem elementary consequences}\eqref{lem elementary consequences 5} implies that $T_{\zeta_1} T_\xi^* T_\xi = T_{\zeta_2} T_\xi^* T_\xi$, and the rest of the above argument remains valid.
\end{proof}

\begin{Remark}

It follows that Definition \ref{def toeplitz algebra} generalizes the definition given in \cite[Definition 4.6]{bkqs} to the nonfinitely aligned case.
\end{Remark}

\begin{Corollary} \label{cor 2 maps to 1}

There is a surjective homomorphism $C^*(G_2) \to C^*(G_1)$ carrying generators to generators.

\end{Corollary}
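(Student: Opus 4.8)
The plan is to deduce this directly from the universal descriptions in Theorem \ref{thm toeplitz gens and rels}, together with a comparison of the two versions of relation (4). The key point is that relation (4)$_2$ is \emph{weaker} than relation (4)$_1$: the set of zigzags to which (4)$_2$ applies is contained in the set to which (4)$_1$ applies. Indeed, in the proof of Lemma \ref{lem gens and rels in G} it is shown that if $\varphi_\zeta = \text{id}_{A(\zeta)}$ then $\Phi_\zeta = \text{id}_{\widehat{A(\zeta)}}$. Consequently any family $\{ T_\zeta : \zeta \in \CZ \}$ satisfying (1)--(3) and (4)$_1$ automatically satisfies (4)$_2$ as well: whenever $\varphi_\zeta = \text{id}_{A(\zeta)}$ we also have $\Phi_\zeta = \text{id}_{\widehat{A(\zeta)}}$, and then (4)$_1$ forces $T_\zeta = T_\zeta^* T_\zeta$, which is exactly the conclusion of (4)$_2$.

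First I would apply this observation to the canonical generators. By Lemma \ref{lem gens and rels in G} the family $\{ t_\zeta \} = \{ \chi_{[\zeta,A(\zeta)]} \} \subseteq C_c(G_1)$ satisfies (1)--(3) and (4)$_1$, and hence, by the preceding paragraph, it also satisfies (4)$_2$. Now I invoke the universal property of $C^*(G_2)$ from Theorem \ref{thm toeplitz gens and rels}: since $\{ t_\zeta \} \subseteq C^*(G_1)$ is a family satisfying (1)--(3) and (4)$_2$, there is a (unique) $*$-homomorphism $\pi : C^*(G_2) \to C^*(G_1)$ carrying each generator $T_\zeta$ to $t_\zeta$, i.e.\ carrying generators to generators.

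Finally I would check surjectivity. The generators $\{ t_\zeta \}$ generate $C^*(G_1)$ by construction, so the range of $\pi$ is a $C^*$-subalgebra of $C^*(G_1)$ containing a generating set and is therefore dense; as the image of a $*$-homomorphism of $C^*$-algebras it is also closed, whence $\pi$ is onto. I do not anticipate any genuine obstacle: the entire content is the already established implication $\varphi_\zeta = \text{id}_{A(\zeta)} \Rightarrow \Phi_\zeta = \text{id}_{\widehat{A(\zeta)}}$, which makes (4)$_1$ a stronger relation than (4)$_2$ and thereby exhibits $C^*(G_1)$ as a quotient of $C^*(G_2)$. The only point that requires care is getting the direction of the map right --- it is the algebra defined by the weaker relations, $C^*(G_2)$, that surjects onto the one defined by the stronger relations.
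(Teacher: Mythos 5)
Your proposal is correct and follows essentially the same route as the paper's own proof: both use the implication $\varphi_\zeta = \text{id}_{A(\zeta)} \Rightarrow \Phi_\zeta = \text{id}_{\widehat{A(\zeta)}}$ (established in the proof of Lemma \ref{lem gens and rels in G}) to show that the generators of $C^*(G_1)$, which satisfy (1)--(3) and (4)$_1$, also satisfy (4)$_2$, and then invoke the universal property of $C^*(G_2)$ from Theorem \ref{thm toeplitz gens and rels}. Your additional remarks on surjectivity (closed range containing a generating set) and on getting the direction of the quotient map right are accurate elaborations of what the paper leaves implicit.
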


\begin{proof}
Let $\{ t_\zeta : \zeta \CZ \}$ denote the generators of $C^*(G_1)$.  We know that they satisfy (1) - (3) and (4)$_1$.  Let $\zeta \in \CZ$ and suppose that $\varphi_\zeta = \text{id}_{A(\zeta)}$.  Then also $\Phi_\zeta = \text{id}_{\widehat{A(\zeta)}}$.  By (4)$_1$ we have $t_\zeta = t_\zeta^* t_\zeta$.  Therefore $\{ t_\zeta : \zeta \in \CZ \}$ satisfy (4)$_2$.
\end{proof}

The finitely aligned case requires less adjustment from the treatment in \cite{spi_pathcat}.  Nevertheless we wish to give a careful proof for LCSC's, as the distinction between the two groupoids is crucial.  We mention in particular that while the hypotheses of \cite[Theorem 6.3]{spi_pathcat} mix the definitions of the two groupoids, by Proposition \ref{prop lcsc without inverses} the two coincide for the case of a finitely aligned category of paths.

\begin{Theorem} \label{thm finitely aligned 2}

Let $\Lambda$ be a finitely aligned LCSC.  Then $C^*(G_2)$ is the universal $C^*$-algebra generated by a family $\{ T_\alpha : \alpha \in \Lambda \}$ satisfying

\begin{itemize}

\item[$(1)'$] $T_\alpha^* T_\alpha = T_{s(\alpha)}$.

\item[$(2)'$] $T_\alpha T_\beta = T_{\alpha \beta}$, if $s(\alpha) = r(\beta)$.

\item[$(3)'$] $T_\alpha T_\alpha^* T_\beta T_\beta* = \bigvee_{\gamma \in \alpha \vee \beta} T_\gamma T_\gamma^*$.

\end{itemize}

\end{Theorem}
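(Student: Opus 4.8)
The plan is to deduce the theorem from the zigzag presentation of Theorem \ref{thm toeplitz gens and rels} by showing that the presentation of $C^*(G_2)$ by $(1)$--$(3)$, $(4)_2$ agrees with the presentation by $(1)'$--$(3)'$. Throughout I identify $\alpha \in \Lambda$ with the zigzag $(r(\alpha),\alpha)$, so that $\varphi_{(r(\alpha),\alpha)} = \tau^\alpha$ by Remark \ref{r.zigzag}\eqref{r.zigzag.b}, and I write $T_\alpha := T_{(r(\alpha),\alpha)}$; then $T_\alpha^* = T_{(\alpha,r(\alpha))}$ and $T_{s(\alpha)} = T_{(s(\alpha),s(\alpha))}$. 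Let $A'$ be the universal $C^*$-algebra on generators $\{T_\alpha : \alpha \in \Lambda\}$ subject to $(1)'$--$(3)'$. The goal is to produce mutually inverse homomorphisms between $A'$ and $C^*(G_2)$.

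In the forward direction I would verify that the canonical generators $t_\alpha := t_{(r(\alpha),\alpha)} \in C^*(G_2)$ satisfy $(1)'$--$(3)'$. Relations $(1)'$ and $(2)'$ are immediate from $(1)$, $(2)$ and Lemma \ref{lem elementary consequences}\eqref{lem elementary consequences 4}, since the concatenations $(\alpha,r(\alpha))(r(\alpha),\alpha)$ and $(s(\alpha),s(\alpha))$ both induce $\mathrm{id}_{s(\alpha)\Lambda}$, while $(r(\alpha),\alpha)(r(\beta),\beta)$ and $(r(\alpha),\alpha\beta)$ both induce $\tau^{\alpha\beta}$. For $(3)'$, note that $t_\alpha t_\alpha^*$ is the diagonal projection associated (via Lemma \ref{lem gens and rels in G}) to $\alpha\Lambda$, and finite alignment gives $\alpha\Lambda \cap \beta\Lambda = \bigcup_{\gamma \in \alpha\vee\beta}\gamma\Lambda$ (Lemma \ref{lem finitely aligned}\eqref{lem finitely aligned two}), so the Boolean ring homomorphism property from Lemma \ref{lem gens and rels in G} yields $t_\alpha t_\alpha^* t_\beta t_\beta^* = \bigvee_{\gamma \in \alpha\vee\beta} t_\gamma t_\gamma^*$. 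The universal property of $A'$ then furnishes a homomorphism $\psi : A' \to C^*(G_2)$ with $\psi(T_\alpha) = t_\alpha$, and $\psi$ is surjective: by Lemma \ref{lem finitely aligned zigzag map} each $\varphi_\zeta$ is a finite union $\bigcup_i \tau^{\gamma_i}\circ\sigma^{\delta_i}$, and since $\tau^{\gamma_i}\circ\sigma^{\delta_i} = \varphi_{(r(\gamma_i),\gamma_i,\delta_i,r(\delta_i))}$ corresponds to $t_{\gamma_i}t_{\delta_i}^*$, Lemma \ref{lem elementary consequences}\eqref{lem elementary consequences 6} gives $t_\zeta = \bigvee_i t_{\gamma_i}t_{\delta_i}^*$, which lies in the image of $\psi$.

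For the reverse direction I would invert $\psi$. Let $\{S_\alpha\}$ be an arbitrary family in a $C^*$-algebra $B$ satisfying $(1)'$--$(3)'$ (ultimately $B = A'$ with its universal generators), and for $\zeta = (\alpha_1,\beta_1,\ldots,\alpha_n,\beta_n)$ set $S_\zeta := S_{\alpha_1}^* S_{\beta_1}\cdots S_{\alpha_n}^* S_{\beta_n}$. It suffices to check that $\{S_\zeta\}$ satisfies $(1)$--$(3)$ and $(4)_2$, for then Theorem \ref{thm toeplitz gens and rels} supplies $\phi : C^*(G_2) \to B$ with $\phi(t_\zeta) = S_\zeta$, hence $\phi(t_\alpha) = S_\alpha$; taking $B = A'$ and comparing $\phi$, $\psi$ on generators shows they are mutually inverse, proving $C^*(G_2) \cong A'$. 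Relations $(1)$ and $(2)$ are formal consequences of the definition together with $(1)'$, $(2)'$ (the ``$=0$'' clause of $(1)$ coming from orthogonality of the vertex projections $S_{(u,u)}$, which itself follows from $(3)'$ applied to vertices of distinct range, where $\alpha\vee\beta = \varnothing$). The structural input for $(3)$ is that $(3)'$ is exactly condition \eqref{thm boolean ring homomorphism finitely aligned 3} of Theorem \ref{thm boolean ring homomorphism finitely aligned}: it forces the projections $S_\alpha S_\alpha^*$ to commute, is well-defined on $\approx$-classes (a short computation with $(1)'$, $(2)'$ showing $S_\mu S_\mu^* = S_{r(\mu)}$ for invertible $\mu$), and extends to a Boolean ring homomorphism $\mu_v : \CA_v \to B$ with $\mu_v(\alpha\Lambda) = S_\alpha S_\alpha^*$. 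One then proves by induction on the length of $\zeta$, using covariance of $\mu$ under the generators ($S_\alpha^* \mu(\gamma\Lambda) S_\alpha = \mu(\sigma^\alpha(\gamma\Lambda \cap \alpha\Lambda))$, which follows from $(1)'$--$(3)'$), the source-projection identity $S_\zeta^* S_\zeta = \mu_{s(\zeta)}(A(\zeta))$; relation $(3)$ is then immediate from the Boolean ring homomorphism property of $\mu$.

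The main obstacle is $(4)_2$, equivalently the well-definedness statement that $\varphi_{\zeta_1} = \varphi_{\zeta_2}$ implies $S_{\zeta_1} = S_{\zeta_2}$ (so that $\varphi_\zeta = \mathrm{id}_{A(\zeta)}$ gives $S_\zeta = S_{\overline\zeta\zeta} = S_\zeta^* S_\zeta$). The delicate point is that I cannot route this through Lemma \ref{lem elementary consequences}\eqref{lem elementary consequences 4}, whose proof presupposes $(4)_2$; it must be established directly from $(1)'$--$(3)'$. Here finite alignment is indispensable: using Lemma \ref{lem finitely aligned zigzag map} and Corollary \ref{cor finitely aligned zigzag set} I would take a common decomposition $A(\zeta_1) = A(\zeta_2) = \bigcup_k \delta_k\Lambda$ on which both maps equal a single local form $\tau^{\gamma_k}\circ\sigma^{\delta_k}$, and prove the key cut-down identity $S_{\zeta_j}\,\mu(\delta_k\Lambda) = S_{\gamma_k}S_{\delta_k}^*$ directly: since $S_{\zeta_j}S_{\delta_k} = S_{\zeta_j (r(\delta_k),\delta_k)}$ is a zigzag operator whose map is $\tau^{\gamma_k}$, the source-projection identity and an induction on zigzag length reduce it to $S_{\gamma_k}$. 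Summing these local agreements through the Boolean ring homomorphism $\mu$ over a disjointification of $A(\zeta_1)$ yields $S_{\zeta_1} = S_{\zeta_1}\mu(A(\zeta_1)) = S_{\zeta_2}\mu(A(\zeta_2)) = S_{\zeta_2}$, giving $(4)_2$. With $(1)$--$(3)$ and $(4)_2$ verified, $\phi$ exists and the theorem follows.
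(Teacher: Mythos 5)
Your proposal is correct, and while its outer skeleton (reduce both presentations to Theorem \ref{thm toeplitz gens and rels}, set $T_\zeta = T_{\alpha_1}^*T_{\beta_1}\cdots T_{\alpha_n}^*T_{\beta_n}$, and verify (1)--(3) and (4)$_2$) necessarily matches the paper, your verification of (3) and (4)$_2$ runs along a genuinely different track. The paper's proof never invokes Theorem \ref{thm boolean ring homomorphism finitely aligned}; instead it establishes two combinatorial claims about presentations of zigzag maps as finite unions $\bigcup_i \tau^{\gamma_i}\sigma^{\delta_i}$ --- first, that $\bigvee_i T_{\gamma_i}T_{\delta_i}^*$ is independent of the chosen presentation (by pruning to an ``independent'' family and matching terms up to invertibles), and second, by induction on length, that $T_\zeta = \bigvee_i T_{\gamma_i}T_{\delta_i}^*$ --- and then reads off (3) and (4)$_2$ from these. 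You instead build the diagonal Boolean ring homomorphism $\mu_v$ directly from $(3)'$ via Theorem \ref{thm boolean ring homomorphism finitely aligned}, prove the source-projection identity $S_\zeta^*S_\zeta = \mu(A(\zeta))$ by a covariance induction, so that (3) becomes a one-line consequence of the homomorphism property, and isolate the remaining work in the cut-down identity $S_\zeta S_\delta = S_\gamma$ whenever $\varphi_\zeta\circ\tau^\delta = \tau^\gamma$.

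One step in your sketch deserves to be made explicit, because on a naive reading it looks circular (an arbitrary tail of the zigzag no longer induces a map of the form $\tau^\gamma$, so one might fear the induction collapses back into the paper's general decomposition claims). The induction stays inside the class of ``full $\tau$'' maps precisely because of Remark \ref{r.zigzag}\eqref{r.zigzag.f}: writing $\zeta = (\alpha_1,\beta_1)\zeta'$, the hypothesis $\varphi_\zeta\circ\tau^\delta = \tau^\gamma$ on all of $s(\delta)\Lambda$ forces $\varphi_{\zeta'}\circ\tau^\delta = \tau^{\delta'}$ with $\delta' = \varphi_{\zeta'}(\delta)$, again with full domain; the inductive hypothesis then gives $S_{\zeta'}S_\delta = S_{\delta'}$, and since $\beta_1\delta' = \alpha_1\gamma$, the base-case computation $S_{\alpha_1}^*S_{\beta_1}S_{\delta'} = S_{\alpha_1}^*S_{\alpha_1\gamma} = S_\gamma$ finishes the step using only $(1)'$ and $(2)'$. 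With that observation supplied, your disjointification argument for (4)$_2$ closes correctly. As for what each route buys: yours makes cleaner use of machinery the paper has already established (Theorems \ref{thm boolean ring homomorphism} and \ref{thm boolean ring homomorphism finitely aligned}) and trivializes (3); the paper's route is more self-contained, and its decomposition claims --- proved deliberately without appeal to (4)$_2$ --- are reused verbatim in the proof of Theorem \ref{thm finitely aligned 1} for $G_1$, a side benefit your argument would not supply without additional work.
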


\begin{proof}
First suppose that we have a representation of $C^*(G_2)$.  By Theorem \ref{thm toeplitz gens and rels} we have a family of elements $\{ T_\zeta : \zeta \in \CZ \}$ in a $C^*$-algebra satisfying (1) - (3) and (4)$_2$.  For $\alpha \in \Lambda$ we define $T_\alpha = T_{(r(\alpha),\alpha)}$.  Note that $A(r(\alpha),\alpha) = s(\alpha) \Lambda = A(s(\alpha),s(\alpha))$.  Since $\varphi_{(\alpha,r(\alpha),r(\alpha),\alpha)} = \varphi_{(s(\alpha),s(\alpha))}$, by (1), (2) and Lemma \ref{lem elementary consequences}\eqref{lem elementary consequences 4} we have
\[
T_\alpha^* T_\alpha = T_{(r(\alpha),\alpha)}^* T_{(r(\alpha),\alpha)} = T_{\overline{(r(\alpha),\alpha)} (r(\alpha),\alpha)} = T_{(s(\alpha),s(\alpha))} = T_{s(\alpha)},
\]
verifying $(1)'$.  Next, let $s(\alpha) = r(\beta)$.  Since $\varphi_{(r(\alpha),\alpha,r(\beta),\beta)} = \varphi_{(r(\alpha),\alpha\beta)}$, by (1) and Lemma \ref{lem elementary consequences}\eqref{lem elementary consequences 4} we have
\[
T_\alpha T_\beta = T_{(r(\alpha),\alpha)} T_{(r(\beta),\beta)} = T_{(r(\alpha),\alpha,r(\beta),\beta)} = T_{(r(\alpha),\alpha\beta)} = T_{\alpha\beta},
\]
verifying $(2)'$.  Finally, let $\alpha$, $\beta \in \Lambda$.  Then by (1) and (2), $T_\alpha T_\alpha^* T_\beta T_\beta^* = T_{(r(\alpha),\alpha) \overline{(r(\alpha),\alpha)} (r(\beta),\beta) \overline{(r(\beta),\beta)}}$.  Note that
\begin{align*}
\varphi_{(r(\alpha),\alpha) \overline{(r(\alpha),\alpha)} (r(\beta),\beta) \overline{(r(\beta),\beta)}}
&= \varphi_{(r(\alpha),\alpha,\alpha,r(\alpha),r(\beta),\beta,\beta,r(\beta))}
= \tau^\alpha \sigma^\alpha \tau^\beta \sigma^\beta \\
&= \bigcup_{\gamma \in \alpha \vee \beta} \tau^\alpha \tau^{\displaystyle (\sigma^\alpha \gamma)}  \sigma^{\displaystyle (\sigma^\beta \gamma)} \sigma^\beta
= \bigcup_{\gamma \in \alpha \vee \beta} \tau^{\displaystyle (\alpha \sigma^\alpha \gamma)} \sigma^{\displaystyle (\beta \sigma^\beta \gamma)} \\
&= \bigcup_{\gamma \in \alpha \vee \beta} \tau^\gamma \sigma^\gamma
= \bigcup_{\gamma \in \alpha \vee \beta} \varphi_{(r(\gamma),\gamma,\gamma,r(\gamma))}.
\end{align*}
By Lemma \ref{lem elementary consequences}\eqref{lem elementary consequences 6} we have $T_{(r(\alpha),\alpha) \overline{(r(\alpha),\alpha)} (r(\beta),\beta) \overline{(r(\beta),\beta)}} = \bigvee_{\gamma \in \alpha \vee \beta} T_\gamma T_\gamma^*$, verifying $(3)'$.

Conversely, let $\{T_\alpha : \alpha \in \Lambda \}$ be given satisfying $(1)'$ - $(3)'$.  For $\zeta = (\alpha_1, \beta_1, \ldots, \alpha_n, \beta_n) \in \CZ$ define $T_\zeta = T_{\alpha_1}^* T_{\beta_1} \cdots T_{\alpha_n}^* T_{\beta_n}$.  Then (1) and (2) clearly hold.  We will verify (3).  

We first prove the following claim.  If $\gamma_i$, $\delta_i$, $\xi_j$, $\eta_j \in \Lambda$ are finite collections such that $\bigcup_i \tau^{\gamma_i} \sigma^{\delta_i} = \bigcup_j \tau^{\xi_j} \sigma^{\eta_j}$, then $\bigvee_i T_{\gamma_i} T_{\delta_i}^* = \bigvee_j T_{\xi_j} T_{\eta_j}^*$.  To prove this claim, first fix $i_0$.  Since $\delta_{i_0}$ is in the domain of $\bigcup_i \tau^{\gamma_i} \sigma^{\delta_i}$, there exists $j_0$ such that $\eta_{j_0} \in [\delta_{i_0}]$.  Similarly, there is $i_1$ such that $\delta_{i_1} \in [\eta_{j_0}]$.  Therefore $\delta_{i_1} \in [\delta_{i_0}]$.  Let $\delta_{i_0} = \delta_{i_1} \mu$.  Since any two terms of $\bigcup_i \tau^{\gamma_i} \sigma^{\delta_i}$ must agree on the intersection of their domains, we have 
\[
\gamma_{i_0} = \tau^{\gamma_{i_0}} \sigma^{\delta_{i_0}}(\delta_{i_0}) = \tau^{\gamma_{i_1}} \sigma^{\delta_{i_1}}(\delta_{i_0}) = \gamma_{i_1} \mu.
\]
Therefore $\tau^{\gamma_{i_0}} \sigma^{\delta_{i_0}} = \tau^{\gamma_{i_1}} \sigma^{\delta_{i_1}} |_{\delta_{i_0} \Lambda}$.  Thus the $i_0$ term may be deleted from $\bigcup_i \tau^{\gamma_i} \sigma^{\delta_i}$.  We repeat this process until we have that $\delta_i \not\in [\delta_{i'}]$ for all $i \not= i'$.  Moreover, we have $T_{\gamma_{i_1}} T_{\delta_{i_1}}^* = T_{\gamma_{i_0}} T_{\delta_{i_0}}^* + T_{\gamma_{i_1}} (T_{s(\gamma_{i_1})} - T_\mu T_\mu^*) T_{\delta_{i_1}}^*$.  Therefore $T_{\gamma_{i_0}} T_{\delta_{i_0}}^*$ can be deleted from $\bigvee_i T_{\gamma_i} T_{\delta_i}^*$.  Repeating this for the other map and operator, we may also assume that $\eta_j \not\in [\eta_{j'}]$ for all $j \not= j'$.  Now for each $i$ there is $j$ such that $\eta_j \in [\delta_i]$.  Then there is $i'$ such that $\delta_{i'} \in [\eta_j]$.  Hence $\delta_{i'} \in [\delta_i]$, so we must have $\delta_i = \delta_{i'} \approx \eta_j$.  Let $\delta_i = \eta_j \nu$, where $\nu \in s(\eta_j) \Lambda^{-1}$.  Applying both maps to $\delta_i = \eta_j \nu$ we find that $\gamma_i = \xi_j \nu$.  Thus the two presentations of the map are identical, and thus so are the operators.  This finishes the proof of the claim.  

Next we claim that if $\zeta \in \CZ$ and $\varphi_\zeta = \bigcup_i \tau^{\gamma_i} \sigma^{\delta_i}$ is a finite union, then $T_\zeta = \bigvee_i T_{\gamma_i} T_{\delta_i}^*$.  (By Lemma \ref{lem finitely aligned zigzag map} every zigzag map has this form.)  We prove this by induction on the length of $\zeta$.  First suppose that $\zeta = (\alpha,\beta)$.  Then $\varphi_\zeta = \sigma^\alpha \tau^\beta = \bigcup_{\gamma \in \alpha \vee \beta} \tau^{(\sigma^\alpha \gamma)} \sigma^{(\sigma^\beta \gamma)}$.  Moreover, by $(3)'$ we have
\[
T_\zeta = T_\alpha^* T_\beta = T_\alpha^* (T_\alpha T_\alpha^* T_\beta T_\beta^*) T_\beta = \bigvee_{\gamma \in \alpha \vee \beta} T_\alpha^* T_\gamma T_\gamma^* T_\beta = \bigvee_{\gamma \in \alpha \vee \beta} T_{\sigma^\alpha \gamma} T_{\sigma^\beta \gamma}^*.
\]
By the previous claim, we know that this doesn't depend on the decomposition chosen for $\varphi_\zeta$.  Now suppose that the current claim is true for zigzags of length at most $n$.  Let $\zeta = (\alpha_1, \beta_1, \ldots, \alpha_{n+1}, \beta_{n+1})$.  Let $\zeta_0 = (\alpha_1, \beta_1, \ldots, \alpha_n, \beta_n)$.  Write $\varphi_{\zeta_0} = \bigcup_i \tau^{\gamma_i} \sigma^{\delta_i}$ and $\varphi_{(\alpha_{n+1},\beta_{n+1})} = \bigcup_j \tau^{\mu_j} \sigma^{\nu_j}$.  Then
\[
\varphi_\zeta = \varphi_{\zeta_0} \circ \varphi_{(\alpha_{n+1},\beta_{n+1})} = \bigcup_{i,j} \tau^{\gamma_i} \sigma^{\delta_i} \tau^{\mu_j} \sigma^{\nu_j} = \bigcup_{i,j,k} \tau^{\gamma_i} \tau^{\xi_k} \sigma^{\eta_k} \sigma^{\nu_j} = \bigcup_{i,j,k} \tau^{\gamma_i\xi_k} \sigma^{\nu_j\eta_k}.
\]
Then the inductive hypothesis gives
\[
T_\zeta = T_{\zeta_0} T_{(\alpha_{n+1},\beta_{n+1})} = \bigvee_{i,j} T_{\gamma_i} T_{\delta_i}^* T_{\mu_j} T_{\nu_j}^* = \bigvee_{i,j,k} T_{\gamma_i} T_{\xi_k} T_{\eta_k}^* T_{\nu_j}^* = \bigvee_{i,j,k} T_{\gamma_i \xi_k} T_{\nu_j \eta_k}^*.
\]
Again, the first claim shows that this is independent of the choice of decomposition of $\varphi_\zeta$.

Now let $A(\zeta) = \bigcup_{i=1}^n A(\zeta_i)$.  Write $\varphi_\zeta = \bigcup_j \tau^{\gamma_j} \sigma^{\delta_j}$ and $\varphi_{\zeta_i} = \bigcup_j \tau^{\gamma_{ij}} \sigma^{\delta_{ij}}$.  Then $\varphi_{\overline{\zeta}\zeta} = \bigcup_j \tau^{\delta_j} \sigma^{\delta_j}$, and similarly for $\varphi_{\overline{\zeta_i} \zeta_i}$.  Therefore we also have $\varphi_{\overline{\zeta} \zeta} = \bigcup_{i,j} \tau^{\delta_{ij}} \sigma^{\delta_{ij}}$.  By the last claim we get
\[
\bigvee_j T_{\delta_j} T_{\delta_j}^* = T_{\overline{\zeta} \zeta} = \bigvee_{i,j} T_{\delta_{ij}} T_{\delta_{ij}}^*.
\]
and hence $T_\zeta^* T_\zeta = \bigvee_i T_{\zeta_i}^* T_{\zeta_i}$, verifying (3).

Finally, if $\varphi_\zeta = \text{id}_{A(\zeta)} = \varphi_{\overline{\zeta} \zeta}$, then the last claim gives $T_\zeta = \bigvee_j T_{\delta_j} T_{\delta_j}^* = T_\zeta^* T_\zeta$, verifying (4)$_2$.
\end{proof}

Since the choice of the groupoid is not present in the relations $(1)'$ - $(3)'$, it is clear that the analogous result for $C^*(G_1)$ will require additional relations.  (This is clear from consideration of Examples \ref{example group old} and \ref{example group new}.)

\begin{Theorem} \label{thm finitely aligned 1}

Let $\Lambda$ be a finitely aligned LCSC.  Then $C^*(G_1)$ is the universal $C^*$-algebra generated by a family $\{ T_\alpha : \alpha \in \Lambda \}$ satisfying $(1)'$ - $(3)'$ of Theorem \ref{thm finitely aligned 2} and also

\begin{itemize}

\item[$(4)'$] If $\Phi_{(r(\alpha),\alpha)} = \Phi_{(r(\beta),\beta)}$ then $T_\alpha = T_\beta$.

\end{itemize}

\end{Theorem}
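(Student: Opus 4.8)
The plan is to mirror the proof of Theorem \ref{thm finitely aligned 2}, adding only the treatment of the extra relation $(4)'$ and its interaction with (4)$_1$; the verifications of $(1)'$--$(3)'$ and of (1)--(3) carry over word for word, since those relations do not reference the choice of groupoid. The whole theorem is thus a ``delta'' on the previous one.

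For the forward direction I would start from a family $\{T_\zeta : \zeta \in \CZ\}$ satisfying (1)--(3) and (4)$_1$ and set $T_\alpha = T_{(r(\alpha),\alpha)}$. Since (4)$_1$ implies (4)$_2$ (this is exactly the content of the proof of Corollary \ref{cor 2 maps to 1}, because $\varphi_\zeta = \mathrm{id}_{A(\zeta)}$ forces $\Phi_\zeta = \mathrm{id}_{\widehat{A(\zeta)}}$), the computation of Theorem \ref{thm finitely aligned 2} applies verbatim to give $(1)'$--$(3)'$. For $(4)'$, if $\Phi_{(r(\alpha),\alpha)} = \Phi_{(r(\beta),\beta)}$ then Lemma \ref{lem elementary consequences}\eqref{lem elementary consequences 5}, applied with $\zeta_1 = (r(\alpha),\alpha)$ and $\zeta_2 = (r(\beta),\beta)$, gives $T_\alpha = T_{(r(\alpha),\alpha)} = T_{(r(\beta),\beta)} = T_\beta$ at once.

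The converse is the substance of the theorem. Given $\{T_\alpha\}$ satisfying $(1)'$--$(3)'$ and $(4)'$, define $T_\zeta = T_{\alpha_1}^* T_{\beta_1} \cdots T_{\alpha_n}^* T_{\beta_n}$. The proof of Theorem \ref{thm finitely aligned 2} already shows, using only $(1)'$--$(3)'$, that (1)--(3) and (4)$_2$ hold, and moreover establishes the key fact that whenever $\varphi_\zeta = \bigcup_j \tau^{\gamma_j}\sigma^{\delta_j}$ is a finite decomposition (one exists by Lemma \ref{lem finitely aligned zigzag map}) one has $T_\zeta = \bigvee_j T_{\gamma_j}T_{\delta_j}^*$ and $T_\zeta^* T_\zeta = \bigvee_j T_{\delta_j}T_{\delta_j}^*$, independently of the decomposition. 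It remains only to verify (4)$_1$, and here $(4)'$ enters. Suppose $\Phi_\zeta = \mathrm{id}_{\widehat{A(\zeta)}}$; since $\Phi_\zeta : \widehat{A(\zeta)}\subseteq X_{s(\zeta)} \to \widehat{A(\overline\zeta)}\subseteq X_{r(\zeta)}$ is the identity, necessarily $r(\zeta) = s(\zeta)$. Fix a decomposition as above, so $A(\zeta) = \bigcup_j \delta_j\Lambda$ and $\varphi_\zeta$ agrees with $\tau^{\gamma_j}\sigma^{\delta_j}$ on $\delta_j\Lambda$. By the characterization recorded after Definition \ref{def groupoids}, $\Phi_\zeta$ on $\widehat{\delta_j\Lambda}$ depends only on $\varphi_\zeta|_{\delta_j\Lambda}$, so $\Phi_\zeta|_{\widehat{\delta_j\Lambda}} = \widehat{\tau^{\gamma_j}}\circ (\widehat{\tau^{\delta_j}})^{-1}$. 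As $\widehat{\tau^{\delta_j}}$ maps $X_{s(\delta_j)}$ homeomorphically onto $\widehat{\delta_j\Lambda}$, writing $x = \widehat{\tau^{\delta_j}}(z)$ and letting $z$ range over $X_{s(\delta_j)}$ turns $\Phi_\zeta = \mathrm{id}$ into $\widehat{\tau^{\gamma_j}}(z) = \widehat{\tau^{\delta_j}}(z)$ for all $z$, i.e. $\Phi_{(r(\gamma_j),\gamma_j)} = \Phi_{(r(\delta_j),\delta_j)}$ (the domains agree since $s(\gamma_j) = s(\delta_j)$, the codomains since $r(\gamma_j) = r(\zeta) = s(\zeta) = r(\delta_j)$). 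Relation $(4)'$ gives $T_{\gamma_j} = T_{\delta_j}$ for each $j$, whence $T_\zeta = \bigvee_j T_{\gamma_j}T_{\delta_j}^* = \bigvee_j T_{\delta_j}T_{\delta_j}^* = T_\zeta^* T_\zeta$. Then Theorem \ref{thm toeplitz gens and rels} yields the representation of $C^*(G_1)$, and the two assignments are mutually inverse on generators, so the universal algebras coincide.

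The main obstacle is precisely this last step: converting the spectral hypothesis $\Phi_\zeta = \mathrm{id}$, which is strictly weaker than the combinatorial $\varphi_\zeta = \mathrm{id}$, into operator equalities. The role of $(4)'$ is exactly to pass from the equality of induced maps $\widehat{\tau^{\gamma_j}} = \widehat{\tau^{\delta_j}}$ on the spectrum back to the equality $T_{\gamma_j} = T_{\delta_j}$ of generators; everything else is bookkeeping inherited from Theorem \ref{thm finitely aligned 2}. One should double-check that the reduction to the case $r(\zeta)=s(\zeta)$ is legitimate and that the decomposition-independence established previously really does free us to choose the convenient pieces $\tau^{\gamma_j}\sigma^{\delta_j}$.
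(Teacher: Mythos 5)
Your proof is correct and follows essentially the same route as the paper's: the forward direction reduces to Theorem \ref{thm finitely aligned 2} (the paper reruns that computation using parts \eqref{lem elementary consequences 5} and \eqref{lem elementary consequences 7} of Lemma \ref{lem elementary consequences}, while you invoke the implication (4)$_1 \Rightarrow$ (4)$_2$ from Corollary \ref{cor 2 maps to 1}; both are valid), and the converse verifies (4)$_1$ exactly as the paper does, by writing $\varphi_\zeta = \bigcup_j \tau^{\gamma_j}\sigma^{\delta_j}$ via Lemma \ref{lem finitely aligned zigzag map}, deducing $\Phi_{(r(\gamma_j),\gamma_j)} = \Phi_{(r(\delta_j),\delta_j)}$ from $\Phi_\zeta = \mathrm{id}$, and applying $(4)'$ together with the decomposition-independent formula $T_\zeta = \bigvee_j T_{\gamma_j}T_{\delta_j}^*$, which indeed uses only $(1)'$--$(3)'$. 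Your extra checks (that $r(\zeta)=s(\zeta)$ and the domain/codomain bookkeeping) are sound and merely make explicit what the paper leaves implicit.
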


\begin{proof}
First suppose that we have a representation of $C^*(G_1)$.  By Theorem \ref{thm toeplitz gens and rels} we have a family of elements $\{ T_\zeta : \zeta \in \CZ \}$ in a $C^*$-algebra satisfying (1) - (3) and (4)$_1$.  For $\alpha \in \Lambda$ we define $T_\alpha = T_{(r(\alpha),\alpha)}$.  Since in general, if $\varphi_\zeta = \varphi_{\zeta'}$ then $\Phi_\zeta = \Phi_{\zeta'}$, relations $(1)'$ - $(3)'$ follow as in the proof of Theorem \ref{thm finitely aligned 2}, but using Lemma \ref{lem elementary consequences} \eqref{lem elementary consequences 5} and \eqref{lem elementary consequences 7} instead of \eqref{lem elementary consequences 4} and \eqref{lem elementary consequences 6}.  Relation $(4)'$ follows from Lemma \ref{lem elementary consequences} \eqref{lem elementary consequences 5}.

Conversely, let $\{T_\alpha : \alpha \in \Lambda \}$ be given satisfying $(1)'$ - $(4)'$.  For $\zeta = (\alpha_1, \beta_1, \ldots, \alpha_n, \beta_n) \in \CZ$ define $T_\zeta = T_{\alpha_1}^* T_{\beta_1} \cdots T_{\alpha_n}^* T_{\beta_n}$.  Then (1) and (2) clearly hold.  Relation (3) follows as in the proof of Theorem \ref{thm finitely aligned 2}.  Suppose that $\Phi_\zeta = \text{id}_{A(\zeta)}$.  By Lemma \ref{lem finitely aligned zigzag map} we may write $\varphi_\zeta = \bigcup_i \tau^{\gamma_i} \sigma^{\delta_i}$ (a finite sum), so that $\Phi_\zeta = \bigcup_i \Phi_{(r(\gamma_i),\gamma_i)} \circ \Phi_{(r(\delta_i),\delta_i)}^{-1}$.  Then for each $i$ we have $\Phi_{(r(\gamma_i),\gamma_i)} \circ \Phi_{(r(\delta_i),\delta_i)}^{-1} = \text{id}_{\widehat{\delta_i \Lambda}}$, hence $\Phi_{(r(\gamma_i),\gamma_i)} = \Phi_{(r(\delta_i),\delta_i)}$.  By $(4)'$ we have $T_{\gamma_i} = T_{\delta_i}$.  Note that the proof of the second claim in the proof of Theorem \ref{thm finitely aligned 2} did not rely on (4)$_2$, and is still valid here.  Therefore $T_\zeta = \bigvee_i T_{\gamma_i} T_{\delta_i}^* = \bigvee T_{\delta_i} T_{\delta_i}^* = T_\zeta T_\zeta^*$, verifying (4)$_1$.
\end{proof}

\section{The boundary of an LCSC}
\label{sec boundary}

Next we define the boundary of an LCSC $\Lambda$.  By Proposition \ref{p.ultrafilter} we may identify $v \Lambda^* = X_v$, where $X = \sqcup_{v \in \Lambda^0} X_v$ is the unit space of the groupoid of $\Lambda$.  We intend to define the boundary to be the closure of the maximal elements of $v \Lambda^*$.  There is a potential ambiguity that we dispose of first; namely, while it is clear that Zorn's lemma applies to the set of filters in $\CD^{(0)}_v $, ordered by inclusion, it is less clear that it applies to the set $v \Lambda^*$.  Also, it is unclear that maximality is the same for the two settings.

\begin{Lemma} \label{lem unambiguous maximal filters}

Let $\Lambda$ be an LCSC and let $v \in \Lambda^0$.  Every maximal filter in $\CD^{(0)}_v$ is an element of $v \Lambda^*$.  Moreover, Zorn's lemma applies to $v \Lambda^*$, and maximal elements of $v \Lambda^*$ are maximal as filters in $\CD^{(0)}_v$.

\end{Lemma}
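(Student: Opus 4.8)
The plan is to route everything through a single workable description of \emph{maximality for filters} in $\CD^{(0)}_v$, and then to read off all three assertions from it together with one standard application of Zorn's lemma to the larger poset of all filters. \textbf{Step 1 (the key characterization).} First I would prove that a filter $C$ in $\CD^{(0)}_v$ is maximal if and only if for every $F \in \CD^{(0)}_v \setminus C$ there is some $E \in C$ with $E \cap F = \varnothing$. The nontrivial direction is the contrapositive: if $E \cap F \neq \varnothing$ for all $E \in C$, then the collection $\{D \in \CD^{(0)}_v : D \supseteq E \cap F \text{ for some } E \in C\}$ is a filter that properly contains $C$ and contains $F$. Here I use that $\CD^{(0)}_v$ is closed under nonempty intersection (Remark \ref{r.zigzag}\eqref{r.zigzag.e}), so that each $E \cap F$, and each relevant pairwise intersection, again lies in $\CD^{(0)}_v$. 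This is the one place requiring genuine care, precisely because $\CD^{(0)}_v$ contains no empty set and intersections can otherwise fall out of the family.

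\textbf{Step 2 (maximal filters lie in $v\Lambda^*$).} Given a maximal filter $C$ and a finite $\CF = \{F_1, \ldots, F_n\} \subseteq \CD^{(0)}_v$ with $\CF \cap C = \varnothing$, Step 1 supplies $E_i \in C$ with $E_i \cap F_i = \varnothing$; putting $E = \bigcap_i E_i \in C$ gives $E \cap (\cup \CF) = \varnothing$. If $\CF$ covered $C$, say $G \in C$ with $G \subseteq \cup \CF$, then $E \cap G \in C$ would be nonempty (being an element of the filter $C$) while being contained in $\cup\CF$, contradicting $E \cap (\cup\CF) = \varnothing$. Hence $C \in v\Lambda^*$, which is the first assertion.

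\textbf{Step 3 (Zorn applies to $v\Lambda^*$).} I would take a chain $\{C_j\}$ in $v\Lambda^*$ and check that $C := \bigcup_j C_j$ again lies in $v\Lambda^*$. That $C$ is a filter is routine from total ordering (any two of its elements already lie in a common $C_j$). For the covering condition, suppose $\CF \cap C = \varnothing$, so $\CF \cap C_j = \varnothing$ for every $j$. The decisive observation is that covering is witnessed by a \emph{single} set $E \in C$ with $E \subseteq \cup\CF$, and any such $E$ already belongs to some $C_{j_0}$; that would make $\CF$ cover $C_{j_0}$, contradicting $C_{j_0} \in v\Lambda^*$. Since $v\Lambda^*$ is nonempty (each fixed filter $\{D \in \CD^{(0)}_v : \alpha \in D\}$ belongs to it, as in Proposition \ref{p.ultrafilter}), Zorn's lemma then yields maximal elements.

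\textbf{Step 4 (maximal in $v\Lambda^*$ $\Rightarrow$ maximal filter).} Finally, let $C$ be maximal in $v\Lambda^*$. The union of a chain of filters is again a filter, so Zorn's lemma applied to the poset of \emph{all} filters in $\CD^{(0)}_v$ extends $C$ to a maximal filter $\widetilde{C} \supseteq C$. By Step 2, $\widetilde{C} \in v\Lambda^*$, whence maximality of $C$ within $v\Lambda^*$ forces $C = \widetilde{C}$; thus $C$ is a maximal filter, giving the last assertion. The main obstacle I anticipate is Step 1: the remaining steps are bookkeeping, but the equivalence between maximality and the separation/disjointness property is exactly what bridges the two notions of maximality, and it must be established with attention to the fact that $\CD^{(0)}_v$ is closed only under \emph{nonempty} intersections.
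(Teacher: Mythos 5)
Your proposal is correct and takes essentially the same route as the paper's proof: your Step 1 separation characterization is precisely the filter-generation argument the paper runs inside its first claim (building the filter from $\{F \cap E : E \in C\}$ and invoking maximality), and your Steps 2--4 match the paper's three parts, including passing a chain's union through the covering condition and extending a maximal element of $v\Lambda^*$ to a maximal filter in $\CD^{(0)}_v$ via Zorn's lemma. The only differences are cosmetic, such as your explicit remark that $v\Lambda^*$ is nonempty because it contains the fixed filters.
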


\begin{proof}
Let $C$ be a maximal filter in $\CD^{(0)}_v$.  We show that $C \in v \Lambda^*$.  Let $F_1$, $\ldots$, $F_n \in \CD^{(0)}_v$ cover $C$.  If for each $i$ there is $E_i \in C$ such that $E_i \cap F_i = \varnothing$, then $E = E_1 \cap \cdots \cap E_n \in C$ and $E \cap F_i = \varnothing$ for $i = 1$, $\ldots$, $n$.  But then, since $E$ meets every element of $C$, it would follow that $F_1$, $\ldots$, $F_n$ do not cover $C$.  Therefore there must be $i$ for which $F_i \cap E \not= \varnothing$ for all $E \in C$. Now we have that $\{ F_i \cap E : E \in C \}$ is a filter base in $\CD^{(0)}_v$.  The filter it generates contains $C$, as well as $F_i$.  By maximality of $C$ it follows that $F_i \in C$.  Thus we have that $C \in v \Lambda^*$.

Next we show that Zorn's lemma applies to $v \Lambda^*$.  Let $S \subseteq v \Lambda^*$ be a totally ordered subset, and put $C = \bigcup S$.  It is clear that $C$ is a filter.  Suppose that $F_1$, $\ldots$, $F_n \in \CD^{(0)}_v$ cover $C$.  Then there is $E \in C$ such that $E \subseteq F_1 \cup \cdots \cup F_n$.  There is $C_0 \in S$ such that $E \in C_0$.  Since $C_0 \in v \Lambda^*$, there must be $i$ such that $F_i \in C_0$.  But then $F_i \in C$.  Therefore $C \in v \Lambda^*$ is an upper bound for $S$, verifying that Zorn's lemma applies to $v \Lambda^*$.

Finally let $C$ be a maximal element of $v \Lambda^*$. We show that $C$ is a maximal filter in $\CD^{(0)}_v$.  Let $C_1$ be a filter in $\CD^{(0)}_v$ with $C \subseteq C_1$.  There is a maximal filter $C_2$ in $\CD^{(0)}_v$ with $C_1 \subseteq C_2$.  By the above, $C_2 \in v \Lambda^*$.  By maximality of $C$ in $v \Lambda^*$ we must have $C = C_2$, and hence $C = C_1$.
\end{proof}

\begin{Definition}
\label{d.boundary}
For $v \in \Lambda^0$ we let $v \Lambda^{**}$ denote the set of maximal elements of $v \Lambda^*$.  We define the \textit{boundary} of $\Lambda$ to be the closure of the maximal elements: 
\[
v \partial \Lambda = \overline{v\Lambda^{**}}, \quad \partial \Lambda = \bigsqcup_{v \in \Lambda^0} v \partial \Lambda.
\]

\end{Definition}

\begin{Definition} \label{def ck algebra}

Let $\Lambda$ be an LCSC.  The {\it Cuntz-Krieger algebra} of $\Lambda$ is the $C^*$-algebra $\CO(\Lambda) := C^*(G_2(\Lambda)|_{\partial \Lambda})$.

\end{Definition}

In order to derive the presentation of $\CO(\Lambda)$ by generators and relations we must first characterize points of the boundary.

\begin{Proposition}
\label{p.maxelements}
Let $C \subseteq \CD_v^{(0)}$ be a filter. $C \in v \Lambda^{**}$ if and only if for each $F \in \CD_v^{(0)}$, if $F \cap E \not= \varnothing$ for all $E \in C$, then $F \supseteq E$ for some $E \in C$.

\end{Proposition}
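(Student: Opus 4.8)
The plan is to reduce the statement to a clean maximality characterization for filters in $\CD_v^{(0)}$. By Lemma \ref{lem unambiguous maximal filters}, the maximal elements of $v\Lambda^*$ are precisely the maximal filters in $\CD_v^{(0)}$: the lemma shows directly that every maximal filter lies in $v\Lambda^*$ and that maximal elements of $v\Lambda^*$ are maximal filters, and since any element of $v\Lambda^*$ properly containing a maximal filter would itself be a filter in $\CD_v^{(0)}$ properly containing it, the two notions coincide. Thus it suffices to prove: a filter $C \subseteq \CD_v^{(0)}$ is a maximal filter if and only if whenever $F \in \CD_v^{(0)}$ meets every element of $C$, then $F \supseteq E$ for some $E \in C$ (equivalently, since $C$ is closed under supersets, $F \in C$).

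For the ``only if'' direction I would argue exactly as in the first paragraph of the proof of Lemma \ref{lem unambiguous maximal filters}. Assume $C$ is maximal and let $F \in \CD_v^{(0)}$ satisfy $F \cap E \ne \varnothing$ for all $E \in C$. Since $\CD_v^{(0)}$ is closed under nonempty intersection by Remark \ref{r.zigzag}\eqref{r.zigzag.e}, the family $\{ F \cap E : E \in C \}$ is a filter base in $\CD_v^{(0)}$: its members are nonempty, and $(F \cap E_1) \cap (F \cap E_2) = F \cap (E_1 \cap E_2)$ is again of this form because $E_1 \cap E_2 \in C$. The filter it generates contains both $C$ (as $E \supseteq F \cap E$) and $F$ (as $F \supseteq F \cap E$); by maximality of $C$ this filter equals $C$, whence $F \in C$, and in particular $F \supseteq F$ with $F \in C$.

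For the ``if'' direction, assume the meets-implies-superset condition and suppose $C \subseteq C'$ with $C'$ a filter in $\CD_v^{(0)}$. For any $F \in C'$ and any $E \in C \subseteq C'$ we have $F \cap E \in C'$, hence $F \cap E \ne \varnothing$; thus $F$ meets every element of $C$, and the hypothesis yields $E \in C$ with $F \supseteq E$, so $F \in C$ by superset-closure. Hence $C' = C$, proving $C$ is a maximal filter and therefore $C \in v\Lambda^{**}$ by the reduction above.

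The argument is largely routine once the reduction to maximal filters is in place; the step I expect to be the crux is the ``only if'' direction, where one must know that $\{ F \cap E : E \in C \}$ genuinely lives in $\CD_v^{(0)}$ (so that it is a legitimate filter base there) and that the filter it generates is comparable to $C$. Both points hinge on the closure of zigzag sets under intersection from Remark \ref{r.zigzag}\eqref{r.zigzag.e}, mirroring the computation already carried out in Lemma \ref{lem unambiguous maximal filters}, so I would lean on that lemma rather than reprove the closure facts from scratch.
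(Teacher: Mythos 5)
Your proof is correct and follows essentially the same route as the paper's: both directions hinge on identifying $v\Lambda^{**}$ with the maximal filters in $\CD_v^{(0)}$ via Lemma \ref{lem unambiguous maximal filters}, and the forward direction uses exactly the paper's device of the filter base $\{F \cap E : E \in C\}$ generating a filter that contains $C$. The only cosmetic difference is that you prove the converse directly (any filter containing $C$ equals $C$) where the paper argues via the disjointness property for $F \notin C$; these are the same argument.
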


\begin{proof}
First suppose that $C \in v \Lambda^{**}$.  Let $F \in \CD_v^{(0)}$, and suppose that $F \cap E \not= \varnothing$ for all $E \in C$.  Then $\{ F \cap E : E \in C \}$ is closed under intersection and does not contain the empty set.  This collection then generates a filter containing $C$.  Since $C$ is maximal, it follows from Lemma \ref{lem unambiguous maximal filters} that $F \in C$.  Conversely, suppose the condition in the statement holds.  Let $F \in \CD_v^{(0)} \setminus C$.  Since $C$ is closed under the formation of supersets, we have that $F \not\supseteq E$ for all $E \in C$.  Then the condition of the statement implies that there exists $E \in C$ with $F \cap E = \varnothing$.  This implies that $C$ is maximal.
\end{proof}

Now we will characterize elements of the boundary.

\begin{Theorem}
\label{t.boundary}
Let $C \in v \Lambda^*$.  The following are equivalent:

\begin{enumerate}

\item
\label{t.boundarya}
$C \in v\partial \Lambda$.

\item
\label{t.boundaryb}
For all $\CF \subseteq \CD_v^{(0)}$ finite, if $\CF$ does not cover $C$, then for each $E \in C$ there is $G \in \CD_v^{(0)}$ such that $G \subseteq E \setminus (\cup \CF)$.

\end{enumerate}

\end{Theorem}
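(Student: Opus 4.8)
The plan is to translate membership in the boundary $v\partial\Lambda=\overline{v\Lambda^{**}}$ into a statement about basic neighborhoods and then read it off using the characterization of maximal filters in Proposition \ref{p.maxelements}. First I would record a convenient neighborhood base at $C$. For any $C'\in v\Lambda^*$ and any set of the form $E\setminus\cup\CF$ with $E\in\CD_v^{(0)}$, $\CF\subseteq\CD_v^{(0)}$ finite, and $\cup\CF\subseteq E$, the ultrafilter properties of $\CU_{C'}$ (namely superset closure, disjointness, and the fact, as in the proof of Proposition \ref{p.ultrafilter}, that $\cup\CF\in\CU_{C'}$ exactly when $\CF$ covers $C'$) give that $E\setminus\cup\CF\in\CU_{C'}$ if and only if $E\in C'$ and $\CF$ does not cover $C'$. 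Since $\CU_C^0$ of Definition \ref{d.lambdastar} is a base for $\CU_C$, the sets $\widehat{E\setminus\cup\CF}$ with $E\in C$ and $\CF$ not covering $C$ form a neighborhood base at $C$ in $X_v$. Hence $C\in v\partial\Lambda$ precisely when every such $\widehat{E\setminus\cup\CF}$ contains a point of $v\Lambda^{**}$.

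For \eqref{t.boundaryb}$\Rightarrow$\eqref{t.boundarya} I would fix a basic neighborhood $\widehat{E\setminus\cup\CF}$ of $C$. Condition \eqref{t.boundaryb} supplies an honest zigzag set $G\in\CD_v^{(0)}$ with $G\subseteq E\setminus\cup\CF$, so $\widehat G\subseteq\widehat{E\setminus\cup\CF}$. The point of having a genuine element of $\CD_v^{(0)}$ is that $\widehat G$ is \emph{upward closed} for the inclusion order on filters: if $G\in C_1$ and $C_1\subseteq C_2$ then $G\in C_2$. I would then apply Zorn's lemma to the poset $\widehat G\cap v\Lambda^*$; it is nonempty because the fixed ultrafilter $\CU_{\{\gamma\}}$ of Definition \ref{def fixed ultrafilter}, for any $\gamma\in G$, lies in it, and chains have upper bounds by the union argument of Lemma \ref{lem unambiguous maximal filters}. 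A Zorn-maximal element $C'$ of $\widehat G\cap v\Lambda^*$ is then maximal in all of $v\Lambda^*$ by upward closedness (any proper enlargement would still contain $G$), so $C'\in v\Lambda^{**}$, and $C'\in\widehat G\subseteq\widehat{E\setminus\cup\CF}$.

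I expect this last maneuver to be the main obstacle. The naive attempt to extend $\CU_{\{\gamma\}}$ to a globally maximal filter can leave $\widehat{E\setminus\cup\CF}$, because that neighborhood is cut out by a ``downward'' constraint (the removal of $\cup\CF$) and so is \emph{not} upward closed under inclusion of filters. Condition \eqref{t.boundaryb} is exactly what repairs this: it replaces the neighborhood by an upward-closed subneighborhood $\widehat G$ inside which maximizing is harmless. Recognizing this role of \eqref{t.boundaryb} is, I think, the crux.

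For \eqref{t.boundarya}$\Rightarrow$\eqref{t.boundaryb}, let $C\in v\partial\Lambda$, let $\CF$ be finite not covering $C$, and let $E\in C$; after replacing each member of $\CF$ by its intersection with $E$ and discarding empty sets (which alters neither $E\setminus\cup\CF$ nor the covering status) I may assume $\cup\CF\subseteq E$. By the first paragraph $\widehat{E\setminus\cup\CF}$ is a neighborhood of $C$, so it contains some $C'\in v\Lambda^{**}$, whence $E\in C'$ and $\CF$ does not cover $C'$; in particular no $F_i$ lies in $C'$. Now I invoke Proposition \ref{p.maxelements}: for each $i$, since $F_i\notin C'$ (so $F_i$ cannot be a superset of any member of $C'$), the hypothesis of that proposition must fail, producing $G_i\in C'$ with $F_i\cap G_i=\varnothing$. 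Setting $G=E\cap G_1\cap\cdots\cap G_n$, which lies in $C'\subseteq\CD_v^{(0)}$ by closure under intersection (Remark \ref{r.zigzag}\eqref{r.zigzag.e}), one has $G\subseteq E$ and $G\cap F_i=\varnothing$ for all $i$, i.e.\ $G\subseteq E\setminus\cup\CF$. This is the zigzag set demanded by \eqref{t.boundaryb}, completing the equivalence.
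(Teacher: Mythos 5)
Your proof is correct and follows essentially the same route as the paper's: both directions hinge on the neighborhood base $\widehat{E\setminus(\cup\CF)}$ with $E\in C$ and $\CF$ not covering $C$, with \eqref{t.boundarya}$\Rightarrow$\eqref{t.boundaryb} extracted from Proposition \ref{p.maxelements} applied to a maximal filter in the neighborhood, and \eqref{t.boundaryb}$\Rightarrow$\eqref{t.boundarya} obtained by placing a maximal filter above the set $G$. The only difference is that you spell out, via Zorn's lemma on $\widehat{G}\cap v\Lambda^*$, the existence of $C'\in v\Lambda^{**}$ with $G\in C'$, a step the paper leaves implicit (it follows from Lemma \ref{lem unambiguous maximal filters}).
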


\begin{proof}
\noindent
$\eqref{t.boundarya} \Rightarrow \eqref{t.boundaryb}$: Let $\CF \subseteq \CD_v^{(0)}$ be finite.  Suppose that $\CF$ does not cover $C$.  Let $E \in C$.  Then $E \setminus (\cup \CF) \in \CU_C$.  This means that $(E \setminus (\cup \CF))\,\widehat{\null}$\, is a neighborhood of $C$.  Since $C \in v\partial \Lambda$, there exists $C' \in (E \setminus (\cup \CF))\,\widehat{\null}\, \cap v \Lambda^{**}$.  Then $E \setminus (\cup \CF) \in \CU_{C'}$.  But then $E \in \CU_{C'}$, so $E \in C'$, by Proposition \ref{p.ultrafilter}.  Since $\CU_{C'}$ is a filter, no element of $\CF$ contains an element of $C'$.  By Proposition \ref{p.maxelements}, there is $E' \in C'$ such that $E' \cap (\cup \CF) = \varnothing$.  Let $G = E \cap E'$.  Then $G \not= \varnothing$, since $E$, $E' \in C'$.  Since $G \subseteq E$ and $G \cap (\cup \CF) = \varnothing$, we have $G \subseteq E \setminus (\cup \CF)$.

\noindent
$\eqref{t.boundaryb} \Rightarrow \eqref{t.boundarya}$: Let $N$ be a neighborhood of $C$.  We may assume that $N = \widehat{A}$ for some $A \in \CD_v$.  Then $A = E \setminus (\cup \CF)$, where $\{E\} \cup \CF \subseteq \CD_v^{(0)}$ and $\cup \CF \supsetneq E$.  Then $A \in \CU_C$, so $E \in C$ and $\CF$ does not cover $C$.  By \eqref{t.boundaryb} there is $G \in \CD_v^{(0)}$ such that $G \subseteq E \setminus (\cup \CF)$. Let $C' \in v \Lambda^{**}$ with $G \in C'$.  Then $E \setminus (\cup \CF) \in \CU_{C'}$, so $C' \in \widehat{A} = N$.  Therefore $\CU_C \in \overline{v \Lambda^{**}} = v \partial \Lambda$.
\end{proof}

We now prepare to give the analog of the Cuntz-Krieger relation(s) that characterize boundary representations of $\CT(\Lambda)$.  The next definition follows \cite[p. 124]{dm}.

\begin{Definition}
\label{def cover of a set}
A finite subset $\CF \subseteq \CD_v^{(0)}$ {\it covers} an element $E \in \CD_v^{(0)}$ if $\cup \CF \subseteq E$, and if for every $G \in \CD_v^{(0)}$ with $G \subseteq E$ there exists $F \in \CF$ such that $F \cap G \not= \varnothing$.  (Equivalently, $\CF$ covers $E$ if $E \setminus (\cup \CF)$ does not contain an element of $\CD_v^{(0)}$.)  We mention that this definition is modeled on \cite{dm}, but stands on its own here.
\end{Definition}

\begin{Remark}

Note that this use of the word {\it cover} must be distinguisheed from the use in Definition \ref{d.cover}.  In fact this should be clear from the usage, since the object covered is an element of $\CD_v^{(0)}$ in Definition \ref{def cover of a set}, rather than a filter in $\CD_v^{(0)}$ as in Definition \ref{d.cover}.

\end{Remark}

We give a Cuntz-Krieger-type relation based on Definition \ref{def cover of a set}.  This extends the list given in Definition \ref{def toeplitz relations}.

\begin{Definition} \label{def cuntz-krieger relation}

Let $\Lambda$ be an LCSC.  Let $\{ T_\zeta : \zeta \in \CZ_\Lambda \}$ be a family of elements of a $C^*$-algebra.  We will consider the following relation on the $T_\zeta$:

\begin{itemize}

\item[(5)] For all $v \in \Lambda^0$, and all $\zeta \in \CZ_\Lambda v$ and finite set $J \subseteq \CZ_\Lambda v$, if $\{ A(\xi) : \xi \in J \}$ covers $A(\zeta)$ then $T_\zeta = \bigvee_{\xi \in \CJ} T_\xi^* T_\xi$.

\end{itemize}

\end{Definition}

\begin{Remark}

The condition (5) is related to Exel's notion of {\it tightness} (\cite{exel08}), which in turn is described by Donsig and Milan as {\it cover-to-join} (\cite{dm}).  Following \cite[p. 124]{dm} we say that in an inverse semigroup $S$, a finite set $C$ {\it covers} an element $s$ if $y \le s$ for all $y \in C$, and if for every nonzero $x \le s$ there exists $y \in C$ and $0 \not= z \in S$ such that $z \le x$, $y$.  A representation $\pi$ of $S$ to an inverse semigroup $U$ is called {\it cover-to-join} if for every cover $C$ of an element $s$ we have $\pi(s) = \bigvee \pi(C)$.  It follows from \cite[Corollary 2.3]{dm} that for a representation $\pi$ of $\CT(\Lambda)$, the restriction of $\pi$ to $\{ T_\zeta^* T_\zeta : \zeta \in \CZ_\Lambda \}$ (which is isomorphic to the semilattice $\CD^{(0)}(\Lambda)$) is cover-to-join if and only if it is tight in the sense of Exel, and hence if and only if it satisfies relation (5).
\end{Remark}

The following theorem generalizes \cite[Theorem 8.2]{spi_pathcat}, both in that it applies to LCSC's instead of categories of paths, and also in that it applies in the general (nonfinitely aligned) case.  We mention that the amenability and countability assumptions in that paper are not necessary.

\begin{Theorem}\label{thm boundary gens and rels}

Let $\Lambda$ be an LCSC.  For $i = 1$, 2, $C^*(G_i|_{\partial \Lambda})$ is the universal $C^*$-algebra generated by a family $\{ S_\zeta : \zeta \in \CZ \}$ satisfying (1) - (3), (4)$_i$, and (5).

\end{Theorem}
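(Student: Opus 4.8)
The plan is to realize $C^*(G_i|_{\partial\Lambda})$ as a quotient of the Toeplitz algebra $C^*(G_i)$ and to identify relation (5) as exactly the extra relation that cuts $C^*(G_i)$ down to the boundary. Recall from Definition \ref{d.boundary} that $\partial\Lambda = \overline{v\Lambda^{**}}$ is closed; after checking that each $\Phi_\zeta$ preserves maximality of filters (so that $v\Lambda^{**}$, and hence its closure, is invariant), it is a closed invariant subset of the unit space $X = G_i^{(0)}$. Writing $U = X\setminus\partial\Lambda$, the restriction map gives a surjection $q_i : C^*(G_i)\to C^*(G_i|_{\partial\Lambda})$ whose kernel is the ideal generated by $C_0(U)\subseteq C_0(X)$; as in the proof of \cite[Theorem 8.2]{spi_pathcat} this holds for our ample (possibly non-Hausdorff) \'etale groupoids in the framework of \cite{exel08}. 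By Theorem \ref{thm toeplitz gens and rels}, $C^*(G_i)$ is universal for families $\{T_\zeta\}$ satisfying (1)--(3) and (4)$_i$, so it suffices to prove that such a family also satisfies (5) if and only if the associated representation of $C^*(G_i)$ factors through $q_i$.

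For the forward direction (which produces the canonical map from the universal algebra onto $C^*(G_i|_{\partial\Lambda})$) I would show that the restricted generators $s_\zeta := q_i(t_\zeta)$ satisfy (5); relations (1)--(3) and (4)$_i$ pass to the quotient automatically from Lemma \ref{lem gens and rels in G}. The content is the identity $s_\zeta^* s_\zeta = \bigvee_{\xi\in J} s_\xi^* s_\xi$ whenever $\{A(\xi):\xi\in J\}$ covers $A(\zeta)$. Under the identification of the spectrum with $v\Lambda^*$ (Proposition \ref{p.ultrafilter}), $s_\zeta^* s_\zeta$ and $s_\xi^* s_\xi$ are the characteristic functions of $\widehat{A(\zeta)}\cap\partial\Lambda$ and $\widehat{A(\xi)}\cap\partial\Lambda$; since $\bigcup_\xi A(\xi)\subseteq A(\zeta)$ one inclusion is automatic, and the point is that the open set $D := \widehat{A(\zeta)\setminus\bigcup_{\xi\in J}A(\xi)}$ is disjoint from $\partial\Lambda$. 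Here I invoke Proposition \ref{p.maxelements}: if some $C\in v\Lambda^{**}$ lay in $D$, then $A(\zeta)\in C$ and $A(\xi)\notin C$ for every $\xi$, so by maximality there is $E\in C$ with $E\subseteq A(\zeta)$ and $E\cap\bigcup_\xi A(\xi)=\varnothing$, contradicting that $\{A(\xi)\}$ covers $A(\zeta)$ (Definition \ref{def cover of a set}). Thus $D\cap v\Lambda^{**}=\varnothing$, and as $D$ is open and $\partial\Lambda=\overline{v\Lambda^{**}}$ we conclude $D\cap\partial\Lambda=\varnothing$, giving the desired identity.

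For the converse I would start from an abstract family $\{S_\zeta\}$ satisfying (1)--(3), (4)$_i$ and (5), and apply Theorem \ref{thm toeplitz gens and rels} to obtain a representation $\pi : C^*(G_i)\to B$ with $\pi(t_\zeta)=S_\zeta$, together with the underlying $*$-homomorphism $\pi_0$ on $C_0(X)$ determined (via Theorem \ref{thm boolean ring homomorphism}) by $\pi_0(\chi_{A(\zeta)}) = S_\zeta^* S_\zeta$. It then suffices to show $\pi(\ker q_i)=0$, and since $\ker q_i$ is the ideal generated by $C_0(U)$ with $\pi|_{C_0(X)}=\pi_0$, it is enough to prove $\pi_0|_{C_0(U)}=0$. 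Now $C_0(U)$ is the closed span of $\{\chi_E : E\in\CD_v,\ \widehat E\subseteq U\}$, and for $E = A(\eta)\setminus\bigcup_i A(\xi_i)\in\CD_v$ the reverse of the argument above (extending a witnessing element of $\CD_v^{(0)}$ to a maximal filter by Lemma \ref{lem unambiguous maximal filters}) shows that $\widehat E\cap\partial\Lambda=\varnothing$ holds if and only if $\{A(\xi_i)\}$ covers $A(\eta)$. Relation (5) then yields $S_\eta^* S_\eta = \bigvee_i S_{\xi_i}^* S_{\xi_i}$, i.e. $\pi_0(\chi_{A(\eta)}) = \pi_0(\chi_{\bigcup_i A(\xi_i)})$, whence $\pi_0(\chi_E)=\pi_0(\chi_{A(\eta)})-\pi_0(\chi_{\bigcup_i A(\xi_i)})=0$. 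Therefore $\pi_0$ annihilates $C_0(U)$, so $\pi$ factors through $q_i$, and the two homomorphisms are mutually inverse on generators.

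The main obstacle is the combinatorial--topological equivalence underlying both directions: for $E = A(\eta)\setminus\bigcup_i A(\xi_i)$ one has $\widehat E\cap\partial\Lambda=\varnothing$ precisely when $\{A(\xi_i)\}$ covers $A(\eta)$ in the sense of Definition \ref{def cover of a set}. This is where Proposition \ref{p.maxelements}, the density of $v\Lambda^{**}$ in $\partial\Lambda$, and the characterization in Theorem \ref{t.boundary} do the real work. Two subsidiary points also need care: that $\partial\Lambda$ is genuinely invariant, so that $G_i|_{\partial\Lambda}$ and the quotient description are legitimate; and that the identification of $\ker q_i$ with the ideal generated by $C_0(U)$, together with $C_0(U)$ being the closed span of the relevant $\chi_E$, remains valid for the possibly non-Hausdorff ample groupoids $G_i$. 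Both I would dispatch by the same devices used for categories of paths in \cite[Section 8]{spi_pathcat}.
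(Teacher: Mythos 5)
Your proposal is correct and follows essentially the same route as the paper: both directions reduce to the universal property of $C^*(G_i)$ (Theorem \ref{thm toeplitz gens and rels}) together with the dictionary between ``$\{A(\xi)\}$ covers $A(\zeta)$'' and ``$\widehat{A(\zeta)\setminus\bigcup_\xi A(\xi)}$ misses $\partial\Lambda$'', plus the quotient $C^*(G_i)\to C^*(G_i|_{\partial\Lambda})$ whose kernel is $C^*(G_i|_{\partial\Lambda^c})$ (the paper cites \cite[Remark 4.10]{ren91} for this exact sequence, which is the same as your ``ideal generated by $C_0(U)$''). The only cosmetic difference is that you inline the boundary characterization via Proposition \ref{p.maxelements}, density of $v\Lambda^{**}$, and Lemma \ref{lem unambiguous maximal filters}, where the paper simply invokes its Theorem \ref{t.boundary}, which packages exactly that argument.
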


\begin{proof}
First suppose that $\{S_\zeta\}$ satisfy (1) - (3), (4)$_i$, and (5).  By (1) - (3) and (4)$_i$, and Theorem \ref{thm toeplitz gens and rels}, there is a unique representation $\pi$ of $C^*(G_i)$ such that $\pi(\chi_{[\zeta,A(\zeta)]}) = S_\zeta$.  We claim that $\pi\bigr|_{C_0(\partial \Lambda^c)} = 0$.  To see this, let $C \in v \Lambda^* \setminus v\partial \Lambda$.  By Theorem \ref{t.boundary}, there exists a finite collection $\CF \subseteq \CD_v^{(0)}$ such that $\CF$ does not cover $C$, and there exists $E \in C$ such that for all $G \in 
\CD_v^{(0)}$, $G \not\subseteq E \setminus (\cup \CF)$.  Thus for all $G \in \CD_v^{(0)}$, if $G 
\subseteq E$ then $G \cap F \not= \varnothing$ for some $F \in \CF$.  Let $\CF' = \{ E \cap F : F \in \CF \}$.  It follows that $\CF'$ covers $E$. Let $D = E \setminus (\cup \CF) = E \setminus (\cup \CF')$.  We claim that $\widehat{D} \cap v \partial \Lambda = \varnothing$.  For if not, then $\widehat{D} \cap v \partial \Lambda$ contains an element $C'$ from $v \partial\Lambda$. Then $D \in \CU_{C'}$, hence $\CF' \cap C' = \varnothing$.  Therefore $\CF'$ does not cover $C'$.  Then by Theorem \ref{t.boundary}, $D$ contains an element of $\CD_v^{(0)}$, contradicting the fact that $\CF'$ covers $E$. Now let $\zeta \in \CZ_\Lambda v$, and $\CJ \subseteq \CZ_\Lambda v$ finite, be such that $E = A(\zeta)$ and $\CF = \{ A(\xi) : \xi \in \CJ \}$.  Then $\CF' = \{ A(\xi \overline{\zeta} \zeta) : \xi \in \CJ \}$.  By (5) we have
\[
\pi(\chi_D)
= \pi(\chi_E - \bigvee_{F \in \CF'} \chi_F)
= S_\zeta^* S_\zeta - \bigvee_{\xi \in \CJ} S_{\xi \overline{\zeta} \zeta}^* S_{\xi \overline{\zeta} \zeta}
= 0.
\]
Thus $\pi\bigr|_{C_0(\partial \Lambda^c)} = 0$.  It follows that $\pi(C^*(G\bigr|_{\partial \Lambda^c})) = 0$.  There is an exact sequence
\[
0 \to C^*(G\bigr|_{\partial \Lambda^c}) \to C^*(G) \to C^*(G\bigr|_{\partial \Lambda}) \to 0
\]
(\cite[Remark 4.10]{ren91}).  It follows that $\pi$ factors through $C^*(G\bigr|_{\partial \Lambda})$.

Conversely, let $\pi$ be a representation of $C^*(G\bigr|_{\partial \Lambda})$.  For $\zeta \in \CZ_\Lambda $, let $S_\zeta = \pi(\chi_{[\zeta,A(\zeta)]})$.  Composing $\pi$ with the quotient map gives a representation of $C^*(G)$, so by Theorem \ref{thm toeplitz gens and rels} we have that (1) - (3) and (4)$_i$ hold.  We will prove (5). Let $\zeta$ and $\CJ$ be as in (5).  Let $D = A(\zeta) \setminus \bigcup_{\xi \in \CJ} A(\xi)$.  The $G \not\subseteq D$ for all $G \subseteq \CD_v^{(0)}$.  We claim that $\widehat{D} \cap v \partial \Lambda = \varnothing$.  For suppose that $C \in \widehat{D} \cap v \partial \Lambda$. Then $D \in \CU_C$, so $\{ A(\xi) : \xi \in \CJ \}$ does not cover $C$. Since $E \in C$, Theorem \ref{t.boundary} implies that there is $G \in \CD_v^{(0)}$ such that $G \subseteq D$, a contradiction.  Therefore $0 = \pi(\chi_D) = S_\zeta^* S_\zeta - \bigvee_{\xi \in \CJ} S_\xi^* S_\xi$.
\end{proof}

\begin{Remark}

It follows that Definition \ref{def ck algebra} generalizes the definition given in \cite[Definition 4.7]{bkqs}, and extends it to the general nonfinitely aligned case.

\end{Remark}

\begin{Corollary} \label{cor boundary 2 maps to 1}

Let $\Lambda$ be an LCSC.  There is a surjective homomorphism $C^*(G_2|_{\partial \Lambda}) \to C^*(G_1|_{\partial \Lambda})$ carrying generators to generators.

\end{Corollary}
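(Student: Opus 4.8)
The plan is to imitate the proof of Corollary \ref{cor 2 maps to 1}, replacing the Toeplitz presentation of Theorem \ref{thm toeplitz gens and rels} by the boundary presentation of Theorem \ref{thm boundary gens and rels}. First I would let $\{ s_\zeta : \zeta \in \CZ \}$ denote the canonical generators of $C^*(G_1|_{\partial \Lambda})$, namely $s_\zeta = \chi_{[\zeta,A(\zeta)]}$ regarded in $C^*(G_1|_{\partial \Lambda})$. By Theorem \ref{thm boundary gens and rels} applied with $i = 1$, this family satisfies relations (1) -- (3), (4)$_1$, and (5).

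The one thing to check is that $\{ s_\zeta \}$ also satisfies (4)$_2$, since then it will satisfy exactly the defining relations (1) -- (3), (4)$_2$, and (5) of $C^*(G_2|_{\partial \Lambda})$. The key observation is the same implication used in the proof of Corollary \ref{cor 2 maps to 1}: if $\varphi_\zeta = \text{id}_{A(\zeta)}$ then $\Phi_\zeta = \text{id}_{\widehat{A(\zeta)}}$. (This forward direction is precisely the claim verified within the proof of Lemma \ref{lem gens and rels in G}, and it is consistent with the containment $\sim_2 \; \subseteq \; \sim_1$ of Lemma \ref{lem comparable equivalence relations}.) Consequently, whenever the hypothesis of (4)$_2$ holds for some $\zeta$, the hypothesis of (4)$_1$ holds as well, and then (4)$_1$ yields $s_\zeta = s_\zeta^* s_\zeta$, which is exactly the conclusion required by (4)$_2$. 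Thus $\{ s_\zeta \}$ satisfies (4)$_2$.

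Having shown that $\{ s_\zeta \}$ satisfies (1) -- (3), (4)$_2$, and (5), I would invoke the universal property of $C^*(G_2|_{\partial \Lambda})$ from Theorem \ref{thm boundary gens and rels} (with $i = 2$) to obtain a $*$-homomorphism $\rho : C^*(G_2|_{\partial \Lambda}) \to C^*(G_1|_{\partial \Lambda})$ carrying each generator of $C^*(G_2|_{\partial \Lambda})$ to the corresponding $s_\zeta$. Since the image of $\rho$ contains every generator $s_\zeta$ of $C^*(G_1|_{\partial \Lambda})$, and these generate the target, $\rho$ is surjective, completing the argument.

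I do not expect any genuine obstacle here, as the proof is formally parallel to Corollary \ref{cor 2 maps to 1} and merely substitutes the boundary presentation for the Toeplitz one. The only subtlety worth flagging is the directionality of the equivalence between $\varphi_\zeta = \text{id}$ and $\Phi_\zeta = \text{id}$: we use only the ``easy'' direction $\varphi_\zeta = \text{id} \Rightarrow \Phi_\zeta = \text{id}$, which is always valid and does not require any hypothesis (such as absence of inverses) on $\Lambda$; the converse direction is exactly what fails in general and is responsible for the two groupoids differing, as in Examples \ref{example group old} and \ref{example group new}.
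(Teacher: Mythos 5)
Your proposal is correct and is precisely the argument the paper intends: the paper's proof of Corollary \ref{cor boundary 2 maps to 1} simply says ``the argument is the same as for Corollary \ref{cor 2 maps to 1},'' and your write-up is exactly that argument transported to the boundary presentation of Theorem \ref{thm boundary gens and rels}, using the valid implication $\varphi_\zeta = \text{id}_{A(\zeta)} \Rightarrow \Phi_\zeta = \text{id}_{\widehat{A(\zeta)}}$ to deduce (4)$_2$ from (4)$_1$ and then invoking the universal property with $i=2$.
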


\begin{proof}
The argument is the same as for Corollary \ref{cor 2 maps to 1}.
\end{proof}

Now we adapt Theorem \ref{thm boundary gens and rels} to the finitely aligned case.  For the next results we note that the statements and proofs in \cite[Section 7]{spi_pathcat} do not rely on the hypotheses of right cancellation and lack of inverses in the definition of categories of paths.  Thus those results are true for any countable finitely aligned LCSC.  The following is a modification of \cite[Definition 7.7]{spi_pathcat} (which in turn was borrowed from \cite{raesimyee}).  (In the case where $\Lambda$ is a singly aligned monoid, i.e. LCM, this is termed {\it foundation set} in \cite{simyee}.) As in \cite[Section 8]{spi_pathcat} we need to require that $\Lambda$ be countable in order to give the alternate characterization of the boundary quotient in the finitely aligned case.

\begin{Definition} \label{def finitely aligned exhaustive}

Let $\Lambda$ be a finitely aligned LCSC, and let $v \in \Lambda$.  A subset $F \subseteq v \Lambda$ is {\it exhaustive} if for every $\beta \in v\Lambda$ there is $\alpha \in F$ such that $\alpha \Lambda \cap \beta \Lambda \not= \varnothing$.

\end{Definition}

Now we give the analog of (5) for the finitely aligned case.

\begin{Definition} \label{def finitely aligned ck relation}

Let $\Lambda$ be a finitely aligned LCSC, and let $\{ T_\alpha : \alpha \in \Lambda \}$ be a family of elements of a $C^*$-algebra.  We will consider the following relation on the $T_\alpha$:

\begin{itemize}

\item[$(5)'$] $T_v = \bigvee_{\alpha \in F} T_\alpha T_\alpha^*$ if $F \subseteq v\Lambda$ is a finite exhaustive set.

\end{itemize}

\end{Definition}

\begin{Theorem} \label{thm finitely aligned boundary}

Let $\Lambda$ be a countable finitely aligned LCSC.

\begin{enumerate}

\item \label{thm finitely aligned boundary 2} $C^*(G_2|_{\partial\Lambda})$ is the universal $C^*$-algebra generated by a family $\{S_\alpha : \alpha \in \Lambda \}$ satisfying $(1)'$ - $(3)'$ and $(5)'$.

\item \label{thm finitely aligned boundary 1} $C^*(G_1|_{\partial\Lambda})$ is the universal $C^*$-algebra generated by a family $\{S_\alpha : \alpha \in \Lambda \}$ satisfying $(1)'$ - $(5)'$.

\end{enumerate}

\end{Theorem}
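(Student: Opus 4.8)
The plan is to reduce this theorem to the presentations already in hand by \emph{combining} the zigzag description of the boundary quotient (Theorem \ref{thm boundary gens and rels}) with the $\Lambda$-indexed presentations of $C^*(G_i)$ in the finitely aligned case (Theorems \ref{thm finitely aligned 2} and \ref{thm finitely aligned 1}). Given a family $\{T_\alpha : \alpha \in \Lambda\}$ satisfying $(1)'$--$(3)'$, I would set $T_\zeta = T_{\alpha_1}^* T_{\beta_1}\cdots T_{\alpha_n}^* T_{\beta_n}$ for $\zeta=(\alpha_1,\beta_1,\ldots,\alpha_n,\beta_n)$, exactly as in the proof of Theorem \ref{thm finitely aligned 2}; this establishes a bijective correspondence between $\Lambda$-indexed families satisfying $(1)'$--$(3)'$ (resp.\ $(1)'$--$(4)'$) and $\CZ$-indexed families satisfying $(1)$--$(3)$ and $(4)_2$ (resp.\ $(4)_1$). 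Since Theorem \ref{thm boundary gens and rels} already identifies $C^*(G_i|_{\partial\Lambda})$ as the universal algebra for $(1)$--$(3)$, $(4)_i$, and $(5)$, the whole theorem follows once I show that, in the presence of $(1)'$--$(3)'$, the zigzag relation $(5)$ is equivalent to the exhaustive-set relation $(5)'$. This equivalence uses only $(1)'$--$(3)'$, so the very same argument serves both part \eqref{thm finitely aligned boundary 2} (pairing $(4)_2$ with $(5)'$) and part \eqref{thm finitely aligned boundary 1} (pairing $(4)_1$, hence $(4)'$, with $(5)'$).

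For the equivalence it is convenient to work with the Boolean ring homomorphism $\mu$ of Theorem \ref{thm boolean ring homomorphism} determined by $\mu(A(\zeta)) = T_\zeta^* T_\zeta$, under which relation $(5)$ reads $\mu(E) = \bigvee_{F\in\CF}\mu(F)$ whenever a finite $\CF\subseteq\CD^{(0)}_v$ covers $E\in\CD^{(0)}_v$ in the sense of Definition \ref{def cover of a set}. Since $\mu(\alpha\Lambda) = T_\alpha T_\alpha^*$ and $\mu(v\Lambda) = T_v$, and since (in the finitely aligned case, via Corollary \ref{cor finitely aligned zigzag set}) a finite $F\subseteq v\Lambda$ is exhaustive precisely when $\{\alpha\Lambda : \alpha\in F\}$ covers $v\Lambda$, the implication $(5)\Rightarrow(5)'$ is simply the special case $E = v\Lambda$, $\CF = \{\alpha\Lambda : \alpha\in F\}$ of relation $(5)$.

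The substantive direction is $(5)'\Rightarrow(5)$, which I would carry out in two reductions. First, writing a general $E\in\CD^{(0)}_v$ as a finite union $\bigcup_k \gamma_k\Lambda$ (Corollary \ref{cor finitely aligned zigzag set}) and replacing a cover $\CF$ of $E$ by the covers $\CF_k = \{F\cap\gamma_k\Lambda : F\in\CF\}$ of the individual $\gamma_k\Lambda$, the additivity of $\mu$ together with $\mu(F)\le\mu(E)$ reduces relation $(5)$ to the case that $E=\gamma\Lambda$ is principal. Second, when $\CF$ covers $\gamma\Lambda$, the finitely many principal pieces $\delta\Lambda$ (with $\delta\in\gamma\Lambda$) making up the members of $\CF$ again cover $\gamma\Lambda$, and transporting these through the shift bijection $\sigma^\gamma\colon\gamma\Lambda\to s(\gamma)\Lambda$ --- using that $\tau^\gamma$ preserves and reflects the relation $\Cap$ --- produces a finite exhaustive subset of $s(\gamma)\Lambda$. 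Relation $(5)'$ then yields $T_{s(\gamma)} = \bigvee_{\delta}T_{\sigma^\gamma\delta}T_{\sigma^\gamma\delta}^*$; conjugating by the partial isometry $T_\gamma$ (with $T_\gamma^* T_\gamma = T_{s(\gamma)}$ by $(1)'$) and using $(2)'$ in the form $T_\gamma T_{\sigma^\gamma\delta} = T_\delta$ turns this into $\mu(\gamma\Lambda) = T_\gamma T_\gamma^* = \bigvee_{\delta}T_\delta T_\delta^* = \bigvee_{F\in\CF}\mu(F)$, which is relation $(5)$ for $\gamma\Lambda$. Assembling these with Theorems \ref{thm finitely aligned 2}, \ref{thm finitely aligned 1} and \ref{thm boundary gens and rels} then gives both statements.

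I expect the second reduction to be the main obstacle: one must verify that conjugation by the partial isometry $T_\gamma$ is a join-preserving lattice isomorphism from the projections below $T_{s(\gamma)}$ onto those below $T_\gamma T_\gamma^*$, and that the passage from a cover $\CF$ of $\CD^{(0)}_v$-sets to its exhaustive set of principal pieces is faithful, i.e.\ that covering of $\gamma\Lambda$ in the sense of Definition \ref{def cover of a set} is detected already on principal subsets $\beta\Lambda$ with $\beta\in\gamma\Lambda$. The role of the countability hypothesis is inherited from the finitely aligned boundary machinery of \cite[Sections 7, 8]{spi_pathcat} invoked above through the exhaustive-set description of the boundary; I would flag that the purely algebraic equivalence of $(5)$ and $(5)'$ itself makes no use of countability.
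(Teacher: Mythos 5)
Your proposal is correct, but its substantive step is genuinely different from the paper's. The easy direction is the same in both: a boundary representation satisfies $(5)$ by Theorem \ref{thm boundary gens and rels}, and $(5)'$ is the special case where an exhaustive $F$ gives the principal cover $\{\alpha\Lambda : \alpha \in F\}$ of $v\Lambda$. For the hard direction, however, the paper never proves your equivalence $(5)' \Rightarrow (5)$. Instead, given $\{S_\alpha\}$ satisfying the relations, it forms the representation $\pi$ of $C^*(G_i)$ via Theorems \ref{thm finitely aligned 2} and \ref{thm finitely aligned 1} and then shows $\pi(C_0(\partial\Lambda^c)) = 0$ directly: for $C \in v\Lambda^* \setminus v\partial\Lambda$, it invokes \cite[Theorem 7.8]{spi_pathcat} to produce $\alpha \in C$ and a finite exhaustive set $F \subseteq s(\alpha)\Lambda$ with $\sigma^\alpha(C) \cap F = \varnothing$, so that $E = \alpha\Lambda \setminus \bigcup_{\beta \in F}\alpha\beta\Lambda$ is (by \cite[Lemma 7.9]{spi_pathcat}) a neighborhood of $C$ contained in $v\Lambda^* \setminus v\Lambda^{**}$, and then computes $\pi(\chi_E) = S_\alpha\bigl(S_{s(\alpha)} - \bigvee_{\beta \in F} S_\beta S_\beta^*\bigr)S_\alpha^* = 0$ by $(5)'$. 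This is exactly where the countability hypothesis is used, through those two cited results. Your route --- proving $(5)' \Rightarrow (5)$ purely algebraically and then quoting Theorem \ref{thm boundary gens and rels} --- does work, including the two points you flagged as obstacles: for a cover of $\gamma\Lambda$ by principal sets $\delta_j\Lambda$ with $\delta_j = \gamma\delta_j'$, left cancellation gives $\delta_j\Lambda \cap \gamma\beta\Lambda = \gamma(\delta_j'\Lambda \cap \beta\Lambda)$ for every $\beta \in s(\gamma)\Lambda$, so $\{\delta_j'\}$ is exhaustive at $s(\gamma)$; and since $T_{\delta_j'}T_{\delta_j'}^* \le T_{s(\gamma)} = T_\gamma^* T_\gamma$, conjugation by $T_\gamma$ is a $*$-isomorphism between the corners determined by $T_{s(\gamma)}$ and $T_\gamma T_\gamma^*$, hence carries the join in $(5)'$ to $T_\gamma T_\gamma^* = \bigvee_j T_{\delta_j} T_{\delta_j}^*$; covering of $\gamma\Lambda$ is indeed detected on principal subsets because every nonempty $G \in \CD_v^{(0)}$ contains $x\Lambda$ for any $x \in G$ (Remark \ref{r.zigzag}\eqref{r.zigzag.f}). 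What the comparison buys: the paper's argument stays within the groupoid/boundary picture but inherits countability from the structure theory of \cite[Section 7]{spi_pathcat}; your argument uses only Theorems \ref{thm finitely aligned 2}, \ref{thm finitely aligned 1}, \ref{thm boundary gens and rels}, Theorem \ref{thm boolean ring homomorphism} and Corollary \ref{cor finitely aligned zigzag set}, none of which assume countability, so it in fact proves the theorem with the countability hypothesis removed. Your closing sentence, attributing a residual role for countability to machinery ``invoked above,'' undersells this: your proof invokes no countability-dependent results at all.
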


\begin{proof}
First let $\pi$ be a representation of $C^*(G|_{\partial \Lambda})$.  By Theorem \ref{t.boundary} we know that (1) - (3), (4)$_i$, and (5) hold. We know from Theorems \ref{thm finitely aligned 2} and \ref{thm finitely aligned 1} that for $G_2$, $(1)'$ - $(3)'$ are equivalent to (1) - (3) and (4)$_2$, while for $G_1$, $(1)'$ - $(4)'$ are equivalent to (1) - (3) and (4)$_1$.  Thus it suffices to prove that $(5)'$ holds.  Let $F \subseteq v\Lambda$ be finite exhaustive.  Define $\CJ = \{ (\alpha,r(\alpha)) : \alpha \in F \}\subseteq \CZ v$.  We claim that $\{ A(\xi) : \xi \in \CJ \}$ covers $v \Lambda$.  Since $A(\alpha,r(\alpha)) = \alpha \Lambda$, this follows from the Definition \ref{def finitely aligned exhaustive}.  Then by (5) we have $T_v = \bigvee_{\zeta \in \CJ} T_\zeta^* T_\zeta = \bigvee_{\alpha \in F} T_\alpha T_\alpha^*$, verifying $(5)'$.

Now let $\pi$ be a representation of $\CT(\Lambda)$, and assume $(5)'$.  Let $C \in v \Lambda^* \setminus v \partial \Lambda$. By \cite[Theorem 7.8]{spi_pathcat} there is $\alpha \in C$ such that for all $\alpha' \in C \cap \alpha \Lambda$ there exists a finite exhaustive set $F \subseteq s(\alpha') \Lambda$ such that $\sigma^{\alpha'}(C) \cap F = \varnothing$.  We apply this with $\alpha' = \alpha$ to obtain the corresponding finite exhaustive set $F$ at $s(\alpha)$.  Let $E = \alpha \Lambda \setminus \bigcup_{\beta \in F} \alpha \beta \Lambda$.  Then $\sigma^\alpha(C) \cap F = \varnothing$, or equivalently, $C \cap \alpha F = \varnothing$. By \cite[Lemma 7.9]{spi_pathcat} we have $C \in \widehat{E} \subseteq v \Lambda^* \setminus v \Lambda^{**}$.  Then
\[
\pi(\chi_E)
= S_\alpha S_\alpha^* - \bigvee_{\beta \in F} S_{\alpha\beta} S_{\alpha\beta}^*
= S_\alpha (S_{s(\alpha)} - \bigvee_{\beta \in F} S_{\beta} S_{\beta}^*) S_\alpha^*
= 0,
\]
by $(5)'$.  Therefore $\pi(C_0(\partial \Lambda^c)) = 0$.
\end{proof}

\section{The regular representation}
\label{sec regular representation}

Let $\Lambda$ be an LCSC.  We recall that $\CT(\Lambda) = C^*(G_2(\Lambda))$.  We will write $G_2$ for $G_2(\Lambda)$, $\CZ$ for $\CZ_\Lambda $, etc.  In this section we compare the regular representation of $\Lambda$ with the regular representation of $G_2$.  (Since the regular representation on $\ell^2(\Lambda)$ is essentially defined by the maps $\varphi_\zeta$, $G_1(\Lambda)$ is generally not relevant to this matter.  Examples \ref{example group old} and \ref{example group new} show that one cannot in general expect to have a map from $C^*(G_1)$ to the algebra generated by the regular representation.)  We let $\{ e_\alpha : \alpha \in \Lambda \}$ denote the standard orthonormal basis of $\ell^2(\Lambda)$.  The following generalizes \cite[Proposition 7.2]{bkqs}.

\begin{Lemma} \label{lem regular representation}

There is a representation $\pi_\ell : \CT(\Lambda) \to B(\ell^2(\Lambda))$ defined by $\pi_\ell(\chi_{[\zeta,A(\zeta)]}) = T_\zeta$, where for $\zeta \in \CZ$,
\[
T_\zeta e_\alpha = \begin{cases} e_{\varphi_\zeta(\alpha)}, &\text{ if }\alpha \in A(\zeta) \\ 0, &\text{ if } otherwise. \end{cases}
\]

\end{Lemma}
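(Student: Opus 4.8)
The plan is to invoke the universal property of $\CT(\Lambda) = C^*(G_2)$ from Theorem \ref{thm toeplitz gens and rels}: it suffices to verify that the operators $T_\zeta$ are well-defined bounded operators on $\ell^2(\Lambda)$ satisfying relations (1)--(3) and (4)$_2$, for then there is a unique representation $\pi_\ell$ of $\CT(\Lambda)$ sending each generator $\chi_{[\zeta,A(\zeta)]} = t_\zeta$ (in the notation of Lemma \ref{lem gens and rels in G}) to $T_\zeta$.

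First I would record that each $T_\zeta$ is a partial isometry. By Remark \ref{r.zigzag}\eqref{r.zigzag.d} the map $\varphi_\zeta$ is a bijection of $A(\zeta)$ onto $A(\overline{\zeta})$ with inverse $\varphi_{\overline{\zeta}}$; hence $T_\zeta$ carries the orthonormal set $\{e_\alpha : \alpha \in A(\zeta)\}$ bijectively onto $\{e_\beta : \beta \in A(\overline{\zeta})\}$ and kills the remaining basis vectors, so it extends to a bounded operator of norm at most one. A direct computation on basis vectors gives $T_\zeta^* e_\beta = e_{\varphi_{\overline{\zeta}}(\beta)}$ for $\beta \in A(\overline{\zeta})$ and $0$ otherwise, which is exactly $T_{\overline{\zeta}}$, verifying (2). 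Consequently $T_\zeta^* T_\zeta e_\alpha = e_\alpha$ for $\alpha \in A(\zeta)$ and $0$ otherwise; that is, $T_\zeta^* T_\zeta$ is the orthogonal projection $P_{A(\zeta)}$ onto $\overline{\text{span}}\{e_\alpha : \alpha \in A(\zeta)\}$.

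Relations (3) and (4)$_2$ are then immediate. Since the join of projections of the form $P_E$ is the projection $P_{\bigcup E}$ onto the span of the union, the identity $A(\zeta) = \bigcup_{i=1}^n A(\zeta_i)$ yields $T_\zeta^* T_\zeta = P_{A(\zeta)} = \bigvee_i P_{A(\zeta_i)} = \bigvee_i T_{\zeta_i}^* T_{\zeta_i}$, which is (3). If $\varphi_\zeta = \text{id}_{A(\zeta)}$, then $T_\zeta e_\alpha = e_\alpha = P_{A(\zeta)} e_\alpha$ on $A(\zeta)$, so $T_\zeta = P_{A(\zeta)} = T_\zeta^* T_\zeta$, which is (4)$_2$.

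The one step needing care is (1), and I expect the ``equals $0$ otherwise'' clause to be the main subtlety. I would compute $T_{\zeta_1} T_{\zeta_2} e_\alpha$ directly: it is nonzero exactly when $\alpha \in A(\zeta_2)$ and $\varphi_{\zeta_2}(\alpha) \in A(\zeta_1)$, in which case it equals $e_{\varphi_{\zeta_1}(\varphi_{\zeta_2}(\alpha))}$. When $s(\zeta_1) = r(\zeta_2)$ I would identify the set of such $\alpha$ with $A(\zeta_1 \zeta_2)$, using $\varphi_{\zeta_2}^{-1}(A(\zeta_1)) = \varphi_{\overline{\zeta_2}}(A(\zeta_1)) = A(\zeta_1 \zeta_2)$ (the computation from the proof of Lemma \ref{lem gens and rels in G}) together with the fact that $A(\zeta_1 \zeta_2)$ already lies in the range $A(\zeta_2)$ of $\varphi_{\overline{\zeta_2}}$; combined with $\varphi_{\zeta_1} \circ \varphi_{\zeta_2} = \varphi_{\zeta_1 \zeta_2}$ from Remark \ref{r.zigzag}\eqref{r.zigzag.c}, this gives $T_{\zeta_1} T_{\zeta_2} = T_{\zeta_1 \zeta_2}$. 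When $s(\zeta_1) \neq r(\zeta_2)$ I would argue vanishing from range considerations: by Definition \ref{d.zigzag} every element of $A(\zeta_1) \subseteq s(\zeta_1)\Lambda$ has range $s(\zeta_1)$, while every $\varphi_{\zeta_2}(\alpha) \in A(\overline{\zeta_2})$ has range $r(\zeta_2)$, so $\varphi_{\zeta_2}(\alpha)$ can never lie in $A(\zeta_1)$ and hence $T_{\zeta_1} T_{\zeta_2} = 0$. With (1)--(3) and (4)$_2$ in hand, Theorem \ref{thm toeplitz gens and rels} supplies the desired representation $\pi_\ell$.
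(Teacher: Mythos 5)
Your proposal is correct and follows essentially the same route as the paper: verify relations (1)--(3) and (4)$_2$ for the operators $T_\zeta$ acting on basis vectors, then invoke the universal property of Theorem \ref{thm toeplitz gens and rels}. The only differences are organizational --- you phrase (3) and (4)$_2$ via the diagonal projections onto $\overline{\text{span}}\{e_\alpha : \alpha \in A(\zeta)\}$, and you explicitly check the non-composable case of relation (1) (vanishing when $s(\zeta_1) \neq r(\zeta_2)$), which the paper's proof leaves implicit.
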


\begin{proof}
By Theorem \ref{thm toeplitz gens and rels} there are four properties to verify.  First we show that $T_{\zeta_1} T_{\zeta_2} = T_{\zeta_1 \zeta_2}$.  We have for $\alpha \in \Lambda$,
\begin{align*}
T_{\zeta_1} T_{\zeta_2} e_\alpha
&= \begin{cases} T_{\zeta_1} e_{\varphi_{\zeta_2}(\alpha)}, &\text{ if } \alpha \in A(\zeta_2) \\ 0, &\text{ otherwise.} \end{cases} \\
&= \begin{cases} e_{\varphi_{\zeta_1} \circ \varphi_{\zeta_2}(\alpha)}, &\text{ if } \alpha \in A(\zeta_2) \text{ and } \varphi_{\zeta_2}(\alpha) \in A(\zeta_1) \\ 0, &\text{ otherwise} \end{cases} \\
&= \begin{cases} e_{\varphi_{\zeta_1 \zeta_2}(\alpha)}, &\text{ if } \alpha \in A(\zeta_2) \cap \varphi_{\zeta_2}^{-1}(A(\zeta_1)) = A(\zeta_1 \zeta_2) \\ 0, &\text{ otherwise} \end{cases} \\
&= T_{\zeta_1\zeta_2} e_\alpha.
\end{align*}
Next we show that $T_\zeta^* = T_{\overline{\zeta}}$.  For $\alpha$, $\beta \in \Lambda$ we have
\begin{align*}
\langle T_\zeta^* e_\alpha,e_\beta \rangle
&= \langle e_\alpha, T_\zeta e_\beta \rangle \\
&= \begin{cases} \langle e_\alpha, e_{\varphi_\zeta(\beta)} \rangle, &\text{ if } \beta \in A(\zeta) \\ 0, &\text{ otherwise} \end{cases} \\
&= \begin{cases} 1, &\text{ if } \beta \in A(\zeta) \text{ and } \alpha = \varphi_\zeta(\beta) \\ 0, &\text{ otherwise} \end{cases} \\
&= \begin{cases} 1, &\text{ if } \alpha \in A(\overline{\zeta}) \text{ and } \beta = \varphi_{\overline{\zeta}}(\alpha) \\ 0, &\text{ otherwise} \end{cases} \\
&= \cdots \\
&= \langle T_{\overline{\zeta}} e_\alpha,e_\beta \rangle.
\end{align*}
Next let $A(\zeta) = \bigcup_{i=1}^n A(\zeta_i)$.  We show $T_\zeta^* T_\zeta = \bigvee_{i=1}^n T_{\zeta_i}^* T_{\zeta_i}$.  First note that by the first two properties we have $T_\zeta^* T_\zeta = T_{\overline{\zeta}\zeta}$, and $\varphi_{\overline{\zeta}\zeta}(\alpha) = \alpha$ if $\alpha \in A(\zeta)$, and is undefined otherwise.  Therefore $T_\zeta^*T_\zeta e_\alpha = e_\alpha$ if $\alpha \in A(\zeta)$ and equals 0 otherwise.  A similar formula holds for each $i$, which establishes the desired equation.  Finally, suppose that $\varphi_\zeta = \text{id}_{A(\zeta)}$.  Then
\begin{align*}
T_\zeta e_\alpha
&= \begin{cases} e_{\varphi_\zeta(\alpha)}, &\text{ if } \alpha \in A(\zeta) \\ 0, &\text{ otherwise} \end{cases} \\
&= \begin{cases} e_\alpha, &\text{ if } \alpha \in A(\zeta) \\ 0, &\text{ otherwise} \end{cases} \\
&= T_\zeta^* T_\zeta e_\alpha. \qedhere
\end{align*}
\end{proof}

\begin{Definition} \label{def regular representations of Lambda}

We write $\CT_r(\Lambda)$ for the $C^*$-algebra $C^*_r(G_2)$ (the reduced groupoid $C^*$-algebra), and refer to it as the {\it reduced Toeplitz algebra} of $\Lambda$.  We write $\CT_\ell(\Lambda)$ for the $C^*$-algebra $\pi_\ell(\CT(\Lambda))$, and refer to it as the {\it regular Toeplitz algebra} of $\Lambda$.

\end{Definition}

\begin{Remark}

We have adapted the definition given in \cite[Definition 7.1]{bkqs}, and extended it to the general nonfinitely aligned case.

\end{Remark}

We next recall the basic facts about regular representations of \'etale groupoids.  Let $G$ be an \'etale groupoid.  For $x \in G^0$ we have the induced representation Ind$\,x$ of $C_c(G)$ on $\ell^2(Gx)$ defined by Ind$\,x(f) \xi = f * \xi$, where $f * \xi (\alpha) = \sum_{\beta} f(\beta) \xi(\beta^{-1} \alpha)$.  Letting $\delta_{gx}$ denote the standard basis vector of $\ell^2(Gx)$ at $gx \in Gx$, we have $f * \delta_{gx} = \sum_h f(h g^{-1}) \delta_{hx}$.  We let $\lambda = \bigoplus_{x \in G^0} \text{Ind}_x$.  ($\lambda$ is technically not the regular representation of $G$, but it is weakly equivalent to it.  See \cite{pat}.)

In the situation of $G \equiv G(\Lambda)$, let $v \in \Lambda^0$ and $x \in X_v$.  Then $Gx = \{ [\zeta,x] : \zeta \in \CZ v \text{ such that } x \in \widehat{A(\zeta)} \}$.  For $\theta \in \CZ$, let $T_\theta = \text{Ind}\, x (\chi_{[\theta,A(\theta)]})$.  (Then the $\{ T_\theta : \theta \in \CZ\}$ determine Ind$\, x$, as in Theorem \ref{thm toeplitz gens and rels}.)  We get
\begin{align*}
T_\theta \delta_{[\zeta,x]}
&= \chi_{[\theta,A(\theta)]} * \delta_{[\zeta,x]} \\
&= \sum_{\{ [\xi,x] : x \in \widehat{A(\xi)} \}} \chi_{[\theta,A(\theta)]} \bigl([\xi,x] \cdot [\zeta,x]^{-1} \bigr) \delta_{[\xi,x]} \\
&= \sum_{\{ [\xi,x] : x \in \widehat{A(\xi)} \}} \chi_{[\theta,A(\theta)]} \bigl([\xi \overline{\zeta},\Phi_\zeta(x)]\bigr) \delta_{[\xi,x]}.
\end{align*}
If $\Phi_\zeta(x) \not\in \widehat{A(\theta)}$, then $\chi_{[\theta,A(\theta)]} \bigl([\xi \overline{\zeta},\Phi_\zeta(x)]\bigr) = 0$ for any $\xi$.  Suppose that $\Phi_\zeta(x) \in \widehat{A(\theta)}$.  Then $\chi_{[\theta,A(\theta)]} ([\xi \overline{\zeta}, \Phi_\zeta(x)]) = 1$ if and only if $\Phi_{\xi \overline{\zeta}} = \Phi_\theta$ near $\Phi_\zeta(x)$, equivalently, if and only if $\Phi_\xi = \Phi_{\theta \zeta}$ near $x$, or finally, if and only if $[\xi, x] = [\theta \zeta, x]$.  Therefore $T_\theta \delta_{[\zeta,x]} = \delta_{[\theta \zeta, x]}$.

Recall that $(v,v) \in \CZ v$ is the trivial zigzag on $v\Lambda$.  Thus $\Phi_{(v,v)}$ is the identity on $X_v$.  Thus for any $\zeta \in \CZ v$ such that $x \in \widehat{A(\zeta)}$, $\delta_{[\zeta,x]} = T_\zeta \delta_{[(v,v),x]}$.  Thus $\delta_{[(v,v),x]}$ is a cyclic vector for Ind$\,x$.  For $\alpha \in \Lambda v$, note that $\tau^\alpha = \phi_{(v,\alpha)}$, and that $x \in A((v,\alpha))\widehat{\phantom{i}}$.  Thus if we choose $x_v \in v\Lambda^*$ for each $v \in \Lambda^0$, the representation $\pi_\ell$ of Lemma \ref{lem regular representation} is unitarily equivalent to a subrepresentation of $\bigoplus_{v \in \Lambda^0} \text{Ind}_{x_v}$.  We have proved the following proposition.

\begin{Proposition} \label{prop toeplitz onto regular}

The homomorphism $\pi_\ell : \CT(\Lambda) \to \CT_\ell(\Lambda)$ factors through $\CT_r(\Lambda)$.

\end{Proposition}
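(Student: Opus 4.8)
The plan is to realize the canonical surjection $q : \CT(\Lambda) \to \CT_r(\Lambda)$ as the representation $\lambda = \bigoplus_{x \in X} \text{Ind}_x$ and then to exhibit $\pi_\ell$ (from Lemma \ref{lem regular representation}) as a subrepresentation of $\lambda$; once this is done, the desired factoring is a purely formal consequence of a containment of kernels. Recall that $\CT_r(\Lambda) = C^*_r(G_2)$ is the completion of $C_c(G_2)$ in the reduced norm $\|f\|_r = \|\lambda(f)\|$, so that $q$ is nothing but $\lambda$ regarded as a map onto its image, and hence $\ker q = \ker \lambda$. (Here I use that $\lambda$, although not literally the regular representation, is weakly equivalent to it and therefore has the same kernel, as noted above following \cite{pat}.) By the universal property of the quotient, to obtain a homomorphism $\CT_r(\Lambda) \to \CT_\ell(\Lambda)$ through which $\pi_\ell$ factors it suffices to verify the single inclusion $\ker \lambda \subseteq \ker \pi_\ell$.

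For this inclusion I would appeal directly to the computation carried out in the paragraphs preceding the statement. Fixing a point $x_v \in X_v = v \Lambda^*$ for each $v \in \Lambda^0$, that discussion shows that $\pi_\ell$ is unitarily equivalent to a subrepresentation of $\bigoplus_{v \in \Lambda^0} \text{Ind}_{x_v}$. Since $\{ x_v : v \in \Lambda^0 \}$ is a subset of $X = G_2^{(0)}$, the representation $\bigoplus_{v \in \Lambda^0} \text{Ind}_{x_v}$ is obtained from $\lambda = \bigoplus_{x \in X} \text{Ind}_x$ simply by restricting the index set, hence is itself a subrepresentation of $\lambda$. As passing to a subrepresentation can only enlarge the kernel, we get
\[
\ker \lambda \;\subseteq\; \ker \Bigl( \bigoplus_{v \in \Lambda^0} \text{Ind}_{x_v} \Bigr) \;\subseteq\; \ker \pi_\ell,
\]
which is precisely the inclusion needed.

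Combining the two steps gives $\ker q = \ker \lambda \subseteq \ker \pi_\ell$, so $\pi_\ell$ descends to a homomorphism $\CT_r(\Lambda) \to \CT_\ell(\Lambda)$ satisfying $\pi_\ell = (\text{this map}) \circ q$, which is the assertion. The only step that genuinely requires care — and which I would regard as the main obstacle — is the identification $\ker q = \ker \lambda$, i.e.\ the assertion that the direct sum of the induced representations over all of $G_2^{(0)}$ realizes the reduced norm; this rests on the weak equivalence of $\lambda$ with the regular representation of the \'etale groupoid $G_2$. The remaining content is the unitary-equivalence computation already established before the statement, together with the elementary facts that subrepresentations enlarge kernels and that a representation killing $\ker q$ factors through the quotient.
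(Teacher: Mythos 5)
Your proposal is correct and follows essentially the same route as the paper: the paper's proof consists exactly of the computation (in the paragraphs preceding the statement) showing that $\pi_\ell$ is unitarily equivalent to a subrepresentation of $\bigoplus_{v \in \Lambda^0} \mathrm{Ind}_{x_v}$, with the factorization through $\CT_r(\Lambda) = C^*_r(G_2)$ then being immediate because the reduced norm is realized by $\lambda = \bigoplus_{x \in G_2^{(0)}} \mathrm{Ind}_x$. Your kernel-inclusion argument merely makes explicit the final step the paper leaves implicit, so there is no substantive difference.
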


Thus there is a commutative diagram:
\[
\begin{tikzpicture}[scale=1.5]

\node (00) at (0,0) [rectangle] {$\CT_\ell(\Lambda)$};
\node (01) at (0,1) [rectangle] {$\CT(\Lambda)$};
\node (21) at (2,1) [rectangle] {$\CT_r(\Lambda)$};

\draw[-latex,thick] (01) -- (21) node[pos=0.5, inner sep=0.5pt, above=1pt] {$\pi_r$};
\draw[-latex,thick] (21) -- (00) node[pos=0.5, inner sep=0.5pt, below right=1pt] {$\overline{\pi_\ell}$};
\draw[-latex,thick] (01) -- (00) node[pos=0.5, inner sep=0.5pt, right=1pt] {$\pi_\ell$};

\end{tikzpicture}
\]

We will show that under some conditions, Ind$\,x$ is weakly contained in $\pi_\ell$ for all $x$, and hence that $\pi_\ell$ descends to a faithful representation ($\overline{\pi_\ell}$) of $\CT_r(\Lambda)$.  We also give conditions under which $\overline{\pi_\ell}$ is not faithful on $\CT_r(\Lambda)$.  Recall from Lemma \ref{lem gens and rels in G} that $t_\zeta = \chi_{[\zeta,A(\zeta)]}$ in $C_c(G_2)$ (or in $\CT(\Lambda)$ or $\CT_r(\Lambda)$).

\begin{Proposition} \label{prop finitely aligned regular representation}

Let $\Lambda$ be finitely aligned.  Then $\lambda \preceq \pi_\ell$.

\end{Proposition}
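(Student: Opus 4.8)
The plan is to establish the norm estimate $\|\lambda(f)\|\le\|\pi_\ell(f)\|$ for every $f\in C_c(G_2)$, which is the meaning of $\lambda\preceq\pi_\ell$. Since $\lambda=\bigoplus_{x\in X}\mathrm{Ind}_x$ we have $\|\lambda(f)\|=\sup_{x\in X}\|\mathrm{Ind}_x(f)\|$, so it suffices to dominate each $\|\mathrm{Ind}_x(f)\|$ by $\|\pi_\ell(f)\|$. I would handle this in two stages: first for fixed ultrafilters, where I intend to realize $\mathrm{Ind}_x$ as an honest subrepresentation of $\pi_\ell$, and then for arbitrary $x$ by a density-and-continuity argument.

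For the first stage, fix $\beta\in\Lambda$ and let $x_\beta\in X_{r(\beta)}$ be the point given by the fixed ultrafilter $\CU_{\{\beta\}}$. The crucial step, and the only place finite alignment enters, is to show that $[\zeta,x_\beta]\mapsto\varphi_\zeta(\beta)$ is a bijection of the orbit $G_2 x_\beta$ onto $\CO_\beta=\{\varphi_\zeta(\beta):\zeta\in\CZ r(\beta),\ \beta\in A(\zeta)\}\subseteq\Lambda$. The map is well defined on germs and surjective by inspection; for injectivity I would argue as follows. If $\varphi_\zeta(\beta)=\varphi_{\zeta'}(\beta)$, put $\eta=\overline{\zeta'}\zeta$, so that $\beta\in A(\eta)$ and $\varphi_\eta(\beta)=\beta$; it then suffices to prove $\varphi_\eta=\mathrm{id}$ on the zigzag set $\beta\Lambda\in\CU_{x_\beta}$, since this yields $[\eta,x_\beta]=[\mathrm{id},x_\beta]$ and hence $[\zeta,x_\beta]=[\zeta',x_\beta]$. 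By Lemma \ref{lem finitely aligned zigzag map} I may write $\varphi_\eta=\bigcup_i\tau^{\gamma_i}\sigma^{\delta_i}$ as a finite union, so $A(\eta)=\bigcup_i\delta_i\Lambda$ and $\beta\in\delta_{i_0}\Lambda$ for some $i_0$; writing $\beta=\delta_{i_0}\mu$ gives $\gamma_{i_0}\mu=\varphi_\eta(\beta)=\beta$, and then for $\rho\in s(\beta)\Lambda$ one computes $\varphi_\eta(\beta\rho)=\gamma_{i_0}\mu\rho=\beta\rho$, so $\varphi_\eta=\mathrm{id}_{\beta\Lambda}$ as needed.

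With the bijection available I would define $V:\ell^2(G_2 x_\beta)\to\ell^2(\Lambda)$ by $V\delta_{[\zeta,x_\beta]}=e_{\varphi_\zeta(\beta)}$; it is an isometry onto $\ell^2(\CO_\beta)$, which is a reducing subspace for $\pi_\ell$. Using $\varphi_{\theta\zeta}=\varphi_\theta\circ\varphi_\zeta$ and noting that $\mathrm{Ind}_{x_\beta}(t_\theta)\delta_{[\zeta,x_\beta]}$ and $\pi_\ell(t_\theta)e_{\varphi_\zeta(\beta)}$ are each nonzero under the identical condition $\varphi_\zeta(\beta)\in A(\theta)$, one checks $V\,\mathrm{Ind}_{x_\beta}(t_\theta)=\pi_\ell(t_\theta)\,V$ on the generators $t_\theta=\chi_{[\theta,A(\theta)]}$, hence on all of $\CT(\Lambda)$. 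Thus $\mathrm{Ind}_{x_\beta}$ is unitarily equivalent to $\pi_\ell|_{\ell^2(\CO_\beta)}$, and in particular $\|\mathrm{Ind}_{x_\beta}(f)\|\le\|\pi_\ell(f)\|$ for all $f$.

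Finally I would pass to arbitrary $x\in X$. Invoking the standard lower semicontinuity of $x\mapsto\|\mathrm{Ind}_x(f)\|$ for $f\in C_c(G_2)$ (a general feature of \'etale groupoids, cf. \cite{pat}) together with the density of the fixed ultrafilters from Lemma \ref{lem fixed ultrafilters are dense}, I get, for any $x_0$ and a net of fixed ultrafilters $x_\beta\to x_0$, that $\|\mathrm{Ind}_{x_0}(f)\|\le\liminf\|\mathrm{Ind}_{x_\beta}(f)\|\le\sup_\beta\|\mathrm{Ind}_{x_\beta}(f)\|\le\|\pi_\ell(f)\|$. Taking the supremum over $x_0$ gives $\|\lambda(f)\|=\|f\|_r\le\|\pi_\ell(f)\|$, which is the claim. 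The main obstacle is the injectivity of the orbit map: in the general (nonfinitely aligned) case a fixed ultrafilter can carry isotropy invisible to $\pi_\ell$, so the unitary equivalence would fail; Lemma \ref{lem finitely aligned zigzag map} is exactly what forces $\varphi_\eta$ to collapse to the identity near $\beta$ and thereby trivializes this isotropy.
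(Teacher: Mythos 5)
Your proof splits into two stages: (i) for a fixed ultrafilter $x_\beta=\CU_{\{\beta\}}$, the representation $\text{Ind}_{x_\beta}$ is unitarily equivalent to the restriction of $\pi_\ell$ to $\ell^2(\CO_\beta)$, and (ii) passage from fixed ultrafilters to arbitrary $x$ via density plus lower semicontinuity of $x\mapsto\|\text{Ind}_x(f)\|$. Stage (i) is correct, but your use of finite alignment there is superfluous: injectivity of the orbit map needs no decomposition of $\varphi_\eta$ into maps $\tau^{\gamma_i}\sigma^{\delta_i}$, because Remark \ref{r.zigzag}\eqref{r.zigzag.f} already gives $\varphi_\eta(\beta\rho)=\varphi_\eta(\beta)\rho=\beta\rho$ once $\varphi_\eta(\beta)=\beta$, so $\varphi_\eta=\text{id}$ on the set $\beta\Lambda\in\CU_{\{\beta\}}$. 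Hence stage (i) --- and with it the weak equivalence of $\pi_\ell$ with $\bigoplus_{\beta\in\Lambda}\text{Ind}_{x_\beta}$ --- holds for \emph{every} LCSC, finitely aligned or not. In particular your closing diagnosis, that in the nonfinitely aligned case the obstacle is isotropy at fixed ultrafilters invisible to $\pi_\ell$, is mistaken; that is not where the difficulty lies.

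The genuine gap is stage (ii). Lower semicontinuity of $x\mapsto\|\text{Ind}_x(f)\|$ is a Hausdorff phenomenon (the reference \cite{pat} treats Hausdorff groupoids), whereas $G_2$ here is in general only locally Hausdorff: the paper obtains Hausdorffness from finite alignment only in conjunction with right cancellation (Lemma \ref{lem finitely aligned hausdorff}), which is not assumed in this proposition. Worse, since stage (i) is completely general, your argument, if valid, would prove $\lambda\preceq\pi_\ell$ for every LCSC --- and this is false: Proposition \ref{prop not weakly contained} together with Example \ref{example different regular representations 1} produces an LCSC with $\lambda\not\preceq\pi_\ell$, even though fixed ultrafilters are always dense (Lemma \ref{lem fixed ultrafilters are dense}) and each $\text{Ind}_{x_\beta}$ is a subrepresentation of $\pi_\ell$. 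The point of Remark \ref{rmk special point} is a limit of fixed ultrafilters at which the semicontinuity you invoke fails, so that semicontinuity is exactly what must be proved, and finite alignment has to enter at this step. This is what the paper's proof does: it uses Theorem \ref{thm spectrum finitely aligned} (valid only under finite alignment) to view $x$ as a directed hereditary subset of $\Lambda$, and then, given finitely many elements of a total set of $\CT(\Lambda)$, chooses minimal common extensions inside $x$ to manufacture a single $\eta\in\Lambda$ with $\langle\text{Ind}_x(T_i)\delta_{[v,v,x]},\delta_{[v,v,x]}\rangle=\langle\pi_\ell(T_i)e_\eta,e_\eta\rangle$ for all $i$, concluding by \cite[Theorem 3.4.4]{dixmier}. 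To repair your proof you would need to supply a state-matching argument of this kind (or a finite-alignment-based proof of the semicontinuity) in place of the appeal to a ``standard'' fact that does not hold for the groupoids under consideration.
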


\begin{proof}
We will write $t_\alpha := t_{(r(\alpha),\alpha)}$ for $\alpha \in \Lambda$.  By \cite[Proposition 6.7]{spi_pathcat} (whose proof applies without change to the case of a finitely aligned LCSC), the elements $t_\gamma t_\eps^* t_{\nu_1}t_{\nu_1}^* \cdots t_{\nu_k}t_{\nu_k}^*$ form a total set in $\CT(\Lambda)$.  Let $x \in v \Lambda^*$; we think of $x$ as a directed hereditary subset of $\Lambda$.  To show that Ind$_x \preceq \pi_\ell$, let $\gamma_i$, $\eps_i$, $\nu_{ij} \in \Lambda$ for $1 \le i \le n$, $1 \le j \le k_i$, with $r(\eps_i) = r(\nu_{ij}) = r(x)$ and $s(\gamma_i) = s(\eps_i)$.  We will find $\eta \in \Lambda$ such that $\langle \text{Ind}_x(T_i) \delta_{[v,v,x]},\delta_{[v,v,x]} \rangle = \langle \pi_\ell(T_i) e_\eta,e_\eta \rangle$ for $1 \le i \le n$, where $T_i = t_{\gamma_i} t_{\eps_i}^* \prod_{j=1}^{k_i} t_{\nu_{ij}}t_{\nu_{ij}}^*$.

Let $I = \{ 1 \le i \le n : \eps_i,\; \nu_{ij} \in x \text{ for } 1 \le j \le k_i \}$, and let $J = \{1,\ldots,n\} \setminus I$.  Note that
\begin{align*}
\text{Ind}_x(t_\nu t_\nu^*) \delta_{[v,v,x]}
&= \begin{cases}
\delta_{[v,v,x]}, &\text{ if } \nu \in x \\
0, &\text{ if } \nu \not\in x,
\end{cases} \\
\text{Ind}_x(t_\eps^*) \delta_{[v,v,x]}
&= \begin{cases}
\delta_{s(\eps),\eps,\sigma^\eps x]}, &\text{ if } \eps \in x \\
0, &\text{ if } \eps \not\in x.
\end{cases}.\\
\noalign{Hence} \\
\text{Ind}_x(T_i) \delta_{[v,v,x]}
&= \begin{cases}
\delta_{[\gamma_i,\eps_i, \sigma^{\eps_i} x]}, &\text{ if } i \in I \\
0, &\text{ if } i \in J.
\end{cases}
\end{align*}
Choose $\eps \in \bigl( \bigvee \{ \eps_i, \nu_{ij} : i \in I, 1 \le j \le k_i \} \bigr) \cap x$.  Such an element exists since $x$ is directed.  For $i \in I$ there is $\eps_i'$ such that $\eps = \eps_i \eps_i'$.  If $\Lambda$ is right cancellative we may choose $\eta = \eps$ (as the following argument will show).  In general, note that for $i \in I$,
\[
\langle \text{Ind}_x(T_i) \delta_{[v,v,x]},\delta_{[v,v,x]} \rangle
= \langle \delta_{[\gamma_i,\eps_i,\sigma^{\eps_i} x]},\delta_{[v,v,x]} \rangle
= 1
\]
if and only if  $(\gamma_i,\eps_i,\sigma^{\eps_i} x) \sim (v,v,x)$ (where the equivalence relation $\sim$ is defined in \cite[Definition 4.14]{spi_pathcat}), and equals 0 otherwise.  The value 1 occurs if and only if there are $\mu_i$, $\nu_i$, $z_i$ such that $\sigma^{\eps_i} x = \mu_i z_i$, $x = \nu_i z_i$, $\gamma_i \mu_i = \nu_i$, and $\eps_i \mu_i = \nu_i$.  In this case $\gamma_i \mu_i = \eps_i \mu_i \in x$.  Let $I_0 = \{i \in I : \text{Ind}_x(T_i) \delta_{[v,v,x]} = \delta_{[v,v,x]} \}$.  (Thus
\[
\langle \text{Ind}_x(T_i) \delta_{[v,v,x]},\delta_{[v,v,x]} \rangle
= \begin{cases} 1, &\text{ if } i \in I_0 \\ 0, &\text{ if } i \not \in I_0.) \end{cases}
\]
For $i \not\in I_0$ let $\mu_i = s(\eps_i)$.  Then $\eps_i \mu_i \in x$ for all $i \in I$.  Choose $\eta \in \bigl(\bigvee\{\eps_i \mu_i, \nu_{ij} : i \in I,\ 1 \le j \le k_i \}\bigr) \cap x$.  For $i \in I_0$,
\[
\langle \pi_\ell(T_i) e_\eta,e_\eta \rangle
= \langle \pi_\ell(t_{\gamma_i} t_{\eps_i}^*) e_{\eps_i \mu_i \sigma^{\eps_i\mu_i} \eta}, e_{\eps_i \mu_i \sigma^{\eps_i\mu_i} \eta} \rangle = 1.
\]
If $i \in I \setminus I_0$,
\[
\langle \pi_\ell(T_i) e_\eta,e_\eta \rangle
= \langle \pi_\ell(t_{\gamma_i}t_{\eps_i}^*) e_{\eps_i \sigma^{\eps_i} \eta},e_{\eps_i \sigma^{\eps_i}\eta} \rangle
= \langle e_{\gamma_i \sigma^{\eps_i} \eta}, e_{\eps_i \sigma^{\eps_i} \eta} \rangle
= 0,
\]
since $\gamma_i \sigma^{\eps_i} \eta = \eps_i \sigma^{\eps_i} \eta$ would imply $i \in I_0$.  Finally, if $i \in J$, either $\eps_i \not\in x$ or $\nu_{ij} \not\in x$ for some $j$.  Then either $\eta \not\in \eps_i\Lambda$ or $\eta \not\in \nu_{ij}\Lambda$.  In both cases, $\pi_\ell(T_i) e_\eta = 0$.  Thus $\langle \text{Ind}_x(T_i) \delta_{[v,v,x]},\delta_{[v,v,x]} \rangle = \langle \pi_\ell(T_i) e_\eta,e_\eta \rangle$ for $1 \le i \le n$.  It now follows from \cite[Theorem 3.4.4]{dixmier} that $\lambda \preceq \pi_\ell$.
\end{proof}

\begin{Theorem}

Suppose that $G_2$ is Hausdorff.  Then $\lambda \preceq \pi_\ell$.

\end{Theorem}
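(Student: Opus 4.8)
The plan is to establish $\lambda \preceq \pi_\ell$ by the same mechanism used in Proposition \ref{prop finitely aligned regular representation}, namely \cite[Theorem 3.4.4]{dixmier}, but replacing the \emph{exact} matching of matrix coefficients (available in the finitely aligned case) by an \emph{approximation} argument that exploits Hausdorffness together with density of the fixed ultrafilters (Lemma \ref{lem fixed ultrafilters are dense}). Since $\lambda = \bigoplus_{x \in X} \text{Ind}_x$ and weak containment passes to direct sums, it suffices to prove $\text{Ind}_x \preceq \pi_\ell$ for each $x \in X$. As noted in the computation preceding Proposition \ref{prop toeplitz onto regular}, $\delta_{[(v,v),x]}$ is a cyclic vector for $\text{Ind}_x$, so $\text{Ind}_x$ is the GNS representation of the state $\omega_x(f) = \langle \text{Ind}_x(f)\delta_{[(v,v),x]}, \delta_{[(v,v),x]}\rangle = f(x)$ (evaluation of $f \in C_c(G_2)$ at the unit $x$). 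By \cite[Theorem 3.4.4]{dixmier} it is then enough to show that $\omega_x$ lies in the weak-$*$ closure of the states associated to $\pi_\ell$.

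First I would identify the vector states of $\pi_\ell$ with evaluations at fixed ultrafilters. Using the formula for $\pi_\ell$ in Lemma \ref{lem regular representation}, for a basis vector $e_\eta$ and a basic function $\chi_{[\zeta,E]}$ one computes $\langle \pi_\ell(\chi_{[\zeta,E]})e_\eta, e_\eta \rangle = 1$ exactly when $\eta \in E$ and $\varphi_\zeta(\eta) = \eta$, and $0$ otherwise. On the other hand, writing $x_\eta = \CU_{\{\eta\}}$ for the fixed ultrafilter at $\eta$ (Definition \ref{def fixed ultrafilter}), these same two conditions characterize membership $x_\eta \in [\zeta,E]$; hence the vector state $\psi_\eta := \langle \pi_\ell(\cdot)e_\eta,e_\eta\rangle$ equals $\omega_{x_\eta}$, the evaluation at the unit $x_\eta$. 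Thus the vector states of $\pi_\ell$ are precisely the evaluations at fixed-ultrafilter units.

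The heart of the argument is then a matching step. For $\zeta \in \CZ$ and $E \in \CA_{s(\zeta)}$, the function $y \mapsto \chi_{[\zeta,E]}(y)$ on units is the indicator of $W_{\zeta,E} := [\zeta,E] \cap X$. By Proposition \ref{pathcat 4.10} the bisection $[\zeta,E]$ is compact-open, and because $G_2$ is Hausdorff the unit space $X$ is clopen in $G_2$; therefore $W_{\zeta,E}$ is a compact-open, hence clopen, subset of $X$. Given finitely many $f_1,\dots,f_n \in C^*(G_2)$ and $\eps > 0$, I would first use that the linear span of $\{\chi_{[\zeta,E]}\}$ is norm-dense in $C^*(G_2)$ (these sets form the basis $\CB_2$ of compact-open bisections, so their characteristic functions span $C_c(G_2)$) to choose combinations $g_j$ of basic functions with $\|f_j - g_j\| < \eps/3$. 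Let $\chi_{[\zeta_1,E_1]},\dots,\chi_{[\zeta_m,E_m]}$ be the finitely many basic functions occurring in the $g_j$. The set $N = \bigcap_{k=1}^m \{ y \in X : \mathbf 1_{W_{\zeta_k,E_k}}(y) = \mathbf 1_{W_{\zeta_k,E_k}}(x) \}$ is a clopen neighborhood of $x$, so by Lemma \ref{lem fixed ultrafilters are dense} it contains some fixed ultrafilter $x_\eta$. Then $\omega_{x_\eta}$ agrees with $\omega_x$ on each $\chi_{[\zeta_k,E_k]}$, hence on each $g_j$.

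Finally, since $\omega_x$ and $\omega_{x_\eta} = \psi_\eta$ are states (norm $\le 1$), for each $j$ we get $|\omega_x(f_j) - \psi_\eta(f_j)| \le |\omega_x(f_j - g_j)| + |\psi_\eta(g_j - f_j)| < \eps$. Thus $\omega_x$ is a weak-$*$ limit of vector states $\psi_\eta$ of $\pi_\ell$, and \cite[Theorem 3.4.4]{dixmier} yields $\text{Ind}_x \preceq \pi_\ell$; taking the direct sum over $x \in X$ gives $\lambda \preceq \pi_\ell$. The step I expect to be the main obstacle --- and the only place the hypothesis is used --- is the clopenness of the fixed-point sets $W_{\zeta,E}$: this requires $X$ to be closed in $G_2$, which is exactly Hausdorffness. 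Without it, $W_{\zeta,E}$ is merely open, the neighborhood $N$ need not be closed, and one cannot guarantee a single fixed ultrafilter matching $x$ simultaneously on all of $\chi_{[\zeta_1,E_1]},\dots,\chi_{[\zeta_m,E_m]}$.
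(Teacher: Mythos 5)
Your proof is correct, and while it rests on the same three pillars as the paper's --- reduction to $\mathrm{Ind}_x \preceq \pi_\ell$ for each unit $x$, the criterion of \cite[Theorem 3.4.4]{dixmier}, and density of the fixed ultrafilters (Lemma \ref{lem fixed ultrafilters are dense}) --- the way you deploy the Hausdorff hypothesis is genuinely different. The paper fixes finitely many generators $t_{\theta_i}$ and constructs a single $\eta$ so that the matrix coefficients of $\mathrm{Ind}_x$ and $\pi_\ell$ agree \emph{exactly}, via a four-way case analysis on each $\theta_i$ (whether $[\theta_i,x]=[\mathrm{id},x]$, whether $A(\theta_i)\in\CU_x$, whether $\Phi_{\theta_i}(x)=x$); Hausdorffness enters only in the residual case, where disjoint basic neighborhoods of $[\theta_i,x]$ and $[\mathrm{id},x]$ are intersected to produce a set $E\in\CU_x$ on which $\varphi_{\theta_i}$ is fixed-point free. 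You replace that case analysis by the single soft observation that Hausdorffness makes the unit space $X$ closed, hence clopen, in $G_2$, so each $[\zeta,E]\cap X$ is compact open and therefore clopen in $X$; after identifying both families of states as evaluations at units, the matching point $x_\eta$ comes from density, and your $\eps/3$ argument converts agreement on the spanning set into weak-$*$ approximation on all of $C^*(G_2)$. In effect your clopen-set argument disposes of the paper's cases $J_0$, $J_1$, $J_2$ in one stroke; what the paper's computation buys instead is exact rather than approximate matching (not actually needed for Dixmier's criterion) and a concrete picture of how non-Hausdorffness obstructs the argument. Two points you should spell out to make the write-up airtight: the equivalence ``$\varphi_\zeta(\eta)=\eta$ if and only if $[\zeta,\CU_{\{\eta\}}]=[\mathrm{id},\CU_{\{\eta\}}]$'' requires Remark \ref{r.zigzag}\eqref{r.zigzag.f} (a single fixed point forces $\varphi_\zeta$ to fix all of $\eta\Lambda$, which is a member of $\CU_{\{\eta\}}$), and the density of $\operatorname{span}\{\chi_{[\zeta,E]}\}$ in $C^*(G_2)$ is most quickly justified by noting that this span is a $*$-subalgebra containing the generators $t_\zeta$ of Theorem \ref{thm toeplitz gens and rels}.
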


\begin{proof}
Let $x \in \Lambda^*$.  (We will view $x$ both as a point in $G_2^{(0)}$ and as an ultrafilter in the ring of sets $\CA$.)  It suffices to prove that Ind$_x \preceq \pi_\ell$.  Let $v = r(x)$.  We first establish the following claim:  if $\theta \in \CZ v$ is such that $[\theta,x] \not= [\text{id},x]$ and $\Phi_\theta(x) = x$, then there is $E \in x$ such that $\varphi_\theta(\alpha) \not= \alpha$ for all $\alpha \in E$.  For this, let $\theta$ be as in the claim.  Since $G_2$ is Hausdorff there are disjoint neighborhoods, $[\theta,F']$ and $[\text{id},F'']$ of $[\theta,x]$ and $[\text{id},x]$.  Then $F'$, $F'' \in x$.  Put $E = F' \cap F'' \in x$.  For $\alpha \in E$ consider the fixed ultrafilter at $\alpha$:  $\CU_{\{\alpha\}}$.  Then $E \in \CU_{\{\alpha\}}$.  Therefore $[\theta,\CU_{\{\alpha\}}] \in [\theta,E]$ and $[\text{id},\CU_{\{\alpha\}}] \in [\text{id},E]$.  Since these are disjoint sets, it follows that $[\theta,\CU_{\{\alpha\}}] \not= [\text{id},\CU_{\{\alpha\}}]$.  Therefore $\varphi_\theta \not= \text{id}$ in any neighborhood of $\CU_{\{\alpha\}}$, that is, on any $A \in \CA$ such that $\alpha \in A$.  Then for any $A \subseteq \alpha\Lambda$ with $\alpha \in A$ there exists $\beta \in A$ such that $\varphi_\theta(\beta) \not= \beta$.  This implies that $\varphi_\theta(\alpha) \not= \alpha$, since $\varphi_\theta(\alpha) = \alpha$ implies $\varphi_\theta(\beta) = \beta$ for all $\beta \in \alpha\Lambda$.  This proves the claim.

Now let $\theta_1$, $\ldots$, $\theta_n \in \CZ v$.  We will find $\eta \in v \Lambda$ such that $\langle \text{Ind}_x(t_{\theta_i}) \delta_{[v,v,x]},\delta_{[v,v,x]} \rangle = \langle \pi_\ell(t_{\theta_i}) e_\eta,e_\eta \rangle$ for $1 \le i \le n$.  We partition $\{1, \ldots, n\}$ as follows:
\begin{align*}
I &= \{i : [\theta_i,x] = [\text{id},x] \} \\
J_0 &= \{i : A(\theta_i) \not\in x \} \\
J_1 &= \{i : A(\theta_i) \in x \text{ and } \Phi_{\theta_i}(x) \not= x \} \\
J_2 &= \{i : A(\theta_i) \in x,\ \Phi_{\theta_i}(x) = x, \text{ and } [\theta_i,x] \not= [\text{id},x] \}.
\end{align*}
For $i \in I$ choose $E_i \in x$ such that $\varphi_{\theta_i}|_{E_i} = \text{id}|_{E_i}$.  For $i \in J_0$ choose $E_i \in x$ such that $A(\theta_i) \cap E_i = \varnothing$.  For $i \in J_1$ there is $E_i \in x$ such that $\Phi_{\theta_i}(\widehat{E_i}) \cap \widehat{E_i} = \varnothing$.  For $\alpha \in E_i$ we have $\CU_{\{\alpha\}} \in \widehat{E_i}$, so $\Phi_{\theta_i}(\CU_{\{\alpha\}}) \not\in \widehat{E_i}$.  But $\Phi_{\theta_i}(\CU_{\{\alpha\}}) = \CU_{\{\varphi_{\theta_i}(\alpha)\}}$, so $\varphi_{\theta_i}(\alpha) \not\in E_i$.  Thus $\varphi_{\theta_i}(\alpha) \not= \alpha$ for all $\alpha \in E_i$.  Finally, for $i \in J_2$, the previous claim implies that there is $E_i \in x$ such that $\varphi_{\theta_i}(\alpha) \not= \alpha$ for all $\alpha \in E_i$.  Since $x$ is an ultrafilter, the set $E := \cap_{1 \le i \le n} E_i \in x$.  Choose $\eta \in E$.  For $i \in I$ we have $\pi_\ell(t_{\theta_i}) e_\eta = e_\eta$, for $i \in J_0$ we have $\pi_\ell(t_{\theta_i}) e_\eta = 0$, and for $i \in J_1 \cup J_2$ we have $\pi_\ell(t_{\theta_i}) e_\eta \perp e_\eta$.  Therefore for all $1 \le i \le n$ we have
\begin{align*}
\langle \pi_\ell(t_{\theta_i}) e_\eta,e_\eta \rangle
&= \begin{cases} 1, &\text{ if } i \in I \\ 0, &\text{ if } i \not\in I \end{cases} \\
&= \langle \text{Ind}_x(t_{\theta_i}) \delta_{[v,v,x]},\delta_{[v,v,x]} \rangle.
\end{align*}
It now follows from \cite[Theorem 3.4.4]{dixmier} that Ind$_x \preceq \pi_\ell$.  Since $x \in \Lambda^*$ was arbitrary, we have that $\lambda \preceq \pi_\ell$.
\end{proof}

\begin{Corollary}

If $\Lambda$ is finitely aligned, or a subcategory of a groupoid, then the canonical map $\CT_r \to \CT_\ell$ is an isomorphism.

\end{Corollary}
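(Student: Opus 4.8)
The plan is to reduce both hypotheses to the single statement $\lambda \preceq \pi_\ell$ (weak containment of the groupoid regular representation in the regular representation of the category), and then to observe that this weak containment, together with the factorization $\pi_\ell = \overline{\pi_\ell} \circ \pi_r$ coming from Proposition \ref{prop toeplitz onto regular}, forces $\overline{\pi_\ell}$ to be isometric, hence an isomorphism.

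First I would dispatch the two cases. If $\Lambda$ is finitely aligned, then Proposition \ref{prop finitely aligned regular representation} gives $\lambda \preceq \pi_\ell$ directly. If instead $\Lambda$ is a subcategory of a groupoid, then Theorem \ref{thm contained in groupoid hausdorff} shows that $G_2(\Lambda)$ is Hausdorff, and the preceding theorem then yields $\lambda \preceq \pi_\ell$. Thus in either case we have $\lambda \preceq \pi_\ell$.

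Next I would convert this into a comparison of norms. Since $\lambda$ is weakly equivalent to the regular representation of $G_2$, the reduced norm on $\CT(\Lambda)$ is computed by $\lambda$, so $\|\pi_r(a)\| = \|\lambda(a)\|$ for all $a \in \CT(\Lambda)$. Weak containment $\lambda \preceq \pi_\ell$ means $\|\lambda(a)\| \le \|\pi_\ell(a)\|$, hence $\|\pi_r(a)\| \le \|\pi_\ell(a)\|$. On the other hand, $\overline{\pi_\ell}$ is a $*$-homomorphism satisfying $\overline{\pi_\ell} \circ \pi_r = \pi_\ell$, so $\|\pi_\ell(a)\| = \|\overline{\pi_\ell}(\pi_r(a))\| \le \|\pi_r(a)\|$. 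Combining the two inequalities gives $\|\pi_r(a)\| = \|\pi_\ell(a)\|$, so $\overline{\pi_\ell}$ is isometric on the dense $*$-subalgebra $\pi_r(\CT(\Lambda))$ of $\CT_r(\Lambda)$, and therefore injective. Surjectivity is automatic, since $\pi_\ell$ maps onto $\CT_\ell(\Lambda)$ by the definition of $\CT_\ell(\Lambda)$ and factors through $\overline{\pi_\ell}$. Hence $\overline{\pi_\ell}$ is an isomorphism. The content of the corollary is thus entirely formal once $\lambda \preceq \pi_\ell$ is in hand; the real work lies in Proposition \ref{prop finitely aligned regular representation} and the Hausdorff theorem that establish this weak containment, and no further obstacle arises here.
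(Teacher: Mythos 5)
Your proposal is correct and follows essentially the same route as the paper: the paper's proof is just a two-reference citation (Proposition \ref{prop finitely aligned regular representation} for the finitely aligned case, and Theorem \ref{thm contained in groupoid hausdorff} combined with the theorem preceding the corollary for the subcategory-of-a-groupoid case), with the formal norm argument you spell out --- $\|\pi_r(a)\| = \|\lambda(a)\| \le \|\pi_\ell(a)\| \le \|\pi_r(a)\|$, hence $\overline{\pi_\ell}$ is isometric and surjective --- left implicit. Your write-up correctly identifies that this formal step is routine and that all the substance lies in the two weak-containment results.
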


\begin{proof}
This follows from Proposition \ref{prop finitely aligned regular representation} and Theorem \ref{thm contained in groupoid hausdorff}.
\end{proof}

We next present examples where $\lambda$ is not weakly contained in $\pi_\ell$.  We first give a sufficient condition for this to occur.

\begin{Lemma} \label{lem not weakly contained}

Suppose that $v \in \Lambda^0$, $A$, $B \in \CD^{(0)}_v$, and $\theta_1$, $\theta_2$, $\theta_3 \in \CZ v$ have the following properties:

\begin{enumerate}

\item $B \subseteq A$.

\item $B$ is not a finite union of proper subsets from $\CD^{(0)}_v$.

\item $A = A(\theta_k) = A(\overline{\theta_k})$ for $k = 1$, 2, 3.

\item For each $\alpha \in A$ at most one $k \in \{1,2,3\}$ satisfies $\varphi_{\theta_k}(\alpha) \not= \alpha$.

\item For each $k \in \{1,2,3\}$, the set $\{ \alpha \in A : \varphi_{\theta_k}(\alpha) \not= \alpha \}$ is infinite.

\item $\varphi_{\theta_3}|_B = \text{id}|_B$.

\item $\varphi_{\theta_1}|_{A \setminus B} = \varphi_{\theta_2}|_{A \setminus B} = \text{id}|_{A \setminus B}$.

\item The orbits of $\varphi_{\theta_1}$ are of bounded odd length.

\end{enumerate}

Then there is $c > 0$ such that for every $\xi \in \ell^2(\Lambda)$, $|\langle \pi_\ell(t_{\theta_1}) \xi,\xi \rangle|
+ |\langle \pi_\ell(t_{\theta_2}) \xi,\xi \rangle|
+ 1 - \text{Re}\langle \pi_\ell(t_{\theta_3}) \xi,\xi \rangle
\ge c \| \xi \|^2$.

\end{Lemma}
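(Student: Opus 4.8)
The plan is to argue directly on $\ell^2(\Lambda)$ with the concrete operators $U_k:=\pi_\ell(t_{\theta_k})$, $k=1,2,3$. By Lemma~\ref{lem regular representation} together with hypothesis (iii) (so $A(\theta_k)=A(\overline{\theta_k})=A$), each $U_k$ annihilates $\ell^2(\Lambda\setminus A)$, preserves $\ell^2(A)$, and there acts as the permutation unitary $e_\alpha\mapsto e_{\varphi_{\theta_k}(\alpha)}$ of the bijection $\varphi_{\theta_k}\colon A\to A$; in particular each matrix coefficient $\langle U_k\xi,\xi\rangle$ only sees the $\ell^2(A)$-component of $\xi$. First I would set $S_k=\{\alpha\in A:\varphi_{\theta_k}(\alpha)\ne\alpha\}$ and record that by (iv) these supports are pairwise disjoint, by (vii) $S_1,S_2\subseteq B$, and by (vi) $S_3\subseteq A\setminus B$. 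Accordingly I decompose $\ell^2(\Lambda)=H_1\oplus H_2\oplus H_0\oplus\ell^2(\Lambda\setminus A)$ with $H_1=\ell^2(S_1)$, $H_2=\ell^2(S_2)$, $H_0=\ell^2\bigl(A\setminus(S_1\cup S_2)\bigr)$, noting $S_3\subseteq A\setminus B\subseteq H_0$. Each $U_k$ leaves all summands invariant, acts as the identity on the two summands off its own support, and permutes its own support. Hence for $\xi=\xi_1+\xi_2+\xi_0+\xi'$ and $x=\|\xi_1\|^2$, $y=\|\xi_2\|^2$, $z=\|\xi_0\|^2$, $w=\|\xi'\|^2$,
\begin{align*}
\langle U_1\xi,\xi\rangle &= \langle U_1\xi_1,\xi_1\rangle + y + z, \\
\langle U_2\xi,\xi\rangle &= x + \langle U_2\xi_2,\xi_2\rangle + z, \\
\langle U_3\xi,\xi\rangle &= x + y + \langle U_3\xi_0,\xi_0\rangle.
\end{align*}

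The single spectral input is hypothesis (viii). Since $U_1|_{H_1}$ is a direct sum of cyclic permutations of odd length bounded by some $m_0$, and $-1$ is not an eigenvalue of an odd cyclic permutation, the self-adjoint part $\tfrac12(U_1+U_1^*)$ is bounded below by $-(1-\delta)I$ on $H_1$ with $\delta=1-\cos(\pi/m_0)>0$; thus $\text{Re}\langle U_1\xi_1,\xi_1\rangle\ge-(1-\delta)x$. For $U_2$ and $U_3$ I would use only unitarity, namely $\text{Re}\langle U_2\xi_2,\xi_2\rangle\ge-y$, $\text{Re}\langle U_3\xi_0,\xi_0\rangle\le z$, and $|\langle U_1\xi_1,\xi_1\rangle|\le x$. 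Denote the left-hand side by $S$ and, for homogeneity, read the constant $1$ as $\|\xi\|^2=x+y+z+w$ (the bound being applied to unit vectors). Using $|\,\cdot\,|\ge\text{Re}(\cdot)$ on the first two coefficients and substituting the displayed formulas gives the estimate
\[
S\ \ge\ \text{Re}\langle U_1\xi,\xi\rangle+\text{Re}\langle U_2\xi,\xi\rangle+\|\xi\|^2-\text{Re}\langle U_3\xi,\xi\rangle\ \ge\ \delta x+2z+w,
\]
while retaining only the first coefficient and applying the reverse triangle inequality gives $S\ge|\langle U_1\xi,\xi\rangle|\ge (y+z)-x$.

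To finish I would combine these two estimates convexly. For $\lambda\in(0,1)$,
\[
S\ \ge\ \bigl(\lambda(1+\delta)-1\bigr)x+(1-\lambda)y+(1+\lambda)z+\lambda w,
\]
and choosing $\lambda=\tfrac12\bigl(1+(1+\delta)^{-1}\bigr)$, which lies strictly between $(1+\delta)^{-1}$ and $1$, makes all four coefficients strictly positive; taking $c$ to be their minimum yields $S\ge c\|\xi\|^2$. The main obstacle, and the reason the hypotheses are arranged as they are, is an asymmetry: (viii) is assumed only for $\varphi_{\theta_1}$, so if $\xi$ concentrates on $S_2$ then $\langle U_2\xi_2,\xi_2\rangle$ may approach $-y$ and the first (real-part) estimate degenerates to $0$. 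What saves the argument is exactly the disjointness of the supports from (iv), (vi) and (vii): on $H_2$ the map $U_1$ is the identity, so the term $|\langle U_1\xi,\xi\rangle|$ recovers the full mass $y$ through the second estimate. Thus each estimate dominates precisely where the other fails, and the only genuine spectral content is the elementary odd-cycle fact producing $\delta>0$.
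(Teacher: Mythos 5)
Your proof is correct and is essentially the paper's argument: the same orthogonal decomposition of $\ell^2(\Lambda)$ along the pairwise disjoint supports of the $\varphi_{\theta_k}$, the same spectral input (the odd-cycle bound $\mathrm{Re}\langle S\xi,\xi\rangle \ge -\cos(\pi/p)\|\xi\|^2$, which is the paper's Lemma \ref{lem finite shift}), and the same device of trading two complementary estimates against each other, with your convex combination merely repackaging the paper's case split $\|\xi_1\|^2\le\tfrac12$ versus $\|\xi_1\|^2\ge\tfrac12$, and your reverse-triangle bound on the $U_1$-coefficient playing the role of the paper's real-part bound on the $U_2$-coefficient. Your reading of the constant $1$ as $\|\xi\|^2$ (equivalently, restricting to unit vectors) is also exactly what the paper's proof does implicitly, and is the form of the inequality actually used in Proposition \ref{prop not weakly contained}.
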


\begin{Remark}
\label{rmk special point}

It follows from the second condition in the statement of Lemma \ref{lem not weakly contained}, and from Definition \ref{d.lambdastar}, that $\{ E \in \CD_v^{(0)} : B \subseteq E \}$ defines a point of $\Lambda^* = G_2^{(0)}$.  This point will be crucial to our use of Lemma \ref{lem not weakly contained} to construct examples.

\end{Remark}

We have a preparatory lemma.

\begin{Lemma} \label{lem finite shift}

Let $p > 1$ be an odd integer, and let $S \in B(\ell^2(\IZ/p\IZ))$ be the shift:  $S e_i = e_{i+1}$ for $i \in \IZ/p\IZ$.  Then for all $\xi \in \ell^2(\IZ/p\IZ)$ we have Re$\langle S \xi,\xi \rangle \ge -(\cos \frac{\pi}{p}) \| \xi \|^2$.

\end{Lemma}

\begin{proof}
$S$ is a normal operator with spectrum equal to the set of $p$th roots of unity.  Since $p$ is odd, the minimum of the spectrum of Re$\, S$ is $\cos 2\pi(\tfrac{p-1}{2})/p = -\cos \tfrac{\pi}{p}$.  The lemma follows from this observation.
\end{proof}

\begin{proof} (of Lemma \ref{lem not weakly contained})
Let $\{\gamma_i : i \in I \} \subseteq B$ denote generators of the distinct nontrivial orbits of $\varphi_{\theta_1}$.  Let $p_i > 1$ be the length of the orbit of $\gamma_i$.  Let $p = \max \{p_i : i \in I \}$, and let $C = B \setminus \bigcup_{i \in I} \{ \varphi_{\theta_i}^j(\gamma_i) : 0 \le j < p_i \}$.  Let $\xi \in \ell^2(\Lambda)$.  Then $\xi = \sum_{i \in I} \xi_i + \eta + \mu + \nu$, where $\xi_i \in \text{span} \{ e_{\varphi_{\theta_1}^j(\gamma_i)} : 0 \le j < p_i \}$, $\eta \in \overline{\text{span}} \{ e_\alpha : \alpha \in C \}$, $\mu \in \overline{\text{span}} \{e_\alpha : \alpha \in A \setminus B \}$, and $\nu = \xi - \sum_{i \in I} \xi_i - \eta - \mu \in \overline{\text{span}} \{e_\alpha : \alpha \not\in A \}$.  Let $\zeta = \sum_{i \in I} \xi_i$. Now we have
\begin{align*}
\text{Re} \langle \pi_\ell(t_{\theta_1}) \xi_i, \xi_i \rangle
&\ge -(\cos \tfrac{\pi}{p_i}) \| \xi_i \|^2, \text{ by Lemma \ref{lem finite shift}}, \\
&\ge -(\cos \tfrac{\pi}{p}) \| \xi_i \|^2 \\
\langle \pi_\ell(t_{\theta_k}) \xi_i, \xi_i \rangle
&= \| \xi \|^2,\ k = 2,3 \\
\text{Re} \langle \pi_\ell(t_{\theta_2}) \eta, \eta \rangle
&\ge - \| \eta \|^2 \\
\langle \pi_\ell(t_{\theta_k}) \eta, \eta \rangle
&= \| \eta \|^2,\ k = 1,3 \\
\text{Re} \langle \pi_\ell(t_{\theta_3}) \mu,\mu \rangle
&\le \| \mu \|^2 \\
\langle \pi_\ell(t_{\theta_k}) \mu, \mu \rangle
&= \| \mu \|^2,\ k = 1,2 \\
\langle \pi_\ell(t_{\theta_k}) \nu, \nu \rangle
&= 0,\ k = 1,2,3.
\end{align*}
Note that for $\eps_1$, $\eps_2$ distinct among $\{\xi_i : i \in I \} \cup \{\eta,\mu,\nu\}$, and for any $k = 1$, 2, 3, we have $\langle t_{\theta_k} \eps_1,\eps_2 \rangle = 0$.  We then have
\begin{align*}
|\langle \pi_\ell(t_{\theta_1}) \xi, \xi \rangle|
&\ge \text{Re} \langle \pi_\ell(t_{\theta_1}) \xi, \xi \rangle \\
&\ge -(\cos \tfrac{\pi}{p}) \| \zeta \|^2 + \| \eta \|^2 + \| \mu \|^2 \\
|\langle \pi_\ell(t_{\theta_2}) \xi, \xi \rangle|
&\ge \text{Re} \langle \pi_\ell(t_{\theta_2}) \xi, \xi \rangle \\
&\ge \| \zeta \|^2 - \| \eta \|^2 + \| \mu \|^2 \\
1 - \text{Re} \langle \pi_\ell(t_{\theta_3}) \xi, \xi \rangle
&\ge 1 - (\| \zeta \|^2 + \| \eta \|^2 + \| \mu \|^2) \\
&= \| \nu \|^2.
\end{align*}
If $\| \zeta \|^2 \le \tfrac{1}{2}$, we have
\begin{align*}
|\langle \pi_\ell(t_{\theta_1}) \xi, \xi \rangle|
+ |\langle \pi_\ell(t_{\theta_2}) \xi, \xi \rangle|
+ 1 - \text{Re} \langle \pi_\ell(t_{\theta_3}) \xi, \xi \rangle
&\ge |\langle \pi_\ell(t_{\theta_1}) \xi, \xi \rangle|
+ 1 - \text{Re} \langle \pi_\ell(t_{\theta_3}) \xi, \xi \rangle \\
&\ge -(\cos \tfrac{\pi}{p}) \| \zeta \|^2 + 1 - \| \zeta \|^2 \\
&\ge \tfrac{1}{2}(1 - \cos \tfrac{\pi}{p}).
\end{align*}
If $\| \zeta \|^2 \ge \tfrac{1}{2}$, we have
\[
|\langle \pi_\ell(t_{\theta_1}) \xi, \xi \rangle|
+ |\langle \pi_\ell(t_{\theta_2}) \xi, \xi \rangle|
+ 1 - \text{Re} \langle \pi_\ell(t_{\theta_3}) \xi, \xi \rangle
\ge (1 -\cos \tfrac{\pi}{p}) \| \zeta \|^2 + 2\| \mu \|^2 + \| \nu \|^2
\ge \tfrac{1}{2}(1 - \cos \tfrac{\pi}{p}).
\]
Therefore the statement of the lemma holds with $c = \tfrac{1}{2}(1 - \cos \tfrac{\pi}{p})$.
\end{proof}

\begin{Proposition} \label{prop not weakly contained}

Let $v \in \Lambda^0$, $A$, $B \in \CD^{(0)}_v$, and $\theta_1$, $\theta_2$, $\theta_3 \in \CZ v$ satisfy the conditions of Lemma \ref{lem not weakly contained}.  Then $\lambda \not\preceq \pi_\ell$.

\end{Proposition}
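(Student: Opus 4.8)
The plan is to disprove weak containment by exhibiting a single induced representation that fails to be weakly contained in $\pi_\ell$. Since $\lambda=\bigoplus_x\text{Ind}_x$, we have $\ker\lambda=\bigcap_x\ker\text{Ind}_x$, so $\lambda\preceq\pi_\ell$ would force $\text{Ind}_x\preceq\pi_\ell$ for every $x\in\Lambda^*$; thus it suffices to produce one point $x_0$ with $\text{Ind}_{x_0}\not\preceq\pi_\ell$. For $x_0$ I take the point of $\Lambda^*=G_2^{(0)}$ determined by $B$, which exists by Remark \ref{rmk special point}: the trace of $\CU_{x_0}$ on $\CD_v^{(0)}$ is the principal filter $\{E\in\CD_v^{(0)}:B\subseteq E\}$, and, using condition (ii) that $B$ is not a finite union of proper subsets, a basic neighborhood system at $x_0$ is given by the sets $\widehat{B\setminus\cup\CF}$ with $\CF\subseteq\CD_v^{(0)}$ finite and no member of $\CF$ containing $B$.

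Next I compute the three matrix coefficients of the cyclic state $\omega=\langle\text{Ind}_{x_0}(\cdot)\delta_0,\delta_0\rangle$, where $\delta_0=\delta_{[(v,v),x_0]}$. By the computation preceding Proposition \ref{prop toeplitz onto regular}, $\text{Ind}_{x_0}(t_{\theta_k})\delta_0=\delta_{[\theta_k,x_0]}$ (here $x_0\in\widehat{A}$ since $B\subseteq A\in\CU_{x_0}$), so $\omega(t_{\theta_k})=1$ when $[\theta_k,x_0]=[\mathrm{id},x_0]$ and $\omega(t_{\theta_k})=0$ otherwise; equivalently $\omega(t_{\theta_k})=1$ iff $\varphi_{\theta_k}$ restricts to the identity on some element of $\CU_{x_0}$. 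Condition (vi) gives $\varphi_{\theta_3}|_B=\mathrm{id}$ with $B\in\CU_{x_0}$, so $\omega(t_{\theta_3})=1$. The crux is then to show $\omega(t_{\theta_1})=\omega(t_{\theta_2})=0$, i.e.\ that for $k=1,2$ the map $\varphi_{\theta_k}$ is the identity on no neighborhood of $x_0$.

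For this crux I would show that the moved set $M_k=\{\alpha\in A:\varphi_{\theta_k}(\alpha)\neq\alpha\}$ meets every basic neighborhood $B\setminus\cup\CF$ of $x_0$. By conditions (v) and (vii), $M_k$ is an infinite subset of $B$. Remark \ref{r.zigzag}\eqref{r.zigzag.f} shows that $\varphi_{\theta_k}(\alpha)=\alpha$ implies $\varphi_{\theta_k}(\alpha\beta)=\alpha\beta$ for all admissible $\beta$, so the fixed-point set $A\setminus M_k$ is closed under right multiplication and hence $M_k$ is closed under passage to initial segments lying in $A$. If $\varphi_{\theta_k}$ were the identity on $B\setminus\cup\CF$ for an admissible $\CF$, then $M_k\subseteq\cup\CF$ while $B\not\subseteq\cup\CF$; intersecting with $B$ and using that each $B\cap F\in\CD_v^{(0)}$ is again closed under right multiplication, a minimal (initial) element $\rho$ of $M_k$ would lie in some $F$, whence $\rho\Lambda\subseteq F$, and condition (ii) forces such upward-closed pieces to exhaust $B$, giving $B\subseteq\cup\CF$, a contradiction. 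This cofinality of $M_k$ at $x_0$ is exactly where conditions (ii), (v), (vii) and the hereditary structure of zigzag sets enter, and it is the main obstacle; the delicate case is when $B$ is not principal, where the existence and use of a minimal element of $M_k$ must be handled with care.

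Finally I would invoke weak containment via \cite[Theorem 3.4.4]{dixmier}, in the single-vector form already used in the proofs of Proposition \ref{prop finitely aligned regular representation} and the theorem following it. If $\text{Ind}_{x_0}\preceq\pi_\ell$, then $\omega$ is a weak-$*$ limit of vector functionals $\langle\pi_\ell(\cdot)\eta_\nu,\eta_\nu\rangle$ with $\|\eta_\nu\|=1$, so $\langle\pi_\ell(t_{\theta_k})\eta_\nu,\eta_\nu\rangle\to\omega(t_{\theta_k})$ for $k=1,2,3$. Applying the inequality of Lemma \ref{lem not weakly contained} to each unit vector $\eta_\nu$ and passing to the limit yields
\[
0=|\omega(t_{\theta_1})|+|\omega(t_{\theta_2})|+1-\mathrm{Re}\,\omega(t_{\theta_3})=\lim_\nu\Bigl(|\langle\pi_\ell(t_{\theta_1})\eta_\nu,\eta_\nu\rangle|+|\langle\pi_\ell(t_{\theta_2})\eta_\nu,\eta_\nu\rangle|+1-\mathrm{Re}\langle\pi_\ell(t_{\theta_3})\eta_\nu,\eta_\nu\rangle\Bigr)\ge c,
\]
contradicting $c>0$. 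Hence $\text{Ind}_{x_0}\not\preceq\pi_\ell$, and therefore $\lambda\not\preceq\pi_\ell$.
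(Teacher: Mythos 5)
You follow the paper's proof in outline: the same point $x_0$ determined by $B$ (Remark \ref{rmk special point}), the same claim that the cyclic state $\omega$ of $\mathrm{Ind}_{x_0}$ takes the values $(0,0,1)$ at $(t_{\theta_1},t_{\theta_2},t_{\theta_3})$, and the same combination of Lemma \ref{lem not weakly contained} with \cite[Theorem 3.4.4]{dixmier}. However, both of the substantive steps you supply are genuinely gapped.

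First, the ``crux'' argument is invalid, and indeed the statement it aims at is not provable from the hypotheses you use. Beyond the unjustified existence of a minimal element of $M_k$ (which you flag yourself), the inference ``$\rho\Lambda\subseteq F$, and condition (ii) forces such upward-closed pieces to exhaust $B$, giving $B\subseteq\cup\CF$'' is a non sequitur: condition (ii) says $B$ is \emph{not} a finite union of proper zigzag subsets, and nothing about one upward-closed piece $\rho\Lambda\subseteq F$ makes $\cup\CF$ swallow $B$. More seriously, conditions (i)--(viii) do not exclude a zigzag set $F\in\CD_v^{(0)}$ with $M_1\cup M_2\subseteq F\subsetneq B$ and $B\setminus F$ infinite; for instance, modify Example \ref{example different regular representations 1} so that $\theta_1,\theta_2$ twist only the $\gamma_{ij}$ with $i>0$ and adjoin a fifth pair $\alpha_5,\beta_5$ whose zigzag set is $\{\gamma_{ij}: i\not\equiv 0,\ i>0\}$; all eight conditions survive. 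For such an $F$ the family $\CF=\{F\}$ is admissible, and $\varphi_{\theta_k}$ \emph{is} the identity on the basic neighborhood $\widehat{B\setminus F}$ of $x_0$, so $\omega(t_{\theta_k})=1$ rather than $0$. Hence any derivation of the $(0,0,1)$ computation from the eight conditions alone must be wrong. (The paper asserts this computation without proof; what makes it true in Examples \ref{example different regular representations 1} and \ref{example different regular representations 2} is the explicit fact that there $\CD_v^{(0)}$ consists of $v\Lambda$, $A$, $B$ and singletons, so an admissible $\CF$ is a finite set of singletons and condition (v) finishes the argument. Your write-up needs to invoke that structure, or something equivalent to it.)

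Second, your last step misstates the weak-containment criterion. By \cite[Theorem 3.4.4]{dixmier}, $\mathrm{Ind}_{x_0}\preceq\pi_\ell$ is equivalent to $\omega$ being a weak$^*$ limit of \emph{sums} of positive forms associated with $\pi_\ell$ (equivalently, vector states of a countable amplification of $\pi_\ell$), not of single vector states; these classes differ in general, and the proofs you cite use single vectors only in the opposite, sufficient, direction (to \emph{establish} containment). The distinction is fatal here because the inequality of Lemma \ref{lem not weakly contained} involves absolute values and is not additive: for $F=\sum_i\langle\pi_\ell(\cdot)\xi_i,\xi_i\rangle$, cancellation among the terms can make $|F(t_{\theta_1})|+|F(t_{\theta_2})|$ small even though $\sum_i\bigl(|\langle\pi_\ell(t_{\theta_1})\xi_i,\xi_i\rangle|+|\langle\pi_\ell(t_{\theta_2})\xi_i,\xi_i\rangle|\bigr)\ge c$, so your limiting display rules out only single vector states, not sums. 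The repair is to return to the additive estimates established inside the proof of the Lemma: with $\xi=\zeta+\eta+\mu+\nu$ decomposed as there,
\begin{align*}
\mathrm{Re}\langle\pi_\ell(t_{\theta_1})\xi,\xi\rangle &\ge -(\cos\tfrac{\pi}{p})\|\zeta\|^2+\|\eta\|^2+\|\mu\|^2,\\
\mathrm{Re}\langle\pi_\ell(t_{\theta_2})\xi,\xi\rangle &\ge \|\zeta\|^2-\|\eta\|^2+\|\mu\|^2,\\
\|\xi\|^2-\mathrm{Re}\langle\pi_\ell(t_{\theta_3})\xi,\xi\rangle &\ge \|\nu\|^2.
\end{align*}
These real-part bounds sum over the $\xi_i$; if $F(t_{\theta_1}),F(t_{\theta_2})\to 0$ and $\mathrm{Re}\,F(t_{\theta_3})\to 1$, then adding the first two kills the $\zeta$- and $\mu$-masses, the first then kills the $\eta$-mass, and the third kills the $\nu$-mass, contradicting $\sum_i\|\xi_i\|^2=1$. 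The paper's proof, though terse, is aimed at sums of positive forms; your reduction to single vector states is a step that would fail.
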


\begin{proof}
Let $x \in v \Lambda^*$ be as in Remark \ref{rmk special point}. It follows from the conditions in Lemma \ref{lem not weakly contained} that $\langle \text{Ind}_x(t_{\theta_k}) \delta_{[v,v,x]}, \delta_{[v,v,x]} \rangle = 0$ for $k = 1$ and 2, and equals 1 for $k = 3$.  By Lemma \ref{lem not weakly contained}, the state $\langle \text{Ind}_x(\cdot) \delta_{[v,v,x]}, \delta_{[v,v,x]} \rangle$ is not the weak$^*$-limit of states which are sums of positive forms associated with $\pi_\ell$.  By \cite[Theorem 3.4.4]{dixmier} Ind$_x \not\preceq \pi_\ell$, and hence $\lambda \not\preceq \pi_\ell$.
\end{proof}

\begin{Example} \label{example different regular representations 1}

Let $p > 1$ be an odd integer.  Let $\Lambda$ be the LCSC given by the following diagram with identifications.  Here $i \in \IZ$, $j \in \IZ/p\IZ$, $k = 0$, 1, 2, 3, 4.

\[
\begin{tikzpicture}%[scale=2]

\node (0_0) at (0,0) [circle] {$u_k$};
\node (2_0) at (2,0) [circle] {$z_{ij}$};
\node (1_1) at (1,1) [circle] {$v$};
\node (1_m1) at (1,-1) [circle] {$w$};

%\foreach \x in {0,2} \filldraw [black] (\x,0) circle (1pt);
%\foreach \y in {-1} \filldraw [black] (1,\y) circle (1pt);

\draw[-latex,thick] (1_1) -- (0_0) node[pos=0.5, inner sep=0.5pt, anchor=south east] {$\alpha_k$};
\draw[-latex,thick] (1_m1) -- (0_0) node[pos=0.5, inner sep=0.5pt, anchor=north east] {$\beta_k$};

\draw[-latex,thick] (2_0) -- (1_1) node[pos=0.4, inner sep=0.5pt, anchor=south west] {$\gamma_{ij}$};

\draw[-latex,thick] (2_0) -- (1_m1) node[pos=0.4, inner sep=0.5pt, anchor=north west] {$\delta_{ij}$};

\end{tikzpicture}
\]
The identifications are as follows:
\begin{align*}
\alpha_0 \gamma_{ij}
&= \beta_0 \delta_{ij},\ i \in \IZ,\ j \in \IZ/p\IZ \\
\alpha_1 \gamma_{ij}
&= \begin{cases}
 \beta_1 \delta_{i,j+1}, &\text{ if } i \equiv 1 \pmod 3,\ j \in \IZ/p\IZ \\
 \beta_1 \delta_{ij}, &\text{ if } i \not\equiv 1 \pmod 3,\ j \in \IZ/p\IZ \end{cases} \\
\alpha_2\gamma_{ij}
&= \begin{cases}
 \beta_2 \delta_{i,j+1}, &\text{ if } i \equiv 2 \pmod 3,\ j \in \IZ/p\IZ \\
 \beta_2 \delta_{ij}, &\text{ if } i \not\equiv 2 \pmod 3,\ j \in \IZ/p\IZ \end{cases} \\\alpha_3 \gamma_{ij}
&= \begin{cases}
 \beta_3 \delta_{i+3,j}, &\text{ if } i \equiv 0 \pmod 3,\ j \in \IZ/p\IZ \\
 \beta_3 \delta_{ij}, &\text{ if } i \not\equiv 0 \pmod 3,\ j \in \IZ/p\IZ \end{cases} \\
\alpha_4 \gamma_{ij}
&= \beta_4 \delta_{ij}, \text{ if } i \not\equiv 0 \pmod 3,\ j \in \IZ/p\IZ.
\end{align*}
Let
\begin{align*}
A &= A(\beta_0,\alpha_0) = \{ \gamma_{ij} : i \in \IZ,\ j \in \IZ/p\IZ \} \\
B &= A(\beta_4,\alpha_4) = \{ \gamma_{ij} : i \in \IZ,\ i \not\equiv 0 \pmod 3,\ j \in \IZ/p\IZ \} \\
\theta_k &= (\alpha_k,\beta_k,\beta_0,\alpha_0),\ k = 1,\;2,\;3.
\end{align*}
Note that $\CD_v^{(0)} = \{v\Lambda,\ A,\ B\} \cup \bigl\{ \{ \gamma_{ij} \} : i \in \IZ,\ j \in \IZ/p\IZ \bigr\}$.  It is straightforward to verify the hypothesis of Lemma \ref{lem not weakly contained}.

\end{Example}

\begin{Remark} \label{remark amenable example}

It is not difficult to see that for the LCSC $\Lambda$ of Example \ref{example different regular representations 1}, the algebra $\CT(\Lambda)$ is type I. We show that $\CT(\Lambda)$ is built of copies of $M_p(\IC)$, $\CK$, and $C(\IT)$.

We first consider the situation at the vertex $v$.  Here is a list of the elements of $\CD_v^{(0)}$:
\begin{align*}
\{\gamma_{ij}\} &= A(\gamma_{ij},\gamma_{ij}) \\
A &:= A(\beta_0,\alpha_0) = \{ \gamma_{ij} : i \in \IZ,\ j \in \IZ/p\IZ \} \\
B &:= A(\beta_4,\alpha_4) = \{ \gamma_{ij} : i \in \IZ,\ i \not\equiv 0 \pmod 3,\ j \in \IZ/p\IZ \} \\
v\Lambda &= A(v,v).
\end{align*}
Next we describe the crucial zigzags:
\begin{align*}
\zeta_k &:= (\alpha_0,\beta_0,\beta_k,\alpha_k),\ k = 1,2,3 \\
A(\zeta_k) &= A, \text{ for } k = 1,2,3 \\
\varphi_{\zeta_k}(\gamma_{ij})
 &= \begin{cases}
  \gamma_{i,j+1}, &\text{ if } i \equiv 1 \pmod 3 \text{ and } k = 1, \text{ or } i \equiv 2 \pmod 3 \text{ and } k = 2 \\
  \gamma_{i+3,j}, &\text{ if } i \equiv 0 \pmod 3 \text{ and } k = 3 \\
  \gamma_{ij}, &\text{ otherwise}
 \end{cases} \\
\zeta_{ijk} &:= (\alpha_0, \beta_0, \beta_k, \alpha_k \gamma_{ij}, \gamma_{ij}, v),\ k = 1,2,3,\ i \in \IZ,\ j \in \IZ/p\IZ \\
A(\zeta_{ijk}) &= \{ \gamma_{ij} \} \\
\varphi_{\zeta_{ijk}} (\gamma_{ij}) &= \varphi_{\zeta_k}(\gamma_{ij}).
\end{align*}
(The fact that no other zigzags in $v \CZ v$ are needed follows from Lemma \ref{lem elementary consequences}\eqref{lem elementary consequences 4}.)  Now we list the elements of $X_v$:
\begin{align*}
C_{ij} &:= \{ E \in \CD_v^{(0)} : \gamma_{ij} \in E \} = \{ \{ \gamma_{ij} \}, B, A, v\Lambda \} \\
C_B &:= \{ B, A, v\Lambda \} \\
C_A &:= \{ A, v\Lambda \} \\
C_v &:= \{ v \Lambda \}.
\end{align*}
We note that
\[
\overline{ \{ C_{ij} : i \equiv k \pmod 3 \} } =
 \begin{cases}
   \{ C_{ij} : i \equiv k \pmod 3 \} \cup \{ C_B \}, &\text{ if } k = 1 \text{ or } 2 \\
   \{ C_{ij} : i \equiv 0 \pmod 3 \} \cup \{ C_A \}, &\text{ if } k = 0,
 \end{cases}
\]
since $\widehat{A \setminus B}$ is a neighborhood of $C_{ij}$ for $i \equiv 0 \pmod 3$, but not of $C_{ij}$ for $i \not\equiv 0 \pmod 3$.  We note also that $C_v$ is an isolated point since $\widehat{\{v\}} = \widehat{v\Lambda \setminus A}$ is a neighborhood of $C_v$.

Now we consider the restriction of $G(\Lambda)$ to $X_v$. $X_v$ is not a transversal, but it is nearly one: $G \setminus G X_v G = \{ C_{u_k} : k = 0,1,2,3,4 \}$, where $C_{u_k} = \{ E \in \CD_{u_k}^{(0)} : u_k \in E \} \in X_{u_k}$.  It is easy to see that $C_{u_k}$ is an isolated point of $G^{(0)}$, and also that it only contributes summands isomorphic to $\IC$ to $\CT(\Lambda)$.  We have that $G|_{X_v}$ and $G X_v G$ are equivalent in the sense of \cite{muhrenwil}, so we need only consider $G|_{X_v}$.  Let $Q = G|{X_v}$.  We will describe the regular representations of $Q$.  First we note that if $i \equiv 1 \pmod 3$ then $r([\zeta_{ij1}^\ell, C_{ij}]) = C_{i,j+\ell}$ for $\ell \in \IZ/p\IZ$, and similarly if $i \equiv 2 \pmod 3$ then $r([\zeta_{ij2}^\ell, C_{ij}]) = C_{i,j+\ell}$ for $\ell \in \IZ/p\IZ$.  If $i \equiv 0 \pmod 3$ then $r([\zeta_{ij3}^\ell, C_{ij}] = C_{i + 3\ell,j}$ for $\ell \in \IZ$. (On the other hand, if $i \not\equiv 1 \pmod 3$, e.g., then $[\zeta_{ij1}^\ell, C_{ij}] = [\text{id}, C_{ij}]$, by Lemma \ref{lem elementary consequences}\eqref{lem elementary consequences 4}.)

Next we describe the regular representations of $Q$ induced from points of $X_v$.  If $i \equiv k \pmod 3$, for $k =$ 1, 2, 3,
\[
H_{\text{Ind}\, C_{ij}} =
 \begin{cases}
 \ell^2\{ [\zeta_{ijk}^\ell, C_{ij}] : \ell \in \IZ/p\IZ \}, &\text{ if } k = 1 \text{ or } 2 \\
 \ell^2\{ [\zeta_{ijk}^\ell, C_{ij}] : \ell \in \IZ \}, &\text{ if } k = 3.
 \end{cases}
\]
Then Ind$\,C_{ij} (t_\zeta) e_{[\zeta_{ijk}^\ell, C_{ij}]} = e_{[\zeta \zeta_{ijk}^\ell, C_{ij}]}$ for $\zeta \in v \CZ v$.  Now, for $i \equiv i' \equiv k \pmod 3$, with $k =$ 1, 2, 3, define $W : H_{\text{Ind}\, C_{ij}} \to H_{\text{Ind}\, C_{ij}}$ by $W e_{[\zeta_{ijk}^\ell, C_{ij}]} = e_{[\zeta_{i'j'k}^\ell, C_{i'j'}]}$.  It is easy to check that $W \, \text{Ind}\, C_{ij} \, W^* = \text{Ind}\, C_{i'j'}$ and hence that Ind$\, C_{ij}$ and Ind$\, C_{i'j'}$ are unitarily equivalent.  Then it is also easy to check that
\[
\text{Ind}\, C_{ij}(\CT(\Lambda)) \cong
 \begin{cases}
 M_P(\IC), &\text{ if } i \not\equiv 0 \pmod 3 \\
 C^*(U) + \CK, &\text{ if } i \equiv 0 \pmod 3,
 \end{cases}
\]
where $U \in B(\ell^2 \IZ)$ is the bilateral shift.

It now follows that $\CT(\Lambda)$ is a type I $C^*$-algebra, and hence is nuclear. Then the groupoid $G_2(\Lambda)$ is amenable, and $\CT(\Lambda) = \CT_r(\Lambda)$, by \cite[Corollary 6.2.14]{anaren}.  However $\CT(\Lambda) \not= \CT_\ell(\Lambda)$ by Proposition \ref{prop not weakly contained}, so that this example is not amenable in the sense of Nica --- the universal $C^*$-algebra is not isomorphic to the $C^*$-algebra generated by the regular representation of $\Lambda$. 

\end{Remark}

\begin{Example} \label{example different regular representations 2}

Let $\Lambda$ be the LCSC from Example \ref{example different regular representations 1}.  Define $\Lambda'$ to be the LCSC obtained by identifying the vertices of $\Lambda$, as in Definition \ref{def amalgamation}.  The hypotheses of Lemma \ref{lem not weakly contained} are verified exactly as in Example \ref{example different regular representations 1}.  We observe that $\Lambda'$ is a cancellative monoid, though not embeddable in a group.

\end{Example}

\section{The Wiener-Hopf algebra}
\label{sec wienerhopf}

Let $Y$ be a groupoid, and let $\Lambda \subseteq Y$ be a subcategory such that $Y^0 \subseteq \Lambda$.  Define $J : \ell^2(\Lambda) \to \ell^2(Y)$ to be the inclusion.  We let $L : Y \to B(\ell^2(Y))$ denote the left regular representation of $Y$: $L_{t_1} e_{t_2} = e_{t_1 t_2}$ if $s(t_1) = r(t_2)$, and equals 0 otherwise.  For $t \in Y$ we let $W_t = J^* L_t J \in B(\ell^2(\Lambda))$.  We note that for $t \in Y$ and $\alpha \in \Lambda$,
\[
W_t e_\alpha = \begin{cases} e_{t\alpha}, &\text{ if } s(t) = r(\alpha) \text{ and } t\alpha \in \Lambda \\ 0, &\text{ otherwise.} \end{cases}
\]
It follows that $W_t \not= 0$ if and only if $t \in \Lambda \Lambda^{-1}$.  By Lemma \ref{lem regular representation}, $\alpha \in \Lambda \mapsto W_\alpha \in B(\ell^2(\Lambda))$ is the regular representation $\pi_\ell$.  In this section we will write $W_\zeta$ instead of $T_\zeta$ as in Lemma \ref{lem regular representation}.  Thus if $\zeta = (\alpha_1,\beta_1, \ldots, \alpha_n,\beta_n) \in \CZ$, then $W_\zeta = W_{\alpha_1}^* W_{\beta_1} \cdots W_{\alpha_n}^* W_{\beta_n}$. We note that if we let $t = \alpha_1^{-1} \beta_1 \cdots \alpha_n^{-1} \beta_n \in Y$ then $\varphi_\zeta = t|_{A(\zeta)}$.

The algebra $\CW = C^*(\{W_t : t \in Y \})$ is called the \textit{Wiener-Hopf algebra} of $\Lambda$.  When $(Y,\Lambda) = (\IR^n,P)$ for suitable cones $P$, one studies the algebra generated by operators $W_f$ for $f \in L^1(\IR^n)$; this situation was studied in \cite{muhlyrenault}.  Building on this work, Nica (\cite{nica}) initiated the study of the case of a countable ordered group (such that $\Lambda \cap \Lambda^{-1} = \{ e \}$).  Let $\CW_0$ be the subalgebra of $\CW$ generated by $\{ W_\alpha : \alpha \in \Lambda \}$.  Nica called $(Y,\Lambda)$ {\it quasi-lattice ordered} if it is {\it singly aligned} in the following sense:  for any finite subset $F \subseteq Y$, if $\bigcap_{t \in F \cup \{e\}} t\Lambda \not= \varnothing$, then there is $\alpha \in \Lambda$ such that $\bigcap_{t \in F \cup \{e\}} t\Lambda = \alpha \Lambda$.  Nica proved that $\CW = \CW_0$ for quasi-lattice ordered groups (\cite[2.4]{nica}).  In \cite[Lemma 8.9]{spi_pathcat} it was shown that if $\Lambda$ itself is finitely aligned, then $\CW = \CW_0$ if and only if $(Y,\Lambda)$ is finitely aligned.  The following is a generalization of this result.  (We remark that even though \cite[Lemma 8.9]{spi_pathcat} was stated for the special case described above where $(Y,\Lambda)$ is an ordered group, the statement and proof still hold when $Y$ is a discrete groupoid and $\Lambda$ is a subcategory of $Y$ containing $Y^0$, without the restriction that $\Lambda$ not contain inverses.)  As above, if $\Lambda$ is a subcategory of a groupoid $Y$ we let $\CW_0 = C^*(\{W_\alpha : \alpha \in \Lambda \})$.

\begin{Proposition} \label{prop wienerhopf operator}

Let $Y$ be a groupoid, and let $\Lambda \subseteq Y$ be a subcategory such that $Y^0 \subseteq \Lambda$.  For $t \in Y$, $W_t \in \CW_0$ if and only if there is a finite set $F \subseteq \CZ_\Lambda$ such that

\begin{enumerate}

\item \label{prop wienerhopf operator 1} for all $\zeta \in F$, $\varphi_\zeta = t|_{A(\zeta)}$,

\item \label{prop wienerhopf operator 2} $\Lambda \cap t^{-1} \Lambda = \bigcup_{\zeta \in F} A(\zeta)$.

\end{enumerate}

\end{Proposition}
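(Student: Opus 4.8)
The plan is to prove both implications inside $B(\ell^2(\Lambda))$, exploiting that every generator acts as a partial permutation of the standard basis. Throughout I write $D = \Lambda\cap t^{-1}\Lambda$, which is exactly the set on which $W_t$ is nonzero (there $W_t e_\alpha = e_{t\alpha}$, and $W_t e_\alpha = 0$ for $\alpha\notin D$), and for a zigzag $\zeta = (\alpha_1,\beta_1,\ldots,\alpha_n,\beta_n)$ I write $t_\zeta = \alpha_1^{-1}\beta_1\cdots\alpha_n^{-1}\beta_n\in Y$, so that $\varphi_\zeta = t_\zeta|_{A(\zeta)}$ and hence, by Lemma \ref{lem regular representation}, $\langle W_\zeta e_\alpha, e_\beta\rangle = 1$ precisely when $\alpha\in A(\zeta)$ and $\beta = t_\zeta\alpha$. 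The first preliminary step is to note that, by relations (1) and (2) of Definition \ref{def toeplitz relations} together with $W_\alpha = W_{(r(\alpha),\alpha)}$ and $W_\alpha^* = W_{(\alpha,r(\alpha))}$, every word in the generators collapses, via concatenation of zigzags (Remark \ref{r.zigzag}), to a single $W_\zeta$; consequently $\CW_0 = \overline{\operatorname{span}}\{W_\zeta:\zeta\in\CZ_\Lambda\}$, and each $W_\zeta$ has all its matrix entries on the ``$t_\zeta$-diagonal'' $\{(\beta,\alpha):\beta\alpha^{-1}=t_\zeta\}$.

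For the easy implication ($\Leftarrow$), given such an $F$ I would disjointify: listing $F=\{\zeta_1,\ldots,\zeta_m\}$ and setting $B_k = A(\zeta_k)\setminus\bigcup_{i<k}A(\zeta_i)$, each $\chi_{B_k}$ is a Boolean combination of the commuting projections $\chi_{A(\zeta_i)} = W_{\zeta_i}^*W_{\zeta_i}\in\CW_0$, hence lies in $\CW_0$. Since $\varphi_{\zeta_k} = t|_{A(\zeta_k)}$ and $B_k\subseteq A(\zeta_k)$, a direct check on basis vectors gives $W_t = \sum_{k=1}^m W_{\zeta_k}\chi_{B_k}$, exhibiting $W_t\in\CW_0$.

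The substance is in ($\Rightarrow$), and the main obstacle is to isolate the ``degree-$t$'' part of an element of $\CW_0$ by a contractive map, since a naive compression of $B(\ell^2(\Lambda))$ to an off-diagonal band is not contractive. The device I would use is to compose the main-diagonal compression with $W_t$: let $\Phi_0$ denote the completely positive, contractive projection of $B(\ell^2(\Lambda))$ onto its diagonal, and set $E_t(A) = W_t\,\Phi_0(W_t^*A)$, which is contractive because $\|W_t\|\le 1$. A short matrix computation shows that $E_t$ retains exactly the entries of $A$ on the $t$-diagonal: the diagonal of $W_t^*W_\zeta$ is supported where $t\alpha = t_\zeta\alpha$, that is where $t = t_\zeta$, so $E_t(W_\zeta) = W_\zeta$ when $t_\zeta = t$ and $E_t(W_\zeta) = 0$ when $t_\zeta\neq t$, while $E_t(W_t) = W_t\Phi_0(\chi_D) = W_t$ (as $W_t^*W_t = \chi_D$). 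Applying the contractive $E_t$ to an approximation of $W_t\in\CW_0$ by finite sums $\sum_j c_j W_{\zeta_j}$ then shows that $W_t$ is a norm-limit of finite sums involving only $W_\zeta$ with $t_\zeta = t$.

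To finish I would pass to the diagonal. For $t_\zeta = t$ one has $A(\zeta)\subseteq D$ and $W_\zeta = W_t\chi_{A(\zeta)}$, so any such finite sum equals $W_t f$ with $f = \sum_j c_j\chi_{A(\zeta_j)}$ supported on $D$; since $\alpha\mapsto t\alpha$ is injective on $D$, $\|W_t - W_t f\| = \|\chi_D - f\|_\infty$. Choosing the approximation with $\|\chi_D - f\|_\infty < \tfrac12$ forces $f(\alpha) > \tfrac12$ for every $\alpha\in D$ and $f(\alpha)=0$ for $\alpha\notin D$; hence every $\alpha\in D$ lies in some $A(\zeta_j)$, while each $A(\zeta_j)\subseteq D$, giving $D = \bigcup_j A(\zeta_j)$ as a finite union. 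Taking $F=\{\zeta_j\}$ then yields both \emph{(i)} $\varphi_{\zeta_j} = t|_{A(\zeta_j)}$ and \emph{(ii)} $\bigcup_{\zeta\in F}A(\zeta) = \Lambda\cap t^{-1}\Lambda$; the degenerate case $W_t=0$ (i.e. $t\notin\Lambda\Lambda^{-1}$) is handled by $F=\varnothing$. The step I expect to be most delicate to phrase cleanly is the contractivity of the $t$-diagonal projection $E_t$, after which the discreteness of $\Lambda$ is what upgrades a single good norm-approximation into an exact finite cover.
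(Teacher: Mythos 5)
Your proof is correct and takes essentially the same route as the paper's: in the forward direction both arguments start from an approximation $\| W_t - \sum_j c_j W_{\zeta_j} \| < \eps$, show that the terms with $\varphi_{\zeta_j} \neq t|_{A(\zeta_j)}$ may be deleted without increasing the error --- your contractive map $E_t(\cdot) = W_t\,\Phi_0(W_t^*\,\cdot)$ yields exactly the inequality $\| W_t - A \| \le \| W_t - A - B \|$ that the paper derives by hand from the orthogonality relations $(W_t - A)e_\beta \perp B e_\beta$ and $(W_t - A)e_{\beta_1} \perp (W_t - A)e_{\beta_2}$ --- and then both read off the cover of $\Lambda \cap t^{-1}\Lambda$ from the fact that a basis vector $e_\beta$ with $\beta \in \Lambda \cap t^{-1}\Lambda$ cannot be killed by a sum lying at distance less than $1$ from $W_t$. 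In the reverse direction your disjointification $W_t = \sum_k W_{\zeta_k}\chi_{B_k}$ (using $\chi_{A(\zeta)} = W_\zeta^* W_\zeta$) is a concrete realization of the paper's join $\bigvee_{\zeta \in F} W_\zeta$ of the coherent partial isometries $W_\zeta$, so that step too differs only in packaging.
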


\begin{proof}
($\Leftarrow$):  It follows from \eqref{prop wienerhopf operator 1} that the operators $\{ W_\zeta : \zeta \in F \}$ are coherent, so that $\bigvee_{\zeta \in F} W_\zeta$ is defined (see the remarks preceding \cite[Theorem 6.3]{spi_pathcat}).  We claim that $W_t = \bigvee_{\zeta \in F} W_\zeta$, from which it follows that $W_t \in \CW_0$.  To prove the claim, let $\beta \in \Lambda$.  If $\beta \in \Lambda \cap t^{-1} \Lambda$ let $\alpha = t \beta \in \Lambda$.  Then $W_t e_\beta = e_\alpha$.  Also, by \eqref{prop wienerhopf operator 2} there is $\zeta \in F$ such that $\beta \in A(\zeta)$.  Then $W_\zeta e_\beta = e_{\varphi_\zeta(\beta)} = e_{t \beta} = e_\alpha$, so that $(\bigvee_{\zeta \in F} W_\zeta) e_\beta = e_\alpha$.  If $\beta \not\in \Lambda \cap t^{-1}\Lambda$, then $t\beta \not \in \Lambda$.  Then $W_t e_\beta = 0$.  On the other hand, for $\zeta \in F$ we have $\beta \not\in A(\zeta)$.  For otherwise, we would have $\beta \in A(\zeta)$ and $\varphi_\zeta(\beta) \in \Lambda$ but $\varphi_\zeta(\beta) = t\beta$, a contradiction.  Hence $W_t e_\beta = 0 = (\bigvee_{\zeta \in F} W_\zeta) e_\beta$.  This proves the claim.

\noindent
($\Rightarrow$):  Let $\| W_t - \sum_{i=1}^n c_i W_{\zeta_i} \| < 1$.  Let $F = \{ \zeta_i : \varphi_{\zeta_i} = t|_{A(\zeta_i)} \}$, and put $A = \sum_{i \in F} c_i W_{\zeta_i}$, $B = \sum_{i \not\in F} c_i W_{\zeta_i}$.  For $\beta \in \Lambda$, if $\beta \in t^{-1} \Lambda$ then $(W_t - A) e_\beta \in \IC e_{t\beta}$ and $B e_\beta \perp e_{t\beta}$, while if $\beta \not\in t^{-1}\Lambda$ then $(W_t - A) e_\beta = 0$.  Hence for all $\beta \in \Lambda$,
\begin{equation*}
(W_t - A) e_\beta \perp B e_\beta. \tag{$*$}
\end{equation*}
Moreover, if $\beta_1 \not= \beta_2$ in $\Lambda$ then
\begin{equation*}
(W_t - A) e_{\beta_1} \perp (W_t - A) e_{\beta_2}, \tag{$**$}
\end{equation*}
since $(W_t - A) e_\beta \in \IC e_{t\beta}$ if $t\beta \in \Lambda$, and equals 0 otherwise.  We have
\begin{align*}
\| (W_t - A) e_\beta \|^2
&\le \| (W_t - A) e_\beta \|^2 + \| B e_\beta \|^2 \\
&= \| (W_t - A - B) e_\beta \|^2, \text{ by } (*), \\
&\le \| W_t - A - B \|^2. \\
\noalign{For $\xi \in \ell^2(\Lambda)$,} \\
\| (W_t - A)\xi \|^2
&= \| \sum_{\beta \in \Lambda} \langle \xi, e_\beta \rangle (W_t - A) e_\beta \|^2 \\
&= \sum_{\beta \in \Lambda} |\langle \xi, e_\beta \rangle|^2 \| (W_t - A) e_\beta \|^2, \text{ by } (**), \\
&\le \| W_t - A - B \|^2 \| \xi \|^2.
\end{align*}
Hence $\| W_t - A \| \le \| W_t - A - B \| < 1$.  Thus we may assume that $B = 0$.  Let $\beta \in \Lambda \cap t^{-1} \Lambda$.  Put $\alpha = t \beta \in \Lambda$.  Then
\[
\| \sum_i c_i W_{\zeta_i} e_\beta \|
= \| W_t e_\beta - (W_t - \sum_i c_i W_{\zeta_i}) e_\beta \|
= \| e_\alpha - (W_t - \sum_i c_i W_{\zeta_i}) e_\beta \|
\ge 1 - \| W_t - \sum_i c_i W_{\zeta_i} \|
> 0.
\]
Then there is $i$ such that $W_{\zeta_i} e_\beta \not= 0$, and hence $\beta \in A(\zeta_i)$.  Therefore $\Lambda \cap t^{-1} \Lambda = \bigcup_i A(\zeta_i)$.
\end{proof}

\begin{Proposition} \label{prop lambda inverse lambda}

Let $\Lambda$ be a subcategory of a groupoid $Y$, and let $t \in \Lambda \Lambda^{-1}$.  Suppose that $t \in \Lambda^{-1} \Lambda$.  Then $W_t \in \CW_0$.

\end{Proposition}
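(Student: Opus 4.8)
The plan is to invoke the characterization of membership in $\CW_0$ provided by Proposition \ref{prop wienerhopf operator}: to conclude that $W_t \in \CW_0$ it suffices to exhibit a finite set $F \subseteq \CZ_\Lambda$ with $\varphi_\zeta = t|_{A(\zeta)}$ for every $\zeta \in F$ and $\Lambda \cap t^{-1}\Lambda = \bigcup_{\zeta \in F} A(\zeta)$. I expect a single zigzag to suffice, so the whole problem reduces to producing one zigzag $\zeta_0$ whose associated element of $Y$ is exactly $t$ and whose domain is all of $\Lambda \cap t^{-1}\Lambda$.

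Since $t \in \Lambda^{-1}\Lambda$, I would write $t = \gamma^{-1}\delta$ with $\gamma$, $\delta \in \Lambda$; composability of $\gamma^{-1}\delta$ in $Y$ forces $r(\gamma) = r(\delta)$, so that $\zeta_0 := (\gamma,\delta)$ is a legitimate element of $\CZ_\Lambda$. Its zigzag map is $\varphi_{\zeta_0} = \sigma^\gamma \circ \tau^\delta$, and the element of $Y$ associated to $\zeta_0$ (in the sense of the discussion preceding Proposition \ref{prop wienerhopf operator}) is $\gamma^{-1}\delta = t$. Hence for $\epsilon \in A(\zeta_0)$ we have $\varphi_{\zeta_0}(\epsilon) = \sigma^\gamma(\delta\epsilon) = \gamma^{-1}\delta\epsilon = t\epsilon$, which gives condition \eqref{prop wienerhopf operator 1} of Proposition \ref{prop wienerhopf operator}.

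The one point requiring care is condition \eqref{prop wienerhopf operator 2}, namely that the domain $A(\zeta_0)$ coincides \emph{exactly} with $\Lambda \cap t^{-1}\Lambda$, and does not merely sit inside it. By definition $A(\zeta_0)$ is the domain of $\sigma^\gamma \circ \tau^\delta$, i.e.\ the set of $\epsilon \in s(\delta)\Lambda$ with $\delta\epsilon \in \gamma\Lambda$. Because $Y$ is a groupoid, $\delta\epsilon \in \gamma\Lambda$ is equivalent to $\gamma^{-1}\delta\epsilon = t\epsilon \in \Lambda$, while the condition $\epsilon \in s(\delta)\Lambda$ is simply $r(\epsilon) = s(\delta) = s(t)$. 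These are precisely the two conditions defining $\Lambda \cap t^{-1}\Lambda = \{ \epsilon \in \Lambda : r(\epsilon) = s(t),\ t\epsilon \in \Lambda \}$, so $A(\zeta_0) = \Lambda \cap t^{-1}\Lambda$. Taking $F = \{\zeta_0\}$ then verifies both hypotheses of Proposition \ref{prop wienerhopf operator}.

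Finally I would point out where the hypothesis $t \in \Lambda\Lambda^{-1}$ enters: it guarantees $W_t \neq 0$, equivalently $\Lambda \cap t^{-1}\Lambda \neq \varnothing$, so that $A(\zeta_0)$ is a genuine nonempty zigzag set. I do not anticipate a real obstacle in this argument; the only thing to watch is the source--range bookkeeping that forces $A(\zeta_0)$ to be all of $\Lambda \cap t^{-1}\Lambda$ rather than a proper subset.
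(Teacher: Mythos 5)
Your proposal is correct and follows essentially the same route as the paper: write $t = \gamma^{-1}\delta$ with $\gamma,\delta \in \Lambda$, take the single zigzag $\zeta = (\gamma,\delta)$, check that $\varphi_\zeta = t|_{A(\zeta)}$ and that $A(\zeta) = \sigma^\delta(\gamma\Lambda \cap \delta\Lambda) = \Lambda \cap t^{-1}\Lambda$, and apply Proposition \ref{prop wienerhopf operator} with $F = \{\zeta\}$. Your careful verification that $A(\zeta)$ equals, rather than merely sits inside, $\Lambda \cap t^{-1}\Lambda$ is exactly the point the paper's proof makes as well.
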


\begin{proof}
Write $t = \gamma^{-1} \delta$ with $\gamma$, $\delta \in \Lambda$.  Then $\Lambda \cap t^{-1} \Lambda = \Lambda \cap \delta^{-1}\gamma \Lambda = \delta^{-1}(\delta \Lambda \cap \gamma \Lambda)$.  Let $\zeta = (\gamma,\delta) \in \CZ_\Lambda$.  Then $\varphi_\zeta = t|_{A(\zeta)}$, and $A(\zeta) = \sigma^\delta (\gamma \Lambda \cap \delta \Lambda) = \Lambda \cap t^{-1} \Lambda$.  Then $W_t \in \CW_0$ by Proposition \ref{prop wienerhopf operator}.
\end{proof}

\begin{Corollary} \label{cor abelian wienerhopf}

If $Y$ is abelian then $\CW = \CW_0$.

\end{Corollary}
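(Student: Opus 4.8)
The plan is to reduce the equality $\CW = \CW_0$ to a purely algebraic containment inside $Y$ and then quote Proposition \ref{prop lambda inverse lambda}. First I would note that, since $W_t = 0$ whenever $t \notin \Lambda\Lambda^{-1}$, the algebra $\CW = C^*(\{W_t : t \in Y\})$ is generated by the operators $W_t$ with $t \in \Lambda\Lambda^{-1}$; moreover $\CW_0 \subseteq \CW$ is automatic because $\Lambda \subseteq Y$. Hence it suffices to show $W_t \in \CW_0$ for every $t \in \Lambda\Lambda^{-1}$.

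The key step is to establish the inclusion $\Lambda\Lambda^{-1} \subseteq \Lambda^{-1}\Lambda$. Granting this, Proposition \ref{prop lambda inverse lambda} applies to each $t \in \Lambda\Lambda^{-1}$: writing $t = \gamma^{-1}\delta$ it produces the zigzag $\zeta = (\gamma,\delta) \in \CZ_\Lambda$ with $\varphi_\zeta = t|_{A(\zeta)}$ and $A(\zeta) = \Lambda \cap t^{-1}\Lambda$, so that $W_t = \bigvee_\zeta W_\zeta \in \CW_0$ via Proposition \ref{prop wienerhopf operator}, and we are done. To get the inclusion I would take a typical $t = \alpha\beta^{-1} \in \Lambda\Lambda^{-1}$ (with $\alpha, \beta \in \Lambda$, so that $s(\alpha) = s(\beta)$ for composability) and use commutativity of $Y$ to rewrite $t = \beta^{-1}\alpha \in \Lambda^{-1}\Lambda$.

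The main obstacle is making precise what abelianness contributes at the level of composability, since the rewriting $\alpha\beta^{-1} = \beta^{-1}\alpha$ requires $\beta^{-1}\alpha$ to be defined, i.e.\ $r(\alpha) = r(\beta)$, which is not forced merely by $s(\alpha) = s(\beta)$. The point to verify is that commutativity of composition forces every morphism of $Y$ to be an endomorphism of its object: applying the commutation law to the always-composable pair $(\alpha, s(\alpha))$ shows that $s(\alpha)\cdot\alpha$ must also be defined, whence $r(\alpha) = s(\alpha)$. Thus an abelian $Y$ is a bundle of abelian groups, and for $t = \alpha\beta^{-1}$ the elements $\alpha$ and $\beta$ are loops at a common object $v = s(\alpha) = s(\beta)$ lying in one abelian isotropy group; there $\beta^{-1}\alpha$ is defined and equals $\alpha\beta^{-1} = t$, giving $\Lambda\Lambda^{-1} \subseteq \Lambda^{-1}\Lambda$. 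Combining this with Proposition \ref{prop lambda inverse lambda} yields $\CW \subseteq \CW_0$, and hence $\CW = \CW_0$.
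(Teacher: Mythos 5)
Your proposal is correct and is exactly the argument the paper intends: the paper states this corollary immediately after Proposition \ref{prop lambda inverse lambda} with no written proof, the point being precisely that abelianness of $Y$ gives $\Lambda\Lambda^{-1} \subseteq \Lambda^{-1}\Lambda$, so that Proposition \ref{prop lambda inverse lambda} applies to every $t$ with $W_t \neq 0$. Your additional observation that commutativity forces every morphism of $Y$ to be a loop (so the rewriting $\alpha\beta^{-1} = \beta^{-1}\alpha$ is legitimately composable) is a careful filling-in of a detail the paper leaves implicit, not a departure from its route.
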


We recall that a monoid $\Lambda$ is {\it right reversible} if for all $\alpha$, $\beta \in \Lambda$, $\Lambda \alpha \cap \Lambda \beta \not= \varnothing$.  We may extend this in the obvious way to small categories.

\begin{Definition} \label{def right reversible}

The small category $\Lambda$ is {\it right reversible} if for all $\alpha$, $\beta \in \Lambda$ with $s(\alpha) = s(\beta)$ we have $\Lambda \alpha \cap \Lambda \beta \not= \varnothing$.

\end{Definition}

\begin{Lemma}

Let $\Lambda$ be a subcategory of a groupoid. Then $\Lambda$ is right reversible if and only if $\Lambda \Lambda^{-1} \subseteq \Lambda^{-1} \Lambda$.

\end{Lemma}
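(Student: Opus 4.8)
The plan is to prove both implications by translating the condition $\Lambda\alpha \cap \Lambda\beta \neq \varnothing$ into the statement that a suitable element $\alpha\beta^{-1}$ of $\Lambda\Lambda^{-1}$ also lies in $\Lambda^{-1}\Lambda$, and conversely. Throughout, the only real work is bookkeeping the source and range maps in $Y$ so that every product I write down is actually composable.

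For the forward implication, I would suppose $\Lambda$ is right reversible and take $t = \alpha\beta^{-1} \in \Lambda\Lambda^{-1}$, where $\alpha$, $\beta \in \Lambda$ and $s(\alpha) = s(\beta)$ (this equality is exactly the condition for $\alpha\beta^{-1}$ to be composable in $Y$). By right reversibility there exist $\mu$, $\nu \in \Lambda$ with $\mu\alpha = \nu\beta$; composability forces $s(\mu) = r(\alpha)$ and $s(\nu) = r(\beta)$, and comparing ranges gives $r(\mu) = r(\mu\alpha) = r(\nu\beta) = r(\nu)$. I would then compute in $Y$ that $\alpha\beta^{-1} = \mu^{-1}(\mu\alpha)\beta^{-1} = \mu^{-1}(\nu\beta)\beta^{-1} = \mu^{-1}\nu$, using $\beta\beta^{-1} = r(\beta)$ and $\mu^{-1}\mu = s(\mu)$. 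Since $r(\mu) = r(\nu)$, the product $\mu^{-1}\nu$ is composable and lies in $\Lambda^{-1}\Lambda$, so $t \in \Lambda^{-1}\Lambda$.

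For the reverse implication, I would assume $\Lambda\Lambda^{-1} \subseteq \Lambda^{-1}\Lambda$ and take $\alpha$, $\beta \in \Lambda$ with $s(\alpha) = s(\beta)$. Then $t := \alpha\beta^{-1} \in \Lambda\Lambda^{-1}$, so by hypothesis $t = \gamma^{-1}\delta$ for some $\gamma$, $\delta \in \Lambda$ with $r(\gamma) = r(\delta)$. Comparing sources and ranges of the two expressions for $t$ yields $s(\gamma) = r(\alpha)$ and $s(\delta) = r(\beta)$, which is precisely what is needed for $\gamma\alpha$ and $\delta\beta$ to be composable. Multiplying $\alpha\beta^{-1} = \gamma^{-1}\delta$ on the left by $\gamma$ and on the right by $\beta$, and simplifying with $\gamma\gamma^{-1} = r(\gamma)$ and $\beta^{-1}\beta = s(\beta)$, I obtain $\gamma\alpha = \delta\beta$. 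This common value lies in $\Lambda\alpha \cap \Lambda\beta$ (it is $\gamma\alpha$ with $\gamma \in \Lambda$ and $\delta\beta$ with $\delta \in \Lambda$, and both products are in $\Lambda$ since $\Lambda$ is a subcategory), so $\Lambda\alpha \cap \Lambda\beta \neq \varnothing$.

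The argument is almost entirely formal, so I do not anticipate a deep obstacle; the one point requiring care is verifying at each step that the source/range conditions hold so that products such as $\mu^{-1}\nu$ and $\gamma\alpha$ are legitimate morphisms of $\Lambda^{-1}\Lambda$ and of $\Lambda$ respectively. This bookkeeping is where an incautious manipulation in the groupoid could silently produce a non-composable expression, and it is the only part of the proof I would write out with full attention.
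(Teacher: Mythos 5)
Your proof is correct and follows essentially the same route as the paper's: in both directions you pass between $\gamma\alpha=\delta\beta$ and $\alpha\beta^{-1}=\gamma^{-1}\delta$ by multiplying by inverses in the ambient groupoid, exactly as the paper does (with $\mu,\nu$ in place of the paper's $\gamma,\delta$ in the forward direction). The only difference is that you spell out the source/range bookkeeping that the paper leaves implicit, which is a fair expansion rather than a new idea.
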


\begin{proof}
First suppose that $\Lambda$ is right reversible.  Let $t \in \Lambda \Lambda^{-1}$.  Then $t = \alpha \beta^{-1}$ for some $\alpha$, $\beta \in \Lambda$.  Then $s(\alpha) = s(\beta)$, so there exist $\gamma$, $\delta \in \Lambda$ such that $\gamma \alpha = \delta \beta$, and hence $t = \alpha \beta^{-1} = \gamma^{-1} \delta \in \Lambda^{-1} \Lambda$.

Now suppose that $\Lambda^{-1} \Lambda \subseteq \Lambda^{-1} \Lambda$.  Let $\alpha$, $\beta \in \Lambda$ with $s(\alpha) = s(\beta)$.  Then $t = \alpha \beta^{-1} \in \Lambda \Lambda^{-1}$, so there are $\gamma$, $\delta \in \Lambda$ with $t = \gamma^{-1} \delta$.  Then $\gamma \alpha = \delta \beta \in \Lambda \alpha \cap \Lambda \beta$.
\end{proof}

Ore's theorem for semigroups states that a right reversible cancellative semigroup embeds into a group of fractions.  The same theorem holds for small categories; we give a precise statement and proof in Theorem \ref{thm ore theorem}.  We give a consequence of this result here.

\begin{Corollary}

If $\Lambda$ is a right cancellative right reversible LCSC then $\CW = \CW_0$.

\end{Corollary}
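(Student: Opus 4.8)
The plan is to reduce the statement to two facts already in hand: that a nonzero Wiener-Hopf operator $W_t$ forces $t \in \Lambda\Lambda^{-1}$, and that $W_t \in \CW_0$ whenever $t$ lies in both $\Lambda\Lambda^{-1}$ and $\Lambda^{-1}\Lambda$ (Proposition \ref{prop lambda inverse lambda}). The only preliminary issue is that the Wiener-Hopf construction, and hence the symbols $\CW$ and $\CW_0$, presuppose that $\Lambda$ is a subcategory of a groupoid; so the first task is to produce such a groupoid.

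First I would invoke Ore's theorem for small categories (Theorem \ref{thm ore theorem}): since $\Lambda$ is left and right cancellative and right reversible, it embeds as a subcategory of its groupoid of fractions $Y$, with $Y^0 = \Lambda^0 \subseteq \Lambda$. This is exactly the standing hypothesis of the section, so the pair $(Y,\Lambda)$ carries a well-defined Wiener-Hopf algebra $\CW$ and subalgebra $\CW_0$. I would then recall from the opening of the section that $W_t \neq 0$ precisely when $t \in \Lambda\Lambda^{-1}$, so that $\CW$ is the $C^*$-algebra generated by the operators $\{ W_t : t \in \Lambda\Lambda^{-1} \}$.

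Next I would apply the preceding Lemma characterizing right reversibility for subcategories of groupoids: right reversibility of $\Lambda$ gives $\Lambda\Lambda^{-1} \subseteq \Lambda^{-1}\Lambda$. Hence every $t \in \Lambda\Lambda^{-1}$ also lies in $\Lambda^{-1}\Lambda$, and Proposition \ref{prop lambda inverse lambda} yields $W_t \in \CW_0$ for each such $t$. Since $\CW_0$ is a $C^*$-subalgebra of $\CW$ containing every generator of $\CW$, this forces $\CW \subseteq \CW_0$; the reverse inclusion is immediate, and therefore $\CW = \CW_0$. The genuine content is entirely in the two structural inputs — the embedding from Ore's theorem and the inclusion $\Lambda\Lambda^{-1} \subseteq \Lambda^{-1}\Lambda$ — and the only point needing care is confirming that Ore's theorem supplies a bona fide groupoid of fractions containing $\Lambda^0$, so that the section's hypotheses are met; once that is settled the argument is a direct appeal to Proposition \ref{prop lambda inverse lambda}.
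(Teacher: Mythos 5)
Your proof is correct and takes essentially the same route the paper intends: the corollary is stated as an immediate consequence of Ore's theorem (Theorem \ref{thm ore theorem}), the preceding lemma identifying right reversibility of a subcategory of a groupoid with the inclusion $\Lambda\Lambda^{-1} \subseteq \Lambda^{-1}\Lambda$, and Proposition \ref{prop lambda inverse lambda}, exactly the three inputs you combine. Your write-up merely makes explicit the steps the paper leaves implicit, including the worthwhile check that the groupoid of fractions supplied by Ore's theorem satisfies the section's standing hypothesis $Y^0 \subseteq \Lambda$ and that the vanishing of $W_t$ off $\Lambda\Lambda^{-1}$ reduces the generators of $\CW$ to those handled by Proposition \ref{prop lambda inverse lambda}.
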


Let $Y$ be a groupoid and $\Lambda \subseteq Y$ a subcategory with $Y^0 \subseteq \Lambda^0$.  As mentioned above, $\CW = \CW_0$ if $(Y,\Lambda)$ is finitely aligned, while $\CW \not= \CW_0$ if $\Lambda$ is finitely aligned but $(Y,\Lambda)$ is not.  If $\Lambda$ is not finitely aligned then $\CW$ and $\CW_0$ are sometimes equal and sometimes not.

\begin{Example}

Here we give an example of a nonfinitely aligned submonoid of a free group, for which $\CW = \CW_0$.  Let $Y = \langle \alpha, \beta, \gamma_1, \gamma_2, \ldots \mid\ \rangle$, and let $\Lambda$ be the submonoid generated by $\{ \alpha, \beta, \gamma_1, \gamma_2, \ldots, \beta^{-1}\alpha\gamma_1, \beta^{-1}\alpha\gamma_2, \ldots \}$. We claim that $\Lambda$ is not finitely aligned (and hence $(Y,\Lambda)$ is not a finitely aligned ordered group), and that $\CW = \CW_0$.

To see this, first let $\Lambda_0$ be the nonfinitely aligned 2-graph
\[
\begin{tikzpicture}[scale=2]

\node (0_1) at (0,1) [rectangle] {$y$};
\node (0_0) at (0,0) [rectangle] {$u$};
\node (1_1) at (1,1) [rectangle] {$v$};
\node (1_0) at (1,0) [rectangle] {$x$};

\draw[-latex,thick] (1_0) -- (0_0) node[pos=0.5, inner sep=0.5pt, anchor=north] {$\alpha$};
\draw[-latex,thick] (0_1) -- (0_0) node[pos=0.5, inner sep=0.5pt, anchor=east] {$\beta$};
\draw[-latex,thick] (1_1) -- (1_0) node[pos=0.5, inner sep=0.5pt, anchor=west] {$\gamma_i$};
\draw[-latex,thick] (1_1) -- (0_1) node[pos=0.5, inner sep=0.5pt, anchor=south] {$\delta_i$};

\end{tikzpicture}
\]
where $i = 1$, 2, $\ldots$, and $\alpha \gamma_i = \beta \delta_i$. Then let $\Lambda$ be the monoid obtained by identifying all of the vertices of $\Lambda_0$, as in Definition \ref{def amalgamation} (and in $\Lambda$ we see that $\delta_i = \beta^{-1} \alpha \gamma_i$).  By Corollary \ref{cor finite alignment in amalgamations}, $\Lambda$ is not finitely aligned.  If $t = \mu \nu^{-1} \in \Lambda \Lambda^{-1}$, let $\mu = (\mu_1, \ldots, \mu_m)$ and $\nu = (\nu_1, \ldots, \nu_n)$ in normal form (as in Lemma \ref{lem normal form}).  We may as well assume that $\mu_m \nu_n^{-1}$ involves no cancellations.  It follows from Proposition \ref{prop common extensions in amalgamations} that $\mu$ is the least upper bound of $t$ and $e$, in that if $\theta \in t \Lambda \cap \Lambda$ then $\theta \in \mu \Lambda$.  Therefore if $\langle W_t e_\xi,e_\eta \rangle \not= 0$ for some $\xi$, $\eta \in \Lambda$, then $t \xi = \eta \in t \Lambda \cap \Lambda$, and hence $\eta = \mu \eta'$ for some $\eta' \in \Lambda$.  Then $\xi = t^{-1} \mu \eta' = (\mu \nu^{-1})^{-1} \mu \eta' = \nu \eta'$.  Then $\langle W_t e_\xi,e_\eta \rangle = \langle W_\mu W_\nu^* e_\xi, e_\eta \rangle$.  It follows that $W_t = W_\mu W_\nu^* \in \CW_0$.

\end{Example}

\begin{Example}

We give an example with $\Lambda$ not finitely aligned such that $\CW \not= \CW_0$.  Let $(Y_i,\Lambda_i)$ be groups with submonoids for $i = 1$, 2.  Suppose that both $(Y_i,\Lambda_i)$ are not finitely aligned, that $\Lambda_1$ is finitely aligned, and that $\Lambda_2$ is not finitely aligned.  Put $Y = Y_1 * Y_2$ and $\Lambda = \Lambda_1 * \Lambda_2$ (so $\Lambda$ is the amalgamation of $\Lambda_1$ and $\Lambda_2$ as in Definition \ref{def amalgamation}).  By Corollary \ref{cor finite alignment in amalgamations}, $\Lambda$ is not finitely aligned.  We will show that $\CW(Y,\Lambda) \not= \CW_0(Y,\Lambda)$.  By \cite[Lemma 8.9]{spi_pathcat} there is $t \in Y_1$ such that $W^{(1)}_t \not \in \CW_0(Y_1,\Lambda_1)$ (we use the superscript $^{(1)}$ to indicate that this is the Wiener-Hopf operator for $t$ for the ordered group $(Y_1,\Lambda_1)$).  We claim that $W_t \not\in \CW_0(Y,\Lambda)$.  For suppose otherwise.  By Proposition \ref{prop wienerhopf operator} there is a finite set $F \subseteq \CZ_\Lambda$ such that $\varphi_\zeta = t|_{A(\zeta)}$ for all $\zeta \in F$, and such that $\Lambda \cap t^{-1} \Lambda = \cup_{\zeta \in F} A(\zeta)$.  Consider $\zeta \in F$:  $\zeta = (\alpha_p,\beta_p, \ldots, \alpha_1,\beta_1)$ with $\alpha_i$, $\beta_i \in \Lambda$ for all $i$.  Note that $\zeta = (\alpha_p,\beta_p) \cdots (\alpha_1,\beta_1)$.  We may assume that $\alpha_i$ and $\beta_i$ are not comparable (since, e.g., $\varphi_{(\gamma,\gamma\delta)} = \varphi_{(r(\delta),\delta)}$).  Let us consider one pair, say $(\alpha_i,\beta_i)$.  Write $\alpha_i = [\alpha_{i1}, \ldots, \alpha_{im_i}]$ and $\beta_i = [\beta_{i1}, \ldots, \beta_{in_i}]$ in normal form, as in Lemma \ref{lem normal form}.  Since $\varphi_\zeta \not= \varnothing$ we must have $\alpha_i \Cap \beta_i$ .  Since $\alpha_i$ and $\beta_i$ are not comparable, Proposition \ref{prop common extensions in amalgamations} implies that $m_i = n_i$, $\alpha_{ij} = \beta_{ij}$ for $j < m_i$, there is $k_i \in \{1,2\}$ such that $\alpha_{im_i}$, $\beta_{im_i} \in \Lambda_{k_i}$, and $\alpha_{im_i} \Cap \beta_{im_i}$ but are not comparable (in $\Lambda_{k_i}$).  But then $\varphi_{(\alpha_i,\beta_i)} = \varphi_{(\alpha_{im_i},\beta_{im_i})}$.  It follows that $t = \alpha_{pm_p}^{-1} \beta_{pm_p} \cdots \alpha_{1m_1}^{-1} \beta_{1m_1}$.  Since $t \in Y_1$ we must have $k_1 = \cdots = k_p = 1$.  Then $\varphi_\zeta = \varphi_{\zeta'}$, where $\zeta' = (\alpha_{pm_p}, \beta_{pm_p}, \ldots, \alpha_{1m_1}, \beta_{1m_1}) \in \CZ_{\Lambda_1}$.  Now Proposition \ref{prop wienerhopf operator} implies that $W_t^{(1)} \in \CW_0(Y_1,\Lambda_1)$, a contradiction.

\end{Example}

\section{Ore's theorem for LCSC's}
\label{sec ore theorem}

Recall the term {\it right reversible} for LCSC's from  Definition \ref{def right reversible}.  Ore's theorem for semigroups states that a right reversible cancellative semigroup embeds in a group of fractions, and that this embedding is universal (\cite[Theorem 1.23]{cliffordpreston}).  The analogous result for small categories has essentially the same proof.  We provide one here.

Let $\Lambda$ be a right cancellative, right reversible LCSC. Let $S = \{ (\alpha,\beta) \in \Lambda^2 : r(\alpha) = r(\beta) \}$. We think of $(\alpha,\beta)$ as representing ``$\alpha^{-1}\beta$''.  To make this rigorous, we define an equivalence relation $\sim$ on $S$ as follows:  $(\alpha,\beta) \sim (\gamma,\delta)$ if there are $x$, $y \in \Lambda$ such that $x \alpha = y \gamma$ and $x \beta = y \delta$.  (Note that $(\alpha,\beta) \sim (\gamma,\delta)$ implies that $s(\alpha) = s(\gamma)$ and $s(\beta) = s(\delta)$.)  It is immediate that $\sim$ is reflexive and symmetric.  We prove transitivity.  Let $(\alpha,\beta) \sim (\gamma,\delta)$ and $(\gamma,\delta) \sim (\zeta,\eta)$.  There are $x$, $y$, $z$, $w$ such that $x\alpha = y\gamma$ and $x\beta = y\delta$, and such that $z\gamma = w\zeta$ and $z\delta = w\eta$.  Note that $s(y) = r(\gamma) = s(z)$.  Then by right reversibility there are $u$, $v$ such that $uy = vz$.  Now we have $ux\alpha = uy\gamma = vz\gamma = vw\zeta$ and $ux\beta = uy\delta = vz\delta = vw\eta$.  Therefore $(\alpha,\beta) \sim (\zeta,\eta)$.

We will define the structure of a groupoid on $S / \sim$.  The multiplication on equivalence classes must be defined in terms of representatives, so we give a preliminary version in that context.

\begin{Definition} \label{def preliminary product}

Let $(\alpha,\beta)$, $(\gamma,\delta) \in S$ with $s(\beta) = s(\gamma)$.  By right reversibility there are $x$, $y \in \Lambda$ such that $x\beta = y\gamma$. We write
\[
(\alpha,\beta) \underset{x \cdot \beta = y \cdot \gamma}{\times} (\gamma,\delta) = (x\alpha,y\delta).
\]
(The idea, of course, is that in terms of fractions we are performing the product:
\[
(\alpha^{-1}\beta)(\gamma^{-1}\delta) = (x\alpha)^{-1} (x\beta) (y\gamma)^{-1} (y\delta) = (x\alpha)^{-1} (y\delta).)
\]

\end{Definition}

\begin{Lemma} \label{lem ore theorem one}

Let $(\alpha,\beta)$, $(\gamma,\delta) \in S$ with $s(\beta) = s(\gamma)$.  Choose $x$, $y \in \Lambda$ such that $x\beta = y\gamma$. Let $\mu \in \Lambda r(\alpha)$, and choose $z$, $w \in \Lambda$ such that $z \mu \beta = w \gamma$.  Then $(\alpha,\beta) \underset{x \cdot \beta = y \cdot \gamma} {\times} (\gamma,\delta) \sim (\mu\alpha,\mu\beta) \underset{z \cdot \mu \beta = w \cdot \gamma}{\times} (\gamma,\delta)$.

\end{Lemma}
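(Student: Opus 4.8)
The plan is to unwind both sides of the claimed equivalence into explicit pairs and then exhibit a witness for $\sim$ directly. By Definition \ref{def preliminary product}, the left-hand product is $(x\alpha, y\delta)$ (where $x\beta = y\gamma$), and the right-hand product is $(z\mu\alpha, w\delta)$ (where $z\mu\beta = w\gamma$). Thus the goal reduces to producing $u$, $v \in \Lambda$ with $u(x\alpha) = v(z\mu\alpha)$ and $u(y\delta) = v(w\delta)$; these will serve as the witnesses required by the definition of $\sim$.

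First I would settle the source/range bookkeeping. The relation $x\beta = y\gamma$ forces $r(x) = r(y)$ and $s(y) = r(\gamma)$, while $z\mu\beta = w\gamma$ forces $r(z) = r(w)$ and $s(w) = r(\gamma)$. Hence $s(y) = r(\gamma) = s(w)$, which is precisely the hypothesis needed to apply right reversibility to $y$ and $w$: there exist $u$, $v \in \Lambda$ with $uy = vw$, and the construction gives $s(u) = r(y) = r(x)$ and $s(v) = r(w) = r(z)$, so that the products $ux$ and $vz$ are composable.

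The key step is to transfer this relation from the $y$-side to the $x$-side. Right-multiplying $uy = vw$ by $\gamma$ and substituting the two defining relations yields $ux\beta = u(x\beta) = u(y\gamma) = (uy)\gamma = (vw)\gamma = v(w\gamma) = v(z\mu\beta) = vz\mu\beta$. Since $\Lambda$ is assumed right cancellative, cancelling the common terminal factor $\beta$ gives $ux = vz\mu$. Right-multiplying $ux = vz\mu$ by $\alpha$ and $uy = vw$ by $\delta$ (both composable because $r(\alpha) = r(\beta) = s(x)$ and $r(\delta) = r(\gamma) = s(y)$) produces exactly $u(x\alpha) = v(z\mu\alpha)$ and $u(y\delta) = v(w\delta)$, establishing $(x\alpha, y\delta) \sim (z\mu\alpha, w\delta)$ with $p = u$, $q = v$.

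The algebra here is short, so the main obstacle is not computational but rather the composability discipline: because we work in a category rather than a monoid, each application of reversibility, each concatenation, and each cancellation must be justified by matching sources and ranges. In particular one must confirm that $uy = vw$ may legitimately be multiplied by $\gamma$, that the factors $ux$ and $vz$ are themselves composable, and that the right cancellation of $\beta$ is valid (both $ux$ and $vz\mu$ have source $r(\beta)$). Once these identities are verified, the choice $p = u$, $q = v$ completes the proof.
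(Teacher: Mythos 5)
Your proof is correct and is essentially the paper's argument in mirror image: the paper applies right reversibility to the first coordinates, choosing $\zeta, \eta$ with $\zeta x = \eta z \mu$ and then right-cancelling $\gamma$ to conclude $\zeta y = \eta w$, whereas you apply reversibility to $y$ and $w$ to get $uy = vw$ and then right-cancel $\beta$ to conclude $ux = vz\mu$; the two routes use exactly the same ingredients and are interchangeable. Your source/range bookkeeping is accurate throughout, so there is nothing to fix.
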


\begin{proof}
Note that $(\alpha,\beta) \underset{x \cdot \beta = y \cdot \gamma} {\times} (\gamma,\delta) = (x\alpha,y\delta)$, and $(\mu\alpha,\mu\beta) \underset{z \cdot \mu \beta = w \cdot \gamma}{\times} (\gamma,\delta) = (z \mu \alpha,w \delta)$.  By right reversibility there are $\zeta$, $\eta \in \Lambda$ such that $\zeta x = \eta z \mu$.  Then $\zeta x \alpha = \eta z \mu \alpha$, and $\zeta y \gamma = \zeta x \beta = \eta z \mu \beta = \eta w \gamma$.  By right cancellation we have $\zeta y = \eta w$, and hence $\zeta y \delta = \eta w \delta$.  Therefore $(x\alpha,y\delta) \sim (z \mu \alpha,w\delta)$.
\end{proof}

\begin{Lemma} \label{lem ore theorem two}

Let $(\alpha,\beta)$, $(\gamma,\delta) \in S$ with $s(\beta) = s(\gamma)$.  Let $(\alpha,\beta) \sim (\alpha',\beta')$.  Choose $x$, $y$, $x'$, $y'$ such that $x\beta = y\gamma$ and $x'\beta' = y' \gamma$. Then $(\alpha,\beta) \underset{x \cdot \beta = y \cdot \gamma} {\times} (\gamma,\delta) \sim (\alpha',\beta') \underset{x' \cdot \beta' = y' \cdot \gamma} {\times} (\gamma,\delta)$.
\end{Lemma}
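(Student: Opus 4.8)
The plan is to reduce everything to Lemma \ref{lem ore theorem one}, which already absorbs both the freedom in the choice of reversibility solution and the effect of multiplying the left factor on the left by an element $\mu$. The crucial observation is that the hypothesis $(\alpha,\beta) \sim (\alpha',\beta')$ furnishes $p$, $q \in \Lambda$ with $p\alpha = q\alpha'$ and $p\beta = q\beta'$, so that the pairs $(p\alpha,p\beta)$ and $(q\alpha',q\beta')$ are \emph{equal} elements of $S$, not merely equivalent. This equality is what lets both products collapse to a single common pair.

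First I would check that the required reversibility solutions exist and can be shared. Since $s(p\beta) = s(\beta) = s(\gamma)$, Definition \ref{def right reversible} yields $z$, $w \in \Lambda$ with $z(p\beta) = w\gamma$; because $p\beta = q\beta'$, the same pair $(z,w)$ also solves $z(q\beta') = w\gamma$. Note also that $p \in \Lambda r(\alpha)$ and $q \in \Lambda r(\alpha')$, since $p\alpha$ and $q\alpha'$ are defined, so Lemma \ref{lem ore theorem one} applies with $\mu=p$ and with $\mu=q$ respectively.

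Next I would invoke Lemma \ref{lem ore theorem one} twice. Applied to $(\alpha,\beta)$ with the given solution $x\beta = y\gamma$, with $\mu = p$ and solution $z\cdot p\beta = w\gamma$, it gives
\[
(\alpha,\beta) \underset{x \cdot \beta = y \cdot \gamma}{\times}(\gamma,\delta) \sim (p\alpha,p\beta)\underset{z \cdot p\beta = w \cdot \gamma}{\times}(\gamma,\delta) = (zp\alpha,\,w\delta).
\]
Applied to $(\alpha',\beta')$ with the given solution $x'\beta'=y'\gamma$, with $\mu = q$ and the same solution $z\cdot q\beta'=w\gamma$, it gives
\[
(\alpha',\beta')\underset{x' \cdot \beta' = y' \cdot \gamma}{\times}(\gamma,\delta) \sim (q\alpha',q\beta')\underset{z \cdot q\beta' = w \cdot \gamma}{\times}(\gamma,\delta) = (zq\alpha',\,w\delta).
\]
Since $p\alpha = q\alpha'$ we have $zp\alpha = zq\alpha'$, so the two right-hand pairs $(zp\alpha,w\delta)$ and $(zq\alpha',w\delta)$ literally coincide. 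Transitivity and symmetry of $\sim$ then deliver the desired conclusion.

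The computation is essentially bookkeeping, so there is no serious obstacle; the one point requiring care is to reduce \emph{both} products to the common pair $(p\alpha,p\beta)=(q\alpha',q\beta')$ and to deliberately choose the \emph{same} reversibility solution $(z,w)$ on each side. This is legitimate precisely because $p\beta=q\beta'$ makes the two clearing equations identical, and once it is arranged the equality of the resulting pairs is immediate, requiring no further appeal to $\sim$ beyond the two instances of Lemma \ref{lem ore theorem one}.
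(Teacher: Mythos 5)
Your proof is correct and is essentially the paper's own argument: both extract $\mu\alpha=\mu'\alpha'$, $\mu\beta=\mu'\beta'$ from the hypothesis (your $p,q$), pick one reversibility solution $z\mu\beta = w\gamma$ that serves both sides, and apply Lemma \ref{lem ore theorem one} twice so that the two products meet at the common pair $(z\mu\alpha,w\delta)=(z\mu'\alpha',w\delta)$. The only difference is presentational --- the paper writes it as a single chain of relations rather than two separate reductions to a common pair.
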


\begin{proof}
Let $\mu$, $\mu'$ be such that $\mu \alpha = \mu' \alpha'$ and $\mu \beta = \mu' \beta'$.  Choose $z$, $w$ as in Lemma \ref{lem ore theorem one}. Then
\begin{align*}
(\alpha,\beta) \underset{x \cdot \beta = y \cdot \gamma} {\times} (\gamma,\delta)
&= (x\alpha,y\delta) \\
&\sim (z\mu\alpha,w\delta), \text{ by Lemma \ref{lem ore theorem one},} \\
&= (z \mu' \alpha', w\delta) \\
&\sim (x' \alpha', y' \delta), \text{ by Lemma \ref{lem ore theorem one},} \\
&= (\alpha',\beta') \underset{x' \cdot \beta' = y' \cdot \gamma} {\times} (\gamma,\delta). \qedhere
\end{align*}
\end{proof}

\begin{Lemma} \label{lem ore theorem three}

Let $(\alpha,\beta)$, $(\gamma,\delta) \in S$ with $s(\beta) = s(\gamma)$.  Let $(\gamma,\delta) \sim (\gamma',\delta')$.  Choose $x$, $y$, $x'$, $y'$ such that $x\beta = y\gamma$ and $x'\beta = y' \gamma'$. Then $(\alpha,\beta) \underset{x \cdot \beta = y \cdot \gamma} {\times} (\gamma,\delta) \sim (\alpha,\beta) \underset{x' \cdot \beta = y' \cdot \gamma'} {\times} (\gamma',\delta')$.

\end{Lemma}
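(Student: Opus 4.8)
The plan is to show that the product of Definition \ref{def preliminary product} is unchanged, up to $\sim$, when the right-hand factor $(\gamma,\delta)$ is replaced by an equivalent pair $(\gamma',\delta')$, mirroring the argument of Lemma \ref{lem ore theorem two} but now acting on the second coordinate. Since $(\gamma,\delta) \sim (\gamma',\delta')$, I would first fix $p$, $q \in \Lambda$ with $p\gamma = q\gamma'$ and $p\delta = q\delta'$. The two products under comparison are $(\alpha,\beta) \underset{x \cdot \beta = y \cdot \gamma}{\times} (\gamma,\delta) = (x\alpha, y\delta)$ and $(\alpha,\beta) \underset{x' \cdot \beta = y' \cdot \gamma'}{\times} (\gamma',\delta') = (x'\alpha, y'\delta')$, so the goal is $(x\alpha, y\delta) \sim (x'\alpha, y'\delta')$.

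First I would bridge the two chosen factorizations. Since $s(y) = r(\gamma) = s(p)$, right reversibility yields $a$, $b \in \Lambda$ with $ay = bp$. Multiplying $x\beta = y\gamma$ on the left by $a$ and using $ay = bp$ together with $p\gamma = q\gamma'$ gives $ax\beta = ay\gamma = bp\gamma = bq\gamma'$, so $(ax)\beta = (bq)\gamma'$ is itself a valid factorization for the product $(\alpha,\beta) \times (\gamma',\delta')$ (which is defined, as $s(\beta) = s(\gamma) = s(\gamma')$). Evaluating with this factorization gives $(\alpha,\beta) \underset{(ax) \cdot \beta = (bq) \cdot \gamma'}{\times} (\gamma',\delta') = (ax\alpha, bq\delta')$.

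Next I would invoke independence of the product from the choice of factorization, which is precisely the special case $\mu = r(\alpha)$ of Lemma \ref{lem ore theorem one}: applied to $(\alpha,\beta)$ and $(\gamma',\delta')$ with the two factorizations $(ax)\beta = (bq)\gamma'$ and $x'\beta = y'\gamma'$, it gives $(ax\alpha, bq\delta') \sim (x'\alpha, y'\delta')$. Finally I would link $(ax\alpha, bq\delta')$ back to $(x\alpha, y\delta)$: from $p\delta = q\delta'$ and $ay = bp$ one gets $ay\delta = bp\delta = bq\delta'$, hence $(ax\alpha, bq\delta') = (ax\alpha, ay\delta)$, and this pair satisfies $(x\alpha, y\delta) \sim (ax\alpha, ay\delta)$ via the witnesses $u = a$ and $v = r(a)$ (left translation by $a$). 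Transitivity of $\sim$ then closes the chain $(x\alpha, y\delta) \sim (ax\alpha, ay\delta) = (ax\alpha, bq\delta') \sim (x'\alpha, y'\delta')$.

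The only genuine obstacle is locating the correct bridging element; once $ay = bp$ is produced by right reversibility, the remainder is a routine bookkeeping chase, with right cancellation entering only implicitly through the appeal to Lemma \ref{lem ore theorem one}. Throughout I would verify that every composite is defined (for instance $r(x) = r(x\beta) = r(y\gamma) = r(y)$ forces $s(a) = r(y) = r(x)$, so $ax$ is composable), ensuring each displayed equality genuinely holds in $\Lambda$.
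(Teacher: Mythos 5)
Your proof is correct, and every composability and witness check goes through: the bridge $ay=bp$ exists by right reversibility since $s(y)=r(\gamma)=s(p)$; the equality $(ax)\beta=(bq)\gamma'$ is a legitimate factorization because $(\gamma,\delta)\sim(\gamma',\delta')$ forces $s(\gamma)=s(\gamma')$, so the product with $(\gamma',\delta')$ is defined; the relation $(x\alpha,y\delta)\sim(ax\alpha,ay\delta)$ via the witnesses $a$ and $r(a)$ is immediate from the definition of $\sim$; and your appeal to Lemma \ref{lem ore theorem one} with $\mu=r(\alpha)$ is a valid use of that lemma (it is exactly the statement that the product is independent of the chosen factorization). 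However, your route differs from the paper's. The paper disposes of this lemma with the single line ``analogous to the proof of Lemma \ref{lem ore theorem two},'' which implicitly asks the reader to mirror Lemma \ref{lem ore theorem one} on the other side --- i.e., to prove that the product is unchanged when $(\gamma,\delta)$ is replaced by $(\mu\gamma,\mu\delta)$ --- and then to chain that mirrored lemma twice through the witnesses $p,q$, exactly as Lemma \ref{lem ore theorem two} chains Lemma \ref{lem ore theorem one}. You instead never form products with $(p\gamma,p\delta)$ or $(q\gamma',q\delta')$: you splice the two factorizations together directly with right reversibility and then invoke only results literally established in the text (left-translation invariance of $\sim$, which is definitional, and the $\mu=r(\alpha)$ case of Lemma \ref{lem ore theorem one}). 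What this buys you is a cleaner dependence structure --- no unstated right-handed analogue is needed, and right cancellation enters only through the quoted lemma --- at the cost of a slightly less symmetric argument; the paper's approach buys brevity and parallelism, but strictly speaking leaves the mirrored lemma to the reader.
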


\begin{proof}
The proof is analogous to the proof of Lemma \ref{lem ore theorem two}.
\end{proof}

\begin{Lemma} \label{lem ore theorem four}

Let $(\alpha, \beta)$, $(\gamma,\delta) \in S$ with $s(\beta) = s(\gamma)$.  Let $(\alpha,\beta) \sim (\alpha',\beta')$ and $(\gamma,\delta) \sim (\gamma',\delta')$.  Let $x$, $y$, $x''$, $y''$ be such that $x \beta = y \gamma$ and $x'' \beta' = y'' \gamma'$. Then $(\alpha,\beta) \underset{x \cdot \beta = y \cdot \gamma}{\times} (\gamma,\delta) \sim (\alpha',\beta') \underset{x'' \cdot \beta' = y'' \cdot \gamma'}{\times} (\gamma',\delta')$.

\end{Lemma}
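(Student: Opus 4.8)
The plan is to reduce Lemma \ref{lem ore theorem four} to the two one-sided well-definedness results, Lemma \ref{lem ore theorem two} and Lemma \ref{lem ore theorem three}, by changing one factor at a time and then invoking transitivity of $\sim$. First I would record the source conditions that legitimize all the products appearing below. Since $(\alpha,\beta) \sim (\alpha',\beta')$ we have $s(\beta) = s(\beta')$, and since $(\gamma,\delta) \sim (\gamma',\delta')$ we have $s(\gamma) = s(\gamma')$; these follow from the remark recorded immediately after the definition of $\sim$. Combined with the hypothesis $s(\beta) = s(\gamma)$, this gives $s(\beta') = s(\gamma)$ and $s(\beta') = s(\gamma')$, so by right reversibility we may choose $x'$, $y' \in \Lambda$ with $x'\beta' = y'\gamma$, and the intermediate product $(\alpha',\beta') \underset{x' \cdot \beta' = y' \cdot \gamma}{\times} (\gamma,\delta)$ is defined.

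Next I would interpose this intermediate product and apply the earlier lemmas in succession. Keeping the second factor $(\gamma,\delta)$ fixed while replacing the first factor $(\alpha,\beta)$ by the equivalent $(\alpha',\beta')$, Lemma \ref{lem ore theorem two} gives
\[
(\alpha,\beta) \underset{x \cdot \beta = y \cdot \gamma}{\times} (\gamma,\delta) \sim (\alpha',\beta') \underset{x' \cdot \beta' = y' \cdot \gamma}{\times} (\gamma,\delta).
\]
Then, keeping the first factor $(\alpha',\beta')$ fixed while replacing the second factor $(\gamma,\delta)$ by the equivalent $(\gamma',\delta')$, Lemma \ref{lem ore theorem three} gives
\[
(\alpha',\beta') \underset{x' \cdot \beta' = y' \cdot \gamma}{\times} (\gamma,\delta) \sim (\alpha',\beta') \underset{x'' \cdot \beta' = y'' \cdot \gamma'}{\times} (\gamma',\delta').
\]
Transitivity of $\sim$ then chains these two equivalences into the asserted relation; moreover the conclusion is independent of the particular representatives $x$, $y$, $x''$, $y''$ precisely because that independence is already built into Lemmas \ref{lem ore theorem two} and \ref{lem ore theorem three}.

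I do not expect a genuine obstacle here, since all the computational content lives in Lemmas \ref{lem ore theorem one}--\ref{lem ore theorem three}, and Lemma \ref{lem ore theorem four} is essentially a bookkeeping step that simultaneously varies both factors. The only points demanding care are the source matchings $s(\beta') = s(\gamma)$ and $s(\beta') = s(\gamma')$ that make the intermediate product meaningful, and aligning the free and fixed factors correctly when invoking the two prior lemmas; once the intermediate term is inserted, the result is immediate.
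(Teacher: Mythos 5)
Your proposal is correct and follows exactly the same route as the paper: interpose the intermediate product $(\alpha',\beta') \underset{x' \cdot \beta' = y' \cdot \gamma}{\times} (\gamma,\delta)$, apply Lemma \ref{lem ore theorem two} to replace the first factor, Lemma \ref{lem ore theorem three} to replace the second, and conclude by transitivity of $\sim$. Your explicit verification of the source conditions $s(\beta')=s(\gamma)$ and $s(\beta')=s(\gamma')$ is a welcome addition (and incidentally corrects a small typo in the paper, which writes $x'\alpha' = y'\gamma$ where $x'\beta' = y'\gamma$ is meant).
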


\begin{proof}
Choose $x'$, $y'$ such that $x' \alpha ' = y' \gamma$. We have
\begin{align*}
(\alpha,\beta) \underset{x \cdot \beta = y \cdot \gamma} {\times} (\gamma,\delta)
&\sim (\alpha',\beta') \underset{x' \cdot \beta' = y' \cdot \gamma} {\times} (\gamma,\delta), \text{ by Lemma \ref{lem ore theorem two},} \\
&\sim (\alpha',\beta') \underset{x'' \cdot \beta' = y'' \cdot \gamma'} {\times} (\gamma',\delta'), \text{ by Lemma \ref{lem ore theorem three}.} \qedhere
\end{align*}
\end{proof}

\begin{Lemma} \label{lem ore theorem five}

Let $(\alpha,\beta)$, $(\alpha',\beta') \in S$, and assume that $(\alpha,\beta) \sim (\alpha',\beta')$.  Then $(\beta,\alpha) \sim (\beta',\alpha')$.

\end{Lemma}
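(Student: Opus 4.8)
The plan is to unwind the definition of the equivalence relation $\sim$ on $S$ and observe that it is manifestly symmetric under simultaneously interchanging the two coordinates of each pair. Recall that $(\alpha,\beta) \sim (\alpha',\beta')$ means precisely that there exist $x$, $y \in \Lambda$ with
\[
x\alpha = y\alpha' \quad\text{and}\quad x\beta = y\beta'.
\]
To prove the lemma I must produce $u$, $v \in \Lambda$ witnessing $(\beta,\alpha) \sim (\beta',\alpha')$, that is, satisfying $u\beta = v\beta'$ and $u\alpha = v\alpha'$.

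The key step is the trivial observation that these are the very same two equations I already have in hand, merely listed in the opposite order. Thus I would simply set $u = x$ and $v = y$: then $u\beta = x\beta = y\beta' = v\beta'$ and $u\alpha = x\alpha = y\alpha' = v\alpha'$, as required. No right reversibility, right cancellation, or further manipulation is needed; the assertion is essentially a restatement of the symmetry built into the definition of $\sim$.

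Before concluding I would record the small bookkeeping point that $(\beta,\alpha)$ and $(\beta',\alpha')$ genuinely lie in $S$. This is because membership in $S$ is governed by the condition $r(\text{first}) = r(\text{second})$, which is itself symmetric in the two coordinates: since $(\alpha,\beta) \in S$ gives $r(\alpha) = r(\beta)$, we have $r(\beta) = r(\alpha)$ and hence $(\beta,\alpha) \in S$, and likewise for the primed pair.

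I do not anticipate any genuine obstacle here; the statement is immediate once the definition is spelled out, and the only thing to guard against is conflating the role of the first and second coordinates when reading off the witnesses $x$, $y$. This lemma is clearly intended as a preparatory observation ensuring that the inversion $(\alpha,\beta) \mapsto (\beta,\alpha)$ is well defined on $S/{\sim}$, which will be used when equipping $S/{\sim}$ with its groupoid structure.
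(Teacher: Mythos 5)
Your proof is correct and matches the paper's, which simply declares the result immediate: the defining equations $x\alpha = y\alpha'$, $x\beta = y\beta'$ are symmetric under swapping the coordinates of each pair, so the same witnesses $x$, $y$ work. Spelling this out, together with the check that $(\beta,\alpha)$, $(\beta',\alpha')$ lie in $S$, is exactly the intended argument.
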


\begin{proof}
The proof is immediate.
\end{proof}

\begin{Definition} \label{def ore groupoid}

Let $\Lambda$ be a right cancellative right reversible LCSC.  Let $G = S / \sim$.  We define multiplication on $G$ as follows.  Let $G^2 = \{([\alpha,\beta],[\gamma,\delta]) \in G \times G : s(\beta) = s(\gamma)\}$.  For $([\alpha,\beta],[\gamma,\delta]) \in G^2$ define  $[\alpha,\beta] [\gamma,\delta] = [(\alpha,\beta) \underset{x \cdot \beta = y \cdot \gamma}{\times} (\gamma,\delta)]$ for any choice of $x$, $y$ such that $x\beta = y \gamma$.  We define inversion by $[\alpha,\beta] = [\beta,\alpha]$.  These operations are well defined by Lemmas \ref{lem ore theorem four} and \ref{lem ore theorem five}.

\end{Definition}

\begin{Theorem} \label{thm ore theorem}

Let $\Lambda$ be a right cancellative right reversible LCSC.  The operations of Definition \ref{def ore groupoid} make $G$ into a groupoid.  The map $\iota : \alpha \in \Lambda \mapsto [r(\alpha),\alpha] \in G$ is an injective homomorphism.  Every element of $G$ can be written in the form $\iota(\alpha)^{-1} \iota(\beta)$. If $\pi : \Lambda \to H$ is a homomorphism of $\Lambda$ to a groupoid $H$, there is a unique homomorphism $\widetilde{\pi} : G \to H$ such that $\pi = \widetilde{\pi} \circ \iota$.

\end{Theorem}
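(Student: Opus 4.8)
The plan is to verify the groupoid axioms directly, exploiting the fraction intuition, with the key device being a common-denominator (telescoping) normal form for composable strings. Two elementary observations will be used throughout. First, the units of $G$ should be the classes $[v,v]$ with $v \in \Lambda^0$: a direct check from the definition of $\sim$ (together with right reversibility) shows $[\mu,\mu] = [\nu,\nu]$ if and only if $s(\mu) = s(\nu)$, so these classes are in bijection with $\Lambda^0$; computing $[\alpha,\beta][\beta,\alpha]$ and $[\beta,\alpha][\alpha,\beta]$ gives the units at $s(\alpha)$ and $s(\beta)$, which serve as $r([\alpha,\beta])$ and $s([\alpha,\beta])$, consistent with the inverse $[\alpha,\beta]^{-1} = [\beta,\alpha]$. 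Second, $[\alpha,\beta] = [x\alpha, x\beta]$ whenever $x\alpha$ is defined (left-multiplication invariance), which is immediate from the definition of $\sim$. Well-definedness of the product and of inversion has already been supplied by Lemmas \ref{lem ore theorem four} and \ref{lem ore theorem five}.

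The heart of the argument, and the step I expect to be the main obstacle, is associativity; I will deduce it from a telescoping normal form. Given composable $g = [\alpha_1,\alpha_2]$ and $h = [\beta_1,\beta_2]$ (so $s(\alpha_2) = s(\beta_1)$), right reversibility produces $x,y \in \Lambda$ with $x\alpha_2 = y\beta_1$, and then left-multiplication invariance rewrites $g = [x\alpha_1, x\alpha_2]$, $h = [x\alpha_2, y\beta_2]$; thus any composable pair attains telescoping form $g = [\mu,\nu]$, $h = [\nu,\rho]$, whence $gh = [\mu,\rho]$. Iterating once more — applying right reversibility to $\nu\rho^{-1}\cdots$, i.e.\ to the shared denominator and the numerator of a third factor, and left-multiplying all three representatives appropriately — a composable triple $g,h,k$ is written $g = [\mu_0,\mu_1]$, $h = [\mu_1,\mu_2]$, $k = [\mu_2,\mu_3]$. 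For such representatives $(gh)k = [\mu_0,\mu_2][\mu_2,\mu_3] = [\mu_0,\mu_3] = [\mu_0,\mu_1][\mu_1,\mu_3] = g(hk)$, giving associativity. The unit and inverse laws are then read off from the same telescoping representatives and the first paragraph, so $G$ is a groupoid.

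Next I turn to $\iota$. For $\alpha,\beta$ with $s(\alpha) = r(\beta)$, right reversibility gives $x,y$ with $x\alpha = y\,r(\beta) = y$, so $\iota(\alpha)\iota(\beta) = [r(\alpha),\alpha][r(\beta),\beta] = [x, x\alpha\beta] = [r(\alpha),\alpha\beta] = \iota(\alpha\beta)$ by left-multiplication invariance and $r(\alpha\beta) = r(\alpha)$; hence $\iota$ is a homomorphism. It is injective: $\iota(\alpha) = \iota(\beta)$ produces $x,y$ with $x\,r(\alpha) = y\,r(\beta)$ and $x\alpha = y\beta$, and the first equation reads $x = y$ since $r(\alpha), r(\beta)$ are identity morphisms, whence $x\alpha = x\beta$ forces $\alpha = \beta$ by \emph{left} cancellation. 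Finally, every class is a fraction: with $\iota(\alpha)^{-1} = [\alpha, r(\alpha)]$ and the product rule applied using $r(\alpha) = r(\beta)$, one computes $\iota(\alpha)^{-1}\iota(\beta) = [\alpha,\beta]$.

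For the universal property, given a homomorphism $\pi \colon \Lambda \to H$ into a groupoid, I will set $\widetilde{\pi}([\alpha,\beta]) = \pi(\alpha)^{-1}\pi(\beta)$, which makes sense because $r(\alpha) = r(\beta)$ gives $r(\pi(\alpha)) = r(\pi(\beta))$. Well-definedness is a direct computation in $H$: if $x\alpha = y\gamma$ and $x\beta = y\delta$, then applying $\pi$ and cancelling the common factors $\pi(x),\pi(y)$ yields $\pi(\alpha)^{-1}\pi(\beta) = \pi(\gamma)^{-1}\pi(\delta)$. That $\widetilde{\pi}$ is a homomorphism follows by putting a composable pair in telescoping form $[\mu,\nu],[\nu,\rho]$ and using $\pi(\mu)^{-1}\pi(\nu)\pi(\nu)^{-1}\pi(\rho) = \pi(\mu)^{-1}\pi(\rho)$, while $\pi = \widetilde{\pi}\circ\iota$ is immediate since $\pi(r(\alpha))$ is the range unit of $\pi(\alpha)$. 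Uniqueness is forced, since any homomorphism extending $\pi$ along $\iota$ must send $\iota(\alpha)^{-1}\iota(\beta)$ to $\pi(\alpha)^{-1}\pi(\beta)$, and these fractions exhaust $G$ by the previous paragraph.
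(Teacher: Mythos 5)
Your proposal is correct and takes essentially the same approach as the paper's proof: the telescoping (common left-multiple) form you use for associativity is exactly the witness computation the paper performs when verifying the groupoid axioms, and your treatment of $\iota$ (homomorphism via left-multiplication invariance, injectivity via left cancellation), of the fraction decomposition $[\alpha,\beta] = \iota(\alpha)^{-1}\iota(\beta)$, and of the universal property via $\widetilde{\pi}([\alpha,\beta]) = \pi(\alpha)^{-1}\pi(\beta)$ coincides with the paper's argument. The only difference is organizational: you isolate the common-denominator reduction as a normal form before computing products, whereas the paper carries the explicit witnesses ($x\beta = y\gamma$, then $zy\delta = w\zeta$) through each calculation.
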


\begin{proof}
We verify \cite[p. 7, conditions (i) and (ii)]{pat}.  For \cite[p. 7, condition (i)]{pat}, let $([\alpha,\beta], [\gamma,\delta])$, $([\gamma,\delta], [\zeta,\eta]) \in G^2$.  It is immediate that $([\alpha,\beta][\gamma,\delta], [\zeta,\eta])$, $([\alpha,\beta], [\gamma,\delta][\zeta,\eta]) \in G^2$.  We show that $([\alpha,\beta][\gamma,\delta])[\zeta,\eta] = [\alpha,\beta]([\gamma,\delta][\zeta,\eta])$. Let $x$, $y$ with $x\beta = y\gamma$.  Let $z$, $w$ be such that $z y \delta = w \zeta$.  Then
\[
([\alpha,\beta][\gamma,\delta])[\zeta,\eta] 
= [(\alpha,\beta) \underset{x \cdot \beta = y \cdot \gamma}{\times} (\gamma,\delta)][\zeta,\eta]
= [x\alpha,y\delta][\zeta,\eta] 
=[(x\alpha,y\delta) \underset{z \cdot y\delta = w \cdot \zeta}{\times} (\zeta,\eta)]
= [zx\alpha,w\eta].
\]
Since $x\beta = y \gamma$ we also have $zx\beta = zy\gamma$.  Then
\[
[\alpha,\beta]([\gamma,\delta][\zeta,\eta]) 
= [\alpha,\beta][(\gamma,\delta) \underset{zy \cdot \delta = w \cdot \zeta}{\times} (\zeta,\eta)]
= [\alpha,\beta][zy\gamma,w\eta] 
= [(\alpha,\beta) \underset{zx \cdot \beta = r(z) \cdot zy \gamma}{\times} (zy\gamma,w\eta)]
= [zx\alpha,w\eta].
\]
For \cite[p. 7, condition (ii)]{pat}, note that $([\alpha,\beta],[\beta,\alpha]) \in G^2$ for any $[\alpha,\beta] \in G$. Moreover, elementary calculations show that $[\alpha,\beta][\beta,\alpha] = [\alpha,\alpha] = [s(\alpha),s(\alpha)] = s([\beta,\alpha])$.  Now let $([\alpha,\beta],[\gamma,\delta]) \in G^2$.  Then by condition (i),
\[
[\beta,\alpha]([\alpha,\beta][\gamma,\delta])
= ([\beta,\alpha][\alpha,\beta])[\gamma,\delta]
= [\beta,\beta][\gamma,\delta]
= s([\beta,\beta])[\gamma,\delta]
= [\gamma,\delta],
\]
and similarly, $([\alpha,\beta][\gamma,\delta])[\delta,\gamma] = [\alpha,\beta]$.

Define $\iota : \Lambda \to G$ as in the statement.  Note that if $s(\alpha) = r(\beta)$, then
\[
\iota(\alpha) \iota(\beta)
= [r(\alpha),\alpha][r(\beta),\beta]
= [(r(\alpha),\alpha) \underset{r(\alpha) \cdot \alpha = \alpha \cdot r(\beta)}{\times} (r(\beta),\beta)]
= [r(\alpha), \alpha\beta]
= \iota(\alpha \beta).
\]
Therefore $\iota$ is a homomorphism.  Suppose that $\iota(\alpha) = \iota(\beta)$.  Then $(r(\alpha),\alpha) \sim (r(\beta),\beta)$, so there are $x$, $y$ such that $x r(\alpha) = y r(\beta)$ and $x\alpha = y \beta$.  But then $x = y$, and hence $\alpha = \beta$.  Therefore $\iota$ is injective.

Finally, let $\pi : \Lambda \to H$ be a homomorphism of $\Lambda$ to a groupoid $H$.  Define a map $\pi_S : S \to H$ by $\pi_S(\alpha,\beta) = \pi(\alpha)^{-1} \pi(\beta)$.  If $(\alpha,\beta) \sim (\alpha',\beta')$, then there are $\mu$, $\mu'$ such that $\mu \alpha = \mu' \alpha'$ and $\mu \beta = \mu' \beta'$.  Then
\[
\pi_S(\alpha,\beta)
= \pi(\alpha)^{-1} \pi(\beta)
= \pi(\alpha)^{-1} \pi(\mu)^{-1} \pi(\mu) \pi(\beta)
= \pi(\mu \alpha)^{-1} \pi(\mu \beta)
= \pi(\mu' \alpha')^{-1} \pi(\mu' \beta')
= \pi_S(\alpha',\beta').
\]
Therefore there is a well defined map $\widetilde{\pi} : G \to H$ given by $\widetilde{\pi}([\alpha,\beta]) = \pi_S(\alpha,\beta) = \pi(\alpha)^{-1} \pi(\beta)$. It follows from this formula that $\widetilde{\pi}$ is a homomorphism.  Moreover
\[
\widetilde{\pi} \circ \iota (\alpha) = \widetilde{\pi}([r(\alpha),\alpha]) = \pi_S(r(\alpha),\alpha) = \pi(r(\alpha))^{-1} \pi(\alpha) = r(\pi(\alpha))^{-1} \pi(\alpha) = \pi(\alpha).
\]
Since
\[
\widetilde{\pi}([\alpha,\beta]) = \widetilde{\pi}([\alpha,r(\alpha)][r(\beta),\beta]) \widetilde{\pi}([r(\alpha),\alpha]^{-1} [r(\beta),\beta]) = (\widetilde{\pi} \circ \iota (\alpha))^{-1} \widetilde{\pi} \circ \iota (\beta),
\]
it is clear that $\widetilde{\pi}$ is unique.
\end{proof}

\section{Subcategories}
\label{sec subcategories}

In this section we present some results concerning functoriality of $\CT(\Lambda)$ with respect to subcategories.  In the case of graph algebras, this was developed in \cite{spi_graphalg}.  For LCSC's the results are less definitive, but still give useful decompositions of the algebras.  In the case of a subgraph of a directed graph, the (Toeplitz) algebra of the subgraph is a subalgebra of that of the larger graph.  However for more general LCSC's this is not true.  It is necessary to use the larger LCSC in some way in the construction of the algebra of the subcategory.  There are various ways to do this.  We first give a construction that is built directly on the unit space of the groupoid of the larger LCSC.  This has the advantage that the $C^*$-algebra of the pair is automatically a subalgebra of the $C^*$-algebra of the larger LCSC.  The disadvantage is that there is not a characterization by generators and relations using only the categories.  Such a characterization can be given, and we do that afterwards.  This is modeled on the treatment for {\it relative categories of paths} in \cite{spi_pathcat}, but there are subtle errors in that treatment.  These errors become magnified in the context of LCSC's.  In this paper we have carefully managed these difficulties for the nonrelative case by considering the two groupoids of Section \ref{sec groupoid}.  In fact, if relative LCSC's are defined in analogy with relative categories of paths as in \cite{spi_pathcat}, then nearly all of the results of this paper apply with only notational changes to relative LCSC's.  We indicate below how this is done, correcting the errors of \cite{spi_pathcat} in the process.

\vspace*{.1 in}

\noindent
{\bf Constructions for an LCSC with respect to a subcategory}

\begin{Definition} \label{def subcategory groupoids}

Let $\Lambda$ be an LCSC, and let $\Lambda_0 \subseteq \Lambda$ be a subcategory.  We define $X^{\Lambda_0}(\Lambda) = \bigsqcup_{v \in \Lambda_0^0} X_v(\Lambda)$.  We set $\CZ(\Lambda_0) * X^{\Lambda_0}(\Lambda) = \{ (\zeta,x) \in \CZ(\Lambda_0) \times X^{\Lambda_0}(\Lambda) : s(\zeta) = r(x) \}$.  We restrict $\sim_1$ and $\sim_2$ of Definition \ref{def groupoids} to $\CZ(\Lambda_0) * X^{\Lambda_0}(\Lambda)$ (and continue to use $\sim_1$ and $\sim_2$ to denote the restrictions). Note that for $\zeta \in \CZ(\Lambda_0)$ the partial homeomorphism $\Phi_\zeta$ of $X(\Lambda)$ actually has domain $\widehat{A(\zeta)}$ contained in $X^{\Lambda_0}(\Lambda)$, and the partial bijection $\varphi$ has domain $A(\zeta)$ contained in $\Lambda_0^0 \cdot \Lambda \cdot \Lambda_0^0$.  For $i = 1$, 2 we let $G_i^{\Lambda_0}(\Lambda) = ( \CZ(\Lambda_0) * X^{\Lambda_0}(\Lambda) ) / \sim_i$.  Then $G_i^{\Lambda_0}(\Lambda)$ is a clopen subgroupoid of $G_i(\Lambda)$, with unit space $X^{\Lambda_0}(\Lambda)$.  We refer to $G_i^{\Lambda_0}(\Lambda)$ as the {\it groupoids of $\Lambda$ with respect to the subcategory $\Lambda_0$}.

\end{Definition}

\begin{Theorem} \label{thm subcategories}

Let $\Lambda$ be an LCSC, and let $\Lambda_0 \subseteq \Lambda$ be a subcategory.  The inclusion $C_c(G_i^{\Lambda_0}(\Lambda)) \subseteq C_c(G_i(\Lambda))$ induces an injective $*$-homomorphism $C^*(G_i^{\Lambda_0}(\Lambda)) \to C^*(G_i(\Lambda))$.

\end{Theorem}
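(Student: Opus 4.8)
The plan is to write $G := G_i(\Lambda)$ and $H := G_i^{\Lambda_0}(\Lambda)$, and to exploit that $H$ is a \emph{clopen} subgroupoid of $G$ (established in Definition \ref{def subcategory groupoids}), with clopen unit space $X^{\Lambda_0}(\Lambda) \subseteq X(\Lambda)$. First I would dispose of the easy half: because $H$ is open in $G$, extension by zero of a function in $C_c(H)$ is continuous and compactly supported on $G$, and for $f,g \in C_c(H)$ the convolution $(f*g)(\gamma)=\sum_{\alpha\beta=\gamma}f(\alpha)g(\beta)$ can only be nonzero on $H$ (both factors must lie in $H$), where it reduces to the convolution in $C_c(H)$. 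Thus extension by zero is a $*$-homomorphism $\iota_0:C_c(H)\to C_c(G)$, continuous for the inductive-limit topologies, and it extends to a $*$-homomorphism $\iota:C^*(G_i^{\Lambda_0}(\Lambda))\to C^*(G_i(\Lambda))$, which is automatically norm-decreasing. The real content is injectivity.

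For injectivity I would use Rieffel induction from the open subgroupoid $H$. The clopen hypothesis makes the restriction map $P:C_c(G)\to C_c(H)$, $P(f)=f|_H$, well defined (the restriction of a compactly supported continuous function to the clopen set $H$ again lies in $C_c(H)$), and $P\circ\iota_0=\mathrm{id}_{C_c(H)}$. Setting $\langle f,g\rangle_H := P(f^**g)$ for $f,g\in C_c(G)$, I would show that this turns $\mathcal{E}:=\overline{C_c(G)}$ into a right Hilbert $C^*(H)$-module, on which $C^*(G)$ acts on the left by adjointable convolution operators $L_f:\xi\mapsto f*\xi$; here $L_{f^*}=L_f^*$ is immediate from $(f*\xi)^*=\xi^**f^*$, and $\|L_f\|\le\|f\|_{C^*(G)}$ follows from positivity of $P$ via $\langle f*\xi,f*\xi\rangle_H \le \|f\|^2\langle\xi,\xi\rangle_H$. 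Thus left convolution gives a representation of $C^*(G)$ on $\mathcal{E}$.

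The key reciprocity step is that $\iota_0$ embeds $C^*(H)$ isometrically as a submodule of $\mathcal{E}$: for $a,b\in C_c(H)$ one has $\langle \iota_0 a,\iota_0 b\rangle_H = P(\iota_0(a^**b)) = a^**b$, since $\iota_0(a^**b)$ is supported in $H$ and $P$ fixes functions supported in $H$. Consequently, given a faithful representation $\sigma$ of $C^*(H)$ on $V$, the induced representation $\mathrm{Ind}\,\sigma$ of $C^*(G)$ on $\mathcal{E}\otimes_{C^*(H)}V$ contains $\sigma$ as a subrepresentation of $(\mathrm{Ind}\,\sigma)\circ\iota$, because the closed submodule generated by $\iota_0(C_c(H))$ is isomorphic to $C^*(H)$ as a Hilbert module. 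Hence $(\mathrm{Ind}\,\sigma)(\iota(a))=0$ forces $\sigma(a)=0$ and therefore $a=0$; this proves $\iota$ is injective, in fact isometric, since $\|a\|=\|\sigma(a)\|\le\|\mathrm{Ind}\,\sigma(\iota(a))\|\le\|\iota(a)\|\le\|a\|$.

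The main obstacle is the positivity underlying the Hilbert-module structure, namely that $P(f^**f)\ge 0$ in $C^*(H)$ for every $f\in C_c(G)$ (equivalently, that the restriction map $P$ is completely positive for the full norms); this, together with adjointability and boundedness of the left action, is exactly where the \'etale and clopen hypotheses must be used. I expect to verify it by testing against representations of $C^*(H)$ and reducing the claim to the combinatorics of convolution over $G$ along the fibres of the quotient by $H$, where the relevant sums organize into positive matrices. An alternative, should the direct positivity argument prove awkward, is first to compress by the multiplier projection $p=\chi_{X^{\Lambda_0}(\Lambda)}$ to identify $C^*(G|_{X^{\Lambda_0}(\Lambda)})$ with the corner $pC^*(G)p$ (using that $X^{\Lambda_0}(\Lambda)$ is clopen), thereby reducing to the case where $H$ and the ambient groupoid share a unit space before running the same induction.
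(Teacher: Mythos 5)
Your plan has the right architecture, and after completion it is in fact the paper's own module in disguise: writing $H := G_i^{\Lambda_0}(\Lambda)$ and $G := G_i(\Lambda)$, for $\gamma \in H$ the sum defining $P(f^* * f)(\gamma)$ runs only over $\delta$ with $s(\delta) = r(\gamma) \in H^{(0)}$, so every $f$ supported off $G \cdot H^{(0)} := \{ g \in G : s(g) \in H^{(0)} \}$ is a null vector and your $\mathcal{E}$ is the completion of $C_c(G \cdot H^{(0)})$; your ``reciprocity'' computation is the paper's decomposition $X = X_0 \oplus X_1$ with $X_0 \cong C^*(H)$ the standard module, and the final containment-of-representations step is identical. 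The genuine gap is exactly the step you flag and then defer: positivity, i.e. $P(f^* * f) \ge 0$ in the \emph{full} algebra $C^*(H)$, together with the bound $\langle f * \xi, f * \xi \rangle_H \le \| f \|_{C^*(G)}^2 \langle \xi, \xi \rangle_H$ (which requires positivity of $a \mapsto P(\xi^* * a * \xi)$ on $C^*(G)$, not just of $P$). Without these, $\mathcal{E}$ is not a Hilbert $C^*(H)$-module, $\mathrm{Ind}\,\sigma$ does not exist, and nothing downstream is available. Moreover, the verification you sketch --- organizing the convolution into visibly positive matrices along the fibres of the quotient by $H$ --- is the classical argument for a subgroup of a discrete group, where $G$ is the disjoint union of the cosets $gH$, each open because $G$ is discrete, so $C_c(G)$ splits as a direct sum of right $C_c(H)$-modules each isomorphic to the standard one. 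In an \'etale groupoid this breaks down: a coset $gH$ lies inside the discrete fibre $r^{-1}(r(g))$, hence is not open in $G$, and the coset structure of the fibres jumps discontinuously as the unit varies; so there is no decomposition of $C_c(G \cdot H^{(0)})$ along cosets, and a partition of unity by bisections does not yield a matrix that can be block-diagonalized and factored as $[\,a_i^* * a_j\,]$. Proving positivity at this level of generality is precisely what Renault's disintegration theorem and the Muhly--Renault--Williams equivalence theorem accomplish.

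This is where the paper's proof differs from yours in substance: it does not prove positivity by hand, but invokes \cite[Section 2]{ionwil}, using that $G \cdot H^{(0)}$ is a free and \emph{proper} right $H$-space (properness is where closedness of $H$, i.e. half of the clopen hypothesis, enters), so that $C_c(G \cdot H^{(0)})$ completes to a $C^*(H^G)$--$C^*(H)$ imprimitivity bimodule with positivity supplied by that theory; the left $C^*(G)$-action and the invariant splitting $X = X_0 \oplus X_1$ are then exactly as in your outline. Note also that your fallback does not evade the issue: for full $C^*$-algebras, identifying $C^*\bigl(G|_{X^{\Lambda_0}(\Lambda)}\bigr)$ with the corner $p\,C^*(G)\,p$ is itself an instance of the same equivalence machinery (it is not automatic, since $X^{\Lambda_0}(\Lambda)$ is not invariant), and even granting it, $H$ is in general a \emph{proper} open subgroupoid of $G|_{X^{\Lambda_0}(\Lambda)}$ --- it contains only classes of zigzags built from $\Lambda_0$ --- so the injectivity problem, and with it the positivity problem, recurs verbatim with the same unit space. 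To complete your argument, replace the positive-matrices step by an appeal to \cite{ionwil} or \cite{muhrenwil}, or prove positivity directly via disintegration of representations of $C^*(H)$.
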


\begin{proof}
(This theorem is true generally for a clopen subgroupoid of an \'etale groupoid.) Fix $i \in \{1,2\}$.  Let $G = G_i(\Lambda)$ and $H = G_i^{\Lambda_0}(\Lambda)$.  Then $H$ is a clopen subgroupoid of $G$.  Let $\alpha : C^*(H) \to C^*(G)$ be the $*$-homomorphism defined by the inclusion $C_c(H) \subseteq C_c(G)$.  We follow \cite[Section 2]{ionwil}. Then $G \cdot H^{(0)}$ is a free and proper right $H$-space.  Let $H^G = (G \cdot H^{(0)} *_s G\cdot H^{(0)}) / H$ (where $H$ acts on the right by the diagonal action).  Then $H^G$ is an (ample) \'etale groupoid and $G \cdot H^{(0)}$ is a free and proper left $H^G$-space. There are $C_c(H^G)$- and $C_c(H)$-valued inner products on $C_c(G \cdot H^{(0)}$.  Let $X$ be the completion of $C_c(G \cdot H^{(0)})$ as a $C^*(H^G)$-$C^*(G)$ imprimitivity bimodule.  There is a left action of $C_c(G)$ on $C_c(G \cdot H^{(0)})$ that extends to a nondegenerate homomorphism $C^*(G) \to L(X)$.  We restrict this action to $C_c(H) \subseteq C_c(G)$.  Write $C_c(G \cdot H^{(0)}) = C_c(H) \oplus C_c(G \cdot H^{(0)} \setminus H)$, an invariant decomposition for the left and right actions of $C_c(H)$.  Completing in $X$ gives $X = X_0 \oplus X_1$, invariant for the left and right actions of $C^*(H)$. The formulas in \cite{ionwil} restrict to the usual left and right convolution of $C_c(H)$.  Therefore $X_0 = C^*(H)$ is the standard correspondence over $C^*(H)$.  It follows that if $L$ is a faithful representation of $C^*(H)$, and if Ind$\, L$ is the induced representation of $C^*(G)$, then Ind$\, L \circ \alpha$ contains $L$.  Therefore Ind$\, L \circ \alpha$ is injective, and hence $\alpha$ is injective.
\end{proof}

\begin{Definition} \label{def toeplitz algebra with respect to a subcategory}

We let $\CT^{\Lambda_0}(\Lambda) = C^*(G_2^{\Lambda_0}(\Lambda))$, the {\it Toeplitz algebra with respect to the subcategory} $\Lambda^0$.

\end{Definition}

\begin{Corollary}

Let $\Lambda$ be an LCSC, and let $\Lambda_1 \subseteq \Lambda_2 \subseteq \cdots \subseteq \Lambda$ be subcategories such that $\Lambda = \cup_n \Lambda_n$.  Then $C^*(G_i(\Lambda)) = \overline{ \cup_n C^*(G_i^{\Lambda_n}(\Lambda))}$.

\end{Corollary}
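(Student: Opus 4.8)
The plan is to reduce everything to two facts: that each $G_i^{\Lambda_n}(\Lambda)$ is a clopen subgroupoid of $G_i(\Lambda)$ whose $C^*$-algebra embeds isometrically via Theorem \ref{thm subcategories}, and that these subgroupoids exhaust $G_i(\Lambda)$ in a way already visible on the dense $*$-subalgebra $C_c(G_i(\Lambda))$. So the argument will be a routine ``increasing union of $C^*$-subalgebras whose cores have dense union'' computation, once the exhaustion is set up correctly.

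First I would record the exhaustion at the groupoid level. Since $\Lambda = \bigcup_n \Lambda_n$ with the $\Lambda_n$ nested, every object of $\Lambda$ is an identity morphism lying in some $\Lambda_n$, so $\Lambda^0 = \bigcup_n \Lambda_n^0$ and hence $X^{\Lambda_n}(\Lambda) = \bigsqcup_{v \in \Lambda_n^0} X_v(\Lambda)$ increases to $X(\Lambda)$. Likewise a zigzag $\zeta \in \CZ_\Lambda$ is a finite tuple of elements of $\Lambda$; each entry lies in some $\Lambda_m$, and by nestedness all entries lie in $\Lambda_n$ for $n$ the maximum of the finitely many indices involved, so $\zeta \in \CZ(\Lambda_n)$. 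Thus $\bigcup_n \CZ(\Lambda_n) = \CZ_\Lambda$, and combining the two observations, every $(\zeta,x) \in \CZ_\Lambda * X(\Lambda)$ lies in $\CZ(\Lambda_n) * X^{\Lambda_n}(\Lambda)$ for all large $n$; hence $G_i(\Lambda) = \bigcup_n G_i^{\Lambda_n}(\Lambda)$ as an increasing union of clopen subgroupoids.

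Next I would transfer this to the algebras. Recall that $C_c(G_i(\Lambda))$ is the linear span of the characteristic functions $\chi_{[\zeta,E]_i}$ of the basic compact-open bisections $[\zeta,E]_i$ (with $\zeta \in \CZ_\Lambda$ and $E \in \CA_{s(\zeta)}$), and is a dense $*$-subalgebra of $C^*(G_i(\Lambda))$. Under Theorem \ref{thm subcategories} the map $\iota_n : C^*(G_i^{\Lambda_n}(\Lambda)) \to C^*(G_i(\Lambda))$ is the isometric extension (injectivity forces isometry) of the extension-by-zero inclusion $C_c(G_i^{\Lambda_n}(\Lambda)) \hookrightarrow C_c(G_i(\Lambda))$, so its image is exactly the closure of $C_c(G_i^{\Lambda_n}(\Lambda))$ computed inside $C^*(G_i(\Lambda))$. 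Since $\Lambda_m \subseteq \Lambda_n$ for $m \le n$ gives $C_c(G_i^{\Lambda_m}(\Lambda)) \subseteq C_c(G_i^{\Lambda_n}(\Lambda))$ as subspaces of $C_c(G_i(\Lambda))$, closing up yields an increasing chain $\iota_m(C^*(G_i^{\Lambda_m}(\Lambda))) \subseteq \iota_n(C^*(G_i^{\Lambda_n}(\Lambda)))$ of $C^*$-subalgebras. Then I assemble the density step: for each generator $\chi_{[\zeta,E]_i}$ of $C_c(G_i(\Lambda))$ pick $n$ with $\zeta \in \CZ(\Lambda_n)$, so that $[\zeta,E]_i$ is a basic bisection of $G_i^{\Lambda_n}(\Lambda)$ and $\chi_{[\zeta,E]_i} \in C_c(G_i^{\Lambda_n}(\Lambda)) \subseteq \iota_n\bigl(C^*(G_i^{\Lambda_n}(\Lambda))\bigr)$. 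Hence $\bigcup_n \iota_n(C^*(G_i^{\Lambda_n}(\Lambda))) \supseteq \bigcup_n C_c(G_i^{\Lambda_n}(\Lambda)) = C_c(G_i(\Lambda))$, which is dense, and taking closures gives $\overline{\bigcup_n C^*(G_i^{\Lambda_n}(\Lambda))} = C^*(G_i(\Lambda))$ after identifying each subalgebra with its image under $\iota_n$.

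I expect the one genuinely delicate point — the main obstacle — to be the bookkeeping around $C_c(G_i)$ in the possibly non-Hausdorff ample \'etale setting: confirming that $C_c(G_i(\Lambda))$ really is spanned by the basic $\chi_{[\zeta,E]_i}$ (so that it suffices to treat these generators) and that the clopen inclusion is compatible with this spanning set, i.e.\ that extension by zero carries a basic bisection $[\zeta,E]_i$ of $G_i^{\Lambda_n}(\Lambda)$ to the same basic bisection of $G_i(\Lambda)$. Once that identification of spanning sets is pinned down, the remaining computation is entirely routine.
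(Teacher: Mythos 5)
Your proposal is correct and is essentially the argument the paper intends: the corollary is stated without proof as an immediate consequence of Theorem \ref{thm subcategories}, combined with exactly your exhaustion observations that every zigzag of $\Lambda$ lies in some $\CZ(\Lambda_n)$, so the clopen subgroupoids $G_i^{\Lambda_n}(\Lambda)$ (and hence the subalgebras $C_c(G_i^{\Lambda_n}(\Lambda))$) exhaust $G_i(\Lambda)$ (and $C_c(G_i(\Lambda))$). The one point you flag as delicate is handled by the partition argument already used in the proof of Theorem \ref{thm toeplitz gens and rels} --- any $f \in C_c(G_i(\Lambda))$ is a finite sum of continuous functions supported on basic compact open bisections $[\zeta_j,A_j]$, hence lies in $C_c(G_i^{\Lambda_n}(\Lambda))$ for $n$ large --- so density of the union follows and your argument goes through as written.
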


\noindent
{\bf Relative LCSC's}

\vspace*{.1 in}

We now turn to {\it relative LCSC's}.  The general theory for categories of paths in \cite[Sections 2 - 6]{spi_pathcat} was developed for relative categories of paths in an attempt to build functoriality into the construction.  However there is an error in that paper in the relative case.  Namely, the definition of the groupoid uses the equivalence relation that we denote by $\sim_1$ in this paper, whereas the characterization by generators and relations (\cite[Theorem 6.1]{spi_pathcat}) uses the relation (4)$_2$ (from Definition \ref{def toeplitz relations}).  As we have seen in Proposition \ref{prop lcsc without inverses}, this doesn't matter for categories of paths, since in that case we have $\sim_1 = \sim_2$. (Therefore \cite[Theorem 6.1]{spi_pathcat} and its proof are valid for (nonrelative) categories of paths.)  For relative categories of paths this need not be the case.  (Specifically, this error occurs in two places. First, in the claim that $\varphi_\zeta = \text{id}_{A(\zeta)}$ if and only if $\Phi_\zeta = \text{id}_{\widehat{A(\zeta)}}$ in \cite[Section 6, paragraph 1]{spi_pathcat}, the {\it if} direction is false in general for relative categories of paths.  Second, in the proof of \cite[Theorem 6.1, p. 579 line 5]{spi_pathcat} the assertion that $\Phi_{\zeta_j}|_{\widehat{A(\xi)}} = \Phi_\xi$ for $j = 1$, 2 implies that $\varphi_{\zeta_1 \overline{\xi}\xi} = \varphi_{\zeta_2 \overline{\xi}\xi}$ is not valid for relative categories of paths.)

We describe explicitly the error in the proof of \cite[Theorem 6.1]{spi_pathcat}.  The assertion that a family $\{T_\zeta : \zeta \in \CZ(\Lambda_0,\Lambda) \subseteq B(H) \}$ satisfying (1) - (4) of that theorem must define a representation of $C^*(G(\Lambda_0,\Lambda))$ is false.  Namely, since \cite[Theorem 6.1(4)]{spi_pathcat} is actually (4)$_2$ of this paper, such a family defines a representation $\pi$ of $C^*(G_2(\Lambda_0,\Lambda))$ (by Theorem \ref{thm toeplitz gens and rels}) given by $\pi(t_\zeta) = T_\zeta$.  Corollary \ref{cor 2 maps to 1} (adapted to relative LCSC's) implies that there is a surjection $C^*(G_2(\Lambda_0,\Lambda)) \to C^*(G_1(\Lambda_0,\Lambda))$, but $\pi$ need not factor through this surjection.  For example, suppose that there is $\zeta \in \CZ(\Lambda_0,\Lambda)$ such that $\varphi_\zeta \not= \text{id}_{A(\zeta)}$ but $\Phi_\zeta = \text{id}_{\widehat{A(\zeta)}}$.  Then $\pi_\ell(t_\zeta) \not= \pi_\ell(t_\zeta)^* \pi_\ell(t_\zeta)$, and hence $\pi_\ell$ is not a representation of $C_c(G_1(\Lambda_0,\Lambda))$.  However $\{ \pi_\ell(t_\zeta) \}$ do satisfy (1) - (4) of \cite[Theorem 6.1]{spi_pathcat} (since (4) does not apply to this particular $\zeta$).

However, our treatment in this paper of the two groupoids $G_1$ and $G_2$ corrects these errors.  We now present the relative theory for LCSC's.  (In the process we will indicate one other minor error in \cite{spi_pathcat}.)

\begin{Definition} \label{def relative lcsc}
(cf. \cite[Definitions 2.11 and 2.12, Remark 2.13]{spi_pathcat})
Let $\Lambda$ be an LCSC and $\Lambda_0 \subseteq \Lambda$ a subcategory.  We refer to the pair $(\Lambda_0,\Lambda)$ as a {\it relative LCSC}.  We let $\CZ_{(\Lambda_0,\Lambda)} = \{ \zeta \in \CZ_\Lambda : \zeta = (\alpha_1,\beta_1,\ldots,\alpha_n,\beta_n) \text{ with } \alpha_i,\beta_i \in \Lambda_0 \}$.  For $v \in \Lambda_0^0$ we let $\CD(\Lambda_0,\Lambda)_v^{(0)} = \{A_\Lambda(\zeta) : \zeta \in \CZ_{(\Lambda_0,\Lambda)}v,\ A(\zeta) \not= \varnothing \}$ and we let $\CA(\Lambda_0,\Lambda)_v$ be the ring of sets generated by $\CD(\Lambda_0,\Lambda)_v^{(0)}$.

\end{Definition}

We remark that while $A_\Lambda(\zeta)$ and $\varphi_\zeta$ in Definition \ref{def relative lcsc} do not depend on $\Lambda_0$, the collections $\CD(\Lambda_0,\Lambda)_v^{(0)}$ and the rings $\CA(\Lambda_0,\Lambda)_v$ do.  All of the definitions and results of sections \ref{sec groupoid} - \ref{sec gens and rels} of this paper hold for relative LCSC's, with the same proofs --- except for Propositions \ref{prop lcsc without inverses} and \ref{prop equality of groupoids}, and the results about finitely aligned LCSC's.  In particular we explicitly state the relative version of Theorem \ref{thm toeplitz gens and rels}.

\begin{Theorem} \label{thm relative toeplitz gens and rels}

Let $(\Lambda_0,\Lambda)$ be a relative LCSC.  Let $i \in \{1,2\}$.  Then $C^*(G_i(\Lambda_0,\Lambda))$ is the universal $C^*$-algebra generated by a family $\{T_\zeta : \zeta \in \CZ(\Lambda_0,\Lambda) \}$ satisfying  (1) - (3) and (4)$_i$ of Definition \ref{def toeplitz relations}.

\end{Theorem}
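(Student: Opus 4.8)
The plan is to reproduce the proof of Theorem \ref{thm toeplitz gens and rels} essentially verbatim, systematically replacing every global object by its relative counterpart: $\CZ_\Lambda$ by $\CZ(\Lambda_0,\Lambda)$, the collections $\CD^{(0)}_v$ and rings $\CA_v$ by $\CD(\Lambda_0,\Lambda)^{(0)}_v$ and $\CA(\Lambda_0,\Lambda)_v$, the space $X$ by $X^{\Lambda_0}(\Lambda)$, and the groupoids $G_i$ by $G_i(\Lambda_0,\Lambda)$. The partial maps $\varphi_\zeta$ and $\Phi_\zeta$ retain their meaning for $\zeta \in \CZ(\Lambda_0,\Lambda)$, now viewed as a partial bijection of $\Lambda_0^0 \cdot \Lambda \cdot \Lambda_0^0$ and a partial homeomorphism of $X^{\Lambda_0}(\Lambda)$ respectively. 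As noted in the remark preceding the statement, all the auxiliary results invoked in the proof of Theorem \ref{thm toeplitz gens and rels} --- namely Lemma \ref{lem gens and rels in G}, Lemma \ref{lem elementary consequences}, Theorem \ref{thm boolean ring homomorphism}, and Lemma \ref{lem fixed ultrafilters are dense} --- hold for relative LCSC's with unchanged proofs, since none of these relies on Proposition \ref{prop lcsc without inverses} or on finite alignment.

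First I would establish the forward direction: by the relative form of Lemma \ref{lem gens and rels in G}, the generating functions $t_\zeta := \chi_{[\zeta,A(\zeta)]} \in C_c(G_i(\Lambda_0,\Lambda))$ satisfy (1) - (3) and (4)$_i$. For the converse, suppose $\{T_\zeta : \zeta \in \CZ(\Lambda_0,\Lambda)\}$ satisfies these relations in a $C^*$-algebra $B$. I would define $\mu : \CD(\Lambda_0,\Lambda)^{(0)} \to B$ by $\mu(A(\zeta)) = T_\zeta^* T_\zeta$, verify conditions \eqref{thm boolean ring homomorphism 1} and \eqref{thm boolean ring homomorphism 2} of the relative Theorem \ref{thm boolean ring homomorphism} exactly as before (the first following from (1) and (2) together with Remark \ref{r.zigzag}\eqref{r.zigzag.e}, the second from (3)), and so obtain a $*$-homomorphism $\pi_0 : C_0(X^{\Lambda_0}(\Lambda)) \to B$ with $\pi_0(\chi_{A(\zeta)}) = T_\zeta^* T_\zeta$. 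Then I would extend $\pi_0$ to $C^*(G_i(\Lambda_0,\Lambda))$ locally, setting $\pi(f) = T_\zeta \pi_0(\widetilde{f})$ for $f$ supported in a basic set $[\zeta,A(\zeta)]$, checking well-definedness via the relative Lemma \ref{lem elementary consequences}\eqref{lem elementary consequences 4} when $i = 2$ and via \eqref{lem elementary consequences 5} when $i = 1$, and finally verifying inductive-limit continuity and multiplicativity as in the original argument.

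The one point requiring genuine care --- and the precise location of the error in \cite{spi_pathcat} that this paper corrects --- is the bookkeeping that keeps the relation (4)$_i$ matched to the correct groupoid $G_i(\Lambda_0,\Lambda)$. In the relative setting the equivalence $\varphi_\zeta = \text{id}_{A(\zeta)} \iff \Phi_\zeta = \text{id}_{\widehat{A(\zeta)}}$ fails in the ``if'' direction, so I must check that this direction is never used. For $i = 2$ the relative Lemma \ref{lem gens and rels in G} needs only the ``only if'' direction, $\varphi_\zeta = \text{id}_{A(\zeta)} \Rightarrow \Phi_\zeta = \text{id}_{\widehat{A(\zeta)}}$, whose proof --- comparing $\CU_x$ with $\CU_{\Phi_\zeta(x)}$ on $E \cap A(\zeta)$ --- is unaffected by the presence of $\Lambda_0$; and for $i = 1$ the argument works directly with $\Phi_\zeta$ and never touches $\varphi_\zeta$. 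Because the two relations and two groupoids are kept rigorously separate throughout (as they already are in Lemma \ref{lem elementary consequences} and in the proof of Theorem \ref{thm toeplitz gens and rels}), no step silently passes from $\Phi_\zeta = \text{id}$ to $\varphi_\zeta = \text{id}$, and the proof goes through for both values of $i$. I expect this matching to be the only real obstacle; everything else is a transcription of the absolute case.
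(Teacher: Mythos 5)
Your overall strategy --- transcribing the proof of Theorem \ref{thm toeplitz gens and rels} with relative objects substituted, and checking that no step uses the false (``if'') direction of the equivalence between $\varphi_\zeta = \text{id}_{A(\zeta)}$ and $\Phi_\zeta = \text{id}_{\widehat{A(\zeta)}}$ --- is exactly the justification the paper itself gives, and your bookkeeping discussion of (4)$_i$ is on target. However, there is a genuine error in your setup: you substitute the wrong unit space. The space $X^{\Lambda_0}(\Lambda) = \bigsqcup_{v \in \Lambda_0^0} X_v(\Lambda)$ of Definition \ref{def subcategory groupoids} consists of ultrafilters in the \emph{full} rings $\CA(\Lambda)_v$; it is the unit space of the other construction, $G_i^{\Lambda_0}(\Lambda)$, for which the paper explicitly states there is \emph{no} characterization by generators and relations using only the categories. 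The unit space of the relative groupoid $G_i(\Lambda_0,\Lambda)$ is $X(\Lambda_0,\Lambda)$, the spectrum of the relative ring $\CA(\Lambda_0,\Lambda)_v$, which in general is only a proper quotient of $X^{\Lambda_0}(\Lambda)$ (the paper records the continuous proper surjection $X(\Lambda) \cap r^{-1}(\Lambda_0^0) \to X(\Lambda_0,\Lambda)$). Your substitution list is therefore internally inconsistent: you correctly replace $\CA_v$ by $\CA(\Lambda_0,\Lambda)_v$, but the relative Theorem \ref{thm boolean ring homomorphism} then produces a homomorphism $\pi_0$ defined on $C_0(X(\Lambda_0,\Lambda))$, not on $C_0(X^{\Lambda_0}(\Lambda))$ --- from the relations alone there is no way to prescribe $\pi_0$ on $\chi_{\widehat{E}}$ for $E \in \CA(\Lambda)_v \setminus \CA(\Lambda_0,\Lambda)_v$ --- and it is essential to the local extension argument that the domain of $\pi_0$ coincide with $C_0$ of the unit space of the groupoid being presented.

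The mistake is not merely notational, because it also corrupts the relations for $i = 1$. You say $\Phi_\zeta$ is ``now viewed as \ldots a partial homeomorphism of $X^{\Lambda_0}(\Lambda)$''; that map is $\Phi_\zeta^\Lambda$, whereas both $\sim_1$ and relation (4)$_1$ in the relative theorem must be read with $\Phi_\zeta^{(\Lambda_0,\Lambda)}$, the induced partial homeomorphism of $X(\Lambda_0,\Lambda)$. The closing Remark of the paper makes exactly this point: relation (4)$_1$ is not the same in $C^*(G_1(\Lambda_0,\Lambda))$ as in $C^*(G_1(\Lambda))$, which is why there is no analog of Theorem \ref{thm relative lcsc homomorphism} for $G_1$. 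With your reading, the $i = 1$ statement you would be proving is a different (and generally false) one. The fix is straightforward: run your transcription with $X(\Lambda_0,\Lambda)$ and $\Phi_\zeta^{(\Lambda_0,\Lambda)}$ throughout; all the auxiliary lemmas (including density of fixed ultrafilters, whose proof never needs injectivity of $\alpha \mapsto \CU_{\{\alpha\}}$, an injectivity that genuinely fails in the relative setting) hold there with unchanged proofs, and the argument then goes through for both values of $i$.
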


Note that $\CA(\Lambda_0,\Lambda)_v \subsetneq \CA(\Lambda)_v$, and hence $A(\Lambda_0,\Lambda)_v$ is a unital $C^*$-subalgebra of $A(\Lambda)_v$.  Therefore there is a continuous surjection $\pi_v : X(\Lambda)_v \to X(\Lambda_0,\Lambda)_v$.  Thus we have a continuous proper surjection $\pi : X(\Lambda) \cap r^{-1}(\Lambda_0^0) \to X(\Lambda_0,\Lambda)$.

We now consider consequences of Theorem \ref{thm toeplitz gens and rels} in the case of a relative LCSC.  For $\zeta \in \CZ(\Lambda_0,\Lambda)$ we have $A_{(\Lambda_0,\Lambda)}(\zeta) = A_\Lambda(\zeta)$ and $\varphi_\zeta^{(\Lambda_0,\Lambda)} = \varphi_\zeta^\Lambda$.  Therefore relations (3) and (4)$_2$ of Definition \ref{def toeplitz relations} are the same for $(\Lambda_0,\Lambda)$ as for $\Lambda$.  Thus the generators $\{t_\zeta : \zeta \in \CZ(\Lambda_0,\Lambda) \}$ of $C^*(G_2(\Lambda_0,\Lambda))$ satisfy the relations defining $C^*(G_2(\Lambda))$.  We therefore have the following results.

\begin{Theorem} \label{thm relative lcsc homomorphism}

Let $(\Lambda_0,\Lambda)$ be a relative LCSC.  There is a $*$-homomorphism $\CT(\Lambda_0,\Lambda) \to \CT(\Lambda)$ defined by mapping generators to (corresponding) generators.

\end{Theorem}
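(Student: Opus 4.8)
The plan is to invoke the universal property supplied by Theorem \ref{thm relative toeplitz gens and rels}. Since $\CT(\Lambda_0,\Lambda) = C^*(G_2(\Lambda_0,\Lambda))$ is the universal $C^*$-algebra generated by a family $\{T_\zeta : \zeta \in \CZ(\Lambda_0,\Lambda)\}$ satisfying (1) -- (3) and (4)$_2$, to produce a $*$-homomorphism $\CT(\Lambda_0,\Lambda) \to \CT(\Lambda)$ it suffices to exhibit inside $\CT(\Lambda) = C^*(G_2(\Lambda))$ a family indexed by $\CZ(\Lambda_0,\Lambda)$ that satisfies exactly those relations. The natural candidate is the restriction of the canonical generators: since $\CZ(\Lambda_0,\Lambda) \subseteq \CZ(\Lambda)$, I would take $\{t_\zeta : \zeta \in \CZ(\Lambda_0,\Lambda)\}$, where $t_\zeta = \chi_{[\zeta,A(\zeta)]} \in C_c(G_2(\Lambda))$ is the generator of $\CT(\Lambda)$ from Lemma \ref{lem gens and rels in G}.

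The verification of the relations splits according to their nature. Relations (1) and (2) are inherited because $\CZ(\Lambda_0,\Lambda)$ is closed under composable concatenation and under reversal (both operations preserve the property that all entries of the zigzag lie in $\Lambda_0$); thus for $\zeta_1,\zeta_2,\zeta \in \CZ(\Lambda_0,\Lambda)$ the elements $\zeta_1\zeta_2$ and $\overline{\zeta}$ again lie in $\CZ(\Lambda_0,\Lambda)$, so the identities $t_{\zeta_1}t_{\zeta_2} = t_{\zeta_1\zeta_2}$ and $t_{\overline{\zeta}} = t_\zeta^*$ that hold for the full family remain within the restricted family. For (3) and (4)$_2$ I would invoke the observation recorded just before the statement: for $\zeta \in \CZ(\Lambda_0,\Lambda)$ one has $A_{(\Lambda_0,\Lambda)}(\zeta) = A_\Lambda(\zeta)$ and $\varphi_\zeta^{(\Lambda_0,\Lambda)} = \varphi_\zeta^\Lambda$. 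Consequently any hypothesis ``$A(\zeta) = \bigcup_{i=1}^n A(\zeta_i)$'' with $\zeta,\zeta_i \in \CZ(\Lambda_0,\Lambda)$, or ``$\varphi_\zeta = \mathrm{id}_{A(\zeta)}$'', is literally the same condition interpreted in $\Lambda$; since the full family $\{t_\zeta : \zeta \in \CZ(\Lambda)\}$ satisfies (3) and (4)$_2$ for \emph{all} zigzags in $\CZ(\Lambda)$ by Theorem \ref{thm toeplitz gens and rels}, the corresponding conclusions $t_\zeta^* t_\zeta = \bigvee_i t_{\zeta_i}^* t_{\zeta_i}$ and $t_\zeta = t_\zeta^* t_\zeta$ hold in $\CT(\Lambda)$ in particular for indices drawn from $\CZ(\Lambda_0,\Lambda)$.

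With the relations verified, the universal property in Theorem \ref{thm relative toeplitz gens and rels} yields a unique $*$-homomorphism $\CT(\Lambda_0,\Lambda) \to \CT(\Lambda)$ carrying each generator $T_\zeta$ to $t_\zeta$, which is the desired map ``generators to corresponding generators''. I do not expect a genuine obstacle here; the whole force of the argument is the fact that zigzag sets and zigzag maps are intrinsic to $\Lambda$ and do not depend on the subcategory $\Lambda_0$ (only the distinguished families $\CD^{(0)}$ and the rings $\CA$ do), so relations (3) and (4)$_2$ transfer verbatim while (1) and (2) follow from closure of $\CZ(\Lambda_0,\Lambda)$ under concatenation and reversal. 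The one point meriting care is precisely to note that the decompositions appearing in relation (3) for the relative data involve only zigzags from $\CZ(\Lambda_0,\Lambda)$, so that passing to a sub-index set causes no loss; and it is worth remarking that, unlike the map of Theorem \ref{thm subcategories}, this homomorphism need not be injective, since it arises purely from the coincidence of relations rather than from an inclusion of groupoids.
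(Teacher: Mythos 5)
Your proposal is correct and takes essentially the same route as the paper: the paper's own argument (the paragraph preceding the theorem) rests exactly on the observation that $A_{(\Lambda_0,\Lambda)}(\zeta) = A_\Lambda(\zeta)$ and $\varphi_\zeta^{(\Lambda_0,\Lambda)} = \varphi_\zeta^\Lambda$ for $\zeta \in \CZ(\Lambda_0,\Lambda)$, so that relations (3) and (4)$_2$ coincide in the two settings, whereupon the universal property of $\CT(\Lambda_0,\Lambda)$ from Theorem \ref{thm relative toeplitz gens and rels} applied to the subfamily $\{t_\zeta : \zeta \in \CZ(\Lambda_0,\Lambda)\} \subseteq \CT(\Lambda)$ produces the generator-to-generator map. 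Your additional points --- that (1) and (2) transfer because $\CZ(\Lambda_0,\Lambda)$ is closed under concatenation and reversal, that relation (3) for the relative data is just a sub-collection of the relations already satisfied in $\CT(\Lambda)$, and that the resulting map need not be injective --- are all consistent with the paper, the last being exactly its subsequent remark correcting \cite[Corollary 6.2]{spi_pathcat}.
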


\begin{Corollary} \label{cor lcsc inductive limit}

Let $\Lambda$ be an LCSC, and let $\Lambda_1 \subseteq \Lambda_2 \subseteq \cdots \subseteq \Lambda$ be subcategories such that $\Lambda = \cup_n \Lambda_n$.  Then $\CT(\Lambda) = \underset{\rightarrow}{\lim} \CT(\Lambda_n,\Lambda)$.

\end{Corollary}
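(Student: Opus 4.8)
The plan is to realize both $\CT(\Lambda)$ and the inductive limit as $C^*$-algebras enjoying universal properties, and to write down mutually inverse $*$-homomorphisms between them. The observation underlying everything is that for a fixed zigzag $\zeta \in \CZ_\Lambda$ the zigzag set $A_\Lambda(\zeta)$ and the zigzag map $\varphi_\zeta$ depend only on $\Lambda$, not on the ambient relative structure (only the generated rings $\CA(\Lambda_n,\Lambda)_v$ enlarge as $n$ grows). Consequently relations (1)--(3) and (4)$_2$ of Definition \ref{def toeplitz relations}, all of which are phrased through $\varphi_\zeta$ and the sets $A(\zeta)$, transfer verbatim whenever the finitely many zigzags they involve lie in a common $\CZ(\Lambda_n,\Lambda)$. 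Since each $\zeta = (\alpha_1,\beta_1,\ldots,\alpha_m,\beta_m)$ is a finite tuple of morphisms of $\Lambda = \bigcup_n \Lambda_n$, any finite family of zigzags does lie in some $\CZ(\Lambda_n,\Lambda)$.

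First I would set up the inductive system. Write $t_\zeta^{(n)}$ for the generators of $\CT(\Lambda_n,\Lambda) = C^*(G_2(\Lambda_n,\Lambda))$ furnished by Theorem \ref{thm relative toeplitz gens and rels}. Since $\CZ(\Lambda_n,\Lambda) \subseteq \CZ(\Lambda_{n+1},\Lambda)$ and the data $A(\zeta),\varphi_\zeta$ are unchanged, the subfamily $\{ t_\zeta^{(n+1)} : \zeta \in \CZ(\Lambda_n,\Lambda) \}$ of generators of $\CT(\Lambda_{n+1},\Lambda)$ satisfies the relations defining $\CT(\Lambda_n,\Lambda)$; by universality there is a connecting $*$-homomorphism $c_n : \CT(\Lambda_n,\Lambda) \to \CT(\Lambda_{n+1},\Lambda)$ with $c_n(t_\zeta^{(n)}) = t_\zeta^{(n+1)}$. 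Let $A = \underset{\rightarrow}{\lim}\, \CT(\Lambda_n,\Lambda)$ with canonical maps $\iota_n : \CT(\Lambda_n,\Lambda) \to A$. The homomorphisms $\phi_n : \CT(\Lambda_n,\Lambda) \to \CT(\Lambda)$ of Theorem \ref{thm relative lcsc homomorphism} send $t_\zeta^{(n)}$ to the generator $t_\zeta$ of $\CT(\Lambda)$, so $\phi_{n+1}\circ c_n = \phi_n$ on generators, hence on all of $\CT(\Lambda_n,\Lambda)$; the compatible family $(\phi_n)$ therefore induces $\Phi : A \to \CT(\Lambda)$ with $\Phi\circ\iota_n = \phi_n$.

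Next I would produce the reverse map. For $\zeta\in\CZ_\Lambda$ choose $n$ with $\zeta\in\CZ(\Lambda_n,\Lambda)$ and set $S_\zeta = \iota_n(t_\zeta^{(n)})$; this is independent of $n$ because $\iota_{n+1}\circ c_n = \iota_n$. Using the finiteness observation above, each of the relations (1)--(3) and (4)$_2$ imposed on $\{S_\zeta : \zeta\in\CZ_\Lambda\}$ reduces, after choosing $n$ large enough to contain the finitely many zigzags occurring in it, to the corresponding relation among the $t_\zeta^{(n)}$ inside $\CT(\Lambda_n,\Lambda)$, to which one then applies $\iota_n$. Hence $\{S_\zeta\}$ satisfies (1)--(3) and (4)$_2$ for $\Lambda$, and Theorem \ref{thm toeplitz gens and rels} yields a $*$-homomorphism $\Psi : \CT(\Lambda)\to A$ with $\Psi(t_\zeta) = S_\zeta$.

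Finally I would check that $\Phi$ and $\Psi$ are mutually inverse. On generators $\Phi\Psi(t_\zeta) = \Phi(\iota_n(t_\zeta^{(n)})) = \phi_n(t_\zeta^{(n)}) = t_\zeta$, so $\Phi\circ\Psi = \text{id}_{\CT(\Lambda)}$; and $\Psi\Phi(\iota_n(t_\zeta^{(n)})) = \Psi(t_\zeta) = S_\zeta = \iota_n(t_\zeta^{(n)})$, so $\Psi\circ\Phi$ is the identity on $\bigcup_n \iota_n(\CT(\Lambda_n,\Lambda))$, which is dense in $A$, whence $\Psi\circ\Phi = \text{id}_A$. Thus $\CT(\Lambda)\cong A = \underset{\rightarrow}{\lim}\,\CT(\Lambda_n,\Lambda)$, the isomorphism being implemented by the canonical maps $\phi_n$. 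The argument is entirely formal once the transfer of relations is in hand; the one point requiring care — and the place where the relative theory could go wrong — is to keep every relation phrased through $\varphi_\zeta$ and $A(\zeta)$ and to stay within the $G_2$/(4)$_2$ setting throughout, precisely so as to avoid the $\sim_1$ versus $\sim_2$ discrepancy discussed above for relative categories of paths.
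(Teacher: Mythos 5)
Your proposal is correct and is essentially the argument the paper intends (the paper states the corollary without proof, as an immediate consequence of Theorem \ref{thm relative lcsc homomorphism} and the fact that every zigzag, and every instance of relations (1)--(3) and (4)$_2$, involves only finitely many morphisms and hence lives in some $\CZ(\Lambda_n,\Lambda)$). Your explicit construction of the inverse map $\Psi$ via the universal property of $\CT(\Lambda)$ is exactly the right way to make this rigorous: since the maps $\phi_n$ need not be injective (as the paper's subsequent remark points out), density of $\bigcup_n \phi_n(\CT(\Lambda_n,\Lambda))$ alone would only give a surjection from the inductive limit onto $\CT(\Lambda)$.
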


\begin{Remark}

It is not in general true that the $*$-homomorphism of Theorem \ref{thm relative lcsc homomorphism} is injective, even in the case that $G_2(\Lambda)$ is amenable. The claim to the contrary of \cite[Corollary 6.2]{spi_pathcat} is incorrect.

\end{Remark}

\begin{Remark}

For $\zeta \in \CZ(\Lambda_0,\Lambda)$, $\Phi_\zeta^{(\Lambda_0,\Lambda)}$ is a partial homeomorphism of $X(\Lambda_0,\Lambda)$, while $\Phi_\zeta^\Lambda$ is a partial homeomorphism of $X(\Lambda)$.  Therefore relation (4)$_1$ of Definition \ref{def toeplitz relations} is not the same in $C^*(G_1(\Lambda_0,\Lambda))$ as in $C^*(G_1(\Lambda))$. Thus there is not in general an analog of Theorem \ref{thm relative lcsc homomorphism} for $C^*(G_1(\Lambda_0,\Lambda))$.

\end{Remark}

\end{document}